\title[Balanced alg.~unknotting, linking forms, surfaces in 3D \& 4D]{Balanced algebraic unknotting, linking forms, and surfaces in three- and four-space}
\author{Peter Feller}
\address{ETH Zurich, R\"amistrasse 101, 8092 Zurich, Switzerland}
\email{\myemail{peter.feller@math.ch}}
\author{Lukas Lewark}
\address{Faculty of Mathematics, University of Regensburg, 93053 Regensburg, Germany}
\email{\myemail{lukas@lewark.de}}
\keywords{Blanchfield pairing, linking pairings, branched coverings, unknotting number, slice genus}
\renewcommand*{\backrefalt}[4]{%
\tiny
\ifcase #1 %
No citations.%
\or
Cited on page~#2.%
\else
Cited on pages~#2.%
\fi
}
\Crefname{subsection}{Section}{Sections}
\Crefname{prop}{Proposition}{Propositions}
\crefname{equation}{}{}
\let\cref\Cref
\newcommand{\myemail}[1]{\href{mailto:#1}{#1}}
\newcommand{\qua}{\hskip 0.4em \ignorespaces}
\def\arxiv#1{\relax\ifhmode\unskip\qua\fi
\href{http://arxiv.org/abs/#1}%
{\tt arXiv:\penalty -100\unskip#1}}
\def\MR#1{\relax\ifhmode\unskip\qua\fi
\href{https://mathscinet.ams.org/mathscinet-getitem?mr=#1}{\tt MR#1}}
\def\ZB#1{\relax\ifhmode\unskip\qua\fi
\href{https://zbmath.org/?q=an:#1}{\tt Zbl #1}}
\def\xox#1{\csname xx#1\endcsname}
\newcolumntype{x}[1]{>{\centering\arraybackslash\hspace{0pt}}p{#1}}
\newcommand{\addresseshere}{%
  \enddoc@text\let\enddoc@text\relax
}
\declaretheorem[numberwithin=section,name=Theorem]{thm}
\newtheorem{corollary}[thm]{Corollary}
\newtheorem{lemma}[thm]{Lemma}
\newtheorem{definition}[thm]{Definition}
\newtheorem{prop}[thm]{Proposition}
\newtheorem{claim}[thm]{Claim}
\newtheorem{observation}[thm]{Observation}
\theoremstyle{remark}
\newtheorem{example}[thm]{Example}
\newtheorem{rmk}[thm]{Remark}
\newenvironment{Rmk}{\begin{rmk}\rm}{\end{rmk}}
\def\N{\mathbb{N}}
\def\Q{\mathbb{Q}}
\def\Z{\mathbb{Z}}
\def\epsilon{\varepsilon}
\def\gtop{{g_{\rm{top}}}}
\def\gs{{g_{\rm{smooth}}}}
\def\Ann{{\rm{Ann}}}
\def\Adj{{\rm{Adj}}}
\def\Ord{{\rm{Ord}}}
\def\ga{{g_{\rm{alg}}}}
\def\gst{{g_{\Z}}}
\DeclareMathOperator\snZ{sn_{\Z}}
\DeclareMathOperator\sn{sn}
\DeclareMathOperator{\Arf}{Arf}
\DeclareMathOperator{\Hom}{Hom}
\DeclareMathOperator{\coker}{coker}
\DeclareMathOperator{\Mat}{Mat}
\DeclareMathOperator{\Bl}{Bl}
\DeclareMathOperator{\im}{im}
\DeclareMathOperator{\cyc}{cyc}
\DeclareMathOperator{\lkh}{lkh}
\DeclareMathOperator{\lke}{lke}
\DeclareMathOperator{\PD}{PD}
\DeclareMathOperator{\id}{id}
\newcommand{\legendre}[2]{\ensuremath{\Bigl(\frac{#1}{#2}\Bigr)}}
\newcommand{\subsectionpdfbookmark}[2]{%
\stepcounter{subsection}%
\pdfbookmark[2]{\thesubsection. #2}{pdfbookmarksubsec:\thesubsection}%
\addtocounter{subsection}{-1}%
\hypersetup{bookmarksdepth=-2}%
\subsection{#1}%
\mbox{}%
\hypersetup{bookmarksdepth=2}%
}
\newcommand{\sectionpdfbookmark}[2]{%
\stepcounter{section}%
\pdfbookmark[1]{\thesection. #2}{pdfbookmarksec:\thesection}%
\addtocounter{section}{-1}%
\hypersetup{bookmarksdepth=-2}%
\section{#1}%
\mbox{}%
\hypersetup{bookmarksdepth=2}%
}
\begin{document}
\subjclass{57K10, 57K14, 57M12, 11E39}
\maketitle
\begin{abstract}
We provide three $3$--dimensional characterizations
of the \emph{$\Z$--slice genus of a knot}, the minimal genus of a locally-flat surface in 4--space cobounding the knot whose complement has cyclic fundamental group:
in terms of balanced algebraic unknotting, in terms of Seifert surfaces, and in terms of presentation matrices
of the Blanchfield pairing.
This result generalizes to a knot in an integer homology 3--sphere and surfaces in certain simply connected signature zero 4--manifolds cobounding this homology sphere.
Using the Blanchfield characterization, we obtain effective lower bounds for the $\Z$--slice genus from the linking pairing
of the double branched covering of the knot.
In contrast, we show that for odd primes $p$,
the linking pairing on the first homology of the $p$--fold branched covering is determined up to isometry by the action of the deck transformation group on said first homology.
As an application of the new upper and lower bounds, we complete the calculation of the $\Z$--slice genus for all prime knots with crossing number up to~12.
\end{abstract}

\section{Introduction}
The main result of this paper is the following.
\begin{thm}\label{thm:main}
For a knot $K$---a smooth, oriented, non-empty, and connected 1--submanifold of $S^3$---and a non-negative integer $g$, the following are equivalent.
\begin{enumerate}[itemsep=1ex]
  \item\label{item:Zslicesurface} There exists an oriented compact connected surface $F$ of genus~$g$
properly embedded and locally flat in $B^4$ with boundary $K\subseteq S^3=\partial B^4$
such that $\pi_1(B^4\setminus F)\cong \Z$.
  \item\label{item:Seifertsurface} There exists a smooth oriented compact connected surface of genus~$g$ in $S^3$ with two boundary components, %
one of which is the knot~$K$ and the other %
a knot with Alexander polynomial~$1$.
  \item \label{item:unknotting}The knot $K$ can be turned into a knot with Alexander polynomial $1$ by changing $g$ positive and $g$ negative crossings.
  \item \label{item:Blanchfield}The Blanchfield pairing of $K$ can be presented by a Hermitian matrix $A(t)$ of size
 $2g$ over $\Z[t^{\pm1}]$ such that the integral symmetric matrix $A(1)$ has signature zero.
\end{enumerate}
\end{thm}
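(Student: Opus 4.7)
The plan is to prove the equivalence through a cycle such as $(2) \Leftrightarrow (3) \Rightarrow (1) \Rightarrow (4) \Rightarrow (2)$. The two directions $(2) \Leftrightarrow (3)$ and $(2) \Rightarrow (1)$ are essentially geometric, $(1) \Rightarrow (4)$ is the standard passage from a $4$-dimensional Seifert-like surface to a Blanchfield presentation, and $(4) \Rightarrow (2)$ is where I expect the bulk of the work to lie: one must convert an abstract algebraic datum into a genuine cobordism while respecting the signature-zero constraint.

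For $(2) \Leftrightarrow (3)$, the key local observation is that a balanced pair of crossing changes---one positive, one negative---is exactly the boundary of a properly embedded genus-$1$ tube in $S^3$: attaching such a tube to $K$ performs both crossing changes simultaneously. Thus $g$ balanced pairs of crossing changes correspond to a genus-$g$ cobordism $F_0 \subset S^3$ from $K$ to a knot $J$, and conversely any such $F_0$ can be decomposed into $g$ tubes along non-separating curves. For $(2) \Rightarrow (1)$, I would push $F_0$ slightly into $B^4$, and cap off its $J$-boundary by the locally flat disk $D$ in a small ball provided by Freedman's theorem (which applies since $\Alex{J} = 1$ and, crucially, yields $\pi_1$-cyclic complement). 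A Seifert--van Kampen argument based on the meridional generators of the two pieces then identifies $\pi_1(B^4\setminus (F_0\cup D))$ with $\Z$.

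For $(1) \Rightarrow (4)$, let $X = B^4 \setminus \nu F$ and form the infinite cyclic cover $\tilde X \to X$, using $\pi_1(X) = \Z$. The equivariant intersection form on $H_2(\tilde X)$ yields a Hermitian matrix $A(t)$ of size $2g$ over $\Z[t^{\pm 1}]$ that presents $\Bl(K)$; its integer specialization $A(1)$ is the ordinary intersection form on $H_2(X)$. This has signature zero because $X$ embeds in the signature-zero manifold $B^4$ with complementary piece $F$ contributing nothing to the signature (one can alternatively verify this via the double of $X$ along $F$, which is diffeomorphic to $S^4$).

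The main obstacle is $(4) \Rightarrow (2)$. Without the signature-zero hypothesis, this is essentially the Borodzik--Friedl theorem identifying the algebraic genus of $K$ with the minimum half-size of a Hermitian Blanchfield presentation; to strengthen the conclusion to ``$\Alex{}=1$ on the far end'' one needs the signature data. The strategy is to first reduce $A(t)$ to a suitable normal form using Hermitian congruence and stabilization moves over $\Z[t^{\pm 1}]$, chosen so that positive and negative contributions to $A(1)$ cancel explicitly; then realise this normal form geometrically as successive balanced band attachments on a Seifert surface of $K$, with the signature constraint guaranteeing that the knot at the other end of the resulting cobordism has trivial Alexander polynomial.
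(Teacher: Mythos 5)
Your proposed cycle $(2)\Leftrightarrow(3)\Rightarrow(1)\Rightarrow(4)\Rightarrow(2)$ differs structurally from the paper's $(1)\Rightarrow(4)\Rightarrow(3)\Rightarrow(2)\Rightarrow(1)$, and the difference is not cosmetic: it hides the hardest implication in a step you treat as easy.

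The gap I'd flag first is the claimed direct implication $(2)\Rightarrow(3)$. It is true that a balanced pair of crossing changes bounds a genus-one tube, as you say, and this gives $(3)\Rightarrow(2)$ (essentially what the paper does in \cref{prop:crchangestoSS}). But the converse fails: a general genus-one piece of a cobordism from $K$ to $J$ is an arbitrary band-sum/stabilization, and there is no reason its two ends differ by one positive and one negative crossing change. In fact, the paper explicitly highlights (\cref{cor:genusoneknots} and the surrounding discussion of $P(3,3,3)$) that every genus-one knot can be turned into an Alexander-polynomial-one knot by a balanced pair of crossing changes --- and emphasizes this is a \emph{non-obvious consequence} of the theorem, precisely because knots of genus one need not be unknottable by one positive and one negative crossing change. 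The paper obtains $(2)\Rightarrow(3)$ only by going all the way around through $(1)$ and $(4)$. So treating $(2)\Leftrightarrow(3)$ as an easy local observation is a genuine error, not a shortcut.

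For $(1)\Rightarrow(4)$ there is a second, more technical gap. You set $X=B^4\setminus\nu F$ and claim the equivariant intersection form on $\tilde X$ presents $\Bl(K)$. But $\partial X$ is $(S^3\setminus\nu K)\cup\partial\nu F$, not the zero-surgery $M_K$, so the Borodzik--Friedl machinery (\cref{thm:BF:W}) does not apply directly; the paper's Step~II glues $H\times S^1$ onto $W'=B^4\setminus\nu F$ precisely to obtain a $W$ with $\partial W=M_K$, and the choice of gluing homeomorphism matters for $\pi_1(W)\cong\Z$. Your signature argument is also hand-waving: $\sigma(X)=0$ is established in the paper by Novikov--Wall non-additivity (\cref{lemma:NW-add}), where the correction term has to be shown to vanish; your alternative ``double of $X$ along $F$ is $S^4$'' is not correct --- the double of $(B^4,F)$ is $(S^4,S)$ for $S$ the double of $F$, which is a different space and does not directly yield $\sigma(X)=0$. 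Finally, your sketch of $(4)\Rightarrow(2)$ is too vague to assess: the crux in the paper is the purely algebraic \cref{lem:nodiag}, which shows that when $A(1)$ is indefinite one can remove Borodzik--Friedl's diagonalizability hypothesis by working over $\Z[t^{\pm1},(t-1)^{-1}]$; ``reduce $A(t)$ to a suitable normal form'' gestures at this but does not identify the obstruction or how to overcome it.
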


Here, the \emph{Alexander polynomial} and the \emph{Blanchfield pairing} are the classical knot invariants introduced by their respective eponyms~\cite{Alexander_28_TopInvsOfKnotsAndLinks,blanchfield}.
The Alexander polynomial of a knot $K$ is most quickly defined as the order of the \emph{Alexander module} $H_1(S^3\setminus K; \Z[t^{\pm 1}])$ of $K$,
which is the $\Z[t^{\pm 1}]$--module given as the first integral homology group of the infinite cyclic covering of $S^3\setminus K$ with $\Z[t^{\pm 1}]$--module structure given by $t$ acting as the group isomorphism induced by a generator of the deck transformation group. The Blanchfield pairing is a Hermitian pairing on $H_1(S^3\setminus K;\Z[t^{\pm 1}])$ taking values in $\Q(t)/\Z[t^{\pm 1}]$.
We refer the reader to \Cref{sec:prel} for more detailed definitions.
By \emph{changing a positive (negative) crossing} of a knot~$K$, we mean the $-1$--framed ($1$--framed) Dehn surgery on the boundary of a \emph{crossing disk}, i.e.~a smooth closed 2--disk $D\subset S^3$ that intersects $K$ exactly twice, only in the interior, transversely, and such that the two intersection points have opposite induced orientations.

\begin{figure}[h]
\centering
\includegraphics{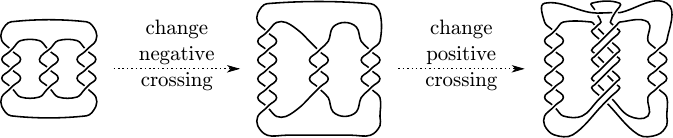}
\caption{The genus-one pretzel knot $(3,3,3)$ (on the left) can be turned into a knot with Alexander\break polynomial $1$ (on the right) by changing one negative  and one positive crossing.}
\label{fig:333}
\end{figure}
Before discussing context, applications, an outline of the proof, and a generalization to more general ambient 3-- and 4--manifolds of \Cref{thm:main}, we note that
this paper naturally splits into two, essentially independent, parts. A first part contains the proof of \cref{thm:main}. %
A second part is concerned with providing obstructions for knots to satisfy~\eqref{item:Zslicesurface} of \Cref{thm:main} for small $g$ using~\eqref{item:Blanchfield}. Concretely, we provide obstructions by specializing the Blanchfield form to the linking form on the first homology of the $p$--fold branched covering of the knot for $p$ a prime. For $p=2$, we provide an easily applicable criterion (see \cref{prop:jacobi}), which turns out to be effective for knots in the knot tables.
The same strategy does not give interesting obstructions for odd primes $p$. This is explained by our second result. %
\begin{restatable}{thm}{thmalliso}
\label{thm:allisometric}
Let $p$ be an odd prime, $\Sigma_p(K)$ the $p$--fold branched covering of a knot~$K$, and $N_p(K)\coloneqq H_1(\Sigma_p(K); \Z[t]/(t^p-1))$
the first homology group of $\Sigma_p(K)$, on which~$t$ acts as a generator of the deck transformation group.
Then, any two non-degenerate Hermitian pairings on $N_p(K)$ are isometric.
In particular, if for two knots $K, K'$, the $\Z[t]/(t^p-1)$--modules $N_p(K)$ and $N_p(K')$ are isomorphic,
then the linking pairings on these modules are isometric.
\end{restatable}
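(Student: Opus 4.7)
The plan is to pass to a purely algebraic classification statement. Write $\Lambda := \Z[t]/(t^p-1)$ with the involution $t \mapsto t^{-1}$ and $M := N_p(K)$. I will argue that any two non-degenerate Hermitian pairings $M \times M \to \Q/\Z$ are isometric, from which the theorem follows at once: the first assertion is immediate, and the second follows because a $\Lambda$-module isomorphism $N_p(K) \cong N_p(K')$ pulls back the linking pairing of $K'$ to a non-degenerate Hermitian pairing on $N_p(K)$, to which the first assertion applies.

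My first step is to split $M = \bigoplus_\ell M_\ell$ into $\ell$-primary components. Since $\Q/\Z = \bigoplus_\ell \Q_\ell/\Z_\ell$, any non-degenerate Hermitian pairing forces distinct primary components to be mutually orthogonal, so it suffices to classify forms on each $M_\ell$ separately. As $M = H_1(\Sigma_p(K); \Z)$ arises from a $p$-fold branched cover, the norm element $1 + t + \cdots + t^{p-1}$ acts as zero on $M$, a fact I will exploit below.

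For $\ell \neq p$, the Chinese Remainder Theorem gives $\Z_\ell[t]/(t^p-1) \cong \Z_\ell \times \Z_\ell[\zeta_p]$, decomposing $M_\ell$ into a trivial-action summand and a summand over $\Z_\ell[\zeta_p]$ with the involution $\zeta_p \mapsto \zeta_p^{-1}$. The norm constraint forces the trivial-action summand to vanish, since there the norm acts as $p$, a unit in $\Z_\ell$. On the $\Z_\ell[\zeta_p]$-summand, a standard semilocal-with-involution analysis classifies non-degenerate Hermitian forms by the underlying module alone: each local factor is an unramified extension of $\Z_\ell$ whose involution is either non-trivial with surjective norm on units or interchanges two factors into a hyperbolic piece; both scenarios yield a unique isometry class per $\Lambda$-isomorphism class.

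The hard case is $\ell = p$: here $\Z_p[t]/(t^p-1)$ is a single local ring with maximal ideal $(p, t-1)$ and residue field $\mathbb{F}_p$, not a product of discrete valuation rings, because $t^p-1 \equiv (t-1)^p$ modulo $p$. I would proceed by induction on the length of $M_p$, at each stage either peeling off a hyperbolic summand or isolating a generator whose orthogonal complement has strictly smaller length. The oddness of $p$ enters critically by allowing division by $2$ in the symmetrization of bilinear data and in the splitting of hyperbolic planes. I expect this step to be the main obstacle, requiring a careful analysis of the interaction between the Hermitian pairing and the $(t-1)$-adic filtration on $M_p$. The goal is to show that in each $\Lambda$-isomorphism class of finite modules annihilated by the norm element, non-degenerate Hermitian pairings form a single orbit under the $\Lambda$-linear automorphism group.
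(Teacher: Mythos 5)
Your overall strategy---kill the trivial-action part via the norm relation, then analyze Hermitian forms locally over extensions of $\Z_\ell$ with involution---is compatible in spirit with the paper's proof, which reduces to a general statement (\cref{prop:allisometric}) about forms on torsion modules over the Dedekind ring $\Z[\zeta_p] \cong \Lambda/\Phi_p$. But there is a serious gap at the point you flag as ``the main obstacle.''

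You treat $\ell = p$ as the hard primary component and propose an induction over the $(t-1)$-adic filtration on $M_p$. This cannot work, for a structural reason: over $\Z_p[\zeta_p]$, the prime $p$ is totally ramified with uniformizer $\pi = 1-\zeta_p$, and the involution $\zeta_p \mapsto \zeta_p^{-1}$ acts trivially on the residue field $\mathbb{F}_p$. Thus the ``norm'' $x \mapsto x\overline{x}$ descends to the squaring map on $\mathbb{F}_p^\times$, which is not surjective for odd $p$. Consequently, on a cyclic module such as $\mathbb{F}_p$ there are two isometry classes of non-degenerate Hermitian forms (one per square class), and the classification-by-module statement you are aiming for is simply \emph{false} for modules with $p$-torsion. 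No amount of care with the filtration will fix this; the theorem would be wrong in that generality. The crucial input you are missing is that $H_1(\Sigma_p(K);\Z)$ has no $p$-torsion. This is where the knot theory enters: $\Delta_K(1) = 1$ implies $(\Delta_K)$ is coprime with $(t-1)$, and $\Phi_p(-1)=1$ (for $p$ odd) makes $(t+1)$ the unit ideal, so the order ideal of the module over $\Z[\zeta_p]$ is coprime with the unique ramified prime $(\pi)$. Once you know $M_p = 0$, the ``hard case'' vanishes, and the remaining $\ell$-components live over $\Z_\ell[\zeta_p]$, an unramified situation where the norm and trace are surjective (the paper's \cref{lem:NT}) and the diagonalization and norm arguments go through. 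Your treatment of those components is still only a gesture (``standard semilocal-with-involution analysis''), but that part is at least in the right direction; the fatal issue is the unrecognized vacuity of the $p$-primary part and the lack of the coprimality argument that makes it vacuous.
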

The techniques for this second part involve elementary number theory, which are rather different from the low-dimensional topology arguments employed in the rest of the text: we use quadratic reciprocity and Dirichlet's prime number theorem, and, for our result when $p$ is odd, we generalize parts of the proof of Wall's classification of symmetric pairings on finite abelian groups with odd order \cite{MR0156890} to modules over Dedekind domains of a certain order.
This second part is contained in \Cref{sec:Comp}, and outlined in more detail in \Cref{subsecintro:lkf}.
The first part can be read independently of it.

\subsection{The genus zero case}
For $g=0$, the conditions (2), (3) and (4) of \Cref{thm:main} are all immediately seen to be equivalent to $K$ having Alexander polynomial 1. In other words, for $g=0$, \Cref{thm:main} is simply restating the following celebrated application of Freedman's disk embedding theorem \cite{Freedman_82_TheTopOfFour-dimensionalManifolds}.
\begin{thm}[{\cite[11.7B]{FreedmanQuinn_90_TopOf4Manifolds}; see also~\cite[Appendix]{GaroufalidisTeichner_04_OnKnotswithtrivialAlex}}]\label{thm:Alex1<=>Zslice} A knot $K$ has Alexander polynomial $1$ if and only if there exists a properly embedded locally-flat disk $D$ in $B^4$ with boundary $K\subseteq S^3=\partial B^4$
such that $\pi_1(B^4\setminus D)\cong \Z$.\qed\end{thm}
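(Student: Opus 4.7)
The plan is to prove the two implications of this biconditional separately. The forward direction (existence of a disk with $\Z$--complement implies $\Delta_K=1$) is a standard algebraic-topological computation; the backward direction is the celebrated application of Freedman's disk embedding theorem.

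For the forward direction, let $X = B^4 \setminus \nu(D)$ denote the exterior of such a disk. Since $\pi_1(X) \cong \Z$, the infinite cyclic cover $\widetilde{X}$ is in fact the universal cover and hence simply connected. Combining Hurewicz, Poincar\'e--Lefschetz duality with $\Z[t^{\pm 1}]$-coefficients, and the long exact sequence of the pair $(X, \partial X)$, I would deduce that $H_1(X; \Z[t^{\pm 1}]) = 0$ and then, using that the knot exterior $S^3 \setminus \nu(K)$ is a piece of $\partial X$, that the Alexander module $H_1(S^3 \setminus K; \Z[t^{\pm 1}])$ vanishes. Hence $\Delta_K(t) = 1$.

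For the backward direction, suppose $\Delta_K(t) = 1$. The plan is to start with a genus-$g$ Seifert surface $F \subset S^3$ for $K$, push its interior into $B^4$, and then kill its genus by ambient surgery along locally-flat embedded disks produced by Freedman's theorem. The hypothesis $\Delta_K = 1$ translates---after $S$-equivalence and possibly a stabilization of $F$---into the existence of a half-basis $a_1, \ldots, a_g$ of $H_1(F)$ which is isotropic for the Seifert form. Such a basis guarantees that each $a_i$ bounds a correctly framed immersed disk $\Delta_i$ in $B^4 \setminus F$, with all algebraic intersection and self-intersection numbers among the $\Delta_i$ vanishing. The ambient $4$-manifold $B^4 \setminus \nu(F)$ has fundamental group normally generated by a meridian of $K$, so that the relevant quotient is $\Z$, a \emph{good} group in the sense of Freedman. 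Freedman's topological disk embedding theorem therefore promotes the $\Delta_i$ to disjoint locally-flat embedded disks with trivial normal bundles, and surgering $F$ along these disks yields a locally-flat disk $D \subset B^4$ with $\partial D = K$.

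The main obstacle is of course the black-boxed application of Freedman's disk embedding theorem, whose proof is notoriously delicate. A subtle but essential technical point is to invoke the version of the theorem which additionally controls the fundamental group of the complement of the output disk, so as to guarantee $\pi_1(B^4 \setminus D) \cong \Z$ rather than merely that this group is normally generated by a meridian. This controlled version is made precise in Freedman--Quinn and is used explicitly in the Garoufalidis--Teichner appendix cited in the statement. The remaining algebraic step---translating the condition $\Delta_K = 1$ into the existence of a Seifert-isotropic half-basis bounding suitable immersed disks in $B^4 \setminus F$---is a standard manipulation of the Seifert form via stabilization and $S$-equivalence.
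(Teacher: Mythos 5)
First, note that the paper does not prove this statement at all: it is stated as Freedman's theorem, with citations to Freedman, to Freedman--Quinn 11.7B, and to the Garoufalidis--Teichner appendix, and the introduction explicitly disclaims reproving it (only its only-if direction is used as a black box). So the comparison is with the proofs in those references, which in modern form establish the if-direction by surgery theory on the zero-surgery $M_K$: since $\Delta_K=1$, the $\Z[t^{\pm1}]$-homology of $M_K$ is that of $S^1\times S^2$, and topological surgery for the good group $\Z$ produces a compact $4$-manifold $Z$ with $\partial Z=M_K$, $\pi_1(Z)\cong\Z$ generated by a meridian, and $Z$ homotopy equivalent to $S^1$; capping off with a $2$-handle along the meridian gives a homotopy $4$-ball, hence $B^4$, and the cocore of that handle is the desired $\Z$-slice disk. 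Your only-if direction (duality in the infinite cyclic, i.e.\ universal, cover of the disk exterior) is the standard computation and is fine.

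Your if-direction, however, has a genuine gap. The reduction you propose---$\Delta_K=1$ gives, after stabilization and $S$-equivalence, a half-basis $a_1,\ldots,a_g$ on which the Seifert form vanishes, and such an isotropic half-basis ``guarantees'' correctly framed immersed disks in the complement with vanishing algebraic intersection data, to which the disk embedding theorem applies---only ever uses that $K$ is algebraically slice, which is strictly weaker than $\Delta_K=1$. If the argument worked as stated, it would show that every algebraically slice knot (e.g.\ $6_1$, with $\Delta=2t^2-5t+2$) is $\Z$-slice, contradicting the only-if direction. The missing content is precisely where $\Delta_K=1$ must enter: the disk embedding theorem has to be applied in the complement of the pushed-in Seifert surface, whose fundamental group is a typically non-abelian, free-type group---not $\Z$, and not known to be good; ``normally generated by a meridian'' is not a substitute for controlling the actual group. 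Its hypotheses require vanishing of the equivariant, $\Z[\pi_1]$-valued intersection and self-intersection numbers together with algebraically dual spheres, none of which follow from the $\Z$-valued Seifert-form data you extract, and you also do not derive $\pi_1(B^4\setminus D)\cong\Z$ for the surgered disk. This is exactly why the cited proofs abandon the Seifert-surface picture and run surgery on $M_K$ instead; as written, the second half of your proposal does not go through.
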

We do not claim to reprove \Cref{thm:Alex1<=>Zslice}; in fact, its only-if-direction (the part based on the disk embedding theorem) is the main input for the proof of $\eqref{item:Seifertsurface}\Rightarrow\eqref{item:Zslicesurface}$.
We understand \Cref{thm:main} as a quantitative version of \Cref{thm:Alex1<=>Zslice} in that it characterizes the existence of a genus $g$ surface in $B^4$ (rather than a disk) in terms of classical, $3$--dimensional knot invariants.
It does so in three a priori different ways:
via the 3D--cobordism distance \eqref{item:Seifertsurface},
balanced algebraic unknotting distance \eqref{item:unknotting},
and a condition on presentation matrices of the Blanchfield pairing \eqref{item:Blanchfield}.

\subsection{Three-dimensional consequences}
While the principal motivation for our main result is the 3D--characterization of an a priori 4D--quantity,
the different 3D--characteri\-zations \eqref{item:Seifertsurface}, \eqref{item:unknotting}, and~\eqref{item:Blanchfield} of \eqref{item:Zslicesurface} in \Cref{thm:main} yield 3D--consequences such as the following.
\begin{corollary}\label{cor:genusoneknots}
A knot of genus one, i.e.~a knot arising as the boundary of a once-punctured torus embedded in~$S^3$,
can be turned into a knot with Alexander polynomial~$1$ by changing one positive and one negative crossing.

More generally, this can be achieved
for any knot that arises as one of the boundary components of a twice-punctured torus embedded in~$S^3$,
whose other boundary component has Alexander polynomial $1$.
\end{corollary}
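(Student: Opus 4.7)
My plan is to deduce both statements of \Cref{cor:genusoneknots} directly from the implication $\eqref{item:Seifertsurface}\Rightarrow\eqref{item:unknotting}$ of \Cref{thm:main} in the case $g=1$, which is precisely a statement about twice-punctured tori cobounding $K$ with an Alexander polynomial $1$ knot and balanced crossing changes turning $K$ into such a knot.

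The second, more general assertion is already a verbatim instance of that implication: its hypothesis provides a smooth oriented genus-one surface in $S^3$ with two boundary components, one being $K$ and the other a knot of Alexander polynomial $1$. This is exactly condition~\eqref{item:Seifertsurface} of \Cref{thm:main} with $g=1$, so \eqref{item:unknotting} furnishes the desired single positive and single negative crossing change transforming $K$ into a knot with Alexander polynomial $1$.

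To handle the first assertion, I would reduce it to the general one by a simple surface modification. Given a once-punctured torus $T\subset S^3$ bounded by the genus-one knot $K$, remove a small open disk from the interior of $T$. The result is a smooth oriented genus-one surface in $S^3$ with two boundary components, namely $K$ and a small unknot; since the unknot has Alexander polynomial $1$, the general assertion applies and yields the claim.

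There is essentially no obstacle here, as the corollary is a direct low-genus consequence of \Cref{thm:main}. The only content is the puncturing observation, which turns any Seifert surface of genus one into a cobordism from $K$ to the unknot, matching the hypothesis of \eqref{item:Seifertsurface}.
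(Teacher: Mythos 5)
Your proof is correct and follows essentially the same route as the paper: you reduce the first assertion to the second by puncturing the once-punctured torus to obtain a cobordism from $K$ to an unknot, and you deduce the second assertion directly from the implication \eqref{item:Seifertsurface}$\Rightarrow$\eqref{item:unknotting} of \Cref{thm:main} with $g=1$.
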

\begin{proof}
The first claim is indeed a special case of the second claim:
puncturing a genus one surface with boundary knot $K$
yields a twice punctured torus in $S^3$ with boundary consisting of $K$ and an unknot (which of course has Alexander polynomial~1).
The second claim follows from \Cref{thm:main} \eqref{item:Seifertsurface} $\Rightarrow$ \eqref{item:unknotting} for $g=1$.
\end{proof}
Compare this to related results in \cite{MR1297516,livingston_unknotting}.
\cref{cor:genusoneknots} stands in %
contrast with the existence of knots with genus one that cannot be unknotted by changing one positive
and one negative crossing, such as $P(p,q,r)$ pretzel knots with $p\geq 1, q\geq 3, r\geq 3$ odd \cite{owens}.
See \Cref{fig:333} for an example, where we explicitly provide the two crossing changes that must exist by \Cref{cor:genusoneknots} for the pretzel knot $P(3,3,3)$. We note that $P(3,3,3)$ is known to have unknotting number $u(P(3,3,3))=3>2$~\cite{owens}.

\subsection{Four-dimensional consequences}\label{subsec:galg}
To facilitate the discussion, let us define the \emph{$\Z$--slice genus} $\gst(K)$ of a knot $K$ as the smallest integer arising as the genus of a
\emph{$\Z$--slice surface} for $K$, i.e.\ an oriented compact surface $F$,
properly embedded and locally flat in $B^4$ with boundary $K\subseteq S^3$ and $\pi_1(B^4\setminus F)\cong \Z$.
In other words, $\gst(K)$ is the minimal $g$ such that~\eqref{item:Zslicesurface} holds.%

In previous work~\cite{FellerLewark_16}, the authors defined the \emph{algebraic genus} $\ga(K)$ of a knot $K$ as the smallest non-negative integer $g$ satisfying the following condition:\medskip
\emph{
\begin{enumerate}[itemsep=1ex]
\setcounter{enumi}{4}
\item\label{item:ga}
The knot $K$ admits a $2(g+h)$--dimensional Seifert matrix $M$ for some $h \geq 0$ such that the
upper-left square $2h$--dimensional submatrix $N$ of $M$ satisfies $\det(t\cdot N - N^{\top}) = t^{h}$.
\end{enumerate}}\medskip
One motivation for this definition was to obtain easily calculable upper bounds for $\gst$ and the topological slice genus $\gtop$.
Indeed, it was shown in \cite{FellerLewark_16} that $\eqref{item:ga} \Leftrightarrow \eqref{item:Seifertsurface} \Rightarrow \eqref{item:Zslicesurface}$, i.e.\ $\ga \geq \gst$.
Now, as a consequence of \Cref{thm:main}, \eqref{item:ga} is equivalent to (1)\,--\,(4), and we have the following
\begin{corollary}\label{cor:galg=gZ}
The algebraic genus and the $\Z$--slice genus agree for all knots.\qed
\end{corollary}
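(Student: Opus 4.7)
The corollary is essentially a one-line consequence of assembling three previously-established equivalences, and so my proof plan is largely bookkeeping. The plan is to trace through the chain of equivalences relating conditions \eqref{item:ga}, \eqref{item:Seifertsurface}, and \eqref{item:Zslicesurface}, observing at each stage that the parameter $g$ is preserved.

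First, I would invoke the prior result of the authors from \cite{FellerLewark_16}, where it is shown that \eqref{item:ga} and \eqref{item:Seifertsurface} are equivalent with the same value of $g$; in particular $\ga(K)$ equals the minimum $g$ for which \eqref{item:Seifertsurface} holds. Next, from \Cref{thm:main} one has the equivalence \eqref{item:Seifertsurface} $\Leftrightarrow$ \eqref{item:Zslicesurface}, again with the same genus $g$ (this is the content of the statement of \Cref{thm:main}). Finally, by definition $\gst(K)$ is the minimum $g$ for which \eqref{item:Zslicesurface} holds, so combining these equivalences yields $\ga(K) = \gst(K)$.

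The only genuine content being used here is \Cref{thm:main}; all the work lies in its proof. The previously-known direction $\ga \geq \gst$ uses only the easy implication \eqref{item:Seifertsurface} $\Rightarrow$ \eqref{item:Zslicesurface}, which is ultimately \Cref{thm:Alex1<=>Zslice} applied after tubing along one boundary component. The inequality $\ga \leq \gst$ is the new content, and it relies on the reverse implication \eqref{item:Zslicesurface} $\Rightarrow$ \eqref{item:Seifertsurface} in \Cref{thm:main}. So there is no additional obstacle beyond what is already required to establish \Cref{thm:main}; the corollary itself deserves only a \textsc{qed}, as the author indeed indicates.
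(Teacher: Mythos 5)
Your proof is correct and is essentially the same argument the paper intends: chain the prior equivalence $\eqref{item:ga}\Leftrightarrow\eqref{item:Seifertsurface}$ from~\cite{FellerLewark_16} with the new equivalence $\eqref{item:Seifertsurface}\Leftrightarrow\eqref{item:Zslicesurface}$ supplied by \cref{thm:main}, and observe that the defining minima for $\ga$ and $\gst$ therefore coincide.
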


The algebraic genus %
and the Seifert matrix techniques related to \eqref{item:ga} will not be used here and
the paper at hand can be read without any knowledge of~\cite{FellerLewark_16}.
On the subject of Seifert matrices, we point out the following consequence of our main result. It follows from~\Cref{cor:galg=gZ} and the fact (see~\cite[Proposition~10]{FellerLewark_16}) that the algebraic genus is a classical knot invariant. A knot invariant is a \emph{classical knot invariant}, if for every knot $K$ it is determined by the isometry class of the Blanchfield pairing of $K$
(or equivalently, by the $S$-equivalence class of the Seifert forms of $K$).

\begin{corollary}\label{cor:gZisclassical}
The $\Z$--slice genus of a knot $K$ is a \emph{classical knot invariant}.\qed %
\end{corollary}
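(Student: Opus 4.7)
My plan is to deduce \Cref{cor:gZisclassical} directly from the equivalence $\eqref{item:Zslicesurface} \Leftrightarrow \eqref{item:Blanchfield}$ in \Cref{thm:main}, bypassing any appeal to the algebraic genus machinery of \cite{FellerLewark_16}. The key observation is that condition~\eqref{item:Blanchfield} is formulated purely in terms of the Blanchfield pairing: it asks for the existence of a size--$2g$ Hermitian matrix $A(t)$ over $\Z[t^{\pm 1}]$ that presents the pairing and satisfies the additional numerical constraint that $A(1)$ has signature zero.

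The core step is therefore to verify that this condition depends only on the isometry class of the Blanchfield pairing. Suppose the Blanchfield pairings of two knots $K$ and $K'$ are isometric via some isomorphism $\varphi\colon H_1(S^3\setminus K;\Z[t^{\pm1}]) \to H_1(S^3\setminus K';\Z[t^{\pm1}])$ of $\Z[t^{\pm1}]$--modules. Then any Hermitian matrix $A(t)$ presenting the first pairing also presents the second: one simply transports a presentation along $\varphi$, which does not alter the matrix itself (only the identification of generators). The numerical invariant $\mathrm{sign}(A(1))$ is a property of the matrix $A(t)$ alone, so the condition in \eqref{item:Blanchfield} is manifestly an isometry invariant of the Blanchfield pairing.

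Combining this with \Cref{thm:main}, the $\Z$--slice genus $\gst(K)$ equals the minimal $g$ for which such an $A(t)$ exists, and by the previous paragraph this minimum is determined by the isometry class of the Blanchfield pairing of $K$. Hence $\gst$ is a classical knot invariant. I expect no substantive obstacle here, since the heavy lifting has already been done in \Cref{thm:main}; the only point requiring a small amount of care is the statement that "$A(t)$ presents the Blanchfield pairing" is well-defined on isometry classes, and this is immediate from the definition of a presentation matrix as recalled in \Cref{sec:prel}.
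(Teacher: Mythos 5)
Your argument is correct, and it takes a genuinely different route from the paper. The paper deduces this corollary from \Cref{cor:galg=gZ} ($\gst = \ga$) combined with an external citation to \cite[Proposition~10]{FellerLewark_16}, which establishes that the algebraic genus $\ga$ is a classical invariant via Seifert matrix arguments. You instead apply the equivalence $\eqref{item:Zslicesurface} \Leftrightarrow \eqref{item:Blanchfield}$ of \Cref{thm:main} directly: condition~\eqref{item:Blanchfield} asks for a presentation matrix with a certain size and signature property, and ``presents the Blanchfield pairing'' is (by definition) invariant under isometry of the pairing, since an isometry $\varphi$ of Blanchfield pairings composed with a presentation isometry for $K$ yields a presentation isometry for $K'$ with the \emph{same} matrix. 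Hence $\gst(K)$, being the minimal $g$ for which \eqref{item:Blanchfield} holds, depends only on the isometry class of $\Bl(K)$. This is a clean, self-contained argument that in fact fits better with the paper's stated intention to make the exposition independent of the Seifert matrix machinery of \cite{FellerLewark_16}; the paper's route buys nothing beyond linking the result to the algebraic genus, which is a point of interest mostly for readers of the earlier paper. Your proof has no gap.
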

This distinguishes $\gst$ from other knot genera such as the three-dimensional knot genus, the smooth slice genus, or the topological slice genus, none of which are classical.

A topological slice surface $F\subset B^4$ is a (topological) \emph{superslice surface} if its double $S\subset S^4$ (the surface $S$ in $S^4$ given by gluing together $(B^4,F)$ with an oppositely oriented copy of itself) is unknotted, i.e.~the boundary of an embedded locally flat handlebody. Combining \Cref{thm:main} with a recent argument by Chen~\cite{Chen_18}, we find that whenever a knot $K$ has a $\Z$--slice surface of genus $g$, then it has a superslice surface of genus $g$. Put differently, for the \emph{topological superslice genus} of a knot $K$, the smallest integer that arises as the genus of a superslice surface of $K$, we have the following.
\begin{corollary}\label{cor:gZ=gsuperslice}
For all knots $K$, the $\Z$--slice genus of $K$ equals the topological superslice genus of $K$.
\end{corollary}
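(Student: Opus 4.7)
I would prove the equality of $\gst(K)$ and the topological superslice genus of $K$ by establishing two inequalities.

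First, \emph{the topological superslice genus of $K$ is at most $\gst(K)$}. Assume $\gst(K) = g$. By the implication $\eqref{item:Zslicesurface}\Rightarrow\eqref{item:Seifertsurface}$ of \Cref{thm:main}, $K$ cobounds in $S^3$ a smooth genus-$g$ surface $\Sigma$ with a knot $J$ of Alexander polynomial $1$. By \Cref{thm:Alex1<=>Zslice}, $J$ bounds a locally flat $\Z$--slice disk $D \subset B^4$; Chen's argument in \cite{Chen_18} is then used to arrange that $D$ is in fact a superslice disk, i.e.\ one whose double in $S^4$ is an unknotted $2$--sphere. Gluing this $D$ to a push-off of $\Sigma$ into $B^4$ yields a locally flat genus-$g$ surface $F \subset B^4$ bounded by $K$. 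It remains to verify $F$ is itself superslice: the double $2F \subset S^4$ decomposes as the double of $\Sigma$ along $K$ (which bounds a suitably embedded copy of the handlebody $\Sigma \times I$) attached along $J$ and its mirror to the unknotted sphere $2D$, and these pieces assemble into a $3$--dimensional handlebody in $S^4$ bounded by $2F$.

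Second, \emph{$\gst(K)$ is at most the topological superslice genus of $K$}. I would show every superslice surface $F$ is already a $\Z$--slice surface. If $S = 2F \subset S^4$ is unknotted, then a Heegaard-type decomposition of $S^4$ and Seifert--van Kampen yield $\pi_1(S^4 \setminus S) \cong \Z$, generated by a meridian of $S$. The orientation-reversing involution of $S^4$ exchanging the two copies of $B^4$ and fixing $S^3$ restricts to $S^4 \setminus S$ with fixed set $S^3 \setminus K$, giving the symmetric decomposition $S^4 \setminus S = (B^4 \setminus F) \cup_{S^3 \setminus K} (B^4 \setminus F)$. Seifert--van Kampen then identifies $\pi_1(S^4 \setminus S) \cong \pi_1(B^4 \setminus F) *_{\pi_1(S^3 \setminus K)} \pi_1(B^4 \setminus F)$, with both inclusions coinciding by symmetry. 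Combined with $H_1(B^4 \setminus F) \cong \Z$ (Alexander duality, true for any slice surface) and the surjectivity of $\pi_1(S^3 \setminus K) \twoheadrightarrow \Z$ via the meridian, this forces $\pi_1(B^4 \setminus F) \cong \Z$.

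The main obstacle is the geometric verification in the first inequality: exhibiting an explicit handlebody in $S^4$ cobounding the double $2F$ of the glued surface $F = \Sigma \cup D$. This hinges on Chen's construction and on controlling how the handlebody bounded by $2D$ meets a thickening of the doubled Seifert-type surface along the circles coming from $J$. The second inequality is comparatively algebraic, with its only delicate point being the deduction of $\pi_1(B^4 \setminus F) \cong \Z$ from the amalgamated-product formula, which exploits the coincidence of the two inclusions and the meridional surjection.
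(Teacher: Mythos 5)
Your two-inequality structure matches the paper's proof, and your expansion of the easy inequality (superslice $\Rightarrow$ $\Z$-slice) via the symmetric Seifert--van Kampen decomposition $S^4\setminus S = (B^4\setminus F)\cup_{S^3\setminus K}(B^4\setminus F)$ is a valid unpacking of what the paper compresses into one sentence; since the amalgamation is over the \emph{image} of $\pi_1(S^3\setminus K)$ in $\pi_1(B^4\setminus F)$, which is a genuine subgroup, the vertex group embeds into the amalgam $\cong\Z$, and $H_1(B^4\setminus F)\cong\Z$ then forces $\pi_1(B^4\setminus F)\cong\Z$.

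For the other inequality your outline is right in broad strokes but has two inaccuracies that leave a real gap. First, the role of Chen's work is misattributed: the $\Z$-slice disk $D$ for the Alexander-polynomial-$1$ knot $J$ is \emph{automatically} superslice, with $2D$ an unknotted sphere bounding a ball $B_J$ in $S^4$ by~\cite[11.7A]{FreedmanQuinn_90_TopOf4Manifolds}. Chen's actual contribution (\cite[Lemma~3.8]{Chen_18}) is an isotopy of $B_J$, rel boundary, so that its transverse intersection with $S^3$ becomes a stabilization of a prescribed Seifert surface for $J$, together with closed components, meeting $\Sigma$ only in $J$. Second, and more importantly, to have a Seifert surface $F_J$ for $J$ disjoint from $\Sigma$ away from $\partial\Sigma$ --- which is the input to Chen's lemma --- one must first arrange the 3D-cobordism $\Sigma$ to be $H_1$-null, using \Cref{lem:nice3dcobo}. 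Without that step, there is no control over how the 3-ball $B_J$ interacts with a thickening of $\Sigma$, and the assembly $\mathrm{thick}(\Sigma)\cup B_J$ need not be an embedded handlebody with boundary $2F$. You flag this as the ``main obstacle,'' but the $H_1$-null preprocessing is exactly the missing ingredient that resolves it.
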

\begin{proof}
By the Seifert-van Kampen theorem, superslice surfaces are $\Z$--slice surfaces. Hence, the superslice genus of a knot is greater than or equal to its $\Z$--slice genus. We discuss the other inequality.

If a knot $K$ has a $\Z$--slice surface of genus $g$, then by \Cref{thm:main}$(1)\Rightarrow (2)$, we find a
smooth connected oriented surface $C$ of genus $g$ with two boundary components, one of which is $K$ and the other is a knot $J$ with Alexander polynomial~$1$. One may further arrange $C$ to be $H_1$--null (see \cref{lem:nice3dcobo}) and hence one finds a Seifert surface $F_J$ for $J$, i.e.~a connected smooth oriented surface in $S^3$ with boundary $J$,
such that the interior of $F_J$ is disjoint from $C$.
Since the knot $J$ has Alexander polynomial 1, it has a $\Z$--slice disk $D$.

We now follow Chen to see that $C\cup D$ becomes a superslice surface when properly pushed into $B^4$.
For details on the following argument, we refer to his text.
Let $B_J$ be a $3$--ball in $S^4$ that intersects $S^3$ transversely with boundary the $2$--sphere given as the double of $D$ (known to exist by \cite[11.7A]{FreedmanQuinn_90_TopOf4Manifolds}).
Crucially, Chen shows that $B_J$ can be isotoped (fixing its boundary) such that the transverse intersection of $B_J$ with $S^3$ is a surface $F\coloneqq B_J\cap S^3$ consisting of a Seifert surface $F'_J$ for $J$ that is a stabilization of $F_J$ and several closed components such that $C\cap F=J$~\cite[Lemma~3.8]{Chen_18}.
With the isotoped $B_J$, denoted again $B_J$ by abuse of notation, at hand, one can check that $C\cup D$ is (after being properly pushed into $B^4$) a superslice surface of genus $g$. Indeed, let $H$ be the handlebody in $S^4$ given as $\mathrm{thick}(C)\cup B_J$, where $\mathrm{thick}(C)$ is the image of an embedding of $C\times[-\epsilon,\epsilon]\to S^4$ mapping $C\times\{0\}$ to $C$ and $J\times[-\epsilon,\epsilon]$ to $\partial B_J$. The boundary $\partial H$ is the double of (a properly pushed into $B^4$ copy of) $C\cup D$; compare \cite[Proof of Theorem~1.5]{Chen_18}.
\end{proof}
We note the following subtlety in the above proof. One is tempted to show the stronger statement that every $\Z$--slice surface is a superslice surface. Promisingly,
a Seifert-van Kampen calculation reveals that the double $S\subset S^4$ of a $\Z$--slice surface satisfies $\pi_1(S^4\setminus S)\cong\Z$.
However, as far as the authors know, it remains open whether closed surfaces $S$ with $\pi_1(S^4\setminus S)\cong\Z$ are always unknotted.
Of course, if $S$ is a sphere this is known to be a consequence of the disk embedding theorem~\cite[Theorem~11.7A]{FreedmanQuinn_90_TopOf4Manifolds}), and, while this paper was under consideration for publication, Conway-Powell resolved the case of $S$ of genus 3 or larger~\cite{ConwayPowell_20}.
The use of our main theorem (switching to a $\Z$--slice surface given as the union of a $\Z$--slice disk and a surface as in (2)) avoids having to deal with the non-zero genus case.

Next, we discuss applications of \eqref{item:Zslicesurface} $\Leftrightarrow$ \eqref{item:Blanchfield} of \Cref{thm:main}.
As mentioned above, we use \eqref{item:Zslicesurface} $\Rightarrow$ \eqref{item:Blanchfield} and then specializations of \eqref{item:Blanchfield} to give lower bounds for $\gst$; see \Cref{subsecintro:lkf}.
In a different direction, let us make a note of the following heuristic:
if some knot theoretic construction has a well-understood effect on the Blanchfield pairing, then one can use \Cref{thm:main} to better understand its effect on $\gst$. As a specific example, the following inequality for satellite knots, the main result of~\cite{FellerMillerPinzon}, has a rather pleasing and fast proof using \eqref{item:Zslicesurface} $\Leftrightarrow$ \eqref{item:Blanchfield}.
\begin{thm}[{\cite[Theorem~1.2]{FellerMillerPinzon}; see also \cite[Theorem~1.4]{zbMATH07442350}}]\label{thm:FMP}
Every satellite knot $P(K)$ satisfies \[\gst(P(K))\leq \gst(P(U))+\gst(K),\] where $P$ and $K$ denote the pattern knot and companion knot of $P(K)$, respectively.
\end{thm}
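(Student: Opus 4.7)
The plan is to use the Blanchfield characterization $\eqref{item:Zslicesurface} \Leftrightarrow \eqref{item:Blanchfield}$ of \Cref{thm:main} together with a decomposition formula for the Blanchfield pairing under the satellite construction. Set $g_1 \coloneqq \gst(P(U))$, $g_2 \coloneqq \gst(K)$, and let $n$ denote the winding number of $P$ in the solid torus.

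First I would apply $\eqref{item:Zslicesurface} \Rightarrow \eqref{item:Blanchfield}$ to fix Hermitian matrices $A_P(t) \in \Mat_{2g_1}(\Z[t^{\pm 1}])$ and $A_K(t) \in \Mat_{2g_2}(\Z[t^{\pm 1}])$ presenting the Blanchfield pairings of $P(U)$ and $K$, respectively, such that $A_P(1)$ and $A_K(1)$ are integer symmetric matrices of signature zero. The crucial input is then the satellite formula for Blanchfield pairings: up to isometry, $\Bl_{P(K)}$ decomposes as the orthogonal sum of $\Bl_{P(U)}$ and the pullback of $\Bl_K$ along the ring homomorphism $\Z[t^{\pm 1}] \to \Z[t^{\pm 1}]$, $t \mapsto t^n$. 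Granting this, the $2(g_1+g_2)$-size Hermitian block diagonal matrix
\[
A(t) \coloneqq \begin{pmatrix} A_P(t) & 0 \\ 0 & A_K(t^n) \end{pmatrix}
\]
presents $\Bl_{P(K)}$, and $A(1) = A_P(1) \oplus A_K(1)$ has signature zero. Applying $\eqref{item:Blanchfield} \Rightarrow \eqref{item:Zslicesurface}$ to $A(t)$ then yields a $\Z$-slice surface of genus $g_1 + g_2$ for $P(K)$, giving the desired inequality.

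The main obstacle is justifying the satellite formula for the Blanchfield pairing. I would approach this by a Mayer--Vietoris argument on the infinite cyclic cover of $S^3 \setminus P(K)$, decomposed along the torus that separates the pattern region (the solid torus minus $P$) from the companion region (the knot exterior $E_K$). On the companion side, the restricted cover is naturally an $n$-fold cover of the infinite cyclic cover of $E_K$, which realises exactly the base change $t \mapsto t^n$ on the Alexander module; on the pattern side one recovers the infinite cyclic cover of $S^3 \setminus P(U)$. The remaining technical point is to show that these two pieces glue orthogonally at the level of Blanchfield pairings, which reduces to verifying that the first homology of the preimage of the splitting torus is absorbed into the two summands without producing cross-terms. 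A small amount of care is required when $n = 0$, where $A_K(t^n) = A_K(1)$ is a constant unimodular integer matrix (since $\det A_K(1) = \pm\Delta_K(1) = \pm 1$) and thus contributes trivially, consistent with the fact that a winding-zero satellite inherits its Alexander invariants from the pattern alone.
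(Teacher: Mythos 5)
Your argument is exactly the paper's proof: apply the implication from item (1) to item (4) of the main theorem to $P(U)$ and $K$, invoke the satellite formula for the Blanchfield pairing (which the paper simply cites from Livingston and Melvin, Theorem~2, rather than re-deriving it by Mayer--Vietoris as you sketch), observe that the block sum evaluates at $t=1$ to a signature-zero matrix of size $2\bigl(\gst(P(U))+\gst(K)\bigr)$, and conclude via the reverse implication. One small remark: your block matrix $A_P(t)\oplus A_K(t^w)$, with the substitution $t\mapsto t^w$ on the companion factor, is the correct form; the paper writes $A_K(t)\oplus A_P(t^w)$, which inadvertently swaps the two factors, though the typo is harmless since both versions evaluate at $t=1$ to the same pair of signature-zero blocks.
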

We omit precise definitions of the satellite operation and also keep the following proof short (but complete), since \cref{thm:FMP} is not %
 an original result of this text.
\begin{proof}
By applying~\eqref{item:Zslicesurface} $\Rightarrow$ \eqref{item:Blanchfield} to the knots $K$ and $P(U)$, we know that there exist presentation matrices $A_K(t)$ and $A_P(t)$ of $K$ and $P(U)$, respectively, satisfying~\eqref{item:Blanchfield}, i.e.\ $A_K(t)$ and $A_P(t)$ are of size $2\gst(K)$ and $2\gst(P(U))$, respectively, and $\sigma(A_K(1)) = \sigma(A_P(1)) = 0$.
By a result of Livingston and Melvin~\cite[Theorem~2]{LivingstonMelvin85},
the Blanchfield pairing of $P(K)$ is presented by the matrix $A_{P(K)}(t) \coloneqq A_K(t)\oplus A_P(t^w)$, where $w$ denotes the algebraic winding number of the pattern knot $P$. However, $A_{P(K)}(t)$ is a presentation matrix of size $2\gst(K)+2\gst(P(U))$
satisfying~\eqref{item:Blanchfield}, since $\sigma(A_{P(K)}(1)) = \sigma(A_K(1)) + \sigma(A_P(1)) = 0$.
Thus, $\gst(P(K))\leq \gst(P(U))+\gst(K)$ by~\eqref{item:Blanchfield}~$\Rightarrow$~\eqref{item:Zslicesurface}.
\end{proof}

Finally, we discuss two four-dimensional knot invariants, the slice genus and the stabilizing number, showing that their natural $\Z$--analogs coincide.
In~\cite{nagel_conway}, Conway and Nagel introduce the \emph{stabilizing number} $\sn(K)$ of a knot~$K$ with $\Arf$ invariant zero, the minimal $c$ such that a null-homologous locally flat slice disk for $K$ exists in $B^4\# (S^2\times S^2)^{\# c}$, and they show the following. \[\text{\emph{For all knots $K$ with $\Arf(K) = 0$, $\sn(K)\leq\gtop(K)$ \cite{nagel_conway}.}}\]
While the classical Levine-Tristram signature lower bounds on $\gtop$ also hold for~$\sn$, 
Casson-Gordon invariants (which are not classical in the sense that they do not only depend on the $S$-equivalence class of knots) may be employed to show that $\gtop$ and $\sn$ differ substantially \cite{nagel_conway}. As a consequence of a generalization of our main result (stated below as~\Cref{thm:mains2s2}), we find that the corresponding notion of {$\Z$--stabilizing number} is equal to the {$\Z$--slice genus for all knots with $\Arf$ invariant zero. Here, we define the \emph{$\Z$--stabilizing number}, denoted $\snZ(K)$, like $\sn$ with the additional assumption that the slice disk $D$ is a $\Z$--slice disk in $B^4\# (S^2\times S^2)^{\# c}$. The condition that the disk is null-homologous in $B^4\# (S^2\times S^2)^{\# c}$ turns out to be superfluous: it is implied by its complement having fundamental group $\Z$; see \Cref{lem:Zslicesurfacesarenullhomo}.
\begin{corollary}\label{cor:snZ=gZ}
For all knots $K$ with $\Arf(K) = 0$, $\snZ(K)=\gst(K)$.\qed
\end{corollary}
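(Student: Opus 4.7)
The plan to prove \Cref{cor:snZ=gZ} is to match $\snZ(K)$ and $\gst(K)$ via a common Blanchfield-pairing characterization. The key input will be the generalization of \Cref{thm:main} to ambient 4-manifolds other than $B^4$, announced in the introduction as \Cref{thm:mains2s2}. Applied to $X_c\coloneqq B^4\#(S^2\times S^2)^{\#c}$, which is simply-connected with signature $0$ and $b_2=2c$, that generalization should specialize to: the knot $K$ bounds a locally flat $\Z$-slice disk in $X_c$ if and only if the Blanchfield pairing of $K$ admits a Hermitian presentation matrix $A(t)$ of size $2c$ over $\Z[t^{\pm 1}]$ with $\sigma(A(1))=0$. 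This is precisely condition \eqref{item:Blanchfield} of \Cref{thm:main} with $g$ replaced by $c$.

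Granting this, the corollary reduces to direct comparison: both $\snZ(K)$ and $\gst(K)$ become the smallest non-negative integer $c$ for which the Blanchfield pairing of $K$ admits a size-$2c$ Hermitian presentation with signature-zero evaluation at $t=1$. Concretely, a genus-$\gst(K)$ $\Z$-slice surface in $B^4$ yields such a presentation via \Cref{thm:main} $\eqref{item:Zslicesurface}\Rightarrow\eqref{item:Blanchfield}$, which in turn yields a $\Z$-slice disk in $X_{\gst(K)}$ via \Cref{thm:mains2s2}, so $\snZ(K)\le \gst(K)$. The reverse inequality runs the same chain backwards: \Cref{thm:mains2s2} converts a $\Z$-slice disk in $X_{\snZ(K)}$ into a size-$2\snZ(K)$ Blanchfield presentation with $\sigma(A(1))=0$, and then \Cref{thm:main} $\eqref{item:Blanchfield}\Rightarrow\eqref{item:Zslicesurface}$ converts that data into a $\Z$-slice surface of the same genus in $B^4$, so $\gst(K)\le\snZ(K)$.

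The Arf-invariant hypothesis plays only an auxiliary role, namely to guarantee that $\snZ(K)$ is defined in the first place: by \Cref{lem:Zslicesurfacesarenullhomo} every $\Z$-slice disk in $X_c$ is automatically null-homologous, and the standard spin-geometric obstruction to the existence of any null-homologous locally flat slice disk in some $X_c$ is precisely $\Arf(K)$. Under the standing assumption $\Arf(K)=0$, the minimum defining $\snZ(K)$ is therefore taken over a non-empty set, and so $\snZ(K)$ is a finite non-negative integer.

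The main obstacle sits upstream of the bookkeeping above, in formulating and proving \Cref{thm:mains2s2} itself. There one must track how additional hyperbolic summands in the intersection form of the ambient signature-zero simply-connected 4-manifold enlarge the Blanchfield presentation matrix while preserving the signature-zero condition on $A(1)$; the four-step equivalence chain underlying \Cref{thm:main} must be carried through in a form that measures $b_2$ of the ambient manifold rather than just the genus of the surface. Once that technical generalization is in place, the present corollary is the short matching argument above.
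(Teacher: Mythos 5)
Your proposal is correct and follows the paper's own route: \Cref{cor:snZ=gZ} is obtained as the special case $M=S^3$, $h=0$, $c_1=0$ of \Cref{thm:mains2s2}, after which conditions (1') and (4') are exactly the two quantities being compared (via the equivalence $(1)\Leftrightarrow(4)$ of \Cref{thm:main}). One small correction of emphasis: the hypothesis $\Arf(K)=0$ does more than ensure $\snZ$ is defined --- it is precisely what the constraint ``$h+c_1\geq 1$ if $\Arf(K)=1$'' in \Cref{thm:mains2s2} requires for $h=c_1=0$, and it enters the proof of that theorem substantively in the step $(2')\Rightarrow(1')$ via \Cref{lem:curves}(ii) and \Cref{lem:arfadditive}, where one must choose a half basis of even-framed curves on the 3D--cobordism along which to compress; without it, the specialization of \Cref{thm:mains2s2} you state for $X_c=B^4\#(S^2\times S^2)^{\#c}$ would fail in the ``if'' direction.
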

It was surprising to the authors that two different 4-dimensional knot invariants ($\gtop$ and $\sn$) with very subtle behavior, become the same if a regularity assumption (complements have fundamental group $\Z$) is added.
We also note 
that $\gtop(K)=0$ if and only if $\sn(K)=0$, by definition. So, the difference between $\sn$ and $\gtop$ manifests itself only for the ``non-genus zero case''. %

\subsection{Perspective: The $\Z$--slice genus is a balanced algebraic unknotting number}
By \Cref{thm:main},
the $\Z$--slice genus of a knot $K$ can be seen as a variation of the \emph{algebraic unknotting number} $u_a(K)$
introduced by Murakami \cite{Murakami} and studied amongst others by Fogel \cite{fogel}, Saeki \cite{saeki} and by Borodzik and Friedl
\cite{BorodzikFriedl_15_TheUnknottingnumberAndClassInv1,BorodzikFriedl_14_OnTheAlgUnknottingNr}.
Indeed, compare to \eqref{item:unknotting} that
$u_a(K)$ equals the minimal number of crossing changes necessary to convert $K$ into a knot with Alexander polynomial~1.
One could say $u_a(K)$ and $\gst(K)$ are respectively the \emph{unsigned} and \emph{balanced} Gordian distance between $K$ and Alexander polynomial 1 knots.
From this perspective, the inequalities $\gst(K) \leq u_a(K) \leq 2\gst(K)\leq \deg (\Delta_K)$ first proven in \cite{FellerLewark_16} are now quite evident.

Of course, one may also more generally consider \emph{signed} algebraic unknotting, and ask whether for given $p, n \geq 0$, the knot $K$ can be turned into an Alexander polynomial 1 knot by changing $p$ positive and $n$ negative crossings.
This question admits an answer in terms of presentation matrices of the Blanchfield pairing
 \cite{BorodzikFriedl_14_OnTheAlgUnknottingNr} generalizing \eqref{item:Blanchfield},
which we cite and use in this text as \Cref{thm:BoroFriedl}.
However, the balanced setting is of special interest, because there seems to be no analog in the unsigned or signed setting of the
characterizations \eqref{item:Zslicesurface} and \eqref{item:Seifertsurface} of the $\mathbb{Z}$--slice genus
in terms of surfaces in 3-- and 4--space.

\subsectionpdfbookmark{Proof of \Cref{thm:main}}{Proof of Theorem~\ref{thm:main}}\label{subsec:proofofmain}
We provide the structure of the proof of \Cref{thm:main}. The details of the argument will be given in \Cref{sec:4D,sec:3D}.
\begin{proof}
[Proof of \Cref{thm:main}]
$\eqref{item:Zslicesurface} \Rightarrow \eqref{item:Blanchfield}$: This is the content of \Cref{sec:4D}.
This part of the proof is
the generalization of the if-part of \Cref{thm:Alex1<=>Zslice}
from $g = 0$ to arbitrary $g \geq 0$. However, while the case $g = 0$
is a quite straight-forward homology calculation (see \cite[last paragraph of Sec.~1.2]{Freedman_82_ASurgerySequenceInDimFour}),
the case $g\geq 0$ requires a little more work. We give a sketch of the argument.
Given a surface $F$ as in~$\eqref{item:Zslicesurface}$, the Blanchfield pairing appears as
sesquilinear intersection form of the universal covering of a slight modification of $B^4 \setminus F$.
The second homology of that covering is a free $\Z[t^{\pm}]$--module of rank $2g$, which gives a Hermitian presentation matrix
$A(t)$ of the desired size for the Blanchfield pairing. The signature
of $A(1)$ may be calculated by the Novikov-Wall non-additivity theorem.

$\eqref{item:Blanchfield} \Rightarrow \eqref{item:unknotting}$:
A result of Borodzik and Friedl implies $\eqref{item:Blanchfield}\Rightarrow \eqref{item:unknotting}$ under the added hypothesis
that $A(1)$ be congruent to a diagonal matrix \cite[Thm.~5.1]{BorodzikFriedl_14_OnTheAlgUnknottingNr}.
We show that this hypothesis is not necessary. The details are provided in \Cref{subsec:(4)=>(3)}.

$\eqref{item:unknotting} \Rightarrow \eqref{item:Seifertsurface}$: The Seifert surface may
be constructed from the crossing changes in an explicit and geometric way; see \Cref{subsec:(3)=>(2)}.

$\eqref{item:Seifertsurface}\Rightarrow\eqref{item:Zslicesurface}$: This is known to be a consequence of the only-if-part of \Cref{thm:Alex1<=>Zslice}. %
We recall the brief argument. The surface $F$ as in \eqref{item:Zslicesurface} is found by taking the union of the Seifert surface in $S^3$ with two boundary components as in~\eqref{item:Seifertsurface} with a locally flat disk $D$ with boundary the Alexander polynomial 1 component as described in \Cref{thm:Alex1<=>Zslice}. After pushing the interior of $F$ into the interior of $B^4$, it remains to check that $\pi_1(B^4\setminus F)\cong \Z$; see \cite[Proof of Claim~20]{FellerLewark_16} for details of how this can be done.

Alternatively, using the setup up from \Cref{subsec:galg}, we note that $\eqref{item:Seifertsurface}{\Leftrightarrow}\eqref{item:ga}$ by \cite[Proposition~17]{FellerLewark_16} and $\eqref{item:ga}{\Rightarrow}\eqref{item:Zslicesurface}$ by \cite[Theorem~1]{FellerLewark_16}; however, we prefer the above direct argument since it makes it clear that the Seifert matrix arguments from \cite{FellerLewark_16} are not needed.
\end{proof}

\subsection{Attempts at variations of the main theorem}
The reader might be tempted to vary the main theorem
by replacing the condition of `Alexander polynomial 1'.
Such attempts will probably succeed to produce variations (1'), (2'), (3') of conditions (1), (2), (3),
and prove the implications (3') $\Rightarrow$ (2') $\Rightarrow$ (1'),
but will probably fail to produce a fitting analog to condition (4), or prove the equivalence of (1'), (2'), (3').

For example, consider the balanced Gordian distance between $K$ and the set of smoothly slice knots;
this knot invariant was introduced by Livingston~\cite{MR1936979}, who denoted it by $U_s(K)$.
As in the proof of \Cref{thm:main}, one may prove that $U_s(K)$ is an upper bound for
the 3D--cobordism distance of $K$ to a smoothly slice knot, which is in turn an upper bound
for the smooth slice genus $\gs$ of $K$. Indeed, the resulting inequality $U_s(K) \geq \gs(K)$ was one
of Livingston's motivations to study $U_s(K)$.
However, as shown by Owens~\cite{MR2592947}, there exist knots $K$ with $U_s(K) \neq \gs(K)$.

We invite the reader to try further variations, e.g.~replacing the condition of `Alexander polynomial 1' by
`topologically slice', `smoothly $\Z$--slice' or `trivial'.
We have found no case where the equivalence of the resulting conditions (1'), (2'), (3') seems plausible for $g > 0$.

\subsection{A generalization of the main theorem to integral homology spheres and $\mathbb{Z}$--slice surfaces in other four-manifolds
}\label{subsec:genmainthm}

So far, we have dealt with knots in $S^3$. However, the only property of $S^3$ we ever use is its homology,
and so \cref{thm:main} and its proof generalize with minimal changes to all integer homology spheres $M$. By the work of Freedman
\cite{Freedman_82_TheTopOfFour-dimensionalManifolds,FreedmanQuinn_90_TopOf4Manifolds}, there is a unique contractible four-manifold~$B$ with boundary $M$, which is the natural ambient space in which to consider $\mathbb{Z}$--slice surfaces.
Perhaps more exciting than the transition from $S^3$ to an integral homology sphere $M$ is to consider $\mathbb{Z}$--slice surfaces in four-manifolds $V$ with boundary $M$ that are different from $B$. This was motivated by~\Cref{cor:snZ=gZ}, which one gets as an application. We note that it turns out that $\mathbb{Z}$--slice surfaces in the four-manifolds we consider are automatically null-homologous; see \Cref{lem:Zslicesurfacesarenullhomo}. This is why we do not add a null-homologous assumption below (in contrast to the definition of $\sn$ above). 

We describe a version of \Cref{thm:main} that incorporates all of this. 
Since this will be a bit lengthy to express, we invite the reader to first parse the statements that follow for the cases $M=S^3$, $h=0$, and either $c_1=0$ or $c_2=0$.

\begin{thm}\label{thm:mains2s2}
Let $K$ be a knot in an integral homology sphere $M$.
Let $h$ and~$c_2$ be non-negative integers and $c_1\in\{0,1\}$, such that $h+c_1\geq1$ if $\Arf(K) = 1$.
Set $g =h + c_1+c_2$. Then the following are equivalent:
\begin{enumerate}[itemsep=1ex]
  \item[(1')] There exists an oriented compact surface $F$ of genus $h$
properly embedded and locally flat in $V = B \# (\mathbb{C}P^2\#\overline{\mathbb{C}P^2})^{\#c_1}\# (S^2\times S^2)^{\#c_2}$
with boundary $K\subseteq M=\partial V$ such that $\pi_1(V\setminus F)\cong \Z$.
Here, $B$ denotes the unique contractible topological four-manifold with $\partial B = M$.
  \item[(2')] There exists a smooth oriented compact genus $g$ surface in $M$ with two boundary components, %
one of which is the knot $K$ and the other %
a knot with Alexander polynomial~$1$.
  \item[(3')] The knot $K$ can be turned into a knot with Alexander polynomial $1$ by changing $g$ positive and $g$ negative crossings.
  \item[(4')] The Blanchfield pairing of $K$ can be presented by a Hermitian matrix $A(t)$ of size
 $2g$ over $\Z[t^{\pm1}]$ such that the integral symmetric matrix $A(1)$ has signature zero.
\end{enumerate}

\end{thm}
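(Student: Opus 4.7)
The plan is to generalize the cycle of implications $(1)\Rightarrow(4)\Rightarrow(3)\Rightarrow(2)\Rightarrow(1)$ of the proof of \Cref{thm:main}, accommodating at each step the two new features: a general integer homology sphere $M$ in place of $S^3$, and the stabilizations $\mathbb{C}P^2\#\overline{\mathbb{C}P^2}$ and $S^2\times S^2$. The implications $(4')\Rightarrow(3')$ (\Cref{subsec:(4)=>(3)}) and $(3')\Rightarrow(2')$ (\Cref{subsec:(3)=>(2)}) of the main theorem use the ambient three-manifold only through its integral homology, hence they extend verbatim to $M$. The two substantive steps to adapt are $(1')\Rightarrow(4')$ and $(2')\Rightarrow(1')$.

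For $(1')\Rightarrow(4')$ I would generalize \Cref{sec:4D}. Given a locally flat genus-$h$ surface $F\subset V$ with $\pi_1(V\setminus F)\cong\Z$, one forms a modification $W$ of $V\setminus\nu(F)$ and studies its infinite cyclic cover $\widetilde{W}$; the Blanchfield pairing of $K$ then appears as the $\Z[t^{\pm1}]$-valued sesquilinear intersection form on $H_2(\widetilde{W};\Z[t^{\pm1}])$. The rank count adapts as follows: each simply connected stabilization summand lifts trivially to $\widetilde{W}$ and contributes a free rank-$2$ summand in addition to the rank-$2h$ contribution of $F$, bringing the total to $2h+2c_1+2c_2=2g$ and producing a Hermitian presentation matrix $A(t)$ of the required size. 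For the signature of $A(1)$, the Novikov--Wall non-additivity theorem (as in the original proof) equates it with $\sigma(V) = \sigma(B)+c_1\sigma(\mathbb{C}P^2\#\overline{\mathbb{C}P^2})+c_2\sigma(S^2\times S^2) = 0$, since $B$ is contractible and each stabilization summand has signature zero.

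For $(2')\Rightarrow(1')$, starting with the genus-$g$ cobordism $C\subset M$ between $K$ and a knot $J$ with $\Delta_J=1$, I would glue $C$ along $J$ to a $\mathbb{Z}$-slice disk for $J$ in $B$ furnished by \Cref{thm:Alex1<=>Zslice}, and push the interior into $V$ to obtain a genus-$g$ $\mathbb{Z}$-slice surface $F_0\subset B\subset V$. It remains to reduce the genus by $c_1+c_2$ units using the stabilization summands. Since $\pi_1(V\setminus F_0)=\Z$, every essential simple closed curve $\gamma\subset F_0$ disjoint from $\partial F_0$ has linking number zero with $K$ and hence bounds a (possibly immersed) compression disk in $V\setminus F_0$; each $S^2\times S^2$ summand allows a Whitney-type move converting such an immersed disk into an embedded one, whose surgery lowers the genus of $F_0$ by one while preserving $\pi_1=\Z$ by a Seifert--van Kampen calculation. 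The $\mathbb{C}P^2\#\overline{\mathbb{C}P^2}$ summand, when $c_1=1$, plays the analogous role and is what accommodates the Arf obstruction; the hypothesis $h+c_1\geq1$ whenever $\Arf(K)=1$ reflects the classical fact that a null-homologous topological slice disk in $B\#(S^2\times S^2)^{\#c_2}$ forces $\Arf(K)=0$.

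The principal obstacle is the $(2')\Rightarrow(1')$ step: one must maintain the $\pi_1=\Z$ condition on the complement throughout the sequence of genus-reducing surgeries, which requires care that the meridians of the newly introduced tubes identify with that of $K$ and do not create new $\pi_1$-generators. A secondary technical point is bookkeeping the Novikov--Wall contributions in $(1')\Rightarrow(4')$ across the boundary pieces introduced by removing $\nu(F)$ from the stabilized manifold, especially to confirm that the null-homologous character of $F$ (\Cref{lem:Zslicesurfacesarenullhomo}) delivers the signature calculation cleanly.
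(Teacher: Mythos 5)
Your high-level road map is correct: the three implications $(4')\Rightarrow(3')$, $(3')\Rightarrow(2')$, and $(1')\Rightarrow(4')$ are indeed straightforward adaptations (the first two use only the homology of the ambient three-manifold, the third follows the argument of \Cref{sec:4D} using $\sigma(V)=0$, $\pi_1(V)=1$, the null-homology of $F$ from \Cref{lem:Zslicesurfacesarenullhomo}, and the fact that \Cref{thm:BF:W} holds for knots in any $\Z HS^3$), and these match what the paper does.

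The step that carries the real content is $(2')\Rightarrow(1')$, and here your approach diverges from the paper's in a way that leaves genuine gaps. The paper does \emph{not} start from a genus-$g$ surface $F_0 \subset B$ and then try to compress it inside a fixed $V$. Instead it chooses, on the three-dimensional cobordism $\Sigma\subset M$, a half-basis of $c=c_1+c_2$ disjoint simple closed curves with prescribed framing parities ($c_1$ odd, $c_2$ even---arranged using \Cref{lem:curves} and \Cref{lem:arfadditive}, which is exactly where the $\Arf$ hypothesis enters), performs ambient $0$-surgery on $B$ along the push-ins of these curves, and reads off from the framings which summand ($S^2\times S^2$ for even, $\mathbb{C}P^2\#\overline{\mathbb{C}P^2}$ for odd) each surgery contributes, by computing the new intersection form. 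The surface is compressed \emph{simultaneously} with the surgery, so the compressing disk is the co-core of the surgery handle; there is no need to produce an embedded disk in a pre-existing $V$.

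Your alternative---produce an immersed null-homotopy of a curve on $F_0$ in $V\setminus F_0$ and then use the stabilizing summands to upgrade it to an embedded compressing disk---is a sensible strategy in outline, but three points are genuinely missing. First, the upgrade is not a ``Whitney-type move''; Whitney moves cancel pairs of intersections of two surfaces, whereas here one removes self-intersections of a single disk (and possibly its intersections with $F_0$) by tubing into the dual spheres of the $S^2\times S^2$ and $\mathbb{C}P^2$ factors (the Norman trick). Second, and more seriously, whether an $S^2\times S^2$ or a $\mathbb{C}P^2\#\overline{\mathbb{C}P^2}$ factor is needed is governed by the framing of the compressed curve modulo $2$: the self-intersection number of a normally framed immersed null-homotopy of $\gamma$ relative to the surface framing equals the $F_0$-framing of $\gamma$, so even-framed curves are compressible after $S^2\times S^2$-stabilization while odd-framed ones require the signature-changing $\mathbb{C}P^2\#\overline{\mathbb{C}P^2}$. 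Your sketch neither arranges the framings (the paper does so via \Cref{lem:curves}, which needs $\Arf(K)=0$ precisely when $h=c_1=0$) nor checks that the distribution of parities among the $c$ compressed curves matches the given $(c_1,c_2)$. Third, your preservation-of-$\pi_1$ claim is asserted, not argued; in the paper this is where the $H_1$-null arrangement of $\Sigma$ (from \Cref{lem:nice3dcobo}) is indispensable, since it guarantees that the curves being surgered are null-homologous in $M\setminus J$ and hence that the Seifert--van Kampen quotient of $\pi_1(B\setminus F')\cong\Z$ is again $\Z$. Without arranging $H_1$-nullity, or an explicit substitute, this step does not go through.
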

Note that (2'), (3'), (4') of \Cref{thm:mains2s2} are identical to (2), (3), (4) of \cref{thm:main} (with $S^3$ replaced by $M$).
Also recall that $\mathbb{C}P^2\#S^2\times S^2=\mathbb{C}P^2\#\mathbb{C}P^2\#\overline{\mathbb{C}P^2}$,
which is why $c_1$ is restricted to $\{0,1\}$.
The proof of \Cref{thm:mains2s2} will be given in detail in \Cref{sec:s2s2}.
It follows the same outline as the proof of \Cref{thm:main} given in \Cref{subsec:proofofmain}.

By now, the eager reader has spotted that the special case $M = S^3$, $h = 0$, $c_1 = 0$ of \Cref{thm:mains2s2} yields \Cref{cor:snZ=gZ}, %
while the special case $M = S^3$, $h = 0$, $c_2 = 0$ takes the following form:
\begin{corollary}\label{cor:snZ=gZandCP2}
Let $K\subset S^3$ be a knot. Then the $\mathbb{Z}$--slice genus $\gst(K)$ of $K$ equals the smallest integer $c\geq 0$ such that a $\mathbb{Z}$--slice disk for $K$ exists in $B^4\# (\mathbb{C}P^2\#\overline{\mathbb{C}P^2})^{\# c}$.\qed
\end{corollary}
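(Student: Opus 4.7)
The plan is to apply \Cref{thm:mains2s2} to two different decompositions of the ambient 4--manifold, showing that both $\gst(K)$ and
\[
d(K) \coloneqq \min\{c\geq 0 : K \text{ bounds a } \Z\text{--slice disk in } B^4\#(\mathbb{C}P^2\#\overline{\mathbb{C}P^2})^{\#c}\}
\]
are characterized by the existence of a Hermitian presentation matrix of the Blanchfield pairing of size $2c$ with vanishing signature at $t = 1$, i.e.\ condition (4') of \Cref{thm:mains2s2} with $g = c$.

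The first application uses $M = S^3$, $h = c$, $c_1 = c_2 = 0$, which specializes $V$ to $B^4$ and gives that $\gst(K)\leq c$ is equivalent to (4') with $g = c$. The $\Arf$ hypothesis $h + c_1 \geq 1$ poses no obstacle: if $\Arf(K) = 1$ then $\gst(K)\geq 1$, since (4') with $g = 0$ demands a trivial Blanchfield pairing and in particular trivial Arf invariant, so only $c\geq 1$ needs to be considered.

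The second application uses $h = 0$, $c_1 = 1$, $c_2 = c - 1$ for $c\geq 1$, yielding $g = c$, $h + c_1 = 1$, and $V = B^4\#(\mathbb{C}P^2\#\overline{\mathbb{C}P^2})\#(S^2\times S^2)^{\#(c-1)}$. The crucial geometric step is the 4--manifold identification
\[
B^4\#(\mathbb{C}P^2\#\overline{\mathbb{C}P^2})\#(S^2\times S^2)^{\#(c-1)} \;\cong\; B^4\#(\mathbb{C}P^2\#\overline{\mathbb{C}P^2})^{\#c},
\]
which follows by iterating the classical identity $\mathbb{C}P^2\#(S^2\times S^2) \cong \mathbb{C}P^2\#\mathbb{C}P^2\#\overline{\mathbb{C}P^2}$; indeed this is precisely the phenomenon that allows $c_1$ to be restricted to $\{0, 1\}$ in \Cref{thm:mains2s2}. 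Combined with (1')$\Leftrightarrow$(4'), this identification converts the theorem's statement into $d(K)\leq c$ if and only if (4') holds with $g = c$.

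The case $c = 0$ is handled directly: both $\gst(K) = 0$ and $d(K) = 0$ are equivalent to $K$ having Alexander polynomial $1$ by \Cref{thm:Alex1<=>Zslice}. Putting the pieces together, $\gst(K)\leq c \iff d(K)\leq c$ for every $c\geq 0$, so the two invariants coincide. No serious obstacle is anticipated: once \Cref{thm:mains2s2} is granted, the argument is essentially bookkeeping, and the only substantive additional input is the standard absorption identity for $S^2\times S^2$ summands in the presence of a $\mathbb{C}P^2\#\overline{\mathbb{C}P^2}$ factor.
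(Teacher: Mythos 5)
Your argument is correct, and it is the careful version of what the paper intends. The paper derives this corollary by calling it ``the special case $M = S^3$, $h = 0$, $c_2 = 0$'' of \Cref{thm:mains2s2}, but taken literally that only covers $c \in \{0,1\}$, since the theorem restricts $c_1$ to $\{0,1\}$. You have correctly identified that for $c \geq 1$ the right specialization is $h=0$, $c_1=1$, $c_2 = c-1$, combined with the absorption identity $(\mathbb{C}P^2\#\overline{\mathbb{C}P^2})\#(S^2\times S^2)^{\#(c-1)}\cong(\mathbb{C}P^2\#\overline{\mathbb{C}P^2})^{\#c}$ --- precisely the identity the paper records right after \Cref{thm:mains2s2} as the reason $c_1$ is capped at $1$. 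Your handling of the $\Arf(K)=1$ case (ruling out $c=0$ on both sides, and noting $h+c_1\ge 1$ holds automatically when $c_1=1$) is also right, and somewhat more explicit than what the paper leaves to the reader. In short: same route as the paper, but with the bookkeeping actually done.
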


\subsection{Linking forms of cyclic branched coverings%
}\label{subsecintro:lkf}

In a second part of this paper (\Cref{sec:Comp}), we use our new characterization of $\gst$ given in \Cref{thm:main}\eqref{item:Blanchfield} to provide a criterion to obstruct knots from having $\gst\leq1$. We summarize what we obtain.

The Blanchfield specializes (essentially by setting $t=-1$) to the linking pairing $\lkh\colon  H_1(\Sigma_2(K); \Z)\times H_1(\Sigma_2(K); \Z)\to \Q/\Z$
on the first integral homology group of the double branched covering $\Sigma_2(K)$ of $S^3$ along $K$.
Using \Cref{thm:main}\eqref{item:Blanchfield}, we show that $\gst(K)\leq 1$ implies that $\lkh$ admits a $2\times 2$ presentation
matrix with determinant equal to $-1$ modulo $4$; see \Cref{prop:doublepresmat}.
Since $H_1(\Sigma_2(K);\Z)$ is of odd order, the following proposition about pairings on abelian groups of odd order provides a testable criterion whether $\lkh$ has such a $2\times2$ presentation matrix.

\begin{restatable}{prop}{propjacobi}
\label{prop:jacobi}
Let $A$ be an abelian group of odd order with at most two generators,
equipped with a non-degenerate symmetric pairing $\ell\colon A \times A \to \Q/\Z$.
Then $(A,\ell)$ decomposes as an orthogonal sum of two subgroups generated cyclically by $g_1, g_2$,
which are of respective order $q_1$ and $q_2$ with $q_1 | q_2$ with odd $q_1, q_2\geq 1$.
Let $\frac{a_i}{q_i} = \ell(g_i, g_i)$.
Then $a_i$ and $q_i$ are coprime, and for a given $u \in \{-1,1\}$, the following statements (A) and (B) are equivalent:
\begin{enumerate}[leftmargin=2em]
\item[(A)]\label{item:A} $\ell$ can be presented by an odd symmetric $2\times 2$ integer matrix~$M$ with $\det M \equiv u \pmod{4}$.
\item[(B)]\label{item:B} $a_1, a_2, q_1, q_2, u$ satisfy both of the following two conditions.\smallskip
\begin{enumerate}[leftmargin=2em]
\item[(B1)] $(-1)^{(q_1q_2 - u)/2}a_1a_2$ is a quadratic residue modulo $q_1$,
 \label{cond:2}
\item[(B2)] $u = 1$, or $q_1q_2 \equiv 3\pmod{4}$, or the Jacobi symbol $\legendre{a_2}{q_2/q_1}$ equals $1$.\label{cond:3}%
\end{enumerate}%
\end{enumerate}%
Note that $a_1, a_2 \in \Z$ are not uniquely determined. However, whether statement (B) holds does not depend on the choice of $a_1, a_2$.
\end{restatable}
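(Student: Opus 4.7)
The plan is to work directly with the possible $2\times 2$ symmetric integer matrices presenting $\ell$, extracting the invariants of $\ell$ from the entries of such a matrix via explicit formulas. The key structural input is the classification of non-degenerate symmetric pairings on finite Abelian groups of odd order (Wall~\cite{MR0156890}): in our setting, $\ell$ is determined up to isometry by the orthogonal decomposition $A \cong \Z/q_1 \oplus \Z/q_2$ together with the square classes of $a_1$ and $a_2$ in $(\Z/q_1)^{*}$ and $(\Z/q_2)^{*}$. Writing $M = \begin{pmatrix} \alpha & \beta \\ \beta & \gamma \end{pmatrix}$, one immediately observes that $\gcd(\alpha,\beta,\gamma) = q_1$ (the first invariant factor) and $|\det M| = q_1 q_2$; in particular $M = q_1 N$ for a symmetric integer matrix $N$ whose entries have $\gcd$ equal to $1$ and with $|\det N| = q_2/q_1$, and $M^{-1} = \mathrm{Adj}(N)/(q_1 \det N)$.

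For the forward direction (A)$\Rightarrow$(B), I would exhibit explicit integer-vector representatives for generators $g_1, g_2$ of the two cyclic summands of $\coker M \cong \Z/q_1 \oplus \Z/q_2$ and then evaluate $\ell(g_i, g_i) = g_i^{\top} M^{-1} g_i \bmod \Z$ directly. The resulting formulas express $a_1 a_2$ modulo squares in $(\Z/q_1)^{*}$ in terms of $\det N$, hence in terms of $\det M$. One then uses the elementary identity $\mathrm{sign}(\det M) = (-1)^{(q_1 q_2 - u)/2}$---an immediate consequence of $\det M = \pm q_1 q_2$ together with $\det M \equiv u \pmod 4$ and the oddness of $q_1 q_2$---to rewrite the discriminant condition in the form~(B1). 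For (B2), I would examine the situation at primes $p$ dividing $q_2/q_1$ but not $q_1$: at such primes the cyclic summand $\Z/q_2$ of $A$ localizes to a single cyclic $p$-group whose discriminant is controlled by the Jacobi symbol $\legendre{a_2}{q_2/q_1}$, and a short case analysis shows that the resulting constraint is vacuous precisely when $u = 1$ or $q_1 q_2 \equiv 3 \pmod 4$.

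For the converse (B)$\Rightarrow$(A), the plan is to construct $M$ by an explicit ansatz. A natural choice is $M = \begin{pmatrix} q_1 x & q_1 \\ q_1 & y \end{pmatrix}$, which automatically has $\gcd$ of entries equal to $q_1$ provided $\gcd(y, q_1) = 1$, and satisfies $\det M = q_1 (xy - q_1)$; the parameters $x, y$ are then chosen in suitable congruence classes modulo $q_1$ and modulo $q_2/q_1$ to realize the prescribed square classes, while their residues modulo $4$ are used to fix both the sign of $\det M$ and the oddness of $M$. Dirichlet's theorem on primes in arithmetic progressions promotes congruence solutions to actual integers satisfying any additional coprimality needed. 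The main obstacle I expect is the simultaneous arrangement of the $2$-adic information ($\det M \bmod 4$ together with oddness of $M$) with the odd-primary discriminant conditions: the $2$-adic side is genuinely subtle because $|A|$ is odd, yet $\det M$ is a signed integer, and condition (B2) is precisely what makes the simultaneous choice feasible. Quadratic reciprocity enters when translating between the Jacobi symbol in (B2) and the square-class condition modulo~$q_1$ appearing in (B1), since $a_2$ naturally lives in $(\Z/q_2)^{*}$ rather than in $(\Z/q_1)^{*}$.
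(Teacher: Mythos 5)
Your overall strategy does follow the paper's in its essential ingredients: Wall's classification, the observation that a presenting matrix $M$ factors as $q_1$ times a primitive symmetric integer matrix $N$, explicit evaluation of $\ell_M$ on generators of the cokernel, the sign identity $\det M = (-1)^{(q_1q_2-u)/2}q_1q_2$, Dirichlet's theorem, and quadratic reciprocity. The paper organizes both implications through an explicitly named intermediate condition~(C), asserting the existence of integers $\alpha,\beta,\gamma,\lambda_1,\lambda_2$ with $\alpha$ odd, $\alpha\gamma-\beta^2=(-1)^{(q_1q_2-u)/2}q_2/q_1$, $\lambda_1^2\alpha\equiv a_1\pmod{q_1}$, $\lambda_2^2\alpha\equiv(-1)^{(q_1q_2-u)/2}a_2\pmod{q_2}$, and proves $(A)\Leftrightarrow(C)\Leftrightarrow(B)$; you argue directly between (A) and (B), but the underlying computations are essentially the same.

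There is, however, a genuine gap in your ansatz for $(B)\Rightarrow(A)$. The matrix $M=\begin{pmatrix}q_1x & q_1\\ q_1 & y\end{pmatrix}$ with $\gcd(y,q_1)=1$ has $\gcd$ of entries equal to $\gcd(q_1,y)=1$, not $q_1$; its cokernel is therefore cyclic, and so it cannot present $\ell$ on $\Z/q_1\oplus\Z/q_2$ when $q_1>1$. Even if you correct the ansatz to $M=q_1\begin{pmatrix}x & 1\\ 1 & y'\end{pmatrix}$, fixing the off-diagonal entry is too rigid: the constraint $|\det M|=q_1q_2$ then forces $xy'=1\pm q_2/q_1$, a fixed integer, so $x$ cannot be produced by Dirichlet in an arbitrary congruence class. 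The off-diagonal entry must remain a free parameter. In the paper's $(B)\Rightarrow(C)$ step, $\alpha$ is set to a signed Dirichlet prime $\sigma_1 p$ (with $\sigma_1,\sigma_2\in\{\pm1\}$ controlling the sign and the residue of $p$ modulo $4$), and one then solves $\alpha\gamma-\beta^2=(-1)^{(q_1q_2-u)/2}q_2/q_1$ for $\beta,\gamma$; solvability is equivalent to $\legendre{(-1)^{1+(q_1q_2-u)/2}q_2/q_1}{p}=1$, and the two free signs $\sigma_1,\sigma_2$ suffice to force this precisely when~(B2) holds. Your sketch of the $(B2)$ argument via localization at primes dividing $q_2/q_1$ is also vaguer than the paper's direct reciprocity computation from $\alpha\gamma-\beta^2=-q_2/q_1$, and I cannot verify from the sketch that it closes.
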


Note that criterion (B) is easy to check for a given pairing.
While we find this to be of theoretical interest, \Cref{prop:jacobi} also allows to complete the calculation of the $\Z$--slice genus for all prime knots of crossing number up to~12; see \Cref{subsec:comp}.
\Cref{prop:jacobi} can also be applied to simplify \cite[Lemma~5.2]{BorodzikFriedl_15_TheUnknottingnumberAndClassInv1}, which can be used to show that a knot has algebraic unknotting number at least 3, and which was our inspiration to implement obstructions for $\Z$--slice genus (the balanced version of the algebraic unknotting number).
The proof of \Cref{prop:jacobi} uses Wall's classification \cite{MR0156890}, quadratic reciprocity and (for the proof of (B)$\Rightarrow$(A)) Dirichlet's prime number theorem, but is elementary apart from that.

In contrast to the effectiveness in obstructing $\Z$--slice genus using the linking pairing of the double-branched covering, we have \Cref{thm:allisometric}: for odd primes $p$, the linking pairing of the $p$--fold branched covering does not provide any additional information to the $\Z[\Z/p]$--module structure of its first homology.
\Cref{thm:allisometric} partially explains a disappointing finding of Borodzik and Friedl in \cite{BorodzikFriedl_15_TheUnknottingnumberAndClassInv1}: the implementation of their obstruction \cite[Lemma~5.1(2)]{BorodzikFriedl_15_TheUnknottingnumberAndClassInv1} for $p>2$ failed to give bounds for the algebraic unknotting number of small knots that were better than the bounds given by the Nakanishi index and Levine-Tristram signatures. %

We establish \Cref{thm:allisometric} by proving a more general statement about linking forms on modules over Dedekind domains with an involution; see \Cref{subsec:higher}.

\subsection{Structure of the paper}
\Cref{sec:prel} gives the essential definitions and fixes notation concerning the Alexander module and the Blanchfield pairing.
\Cref{sec:4D,sec:3D} contain the 4--dimensional and 3--dimensional part of the proof of \Cref{thm:main}, respectively.
In \Cref{sec:s2s2}, \Cref{thm:mains2s2} is proven.
\Cref{sec:Comp} is devoted to linking pairings of branched coverings and calculations of $\gst$.
It contains the proof of \cref{thm:allisometric}.
\Cref{app} provides background on Hermitian pairings.

\subsection*{Acknowledgments} The first author thanks Matthias Nagel, Patrick Orson, and Mark Powell for a fun Blanchfield pairing calculation session. %
Thanks to Filip Misev for drawing \cref{fig:333} together with the second author. %
Thanks to Maciej Borodzik, Anthony Conway, Jim Davis, Chuck Livingston, Duncan McCoy, and Matthias Nagel for comments on a first version of this paper.
The first author gratefully acknowledges support by the Swiss National Science Foundation Grant 181199.
The second author was supported by the Emmy Noether Programme of the DFG, project no.~412851057.
The authors thank the anonymous referee for their careful reading of the text.
\section{Preliminaries%
}\label{sec:prel}
In this section, we collect definitions of and known facts about links and Seifert surfaces, Hermitian pairings and their presentation matrices,
the Alexander module, and the Blanchfield pairing.

\subsection{Matrix presentations of pairings on torsion modules}\label{subsec:pairings}
Let $R$ denote a commutative unital ring with an involution, denoted by $r\mapsto\overline{r}$.
Some examples are the integers $\Z$ with the identity as involution, the ring $\Lambda\coloneqq \Z[t^{\pm1}]$ of Laurent polynomials with integer coefficients and involution given by $f(t)\mapsto f(t^{-1})$, and, for all integers $n\geq 2$, the rings $\Lambda/(t^n-1)$ and $\Lambda/(\Phi_n)$ (where $\Phi_n$ denotes the $n$--th cyclotomic polynomial) with the involution induced from $\Lambda$. We denote by $Q(R)$ the \emph{total quotient ring}---the localization $S^{-1}R$ of $R$ with respect to $S\coloneqq R\setminus \{\text{zero-divisors of } R\}$. Of course, for integral domains%
, $Q(R)$ is simply the field of fractions; e.g.~$\Q$ and $\Q(t)$ for $\Z$ and $\Lambda$, respectively.

An $R$--module $M$ is called \emph{torsion} if it is annihilated by some non-zero-divisor of~$R$.
A \emph{Hermitian pairing} on such an $M$ is an
$R$--sesquilinear map (anti-linear with respect to $\overline{\,\cdot\,}$ in the first factor) $\ell\colon M\times M\to Q(R)/R$ such that $\ell(y,x)=\overline{\ell(x,y)}$ for all $x$ and $y$ in $M$.
Such a pairing is called \emph{non-degenerate} if for all $x\in M$ there is a $y\in M$ with $\ell(x,y) \neq 0$.

A Hermitian square matrix $A\in \Mat_{n\times n}(R)$
whose determinant is not a zero divisor
defines a non-degenerate Hermitian pairing on the cokernel of $A$ as follows:
\[\ell_A\colon R^n/AR^n\times R^n/AR^n\to Q(R)/R,\quad (x,y)\mapsto \overline{x}^{\top}A^{-1}y + R.\]

A Hermitian square matrix $A\in \Mat_{n\times n}(R)$ with non-zero-divisor determinant is said to \emph{present} a Hermitian pairing $\ell\colon M\times M\to Q(R)/R$ if $\ell$ is isometric to $\ell_A$, i.e.~there exists an $R$--module isomorphism $\phi\colon M\to R^n/AR^n$ such that $\ell_A(\phi(x),\phi(y))=\ell(x,y)$ for all $x$ and $y$ in $M$.

See the appendix for a different perspective on Hermitian pairings, and a base change proposition.

\subsection{Links, Seifert surfaces, and 3D--cobordism} Fix a $3$--manifold $M$.
A \emph{link} is a smooth oriented non-empty closed $1$--submanifold of $M$; a connected link is a \emph{knot}. A \emph{Seifert surface} is a smooth oriented connected compact $2$--submanifold with non-empty boundary. A Seifert surface for a link $L$ is a Seifert surface with boundary $L$.
A \emph{3D--cobordism} between two knots $K$ and $J$ is a Seifert surface with boundary a $2$--component link, one component of which is isotopic to $K$, and the other to $J$ with reversed orientation. In previous work, the authors considered 3D--cobordisms between multi-component links \cite{FellerLewark_16}, but this will not be needed here.

\subsection{Twisted homology}\label{subs:twist}
Let $X$ be a space admitting a universal covering.
A surjective group homomorphism $\varphi\colon \pi_1(X)\to G$ to some group $G$ (we consider only abelian $G$)
yields a notion of twisted homology.
For this, take the $\ker(\varphi)$--covering $Y$ of~$X$ and use the deck transformation group action by $G$ to endow the singular chain complex $C_*(Y; \Z)$ with a $\Z[G]$--module structure, resulting in the twisted chain complex~$C_*(X; \Z[G])$.
Define the twisted homology $H_*(X;\Z[G])$ to be the homology of $C_*(X; \Z[G])$, and the twisted cohomology $H^*(X; \Z[G])$ to be homology of the cochain complex given by \[C^*(X; \Z[G]) = \Hom_{\Z[G]}\left(\overline{C_*(X; \Z[G])}, \Z[G]\right).\]
More details may be found e.g.~in~\cite{FNOP}.

\subsection{Alexander module}

Let $K$ be a knot in $S^3$.
The abelianization $\pi_1(S^3\setminus K) \to \Z$ induces a covering space,
the \emph{infinite cyclic covering}, which we denote by~${S^3\setminus K}^{\cyc}$. The first homology $H_*({S^3\setminus K}^{\cyc};\Z)$ becomes a $\Z[\Z]$-module using the deck transformation group action and can be canonically identified with the twisted homology  with respect to the abelianization $H_*({S^3\setminus K};\Lambda)$
(where we identify the group ring $\Z[\Z]$ with $\Lambda$).
The \emph{Alexander polynomial} $\Delta_K\in \Lambda$ of $K$ is usually defined (as we also did in the introduction)
as the order of $H_1({S^3\setminus K};\Lambda)$, which is well-defined up to multiplication with a unit in~$\Lambda$.
Here, the \emph{order ideal} of a finitely presented torsion-module is the ideal generated by the determinants of $n\times n$ minors of an $n\times m$ presentation matrix with $n\leq m$. Since the Alexander module can be presented by a square matrix, the order ideal is a principal ideal. The \emph{order} is a generator of the order ideal.

Alternative (and equivalent) definitions of the Alexander module and polynomial
use the zero-framed Dehn surgery $M_K$ of $K$ instead of the complement $S^3\setminus K$.
Recall that the map $\pi_1(S^3\setminus K)\to\pi_1(M_K)$ induced by inclusion is surjective,
and its kernel---normally generated by the class of a zero-framed longitude---is contained in the second derived commutator subgroup.
Since $H_1(\,\cdot\,; \Z)$ is canonically isomorphic to the abelianization of the commutator subgroup of $\pi_1(\,\cdot\,)$,
the inclusion also induces a $\Lambda$--module isomorphisms between $H_1(S^3\setminus K;\Lambda)$ and $H_1(M_K;\Lambda)$.
Thus, it is consistent with the usual definitions to see the Alexander module as $H_1(M_K;\Lambda)$, and the Alexander polynomial
as its order.

Also, we normalize the Alexander polynomial $\Delta_K$ to be symmetric and satisfy $\Delta_K(1)=1$.
\subsection{The Blanchfield pairing}\label{subsec:defBl}

The Alexander module being a torsion module, one can define the \emph{Blanchfield pairing} as the following non-degenerate Hermitian pairing on the Alexander module:
\[\Bl(K)\colon H_1(S^3\setminus K;\Lambda)\times H_1(S^3\setminus K;\Lambda)\to \Q(t)/\Lambda,\quad (x,y)\mapsto (\Psi(x))(y),\]
where $\Psi$ is the composition of the following maps
\begin{multline*}
H_1(S^3\setminus K;\Lambda)\to H_1(S^3\setminus K,\partial(S^3\setminus K);\Lambda)\xrightarrow{\cong}
H^2(S^3\setminus K;\Lambda)\\\xrightarrow{\cong}
H^1(S^3\setminus K;\Q(t)/\Lambda)
\xrightarrow{\mathrm{ev}}
\overline{\Hom_\Lambda(H_1(S^3\setminus K;\Lambda),\Q(t)/\Lambda)}.
\end{multline*}

The first map is given by canonical inclusion on the chain complex level, the second map is the inverse of Poincar\'e duality for the twisted homology of the 3--manifold $S^3\setminus K$ (see \cite[Section~2]{Wall_book} or \cite[Section~A.3]{FNOP}), the third map is the inverse of the \emph{Bockstein} map---the connecting homomorphism in the long exact sequence of cohomology induced by the short exact sequence of coefficients
\[0\to\Lambda\to\Q(t)\to\Q(t)/\Lambda\to0,\]
and the fourth map is the so-called Kronecker evaluation map.
We only give this brief treatment since we will not make use of the definition of the Blanchfield pairing, and refer the reader to~\cite{FriedlPowell_15} for a detailed treatment.

\subsection{The Blanchfield pairing via twisted intersection forms on $4$--manifolds}
Above we recalled the definition of the Blanchfield pairing using (twisted) Poincar{\'e} duality of 3--manifolds. Much like linking numbers in 3--manifolds can be calculated by intersecting surfaces in 4--manifolds with boundary the 3--manifold in question (most classically, the linking number of two disjoint oriented curves in $S^3$ equals the oriented intersection of generic surfaces bounding them in $B^4$), the Blanchfield pairing has a presentation via the twisted homology of 4--manifolds $W$ with boundary~$M_K$. Borodzik and Friedl established the following rather general statement, which only asks for natural homological assumptions on $W$.
We also note that the result holds in the topological category.
\begin{thm}[{\cite[Theorem~2.6]{BorodzikFriedl_15_TheUnknottingnumberAndClassInv1}}]\label{thm:BF:W}
Let $K$ be a knot and $W$ a connected compact oriented topological 4--manifold with infinite cyclic fundamental group
and boundary $M_K$ such that
the inclusion of $M_K$ into $W$ descends to an isomorphism on $H_1(\,\cdot\,;\Z)$.
Then the twisted homology $H_2(W; \Z[\pi_1(W)])$ is free of rank $b_2(W)$.
Furthermore, if $B$ is an integral matrix for the ordinary intersection pairing of $W$, then
there exists a basis $\mathfrak{B}$ for $H_2(W; \Z[\pi_1(W)])$ such that the matrix $A(t)$ of the twisted
intersection pairing with respect to $\mathfrak{B}$ presents the Blanchfield pairing $\Bl(K)$ of $K$, and $A(1) = B$.
\end{thm}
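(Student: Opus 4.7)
The plan is to compute $H_*(W;\Lambda)$ (writing $\Lambda = \Z[\pi_1(W)] \cong \Z[t^{\pm 1}]$) via the long exact sequence of the pair $(W, M_K)$ with twisted coefficients, combined with twisted Poincar\'e--Lefschetz duality and the universal coefficient spectral sequence over $\Lambda$, and then read off the claimed freeness, presentation, and signature statements. First I would assemble the low-dimensional pieces: since $\widetilde W$ is simply connected and non-compact, Hurewicz gives $H_0(W;\Lambda) = \Z$, $H_1(W;\Lambda) = 0$, and $H_4(W;\Lambda) = 0$; standard calculations for $M_K$ yield $H_0(M_K;\Lambda) = H_2(M_K;\Lambda) = \Z$, $H_1(M_K;\Lambda)$ is the Alexander module $A_K$, and $H_3(M_K;\Lambda) = 0$. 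Using $H_k(W,M_K;\Lambda) \cong H^{4-k}(W;\Lambda)$, the fact that the $\Lambda$-projective dimension of $\Z = \Lambda/(t-1)$ is $1$ (so $\mathrm{Ext}^2_\Lambda(\Z,\Lambda)=0$), and $H_1(W;\Lambda)=0$, the UC spectral sequence gives $H^1(W;\Lambda) \cong \Z$ and $H^2(W;\Lambda) \cong \Hom_\Lambda(H_2(W;\Lambda),\Lambda)$.

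The crucial step identifies the connecting map $H_3(W,M_K;\Lambda)\to H_2(M_K;\Lambda)$, via duality, with the restriction $H^1(W;\Lambda) \to H^1(M_K;\Lambda)$ along $M_K \hookrightarrow W$. Under the hypothesis that inclusion induces an isomorphism on $H_1(\cdot;\Z)$, both sides are $\Z$ and the restriction is an isomorphism. The long exact sequence then collapses to
\begin{equation*}
0 \longrightarrow H_2(W;\Lambda) \xrightarrow{\;\Phi\;} \Hom_\Lambda(H_2(W;\Lambda), \Lambda) \longrightarrow A_K \longrightarrow 0,
\end{equation*}
where $\Phi$ is the adjoint of the twisted intersection pairing. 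This embedding shows $H_2(W;\Lambda)$ is torsion-free, and tensoring with $\Q(t)$ (under which $A_K$ vanishes) pins the $\Lambda$-rank to $b_2(W)$. To upgrade torsion-free of rank $b_2(W)$ to free of rank $b_2(W)$, I would invoke Quillen--Suslin-type results for $\Lambda = \Z[t^{\pm 1}]$ together with reflexivity of the dual module, or alternatively argue directly from a handle decomposition of $W$ providing an explicit basis.

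Choosing a basis $\mathfrak{B}$ turns $\Phi$ into a Hermitian matrix $A(t)$. Specialization of the cellular chain complex of $\widetilde W$ at $t = 1$ yields $C_*(W;\Z)$; since the twisted intersection pairing is built from cup, cap, and the fundamental class, all natural under this specialization, $A(1)$ equals the ordinary intersection matrix $B$. The short exact sequence above exhibits $A(t)$ as a square presentation matrix of $A_K$; to identify the induced pairing $\ell_{A(t)}$ with $\Bl(K)$, I would compare the diagram defining $\Bl(K)$ via Bockstein and Poincar\'e duality on $M_K = \partial W$ with the dual diagram arising from the LES of $(W,M_K)$. The main obstacle is this last identification: verifying that the twisted intersection form on $W$ matches $\Bl(K)$ on the nose rather than merely presenting $A_K$. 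This requires a careful cohomological comparison tying the Bockstein short exact sequence on $M_K$ to the LES for $(W,M_K)$; once all Poincar\'e--Lefschetz isomorphisms are chosen compatibly, the identification becomes a diagram chase, but assembling the right diagram with consistent sign and duality conventions is the substantive work.
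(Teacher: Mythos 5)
The paper does not prove this statement: it is imported verbatim as \cite[Theorem~2.6]{BorodzikFriedl_15_TheUnknottingnumberAndClassInv1}, and the text immediately following the theorem explicitly says the authors ``do not know of a faster proof \ldots{} than the one by Borodzik and Friedl, which goes through a quite general argument employing the universal coefficient spectral sequence.'' So there is no in-paper proof to compare against; the right comparison is with Borodzik--Friedl's argument, and your outline is indeed in that spirit (long exact sequence of the pair, twisted Poincar\'e--Lefschetz duality, and the universal coefficient spectral sequence over $\Lambda$).

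That said, your sketch has a genuine gap at the freeness step. You deduce that $H_2(W;\Lambda)$ is torsion-free of $\Lambda$-rank $b_2(W)$ from the embedding
\[
0 \longrightarrow H_2(W;\Lambda) \longrightarrow \Hom_\Lambda(H_2(W;\Lambda),\Lambda) \longrightarrow H_1(M_K;\Lambda) \longrightarrow 0,
\]
and then propose to ``upgrade torsion-free of rank $b_2(W)$ to free'' by Quillen--Suslin-type results or reflexivity of the dual. But $\Lambda=\Z[t^{\pm1}]$ has Krull dimension $2$ and is not a PID; finitely generated torsion-free (or even reflexive) $\Lambda$-modules need not be free, so this implication is false as stated. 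What the argument actually needs is \emph{projectivity} (equivalently, stable freeness), and this is where the hypotheses on $W$ and, crucially, the structure of the Alexander module enter: $H_1(M_K;\Lambda)$ admits a square presentation by $V - tV^{\top}$ for a Seifert matrix $V$ with nonzero determinant, hence has $\Lambda$-projective dimension $\leq 1$. Once one also knows that the middle term $\Hom_\Lambda(H_2(W;\Lambda),\Lambda)$ is free (which itself requires an argument, e.g.\ via the twisted chain complex of $(W,\partial W)$), Schanuel's lemma applied to the displayed sequence and a length-one free resolution of $H_1(M_K;\Lambda)$ shows $H_2(W;\Lambda)$ is stably free, and then the Bass--Swan theorem that finitely generated projective $\Z[t^{\pm1}]$-modules are free finishes the job. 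Your alternative of ``argue directly from a handle decomposition'' is also not immediate: a handle decomposition gives a free chain complex, and the vanishing of $H_1$ and $H_3$ gives freeness of the cycle submodules, but the image $B_2=\mathrm{im}(d_3)$ only has projective dimension $\leq 1$ from this, which again leaves $H_2 = Z_2/B_2$ with no better than projective dimension $\leq 2$ without a further input. A secondary, smaller point: once freeness is known, realizing a \emph{prescribed} integral matrix $B$ requires lifting a chosen $\Z$-basis of $H_2(W;\Z)\cong H_2(W;\Lambda)\otimes_\Lambda\Z$ to a $\Lambda$-basis (lift the change-of-basis matrix in $GL(b_2,\Z)\subset GL(b_2,\Lambda)$); this is easy but should be said. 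Finally, you correctly flag the identification of the twisted intersection form with $\Bl(K)$ as the substantive diagram chase; that assessment matches the character of the Borodzik--Friedl proof.
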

Although we only apply \Cref{thm:BF:W} to a rather special manifold $W$ in the next section,
we do not know of a faster proof that the above holds for this manifold $W$ than the one by Borodzik and Friedl,
which goes through a quite general argument employing the universal coefficient spectral sequence.

\sectionpdfbookmark{The four-dimensional part of the proof---\eqref{item:Zslicesurface}~$\Rightarrow$~\eqref{item:Blanchfield}}{The four-dimensional part of the proof---\eqref{item:Zslicesurface}~=>~\eqref{item:Blanchfield}}\label{sec:4D}
\newcommand{\tub}[1]{\ensuremath{\nu\hspace{-0.1em}#1}}
Let $F$ be a $\Z$--slice surface of genus $g$ in $B^4$ with boundary $K$. We calculate the Blanchfield pairing of $K$ by using $F$ to define a $4$--manifold $W$ with boundary the zero-framed Dehn surgery of $K$, denoted by $M_K$, such that the intersection pairing twisted by $\pi_1(W)$ on $W$ is a Hermitian presentation of the Blanchfield pairing.
For this purpose, we construct $W$ such that $\pi_1(W) \cong\Z, b_2(W) = 2g, \sigma(W) = 0$ and the inclusion
of $M_K$ into $W$ descends to an isomorphism on integral first homology groups.

Given such a $4$--manifold $W$, \Cref{thm:BF:W} yields a Hermitian $2g\times2g$--matrix $A(t)$ over the ring $\Lambda$ that presents the Blanchfield pairing of $K$ such that $A(1)$ is a unimodular matrix with signature 0. Unimodularity follows since $A(1)$ presents the ordinary intersection form on $W$, which is unimodular because the fact that the inclusion of $M_K=\partial W$ into $W$ induces an isomorphism on $H_1(\,\cdot\,;\Z)$ implies that the long exact sequence of the pair $(W,\partial W)$ induces an isomorphism on $H^2(W,\partial W;\Z)\to H^2(W;\Z)$. And, by definition, the signature of $W$ is the signature the ordinary intersection form on $W$, thus $\sigma(A(1))=\sigma(W)=0$. %

Therefore, to prove \eqref{item:Zslicesurface}~$\Rightarrow$~\eqref{item:Blanchfield},
it only remains to actually construct the 4--manifold~$W$ with the desired properties.
Our construction is modeled on what one often sees in the literature when $F$ is a pushed-in Seifert surface; see e.g.~\cite[Proof of Lemma~5.4]{COT2}.
See also \cite{MR3604490}, where  this construction is considered for strong slice surfaces of links.
We build $W$ in two steps.

\subsection{Step I} We set $W'\coloneqq B^4\setminus\tub{F}$, where $\tub{F}$ denotes an open tubular neighborhood.
Concretely, $\tub{F}$ may be taken as open disk subbundle of the normal bundle of $F$ in the sense of~\cite[Section~9.3]{FreedmanQuinn_90_TopOf4Manifolds}. In particular, the boundary  $\partial \tub{F}$ of $\tub{F}$ as a subspace of $B^4$ is a locally flat 3--manifold with boundary,
properly embedded in $B^4$ and homeomorphic to $F\times S^1$. We note that $\partial W'=\partial \tub{F} \cup (S^3\setminus\tub{K})$, where $\tub{K}$ denotes an open tubular neighborhood of $K$ in $S^3$.  The two pieces, $\partial \tub{F}$ and $S^3\setminus\tub{K}$, intersect in a torus, which we denote by $\Sigma$. It can be understood as the unit normal bundle of $K$ in $S^3$.

\begin{claim}\label{claim1}
We have (i) $\pi_1(W')\cong \Z$, (ii) $b_2(W')=2g$, and (iii) $\sigma(W')=0$.
\end{claim}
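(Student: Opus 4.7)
The plan is to reduce all three assertions to the Mayer--Vietoris sequence for the decomposition $B^4 = W' \cup \overline{\tub{F}}$, whose intersection deformation retracts onto $\partial\tub{F} \cong F\times S^1$. Here the trivialization of the normal disk bundle uses $H^2(F;\Z)=0$, which holds because $F$ has non-empty boundary.

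For (i), I would observe that $B^4\setminus F$ deformation retracts onto $W'$ by radially pushing each point of $\tub{F}\setminus F$ outward along its disk fiber to $\partial\tub{F}$. Combined with the $\Z$--slice hypothesis $\pi_1(B^4\setminus F)\cong\Z$, this gives $\pi_1(W')\cong\Z$.

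For (ii), I would input $H_2(\overline{\tub{F}})=H_2(F)=0$, $H_2(B^4)=0$, $H_3(B^4)=0$, and $H_2(F\times S^1)\cong\Z^{2g}$ (K\"unneth, using that $F$ is a genus-$g$ surface with one boundary component) into the Mayer--Vietoris sequence. The relevant portion collapses to $0\to\Z^{2g}\to H_2(W')\to 0$, yielding $b_2(W')=2g$.

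For (iii), observe that the isomorphism in (ii) is induced by the inclusion $\partial\tub{F}\hookrightarrow W'$, so every class in $H_2(W')$ lies in the image of $H_2(\partial W')\to H_2(W')$. By the long exact sequence of the pair $(W',\partial W')$ this image equals the kernel of $H_2(W')\to H_2(W',\partial W')$, i.e., the radical of the intersection form of $W'$. Hence the intersection form vanishes identically on $H_2(W';\Q)$, and $\sigma(W')=0$. The only technical care needed---existence of a normal disk bundle for the locally flat $F$ and a boundary collar for $W'$---is supplied by the Freedman--Quinn framework invoked above, so no deeper obstacle remains.
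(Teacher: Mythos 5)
Your argument is correct, and parts (i) and (ii) follow the paper's route (the paper also uses the Mayer--Vietoris sequence for $B^4 = W' \cup \overline{\tub{F}}$ for (ii), and for (i) it simply invokes the definition of $\Z$--slice surface, which your deformation-retraction observation unpacks). For (iii), however, you take a genuinely different and more elementary route than the paper. The paper invokes Novikov--Wall non-additivity (stated as \cref{lemma:NW-add}) to deduce $\sigma(B^4) = \sigma(W') + \sigma(\overline{\tub{F}})$ and then concludes $\sigma(W')=0$. You instead note that $H_2(\partial W';\Q)\to H_2(W';\Q)$ is surjective---since it factors the iso $H_2(\partial\tub{F};\Q)\xrightarrow{\cong} H_2(W';\Q)$ from Mayer--Vietoris---hence by exactness of the pair sequence the map $H_2(W';\Q)\to H_2(W',\partial W';\Q)$ is zero, so the intersection form vanishes identically and not merely has zero signature. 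This is cleaner and avoids Novikov--Wall altogether for this claim. The trade-off is that the paper's Novikov--Wall lemma is set up precisely so that it can be reused unchanged in the proof of \cref{claim:PropOfW}(iii) for $W = W'\cup H\times S^1$, where your radical argument would fail: there the inclusion $\partial W = M_K \hookrightarrow W$ is an $H_1$--isomorphism, which forces the intersection form on $W$ to be unimodular rather than identically zero. So the paper opts for the uniform, slightly heavier tool; your argument is the quicker shot at this particular target.
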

Before proving the claim, let us consider the following special case of Novikov-Wall non-additivity,
which will be needed for the proof of (iii).
\begin{lemma}\label{lemma:NW-add}
Let $Z$ be a closed surface, let $X_{\pm}, X_0$ be 3--manifolds with boundary~$Z$,
let $Y_{\pm}$ be topological 4--manifolds with boundaries $\partial Y_{\pm} = X_{\pm} \cup_Z X_0$,
and let $Y$ be the topological 4--manifold given as $Y_+ \cup_{X_0} Y_-$
(see \Cref{fig:NW-add}(i)).
Consider the three maps on $H_1(Z;\Q)$ induced by the inclusions of $Z$ into $X_{\pm}$ and $X_0$.
If the kernels of two of these maps agree, then
$\sigma(Y) = \sigma(Y_+) + \sigma(Y_-)$.
\end{lemma}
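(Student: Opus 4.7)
The plan is to apply the Novikov-Wall non-additivity theorem directly. First, observe that $H_1(Z;\Q)$ carries a symplectic form from the intersection pairing, and the half-lives-half-dies principle (an immediate consequence of Poincaré-Lefschetz duality applied to each of the bounding $3$-manifolds) shows that each kernel $L_\ast = \ker\!\left(H_1(Z;\Q)\to H_1(X_\ast;\Q)\right)$, for $\ast\in\{+,0,-\}$, is a Lagrangian subspace of this symplectic space. Wall's theorem then yields
\[
  \sigma(Y) \;=\; \sigma(Y_+) + \sigma(Y_-) \;-\; \tau(L_-,L_0,L_+),
\]
where $\tau$ is the Wall--Maslov triple index of the three Lagrangians. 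The theorem is originally stated in the smooth category, but the standard extension to topological manifolds applies here because the gluing takes place along a $3$-manifold in a collared neighborhood.

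The second and only remaining step is to verify that $\tau(L_-,L_0,L_+)=0$ whenever two of the three Lagrangians coincide. This is a classical and essentially tautological property of the Maslov index: Wall defines $\tau$ as the signature of a symmetric bilinear form on a quotient constructed from the pairwise intersections and sums of the three Lagrangians, and a direct inspection shows that this form is identically zero as soon as two of the three subspaces agree. Equivalently, one may invoke the fact that the Maslov triple index is an antisymmetric cocycle on the variety of Lagrangians and therefore vanishes on any degenerate triple.

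The hypothesis of the lemma is exactly that two of $L_-,L_0,L_+$ coincide, so combining the two steps gives $\sigma(Y)=\sigma(Y_+)+\sigma(Y_-)$, as claimed. The main obstacle is conceptual: making sure the kernels $L_\ast$ are indeed the Lagrangians that appear in Wall's formulation, and matching the topological setup to the hypotheses of the non-additivity theorem; once these identifications are made, the proof reduces to a citation of Wall together with the vanishing of the Maslov index on the degenerate triple.
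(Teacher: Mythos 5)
Your proof is correct and is essentially the same as the paper's: both invoke Novikov--Wall non-additivity (which holds in the topological category, with the paper citing Kirby's book for this rather than giving a hand-wavy collar argument) and then observe that the correction term vanishes when two of the three kernels coincide. The only cosmetic difference is that you phrase the vanishing via the Lagrangian/Maslov-index formalism and antisymmetry of $\tau$, while the paper works directly with the defining quotient $N = (A\cap(B+C))/((A\cap B)+(A\cap C))$ and notes that $\dim N = 0$ when two of $A,B,C$ agree; note that the slightly more accurate statement is that the underlying \emph{space} $N$ is zero, not that the form is ``identically zero,'' though the conclusion is of course the same.
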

\begin{figure}[tbh]%
\begin{tabular}{@{}c*2{@{\,}c}}%
\fbox{
\begingroup%
  \makeatletter%
  \providecommand\color[2][]{%
    \errmessage{(Inkscape) Color is used for the text in Inkscape, but the package 'color.sty' is not loaded}%
    \renewcommand\color[2][]{}%
  }%
  \providecommand\transparent[1]{%
    \errmessage{(Inkscape) Transparency is used (non-zero) for the text in Inkscape, but the package 'transparent.sty' is not loaded}%
    \renewcommand\transparent[1]{}%
  }%
  \providecommand\rotatebox[2]{#2}%
  \newcommand*\fsize{\dimexpr\f@size pt\relax}%
  \newcommand*\lineheight[1]{\fontsize{\fsize}{#1\fsize}\selectfont}%
  \ifx\svgwidth\undefined%
    \setlength{\unitlength}{92.28337428bp}%
    \ifx\svgscale\undefined%
      \relax%
    \else%
      \setlength{\unitlength}{\unitlength * \real{\svgscale}}%
    \fi%
  \else%
    \setlength{\unitlength}{\svgwidth}%
  \fi%
  \global\let\svgwidth\undefined%
  \global\let\svgscale\undefined%
  \makeatother%
  \begin{picture}(1,0.85949316)%
    \lineheight{1}%
    \setlength\tabcolsep{0pt}%
    \put(0,0){\includegraphics[width=\unitlength,page=1]{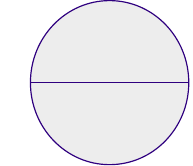}}%
    \put(0.57006456,0.17983557){\color[rgb]{0,0,0}\makebox(0,0)[t]{\lineheight{1.25}\smash{\begin{tabular}[t]{c}$Y_-$\end{tabular}}}}%
    \put(0.57006456,0.61463922){\color[rgb]{0,0,0}\makebox(0,0)[t]{\lineheight{1.25}\smash{\begin{tabular}[t]{c}$Y_+$\end{tabular}}}}%
    \put(0.407522,0.46469344){\color[rgb]{0.18431373,0,0.49803922}\makebox(0,0)[t]{\lineheight{1.25}\smash{\begin{tabular}[t]{c}$X_0$\end{tabular}}}}%
    \put(0.23026097,0.7284195){\color[rgb]{0.18431373,0,0.49803922}\makebox(0,0)[rt]{\lineheight{1.25}\smash{\begin{tabular}[t]{r}$X_+$\end{tabular}}}}%
    \put(0.22399209,0.07705626){\color[rgb]{0.18431373,0,0.49803922}\makebox(0,0)[rt]{\lineheight{1.25}\smash{\begin{tabular}[t]{r}$X_-$\end{tabular}}}}%
    \put(0,0){\includegraphics[width=\unitlength,page=2]{walls_3.pdf}}%
    \put(0.07667567,0.40069103){\color[rgb]{0.64313725,0.22745098,0.07843137}\makebox(0,0)[rt]{\lineheight{1.25}\smash{\begin{tabular}[t]{r}$Z$\end{tabular}}}}%
  \end{picture}%
\endgroup%
}
&
\fbox{\hspace{-1.2em}
\begingroup%
  \makeatletter%
  \providecommand\color[2][]{%
    \errmessage{(Inkscape) Color is used for the text in Inkscape, but the package 'color.sty' is not loaded}%
    \renewcommand\color[2][]{}%
  }%
  \providecommand\transparent[1]{%
    \errmessage{(Inkscape) Transparency is used (non-zero) for the text in Inkscape, but the package 'transparent.sty' is not loaded}%
    \renewcommand\transparent[1]{}%
  }%
  \providecommand\rotatebox[2]{#2}%
  \newcommand*\fsize{\dimexpr\f@size pt\relax}%
  \newcommand*\lineheight[1]{\fontsize{\fsize}{#1\fsize}\selectfont}%
  \ifx\svgwidth\undefined%
    \setlength{\unitlength}{123.36641955bp}%
    \ifx\svgscale\undefined%
      \relax%
    \else%
      \setlength{\unitlength}{\unitlength * \real{\svgscale}}%
    \fi%
  \else%
    \setlength{\unitlength}{\svgwidth}%
  \fi%
  \global\let\svgwidth\undefined%
  \global\let\svgscale\undefined%
  \makeatother%
  \begin{picture}(1,0.64293776)%
    \lineheight{1}%
    \setlength\tabcolsep{0pt}%
    \put(0,0){\includegraphics[width=\unitlength,page=1]{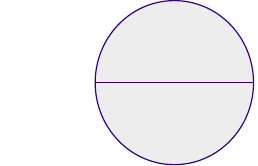}}%
    \put(0.6783894,0.13452472){\color[rgb]{0,0,0}\makebox(0,0)[t]{\lineheight{1.25}\smash{\begin{tabular}[t]{c}$\overline{\tub{F}}$\end{tabular}}}}%
    \put(0.6783894,0.45977651){\color[rgb]{0,0,0}\makebox(0,0)[t]{\lineheight{1.25}\smash{\begin{tabular}[t]{c}$W'$\end{tabular}}}}%
    \put(0.55680007,0.34761063){\color[rgb]{0.18431373,0,0.49803922}\makebox(0,0)[t]{\lineheight{1.25}\smash{\begin{tabular}[t]{c}$\partial\tub{F}$\end{tabular}}}}%
    \put(0.42420051,0.54488904){\color[rgb]{0.18431373,0,0.49803922}\makebox(0,0)[rt]{\lineheight{1.25}\smash{\begin{tabular}[t]{r}$S^3\setminus\tub{K}$\end{tabular}}}}%
    \put(0.40431333,0.05764139){\color[rgb]{0.18431373,0,0.49803922}\makebox(0,0)[rt]{\lineheight{1.25}\smash{\begin{tabular}[t]{r}$\overline{\tub{K}}$\end{tabular}}}}%
    \put(0,0){\includegraphics[width=\unitlength,page=2]{walls.pdf}}%
    \put(0.30933445,0.29973408){\color[rgb]{0.64313725,0.22745098,0.07843137}\makebox(0,0)[rt]{\lineheight{1.25}\smash{\begin{tabular}[t]{r}$\Sigma$\end{tabular}}}}%
  \end{picture}%
\endgroup%
}
&
\fbox{\hspace{0.5em}
\begingroup%
  \makeatletter%
  \providecommand\color[2][]{%
    \errmessage{(Inkscape) Color is used for the text in Inkscape, but the package 'color.sty' is not loaded}%
    \renewcommand\color[2][]{}%
  }%
  \providecommand\transparent[1]{%
    \errmessage{(Inkscape) Transparency is used (non-zero) for the text in Inkscape, but the package 'transparent.sty' is not loaded}%
    \renewcommand\transparent[1]{}%
  }%
  \providecommand\rotatebox[2]{#2}%
  \newcommand*\fsize{\dimexpr\f@size pt\relax}%
  \newcommand*\lineheight[1]{\fontsize{\fsize}{#1\fsize}\selectfont}%
  \ifx\svgwidth\undefined%
    \setlength{\unitlength}{127.59762464bp}%
    \ifx\svgscale\undefined%
      \relax%
    \else%
      \setlength{\unitlength}{\unitlength * \real{\svgscale}}%
    \fi%
  \else%
    \setlength{\unitlength}{\svgwidth}%
  \fi%
  \global\let\svgwidth\undefined%
  \global\let\svgscale\undefined%
  \makeatother%
  \begin{picture}(1,0.6216176)%
    \lineheight{1}%
    \setlength\tabcolsep{0pt}%
    \put(0,0){\includegraphics[width=\unitlength,page=1]{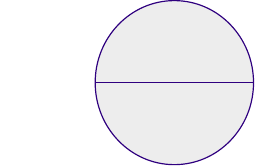}}%
    \put(0.65589263,0.13006363){\color[rgb]{0,0,0}\makebox(0,0)[t]{\lineheight{1.25}\smash{\begin{tabular}[t]{c}$H\times S^1$\end{tabular}}}}%
    \put(0.65589263,0.44453006){\color[rgb]{0,0,0}\makebox(0,0)[t]{\lineheight{1.25}\smash{\begin{tabular}[t]{c}$W'$\end{tabular}}}}%
    \put(0.65883157,0.33608366){\color[rgb]{0.18431373,0,0.49803922}\makebox(0,0)[t]{\lineheight{1.25}\smash{\begin{tabular}[t]{c}$\partial\tub{F}\cong F'\times S^1$\end{tabular}}}}%
    \put(0.4101338,0.52682022){\color[rgb]{0.18431373,0,0.49803922}\makebox(0,0)[rt]{\lineheight{1.25}\smash{\begin{tabular}[t]{r}$S^3\setminus\tub{K}$\end{tabular}}}}%
    \put(0.40559991,0.05572996){\color[rgb]{0.18431373,0,0.49803922}\makebox(0,0)[rt]{\lineheight{1.25}\smash{\begin{tabular}[t]{r}$\overline{H\setminus F'}\times S^1$\end{tabular}}}}%
    \put(0,0){\includegraphics[width=\unitlength,page=2]{walls2.pdf}}%
    \put(0.29905517,0.28979473){\color[rgb]{0.64313725,0.22745098,0.07843137}\makebox(0,0)[rt]{\lineheight{1.25}\smash{\begin{tabular}[t]{r}$\Sigma$\end{tabular}}}}%
  \end{picture}%
\endgroup%
\hspace{-.3em}}\\[1ex]
(i) $Y$ & (ii) $B^4$ & (iii) $W$
\end{tabular}
\caption{Three 4--manifolds, each as union of two others.}
\label{fig:NW-add}
\end{figure}
\begin{proof}
Novikov-Wall non-additivity~\cite{Wall} holds in the topological category~\cite{kirbybook}.
It states that $\sigma(Y) = \sigma(Y_+) + \sigma(Y_-) - \sigma(N)$,
where $N$ is a certain space with a bilinear form.
To prove the lemma, it suffices to prove that $\dim N = 0$, which we will do; in particular, we will not need to explain the definition of the bilinear form on $N$.

To see $\dim N = 0$, we recall from~\cite{Wall} that, if $A, B, C$ are the three kernels
mentioned in the statement of the lemma, then $N$ can be defined as
\[
N = \frac{A \cap (B + C)}{(A \cap B) + (A\cap C)} \subseteq H_1(Z; \Q).
\]
(We remark that the dimension of $N$ and the signature of the bilinear form do not change when permuting $A, B, C$~\cite{Wall}.)
If there are two among $A, B, C$ that agree, then clearly $\dim N = 0$.
\end{proof}
\begin{proof}[Proof of \Cref{claim1}]
(i) is satisfied by the definition of $\Z$--slice surface.

(ii) follows quickly from the Mayer-Vietoris sequence of $B^4 = W' \cup_{\partial\tub{F}} \overline{\tub{F}}$
(see \Cref{fig:NW-add}(ii)), which implies that $H_2(\partial\tub{F};\Q) \cong H_2(W';\Q) \oplus H_2(\overline{\tub{F}};\Q)$.
Alternatively, one could use an appropriate variant of Alexander duality in the ball.

For (iii), we wish to apply the \Cref{lemma:NW-add} to $B^4=W'\cup \overline{\tub{F}}$ (see \Cref{fig:NW-add}(ii)).
The hypothesis is satisfied
because the two maps induced by the inclusions of $\Sigma$ into
$S^3\setminus\tub{K}$ and $\partial\tub{F}$, respectively, have the same kernel, which is generated
by the class of a zero-framed longitude. Thus,
\[
\sigma(B^4)=\sigma(W')+\sigma(\overline{\tub{F}}).
\]
Clearly, we have $\sigma(B^4)=0$ (since $B^4$ has no $H_2$) and $\sigma(\overline{\tub{F}})=0$ (since $\overline{\tub{F}}$ is homeomorphic the product of $F$ with a closed 2--disk, so it has no $H_2$ either). It follows that $\sigma(W') = 0$.
\end{proof}
\subsection{Step II}
The $4$--manifold $W'$ satisfies the desired properties, with the exception that $\partial W'$ is not $M_K$.
In this step, we fix that.

Let $H$ be a genus $g$ handlebody.
In the boundary of $H$, a closed surface of genus $g$, we pick a subsurface $F'$ of genus $g$ with one boundary component (i.e.~the complement of a small open neighborhood of a point in~$\partial H$).
Let us glue $H\times S^1$ to $W'$ to obtain $W$ via a certain homeomorphism
\[
\begin{tikzcd}
\phi\colon\hspace*{-4.5em} & F'\times S^1\ar{r} \ar[draw=none]{d}[sloped,auto=false]{\subseteq} & \partial \tub{F} \ar[draw=none]{d}[sloped,auto=false]{\subseteq}. \\[-3ex]
& H\times S^1 & W'
\end{tikzcd}
\]
It turns out that the choice of homeomorphism matters.
Before we explain our choice, let us discuss properties that hold for any such homeomorphism~$\phi$.

The homeomorphism $\phi$ restricts on the boundary to a homeomorphism $(\partial F') \times S^1 \to \Sigma$ of tori.
Let us write $\ell, m$ respectively for a zero-framed longitude and a meridian of $K$ on $\Sigma$.
The curve $\phi(\partial F' \times \{1\})$ is homotopic to $\ell$,
since this curve and $K$ bound the disjoint surfaces $\phi(F'\times\{1\})$ and $F$ in~$B^4$.
Next, fix a base point $f_0 \in \partial F'$ and consider the curve $\phi(\{f_0\}\times S^1)$.
Its homology class and the class of $\ell$ form a basis of $H_1(\Sigma;\Z)$.
Therefore, its homology class is equal to $\pm [m] + \lambda [\ell]$ for some $\lambda\in\Z$.
Since $H_1(W';\Z)$ is generated by $[m]$, and $\ell$ is null-homologous in $W'$, it follows that
the homology class of $\phi(\{f_0\}\times S^1)$ generates $H_1(W';\Z)\cong\pi_1(W')$.

This can be rephrased as follows.
Let us denote by $i_1$ and $i_2$ and $j$ the homomorphisms of fundamental groups induced by the inclusions
$F' \to F'\times S^1, f\mapsto (f,1)$ and $S^1\to F'\times S^1, z\mapsto (f_0, z)$ and $\partial\tub{F} \hookrightarrow W'$.
Then the composition $j \circ \phi_* \circ i_2$
is an isomorphism $\pi_1(S^1) \to \pi_1(W')$, which we denote by $k$.

Let us now discuss our choice of $\phi$. We wish to choose $\phi$ such that the composition of the following two maps
is the zero map:
\[
\begin{tikzcd}
\pi_1(F') \ar{r}{\phi_*\circ i_1}  &
\pi_1(\partial\tub{F}) \ar{r}{j} &
\pi_1(W') %
\end{tikzcd}
\]
To construct $\phi$, start by picking any homeomorphism $\phi'\colon F'\times S^1\to \partial \tub{F}$.
Let $g\colon \pi_1(F')\to \pi_1(S^1)$ denote the composition $k^{-1}\circ j\circ \phi'_*\circ i_1$.
Let $\psi\colon F'\to S^1$  be a continuous map such that $\psi_*$ is $-g$ (we write the group action additively in $\pi_1(S^1)$,
and multiplicatively in $S^1$ and $\pi_1(F'\times S^1)$).
For this, recall that all group homomorphisms from $\pi_1(F')$ to $\Z$ are induced by a continuous map as a consequence of the following chain of canonical identifications
\[\mathrm{Hom}(\pi_1(F'),\Z)\cong \mathrm{Hom}(H_1(F';\Z),\Z)\cong H^1(F';\Z)\cong [F',S^1].\]
We define a homeomorphism $\omega\colon F'\times S^1\to F'\times S^1$ by $(f,z)\mapsto (f,z\cdot \psi(f))$ and set $\phi\coloneqq\phi'\circ \omega$.

Let us check that the composition $j\circ(\phi_*\circ i_1)$ is indeed zero for this choice of $\phi$.
It is sufficient that $k^{-1}\circ j\circ\phi_*\circ i_1 = 0$, which one finds as follows:
\begin{align*}
  k^{-1}\circ j\circ\phi_*\circ i_1 & = k^{-1}\circ j\circ\phi'_* \circ \omega_*\circ i_1 \\
                                    & = k^{-1}\circ j\circ\phi'_* \circ (i_1 \cdot (i_2\circ \psi_*)) \\
                                    & = (k^{-1}\circ j\circ\phi'_* \circ i_1) + (k^{-1}\circ j\circ\phi'_* \circ i_2\circ \psi_*) \\
                                    & = g + (k^{-1} \circ k \circ (-g)) = 0.%
\end{align*}%
\begin{Rmk}%
The careful reader will notice
that we in fact will only use that the kernel of $j \circ\phi_*$
contains the kernel of
the map induced by
the inclusion of $F'\times S^1$ into $H\times S^1$.
If $g > 0$, not all homeomorphisms $\phi\colon  F'\times S^1 \to \partial\tub{F}$ satisfy this,
and choosing one which does not would make the fundamental group of $W$ a finite instead of infinite cyclic group.
\end{Rmk}

We define $W\coloneqq W'\cup_{\phi}H\times S^1$ and note that $\partial W=\partial M_K$ (see \cref{fig:NW-add}(iii)).
We will establish the following properties $W$, thereby concluding the proof of~\eqref{item:Zslicesurface}~$\Rightarrow$~\eqref{item:Blanchfield}.

\begin{claim}\label{claim:PropOfW}
We have that (i) $\pi_1(W)\cong \Z$, (ii) $b_2(W)=2g$, (iii) $\sigma(W)=0$ and (iv) the inclusion of $M_K$ into $W$ descends to an isomorphism on~$H_1(\,\cdot\,;\Z)$.
\end{claim}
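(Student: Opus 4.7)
The plan is to verify the four properties in the order (i), (iv), (iii), (ii), since each step builds on the previous. Before starting I would identify $\partial W$ with $M_K$: the part of $\partial W'$ not glued to $H\times S^1$ is $S^3\setminus\tub{K}$ (because $\phi$ is a homeomorphism identifying all of $F'\times S^1$ with all of $\partial\tub{F}$), while the part of $\partial(H\times S^1)$ not glued is the solid torus $\overline{\partial H\setminus F'}\times S^1$. Since $\phi(\partial F'\times\{1\})$ is homotopic to $\ell$, the meridian disk of this solid torus caps off the zero-framed longitude of $K$, so the resulting closed $3$-manifold is the zero-surgery $M_K$.

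For (i) I would apply Seifert--van Kampen to the decomposition $W=W'\cup_{F'\times S^1}(H\times S^1)$. By \Cref{claim1}, $\pi_1(W')\cong\Z$ is generated by the meridian, and $\pi_1(H\times S^1)$ is the direct product of the free group $\pi_1(H)$ of rank $g$ with $\pi_1(S^1)$. The map from $\pi_1(F'\times S^1)$ to $\pi_1(H\times S^1)$ surjects $\pi_1(F')$ onto $\pi_1(H)$ and sends the $S^1$-factor isomorphically to the $S^1$-factor. By the careful construction of $\phi$, the corresponding map to $\pi_1(W')$ kills $\pi_1(F')$ and sends the $S^1$-factor isomorphically onto $\pi_1(W')$. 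In the resulting pushout, the two $\Z$-factors are identified while all of $\pi_1(H)$ is killed, yielding $\pi_1(W)\cong\Z$ generated by the meridian. Part (iv) then follows immediately, since both $H_1(M_K;\Z)$ and $H_1(W;\Z)$ are infinite cyclic with generator represented by the meridian.

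For (iii) I would apply the Novikov--Wall non-additivity \Cref{lemma:NW-add} to the same decomposition, with $Z=\Sigma$, $X_0=F'\times S^1$, $X_-=S^3\setminus\tub{K}$, and $X_+$ the solid torus $\overline{\partial H\setminus F'}\times S^1$. A direct check shows that each of the three kernels of $H_1(\Sigma;\Q)\to H_1(X_i;\Q)$ is spanned by $[\ell]$: for $X_-$ because $\ell$ bounds a Seifert surface in $S^3$; for $X_0$ because $[\partial F']=0$ in $H_1(F';\Q)$ and $\phi(\partial F'\times\{1\})\simeq\ell$; for $X_+$ because $\ell$ corresponds to the meridian of the solid torus. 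The hypothesis of \Cref{lemma:NW-add} is thus satisfied, so $\sigma(W)=\sigma(W')+\sigma(H\times S^1)=0+0=0$, where the second summand vanishes because the intersection form on $H\times S^1$ is identically zero on $H_2(H\times S^1)\cong H_1(H)\otimes H_1(S^1)$.

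For (ii) I would use Euler characteristic additivity. Since $\chi(H\times S^1)=\chi(F'\times S^1)=0$, we have $\chi(W)=\chi(W')$, and an analogous computation applied to $B^4=W'\cup\overline{\tub{F}}$ gives $\chi(W')=\chi(B^4)-\chi(\overline{\tub{F}})+\chi(\partial\tub{F})=1-(1-2g)+0=2g$. Parts (i) and (iv) together with the long exact sequence of $(W,\partial W)$ yield $H_1(W,\partial W;\Z)=0=H_0(W,\partial W;\Z)$; Poincar\'e--Lefschetz duality and the universal coefficient theorem then force $H_3(W;\Z)=H_4(W;\Z)=0$. So $b_0=b_1=1$ and $b_3=b_4=0$, whence $b_2=\chi=2g$. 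I expect (i) to be the main obstacle: without the carefully chosen $\phi$, $\pi_1(W)$ could collapse to a finite cyclic group, and the rest of the argument would break down.
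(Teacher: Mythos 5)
Your proof is correct and, for parts (i), (iii), and (iv), takes essentially the same approach as the paper: Seifert--van Kampen with the factoring property of $\phi$ for (i), the generator argument via the meridian for (iv), and Novikov--Wall non-additivity applied to $W = W'\cup_{F'\times S^1}(H\times S^1)$ for (iii), where you spell out all three kernels whereas the paper only checks two (which suffices by \cref{lemma:NW-add}). For (ii) you take a slightly different computational route: you compute $\chi(W)$ by additivity over the two decompositions and conclude $b_2 = 2g$ from $b_0 = b_1 = 1$, $b_3 = b_4 = 0$, whereas the paper computes $b_2(W)$ directly from the rational Mayer--Vietoris sequence of the same decomposition. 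Both routes rely on the same key input, namely $H_3(W;\Q) = 0$, proved via the long exact sequence of $(W,\partial W)$, part (iv), and Lefschetz duality; your Euler-characteristic version is perhaps slightly shorter because it avoids tracking the dimensions of the individual Mayer--Vietoris terms. You also include a brief argument that $\partial W = M_K$, which the paper states without comment; this is a helpful addition. One minor stylistic remark: in (i), the one-line assertion that the pushout is $\Z$ deserves a word on why $\pi_1(F')\to\pi_1(H)$ being surjective and $j\circ\phi_*$ killing $\pi_1(F')$ together imply $\pi_1(W')\to\pi_1(W)$ is an isomorphism, which is essentially the factoring observation the paper makes.
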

\begin{proof}[Proof of \cref{claim:PropOfW}]
(i) A simple application of the Seifert-van Kampen theorem implies that the inclusion of $W'$ into $W$ induces an isomorphism on fundamental groups.
For this, recall that $W$ is obtained by gluing $W'$ and $H\times S^1$ along $F'\times S^1$ via $\phi$. Consider the following commutative diagram of groups:
\[\begin{tikzcd}
    \pi_1(F'\times S^1) \arrow{r}{\subseteq_*} \arrow[swap]{d}{j\circ \phi_*} & \pi_1(H\times S^1) \arrow{d}{\subseteq_*} \\
    \pi_1(W')\arrow{r}{\subseteq_*} & \pi_1(W)
  \end{tikzcd}\]
By our assumption on $\phi$, $j \circ \phi_*$ factors through $\pi_1(F'\times S^1) \xrightarrow{\subseteq_*} \pi_1(H\times S^1)$
(that is, the map $\pi_1(H\times S^1) \to \pi_1(W')$ given as composition of $k$ with the canonical projection $\pi_1(H\times S^1) \to \pi_1(S^1)$
commutes with the other maps).
This implies that the bottom arrow is a group isomorphism since the diagram is a push-out diagram by the Seifert-van Kampen theorem.

(iv)
This follows because
the inclusions $S^3\setminus\tub{K}$ into $W'$ and $M_K$ and the inclusion of $W'$ into $W$ descend to isomorphism on $H_1(\,\cdot\,;\Z)$
(all of these first homology groups are generated by the class of a meridian of $K$ on $\Sigma$).

(ii) To show that $b_2(W)=2g$, we consider the long exact Mayer-Vietoris sequence for homology with rational coefficients:
\[
\begin{tikzcd}[column sep=1.3em]
H_3(W) &[-2em] \ar{r}{\partial} & H_2(F'\times S^1)\ar{r} & H_2(H\times S^1)\oplus H_2(W') \ar{r}    & H_2(W)           & \\[-4ex]
         & \ar{r}{\partial} & H_1(F'\times S^1)\ar{r} & H_1(H\times S^1)\oplus H_1(W') \ar{r}    & H_1(W) \ar{r}{0} & \ldots
\end{tikzcd}
\]
We note that $H_3(W;\Q)=0$. Indeed, $H_3(W;\Q)\cong H^1(W,M_K;\Q)\cong H_1(W,M_K;\Q)$ by Poincar\'{e} duality and the universal coefficient theorem, and $H_1(W,M_K;\Q)=0$ by the long exact sequence for the pair $(W,M_K)$ and the fact that $H_1(M_K;\Q)\to H_1(W;\Q)$ is an isomorphism (see (iv)).
Therefore, the dimensions of the homology spaces in the Mayer-Vietoris sequence are (from left to right)
$0,2g,3g,b_2(W),2g+1,g+2,1$. Since the alternating sum of dimensions is zero, we have $b_2(W)=2g$.

(iii)
We prove $\sigma(W)=0$ by applying \Cref{lemma:NW-add} to $W=W'\cup H\times S^1$;
compare \Cref{fig:NW-add}(iii).
The hypothesis of the lemma is satisfied since
the inclusion of $\Sigma$ in $S^3\setminus\tub{K}$ and $\partial\tub{F}$ have the same kernel (compare the proof of \Cref{claim1}(iii)).
We have $\sigma(W')=0$ (see \Cref{claim1}(iii)) and $\sigma(H\times S^1)=0$ (since, for example, any pair of classes can be represented by two disjoint closed surfaces). It follows that $\sigma(W) = 0$.
\end{proof}

\begin{rmk}\label{rmk:ko}
If the $\mathbb{Z}$--slice surface $F$ of $K$ is obtained as a pushed-in 3D--cobordism
between $K$ and a knot with Alexander polynomial $1$, then a presentation matrix of the Blanchfield pairing
can be given more explicitly using Ko's formula \cite{ko} (see also \cite[(2.3) and (2.4)]{BorodzikFriedl_15_TheUnknottingnumberAndClassInv1}, \cite[Sec.~5]{FellerLewark_16}).
This yields a direct proof of \eqref{item:Seifertsurface}~$\Rightarrow$~\eqref{item:Blanchfield},
the details of which we omit.
\end{rmk}

\section{The three-dimensional part of the proof}\label{sec:3D}
This section completes the proof of \Cref{thm:main}.
We will show~\eqref{item:Blanchfield}~$\Rightarrow$~\eqref{item:unknotting} and
\eqref{item:unknotting}~$\Rightarrow$~\eqref{item:Seifertsurface}
in the next two subsections, respectively.%

\subsectionpdfbookmark{Unknotting information from the Blanchfield pairing---\eqref{item:Blanchfield}~$\Rightarrow$~\eqref{item:unknotting}}{Unknotting information from the Blanchfield pairing---\eqref{item:Blanchfield}~=>~\eqref{item:unknotting}}\label{subsec:(4)=>(3)}
The main result of~\cite{BorodzikFriedl_14_OnTheAlgUnknottingNr} can be phrased as follows:
\begin{thm}[{\cite[Thm.~5.1]{BorodzikFriedl_14_OnTheAlgUnknottingNr}}]\label{thm:BoroFriedl}
Let $A(t)$ be a Hermitian presentation matrix of the Blanchfield pairing of a knot $K$.
Assume that the symmetric bilinear form $A(1)$ is diagonalizable,
and denote the number of its positive and negative eigenvalues
counted with multiplicity by $p, n \in \Z^+_0$, respectively.
Then $K$ can be turned into a knot with Alexander polynomial $1$
by changing $p$ positive and $n$ negative crossings.
\qed\end{thm}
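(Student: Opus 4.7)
The plan is to induct on $p + n$, producing at each step a single crossing change of a prescribed sign that reduces the size of a Hermitian presentation of the Blanchfield pairing by one. After $p + n$ steps, the Blanchfield pairing will be presented by the empty matrix, hence be trivial, so the resulting knot has Alexander polynomial $1$.

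First I would exploit the diagonalizability hypothesis to put $A(t)$ into a convenient normal form. Since $A(1)$ is a unimodular integer symmetric matrix of signature $(p,n)$ that is diagonalizable over $\Z$, there exists $P \in \mathrm{GL}_{2g}(\Z)$ with $P^{\top} A(1) P = \mathrm{diag}(1,\ldots,1,-1,\ldots,-1)$. Viewing $P$ as a constant matrix in $\mathrm{GL}_{2g}(\Lambda)$, we can replace $A(t)$ by the congruent Hermitian matrix $B(t) := P^{\top} A(t) P$, which still presents the Blanchfield pairing of $K$ and now has the standard diagonal form at $t=1$. This reduces the problem to the case where $A(1)$ is already diagonalized with $\pm 1$ entries on the diagonal.

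Next, the core of the argument is the following inductive step: if a knot $K'$ admits a Hermitian matrix $B(t)$ of size $m$ presenting its Blanchfield pairing, with $B(1) = D \oplus (\epsilon)$ for some Hermitian $D$ of size $m-1$ and sign $\epsilon \in \{\pm 1\}$, then a single $\epsilon$-signed crossing change on $K'$ yields a knot $K''$ whose Blanchfield pairing is presented by a Hermitian matrix $B'(t)$ of size $m-1$ with $B'(1) = D$. To prove this step, I would realize $B(t)$ via a Seifert surface of $K'$, using the standard correspondence between Seifert forms and presentations of the Blanchfield pairing. The block decomposition of $B(1)$ translates into a band decomposition of the Seifert surface in which the summand $(\epsilon)$ corresponds to a distinguished band; performing a crossing change on this band---by $\pm 1$-framed surgery (of sign matching $\epsilon$) on the boundary of a suitably chosen crossing disk---effects the geometric crossing change and simultaneously realizes the desired algebraic reduction of the Seifert form, and hence of the Blanchfield presentation.

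Iterating this step $p$ times with $\epsilon = +1$ and $n$ times with $\epsilon = -1$ produces the required sequence of crossing changes. The principal obstacle is this inductive step: while the splitting $B(1) = D \oplus (\epsilon)$ is given over $\Z$ by the diagonalization, converting it into an actual crossing change requires lifting it to a compatible splitting of $B(t)$ over $\Lambda$ (up to permissible presentation moves) and then realizing this lifted splitting by a geometric band on a Seifert surface admitting a crossing disk of the correct framing. The technical heart of the argument consists in carrying out this lift and its geometric realization in a controlled way, ensuring that after each crossing change the new presentation matrix retains the inductive hypothesis, namely that $B'(1)$ remains integrally diagonalizable with signature $(p{-}i_+, n{-}i_-)$, so the induction closes.
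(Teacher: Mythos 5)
First, a point of context: the paper does not prove this statement at all --- it is quoted from Borodzik--Friedl (hence the \qed{} directly after the statement), and the present paper only uses it as a black box. So the comparison is with Borodzik--Friedl's proof, not with anything in this text.

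Your preliminary reduction is fine: since $\det A(t)$ generates the order ideal of the Alexander module, $\det A(1)=\pm\Delta_K(1)=\pm1$, so an integral diagonalization of $A(1)$ automatically has $\pm1$ diagonal entries (you should say this), and a constant congruence preserves the property of presenting the Blanchfield pairing. The genuine gap is that everything after this --- your inductive step --- \emph{is} the theorem, and the route you sketch for it does not go through. (a) An arbitrary Hermitian presentation matrix $B(t)$ of $\Bl(K')$ need not be congruent over $\Lambda$ to $(1-t)V+(1-t^{-1})V^{\top}$ for a Seifert matrix $V$ of matching size; the ``standard correspondence'' runs from Seifert surfaces to presentation matrices, and reversing it with control on the size is itself a realization statement of the same difficulty as the theorem. (b) A direct-sum splitting of $B(1)$ over $\Z$ does not lift to a splitting of $B(t)$ over $\Lambda$: after your congruence the off-diagonal entries of the last column are merely divisible by $(1-t)$, not zero, and the Alexander module presented by $B(t)$ typically has no corresponding cyclic direct summand --- so there is no ``distinguished band'' to split off, even up to presentation moves. (c) Even granted a Seifert surface with a band decomposition, a single crossing change does not delete a row and column of the presentation: its known effect on the Blanchfield pairing is a \emph{bordering} of a presentation matrix by one extra row and column (with prescribed value of the new diagonal entry at $t=1$), respectively a specific $2\times2$ stabilization of the Seifert matrix, and the content of the theorem is precisely the converse realization of such a de-bordering by a geometric crossing change. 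Your closing claim that the new matrix $B'(1)$ stays integrally diagonalizable after the move is likewise asserted rather than proved.

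Borodzik and Friedl handle this crux by a substantive three-/four-dimensional surgery argument (using their surgery-theoretic description of presentation matrices of $\Bl(K)$ via the zero-surgery $M_K$, and realizing suitable classes in the Alexander module by curves along which $\pm1$-twists effect crossing changes), not by band moves on a Seifert surface. As written, your proposal defers exactly this step to the acknowledged ``technical heart,'' so it is an outline of the statement rather than a proof of it.
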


We show that in case $A(1)$ is indefinite, the diagonalization assumption is unnecessary.
More precisely, we prove the following proposition, which might be of independent interest.

\begin{prop}\label{prop:Bl->unknotting}
Let $A(t)$ be a Hermitian presentation matrix of the Blanchfield pairing of a knot $K$.
Assume that the symmetric bilinear form $A(1)$ is indefinite,
and denote the number of its positive and negative eigenvalues
counted with multiplicity by $p, n \in \N$, respectively.
Then $K$ can be turned into a knot with Alexander polynomial~$1$
by changing $p$ positive and $n$ negative crossings.
\end{prop}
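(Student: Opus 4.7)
The plan is to deduce \cref{prop:Bl->unknotting} from \Cref{thm:BoroFriedl} by constructing a new Hermitian presentation matrix $\tilde A(t)$ of $\Bl(K)$ such that $\tilde A(1)$ is $\Z$-diagonalizable and has the same positive and negative eigenvalue counts $(p,n)$ as $A(1)$. Since $A(t)$ presents $\Bl(K)$, we have $\det A(t)\doteq \Delta_K(t)$ and $\Delta_K(1)=\pm 1$, so $A(1)$ is a unimodular integer symmetric matrix. By the classification of indefinite unimodular symmetric bilinear forms over $\Z$, such a form is $\Z$-congruent to a diagonal matrix precisely when it is \emph{odd}. In the odd case \Cref{thm:BoroFriedl} applies directly, so henceforth assume $A(1)$ is even; then $A(1)$ is $\Z$-congruent to $aH\oplus b(\pm E_8)$ for some $a\ge 1$, where $H=\begin{pmatrix}0 & 1\\ 1 & 0\end{pmatrix}$, and in particular contains $H$ as a $\Z$-direct summand.

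The key algebraic fact that I will exploit is that the constant Hermitian $2\times 2$ matrix $H'=\begin{pmatrix}1 & 1\\ 1 & 0\end{pmatrix}$ has determinant $-1$ (a unit in $\Lambda$), signature $0$, and one positive and one negative eigenvalue, matching $H$ numerically, but is \emph{odd} (hence $\Z$-congruent to $\langle 1\rangle\oplus\langle-1\rangle$); moreover $H\oplus\langle 1\rangle\cong H'\oplus\langle 1\rangle$ over $\Z$, as both sides are odd indefinite unimodular of rank $3$ and signature $1$. Using this, I will produce $\tilde A(t)$ via a stabilize--congruence--destabilize procedure: after a preliminary constant $\Lambda$-congruence arranging $A(1)=H\oplus C$ in block form with $A(t)=\begin{pmatrix}A_H(t) & E(t)\\ E(t)^* & A_C(t)\end{pmatrix}$ and $E(1)=0$, stabilize $A(t)$ by a $1\times 1$ block $(1)$ (which does not change the Blanchfield pairing class); apply a constant $\Z$-congruence (lifted to $\Lambda$) realizing $H\oplus\langle 1\rangle\cong H'\oplus\langle 1\rangle$ on the three relevant coordinates; then destabilize by a $1\times 1$ summand whose value at $t=1$ is $\langle 1\rangle$. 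The resulting matrix $\tilde A(t)$ has $\tilde A(1)=H'\oplus C$, which is odd (hence $\Z$-diagonalizable) with signatures $(p,n)$, so \Cref{thm:BoroFriedl} applied to $\tilde A(t)$ yields the desired $p$ positive and $n$ negative crossing changes.

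The main obstacle is the final destabilization. After the constant $\Z$-congruence, the added coordinate is no longer literally a $\Lambda$-direct summand: the off-diagonal block coupling it to the $C$-block vanishes at $t=1$ but is generically nonzero over $\Lambda$, and the relevant diagonal entry need not itself be a unit in $\Lambda$ outside of $t=1$. Completing the destabilization therefore requires further elementary $\Lambda$-congruences to simultaneously clear this off-diagonal and bring the resulting $1\times 1$ block back into the form of a unit-determinant summand. The crucial input making this possible is that every off-diagonal entry in question is divisible by $t-1$ in $\Lambda$, inherited from $E(1)=0$ and the fact that the $\Z$-congruence preserves the vanishing pattern at $t=1$; controlling these $(t-1)$-divisible corrections via a carefully chosen sequence of transvections produces $\tilde A(t)$ with the desired properties, after which \Cref{thm:BoroFriedl} concludes the argument.
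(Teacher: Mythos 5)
Your setup is sound: the reduction to the even case via the classification of indefinite unimodular forms is correct, the observation that $H\oplus\langle 1\rangle\cong H'\oplus\langle1\rangle$ over $\Z$ is correct, and the stabilization $A(t)\mapsto A(t)\oplus(1)$ does preserve the presented Blanchfield pairing. But the destabilization step, which you yourself flag as ``the main obstacle,'' is a genuine gap, and the $(t-1)$-divisibility you single out as ``the crucial input'' does not close it. After the constant $\Z$-congruence, the new $(2g+1,2g+1)$ entry $d(t)$ satisfies $d(1)=1$ but is otherwise an arbitrary Hermitian Laurent polynomial; a Hermitian unit of $\Lambda$ is necessarily $\pm1$ (and even in $\Lambda_0=\Z[t^{\pm1},(t-1)^{-1}]$, a Hermitian unit equal to $1$ at $t=1$ must be the constant $1$). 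To clear an off-diagonal entry $\tilde A_{2g+1,j}(t)$ by a transvection from the last row you must subtract $\tilde A_{2g+1,j}(t)/d(t)$ times that row, and this ratio need not lie in $\Lambda$ (or $\Lambda_0$) merely because the numerator vanishes at $t=1$: the denominator $d(t)$ is not a power of $t-1$. So $\Lambda$-transvections alone cannot produce the desired direct summand, and no argument is given that they can.

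The ingredient you are missing is the ability to change bases over the localization $\Lambda_0$ and still conclude that the result presents the Blanchfield pairing---valid as long as the final matrix has entries in $\Lambda$ and the transformation determinant is a unit in $\Lambda$, not just in $\Lambda_0$. This is exactly \cref{lem:lambda0} (a special case of \cref{cor:BC}), and it relies on $(t-1)$ acting invertibly on the Alexander module. Once that is available, stabilization is unnecessary: the paper keeps the matrix size at $2g$ and directly modifies the $2\times2$ $H$-block by a $\Lambda_0$-congruence of determinant $1$ or $-t^{-1}$ (dividing out or reinserting factors of $1-t$ in the first two rows/columns according to the parity of the leading coefficient $b_{11}(1)$), producing a $\Lambda$-matrix $C(t)$ with $C(1)$ odd, and then diagonalizes over $\Z$. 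Your stabilize--congruence--destabilize route would still, even with \cref{lem:lambda0} in hand, require an unsupplied argument that the destabilization can be completed; I would recommend abandoning it in favor of the direct $2\times2$ block manipulation over $\Lambda_0$.
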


The case $p = n$ of this proposition is exactly the desired implication \eqref{item:Blanchfield}~$\Rightarrow$~\eqref{item:unknotting}.
The remainder of this section is devoted to the proof of the proposition.
Rather than unpacking Borodzik and Friedl's intricate 3--dimensional argument turning algebraic information into unknotting information, we use a purely algebraic argument about Hermitian pairings over $\Lambda\coloneqq\Z[t,t^{-1}]$ and $\Lambda_0\coloneqq\Z[t,t^{-1},(t-1)^{-1}]$ to reduce~\Cref{prop:Bl->unknotting} to~\Cref{thm:BoroFriedl}.
As a first step, we recall that multiplication by $(t-1)$ is an isomorphism of the Alexander module of a knot \cite{MR0461518}.
This implies that the Blanchfield pairing can be dealt with over $\Lambda_0$ rather than~$\Lambda$.
More precisely, we have the following.
\begin{lemma}\label{lem:lambda0}
Let $A(t)$ be a Hermitian $\Lambda$--matrix presenting the Blanchfield pairing of $K$.
If $T(t)$ is a $\Lambda_0$--matrix such that $\det T(t)$ is a unit in~$\Lambda$,
and
\[
B(t) = \overline{T(t)}^{\top}\! A(t) T(t)
\]
is a $\Lambda$--matrix,
then $B(t)$ also presents the Blanchfield pairing of $K$.\qed
\end{lemma}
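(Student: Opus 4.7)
The plan is to prove Lemma~\ref{lem:lambda0} by working over the larger ring $\Lambda_0$, where $T(t)$ is genuinely invertible, and then transferring the resulting isometry back to $\Lambda$. The guiding principle is that both $\Lambda^n/A\Lambda^n$ and $\Lambda^n/B\Lambda^n$ have $(t-1)$ acting invertibly, so they are canonically $\Lambda_0$-modules and localization at $(t-1)$ is transparent.

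First, observe that $\det T \in \Lambda^{\times} = \{\pm t^k : k \in \mathbb{Z}\}$, so $\overline{\det T}\cdot \det T = 1$ and hence $\det B = \det A$. Hermiticity of $B$ follows from that of $A$. In particular $\det B$ is a non-zero-divisor, and $\det B(1) = \det A(1) = \pm 1$, since $A$ presents the Blanchfield pairing of a knot and $\Delta_K(1)=1$ by our normalization. Because $\det B(1)$ is a unit in $\mathbb{Z}$, we have $(\det B, t-1) = \Lambda$ (and similarly for $A$); consequently $(t-1)$ acts invertibly on $\Lambda^n/A\Lambda^n$ and on $\Lambda^n/B\Lambda^n$. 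It follows that the natural localization maps $\iota_A\colon \Lambda^n/A\Lambda^n \to \Lambda_0^n/A\Lambda_0^n$ and $\iota_B\colon \Lambda^n/B\Lambda^n \to \Lambda_0^n/B\Lambda_0^n$ are $\Lambda$-module isomorphisms.

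Over $\Lambda_0$, where $T$ is invertible, multiplication by $\overline{T}^{-\top}$ descends to a $\Lambda_0$-linear isomorphism $\phi^0\colon \Lambda_0^n/B\Lambda_0^n \to \Lambda_0^n/A\Lambda_0^n$ (well-defined because $\overline{T}^{-\top}B = AT$). The direct computation
\[\overline{\phi^0(x)}^{\top} A^{-1}\phi^0(y) \;=\; \overline{x}^{\top} T^{-1}A^{-1}\overline{T}^{-\top}y \;=\; \overline{x}^{\top} B^{-1}y\]
shows that $\phi^0$ is an isometry of the $Q(\Lambda_0)/\Lambda_0$-valued pairings on $\Lambda_0^n/B\Lambda_0^n$ and $\Lambda_0^n/A\Lambda_0^n$ presented respectively by $B$ and $A$. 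Composing with $\iota_A^{-1}$ and $\iota_B$ yields a $\Lambda$-linear isomorphism $\phi \coloneqq \iota_A^{-1}\circ \phi^0\circ \iota_B\colon \Lambda^n/B\Lambda^n \to \Lambda^n/A\Lambda^n$.

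The delicate final step, which I expect to be the main obstacle, is to upgrade this to an isometry of the $Q(\Lambda)/\Lambda$-valued pairings $\ell_B$ and $\phi^*\ell_A$ themselves, not merely their reductions modulo $\Lambda_0/\Lambda$. The Hermitian pairing $\delta \coloneqq \ell_B - \phi^*\ell_A$ on $\Lambda^n/B\Lambda^n$ vanishes after composition with the surjection $Q(\Lambda)/\Lambda \twoheadrightarrow Q(\Lambda_0)/\Lambda_0$, and so takes values in the kernel $\Lambda_0/\Lambda$. Every element of $\Lambda_0/\Lambda$ is annihilated by some power of $(t-1)$; the image of any $\Lambda$-linear map $\Lambda^n/B\Lambda^n \to \Lambda_0/\Lambda$ is therefore annihilated by a uniform power $(t-1)^K$ (using finite generation of the source), whereas $(t-1)^K$ acts surjectively on $\Lambda^n/B\Lambda^n$. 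Thus $\Hom_\Lambda(\Lambda^n/B\Lambda^n, \Lambda_0/\Lambda) = 0$, giving $\delta = 0$. Hence $\phi$ is an isometry of $\ell_B$ and $\ell_A$, and $B(t)$ presents the Blanchfield pairing of~$K$.
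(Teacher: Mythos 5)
Your proof is correct, but it takes a genuinely different route from the paper's. The paper deduces Lemma~\ref{lem:lambda0} as a special case of Corollary~\ref{cor:BC}, which in turn rests on Proposition~\ref{prop:BC} in the appendix: there one first replaces the $Q(R)/R$-valued pairing by an $R/I$-valued pairing (with $I=(\det A)$), so that the hypothesis ``$\phi$ induces $R/I\cong R'/\phi(I)R'$'' makes the two target groups literally isomorphic, and the isomorphism $M\cong M\otimes_R R'$ does all the work --- no mismatch of targets arises. You instead stay with the $Q(\Lambda)/\Lambda$-valued pairing throughout, build the candidate isometry $\phi=\iota_A^{-1}\circ\phi^0\circ\iota_B$ explicitly by localizing at $(t-1)$, and then handle the residual ambiguity in $\ker\bigl(Q(\Lambda)/\Lambda\twoheadrightarrow Q(\Lambda_0)/\Lambda_0\bigr)=\Lambda_0/\Lambda$ by the clean observation that $\Hom_\Lambda(\Lambda^n/B\Lambda^n,\Lambda_0/\Lambda)=0$, since $(t-1)$ acts invertibly on the source but locally nilpotently on the target. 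Each approach has its merits: the paper's framework is reusable elsewhere (e.g.\ Corollary~\ref{cor:pfold}) and works for arbitrary ring maps $\phi\colon R\to R'$; your argument is self-contained, avoids the change-of-target machinery of Appendix~\ref{subsec:a1}, and makes the mechanism (localization at $(t-1)$) very concrete --- at the price of being specific to localizations, since the $\Hom$-vanishing step uses that every element of $\Lambda_0/\Lambda$ is killed by a power of $(t-1)$. One small remark: you established $\det B=\det A$ on the nose (using $\overline{\det T}\cdot\det T=1$ for units of $\Lambda$), which is slightly stronger than needed; coprimality of $\det B$ with $t-1$ is all that is used.
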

This statement is implicit in~\cite[Proof of Prop.~2.1]{BorodzikFriedl_15_TheUnknottingnumberAndClassInv1} and~\cite[Proof of Lemma~5.4]{COT2}, but we thought it beneficial to make the statement explicit.
We use the occasion to formulate a general principle for arbitrary rings, which we prove in detail; see appendix.
The above lemma follows as a special case of \Cref{cor:BC}. %
Now, \Cref{prop:Bl->unknotting} is a direct consequence of \Cref{thm:BoroFriedl} and \cref{lem:nodiag} below.
The proof of the latter will rely on the following well-known fact.
\begin{thm}[see e.g.~\cite{huse}] \label{thm:classificationindefinite}
Let $M$ be a finitely generated free abelian group and $\theta\colon M\times M \to \Z$
an indefinite unimodular symmetric bilinear form. If $\theta$ is \emph{odd}, i.e.~there exists $v\in M$
such that $\theta(v,v)$ is odd, then $\theta$ admits a diagonal matrix; in other words,
$\theta$ is isometric to an orthogonal sum of copies of the forms $(1)$ and $(-1)$.
If $\theta$ is \emph{even}, i.e.~$\theta(v,v)$ is even for all $v\in M$, then $\theta$ is isometric
to an orthogonal sum of copies of the so-called hyperbolic plane $H = \small \begin{pmatrix}0&1\\1&0\end{pmatrix}$ and copies of the positive definite form~$E_8$, with at least one copy of $H$.
\end{thm}
\begin{lemma}\label{lem:nodiag}
Let $A(t)$ be a Hermitian matrix over $\Lambda$ such that $A(1)$ is unimodular and indefinite.
Then there is a transformation matrix over $\Lambda_0$ with determinant a unit in $\Lambda$,
transforming $A(t)$ into a Hermitian matrix $B(t)$ over $\Lambda$
such that $B(1)$ is diagonal with $\pm 1$ diagonal entries and $\sigma(B(1)) = \sigma(A(1))$.
\end{lemma}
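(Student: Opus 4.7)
The plan is to handle two cases according to the parity of the integer symmetric bilinear form $A(1)$.

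If $A(1)$ is odd, the classical classification of indefinite unimodular integer symmetric forms yields $S \in GL_n(\Z)$ with $S^\top A(1) S$ diagonal with $\pm 1$ entries and signature $\sigma(A(1))$. Since $\det S = \pm 1 \in \Lambda^\times$, taking $T := S$ (as a constant matrix) and $B := S^\top A(t) S \in \Mat_n(\Lambda)$ finishes this case.

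If $A(1)$ is even, the goal is to first construct a $\Lambda_0$-transformation making the specialization odd, after which the previous case applies. By the classification of indefinite even unimodular integer symmetric forms, $A(1) \cong H \oplus A'(1)$, where $H$ is the hyperbolic plane. After an integer change of basis, we may assume $A(1)$ has this block form with $H$ in the upper-left $2 \times 2$ block, so $A_{11}(1) = A_{22}(1) = 0$, $A_{12}(1) = 1$, and $A_{1j}(1) = A_{2j}(1) = 0$ for $j > 2$. Using that any symmetric Laurent polynomial vanishing at $t=1$ is divisible by $2 - t - t^{-1}$, write $A_{11}(t) = (2-t-t^{-1})\alpha(t)$ and $A_{22}(t) = (2-t-t^{-1})\gamma(t)$ with symmetric $\alpha, \gamma \in \Lambda$.

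Depending on the parities of $\alpha(1), \gamma(1)$, construct $T_0$ as follows: if $\alpha(1)$ is odd, $T_0 = \mathrm{diag}(1/(1-t), 1-t, 1, \ldots, 1)$; if $\alpha(1)$ is even but $\gamma(1)$ is odd, $T_0 = \mathrm{diag}(1-t, 1/(1-t), 1, \ldots, 1)$; otherwise (both $\alpha(1), \gamma(1)$ even) take $T_0 = \left(\begin{smallmatrix} 1 & 0 \\ 1/(1-t) & 1 \end{smallmatrix}\right) \oplus I_{n-2}$. In each case $\det T_0 = 1 \in \Lambda^\times$, and direct computation (crucially using $A_{1j}(1) = A_{2j}(1) = 0$ for $j > 2$ to cancel the $(1-t)^{-1}$ poles in the off-block entries) confirms that $B := T_0^* A T_0 \in \Mat_n(\Lambda)$. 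The diagonal entry $B(1)_{11}$ (respectively $B(1)_{22}$, $B(1)_{11}$) equals $\alpha(1)$, $\gamma(1)$, or $1 - 2A_{12}'(1) + \gamma(1)$ in the three sub-cases, which is odd by design in each case; thus $B(1)$ is an odd integer symmetric form and the odd case applies to finish.

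Signature preservation $\sigma(B(1)) = \sigma(A(1))$ follows by applying Sylvester's law of inertia to $B(t) = T_0(t)^\top A(t) T_0(t)$ for real $t$ in a punctured neighborhood of $1$ (where $T_0(t)$ is defined and $\det B(t) = \det A(t) \neq 0$ by unimodularity of $A(1)$), together with local constancy of signature on non-singular real symmetric matrices. The main technical difficulty is the construction of $T_0$ in the even case and the verification that $B \in \Mat_n(\Lambda)$: the scaling transformations $\mathrm{diag}(1/(1-t), 1-t, \ldots)$ are designed to absorb the factor $2 - t - t^{-1}$ in the diagonal entries, thereby exposing the potentially odd Taylor coefficients $\alpha(1), \gamma(1)$, while the shearing transformation handles the residual sub-case by accessing the parity information carried by $A_{12}'(1)$.
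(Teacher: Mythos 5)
Your proposal is correct and follows essentially the same route as the paper: diagonalize over $\Z$ when $A(1)$ is odd, and when $A(1)$ is even, put it in the form $H\oplus R$ and then apply a $\Lambda_0$-scale or $\Lambda_0$-shear to make the evaluation at $t=1$ odd. You have one more sub-case than is strictly needed: a shear into the $(2,2)$-slot (as in the paper) produces $C_{22}(1)=1+\alpha(1)+2b_{12}(1)$, which is odd precisely when $\alpha(1)$ is even, and so handles both of your sub-cases with $\alpha(1)$ even regardless of the parity of $\gamma(1)$; your shear into the $(1,1)$-slot instead requires $\gamma(1)$ to be even, which forces the extra scaling case. This is harmless. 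One genuine (though easily repaired) slip occurs in the signature-preservation argument: for real $t\neq\pm1$, the matrix $A(t)$ need not be symmetric---Hermitian-ness over $\Lambda$ only gives $A(t)^\top=A(t^{-1})$---so a ``signature'' of $A(t)$ is not defined there and Sylvester's law does not apply. The argument should instead be run for $t$ on the unit circle near $1$, where $A(t)$ and $B(t)$ are genuine complex Hermitian matrices, $T_0(t)$ is invertible, and local constancy of signature on the open set where the determinant is nonzero yields $\sigma(B(1))=\sigma(A(1))$.
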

\begin{proof}
If the form $A(1)$ is odd, then by \cref{thm:classificationindefinite}
there is a base change over $\Z$ that diagonalizes $A(1)$.
The same base change transforms $A(t)$ into a matrix~$B(t)$ that satisfies the desired properties.
So let us consider the case that $A(1)$ is even.
Then there is a base change over $\Z$ transforming $A(1)$ into $H \oplus R$ for some $R$.
Assume w.l.o.g.\ that $A(1)$ is already of this form.
Then all the entries of the first two rows of $A(1) - (H \oplus N)$ (where $N$ is a zero matrix of size $2g-2$) evaluate to $0$
at $t = 1$ and are thus divisible by $(t - 1)$. Because $A(t)$ is Hermitian, we see that its top-left $2\times 2$ submatrix is
of the form
\[
\begin{pmatrix}
x b_{11} & 1 + (1-t) b_{12} \\
1 + (1-t^{-1})\overline{b_{12}} & xb_{22}
\end{pmatrix},
\]
where we write $x = (1-t) + (1-t^{-1}) = (1-t)(1-t^{-1})$.
Furthermore
\[
A_{ij}  = (1-t) b_{ij}, \qquad
A_{ji}  = (1-t^{-1}) \overline{b_{ij}}
\]
for $i \in \{1,2\}$ and $j > 2$ and some polynomials $b_{ij} \in \Lambda$.
We will now consider the parity of $b_{11}(1)$, and in each case give a transformation matrix over $\Lambda_0$
with determinant a unit in $\Lambda$, which transforms $A(t)$ into a matrix $C(t)$ with odd $C(1)$. In this way the case
that $A(1)$ is even is reduced to the case that $A(1)$ is odd, which has already been discussed.

If $b_{11}(1)$ is even, add $1/(1-t)$ times the first row to the second, and then $1/(1-t^{-1})$ times the first column to the second.
This is a base change over $\Lambda_0$ coming from a transformation matrix with determinant~$1$.
It yields a Hermitian matrix $C(t)$ over $\Lambda$ with top-left $2\times 2$ submatrix
\[
\begin{pmatrix}
x b_{11} & 1 + (1-t) (b_{12} + b_{11}) \\
1 + (1-t^{-1})(b_{11} + \overline{b_{12}}) & xb_{22} + 1 + b_{11} + b_{12} + \overline{b_{12}}
\end{pmatrix}.
\]
One finds $C_{22}(1) = 0 + 1 + b_{11}(1) + 2b_{12}(1)$ to be odd.
This concludes the case that $b_{11}(1)$ is even.

If $b_{11}(1)$ is odd, one proceeds similarly:
one may divide the first row by $(1-t)$, and multiply the second row by $(1-t^{-1})$,
and apply the corresponding changes to the columns.
This is a base change over $\Lambda_0$ coming from a transformation matrix with determinant~$-t^{-1}$.
It yields a Hermitian matrix $C(t)$ over $\Lambda$ with top-left $2\times 2$ submatrix
\[
\begin{pmatrix}
b_{11} & 1 + (1-t) b_{12} \\
1 + (1-t^{-1})\overline{b_{12}} & x^2b_{22},
\end{pmatrix}.
\]
Clearly $C_{11}(1) = b_{11}(1)$ is odd, which concludes the case of odd $b_{11}(1)$.
\end{proof}
We have thus completed the proof of the implication \eqref{item:Blanchfield}~$\Rightarrow$~\eqref{item:unknotting},
and turn to the next part of the proof.
\subsectionpdfbookmark{3D--cobordisms from crossing changes---\eqref{item:unknotting}~$\Rightarrow$~\eqref{item:Seifertsurface}}{3D--cobordisms from crossing changes---\eqref{item:unknotting}~=>~\eqref{item:Seifertsurface}}\label{subsec:(3)=>(2)}
This section is devoted to the proof of the following proposition, from which \eqref{item:unknotting}~$\Rightarrow$~\eqref{item:Seifertsurface} follows since it corresponds to the special case that $K'$ is a knot with Alexander polynomial 1.
\begin{prop}\label{prop:crchangestoSS}
Let $g$ be a non-negative integer.
If a knot $K'$ can be obtained from a knot $K$ by changing $g$ positive and $g$ negative crossings (in any order),
then there exists an oriented connected surface $\Sigma$ of genus $g$ in $S^3$ with oriented boundary a two-component link whose components are isotopic to $K'$ with reversed orientation and $K$, respectively.
\end{prop}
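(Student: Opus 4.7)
I would prove Proposition~\ref{prop:crchangestoSS} by induction on $g$, reducing to a key geometric lemma that handles the case $g=1$.

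For the base case $g=0$, we have $K = K'$, and $\Sigma$ can be taken as the annulus $K\times [0,\epsilon]$ inside a tubular neighborhood of $K$ in $S^3$; its boundary consists of two parallel copies of $K$ carrying opposite induced orientations.

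The heart of the argument is the following key lemma, whose statement captures the $g = 1$ case of the proposition:

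\textbf{Key Lemma.} \emph{If a knot $K''$ is obtained from $K$ by changing exactly one positive and one negative crossing, then there is a connected oriented genus-1 surface $\Sigma\subset S^3$ with boundary the union of $K$ and a copy of $K''$ with reversed orientation.}

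Granted this lemma, I would complete the proof as follows. Pair off the $g$ positive and $g$ negative crossing changes arbitrarily into $g$ pairs. Since crossing changes at disjoint crossing disks commute (one may always make them disjoint by small isotopy), reorder the sequence of $2g$ crossing changes so that each pair appears consecutively. The Key Lemma applied to the first pair produces a genus-1 cobordism $\Sigma_1$ from $K$ to some intermediate knot $K_1$, and the induction hypothesis applied to the remaining $g-1$ pairs yields a genus-$(g-1)$ cobordism $\Sigma_2$ from $K_1$ to $K'$. Gluing $\Sigma_1$ and $\Sigma_2$ along their common boundary component isotopic to $K_1$ produces a connected oriented surface of genus $1 + (g-1) = g$ with two boundary components, one isotopic to $K$ and the other to $K'$ with reversed orientation, as required. (Here we use that the genus is additive under boundary-gluing of two oriented surfaces along a single boundary circle: if $\chi(\Sigma_i) = -2g_i$ and each $\Sigma_i$ has two boundary components, then the glued surface has $\chi = -2g_1 - 2g_2$ and two boundary components, hence genus $g_1 + g_2$.)

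For the Key Lemma itself, the strategy is explicit. Let $D_+$ and $D_-$ be disjoint crossing disks for the positive and negative crossings to be changed. I would realize the cobordism by starting from the annular pushoff $K\times[0,\epsilon]$ and attaching two $1$-handles (bands) whose core arcs pass through $D_+$ and $D_-$, respectively, inside $S^3$. The bands are to be chosen so that the first attachment splits the inner boundary circle into a $2$-component link $L$ (a partial resolution relative to the two crossing disks) and the second remerges $L$ into a single knot equal to $K''$. Two bands decrease the Euler characteristic by $2$ while leaving the number of boundary components equal to $2$, so the resulting surface is connected with $\chi = -2$ and two boundary components, i.e.\ genus $1$. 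Orientability is guaranteed precisely by the opposite signs of the two crossings: the half-twists of the two bands induced by running through $D_+$ and $D_-$ cancel in sign, giving the $1$-handles a coherent framing.

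The main obstacle will be this last construction: one must verify that the two bands can always be chosen so that the simultaneous band surgery yields the knot $K''$ (with both crossings changed) rather than an unrelated knot or a link, and so that the resulting surface is connected and orientable. This requires a delicate placement of the core arcs inside $S^3$, tied closely to the local geometry of $K$ near each crossing disk. The balanced hypothesis is essential: for two crossings of the same sign, the analogous two-band construction would produce a non-orientable surface, so some new idea would be needed. Once this geometric step is carried out (a local model near a neighborhood of each crossing disk should suffice, extended to the rest of $K$ by the annulus), the inductive argument above closes the proof.
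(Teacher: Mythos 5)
Your inductive approach, reducing to a Key Lemma for the $g=1$ case and gluing genus-one cobordisms, is a reasonable strategy but it is not a proof as written, and it differs from what the paper does in a way that hides where the real work is. The main gap is the Key Lemma itself. You describe a plan of attaching two bands to the annular push-off, but---as you candidly note---the decisive verification is missing: that for \emph{arbitrary} disjoint crossing disks $D_+,D_-$ of opposite signs (which may be far apart along $K$ and interleaved with it in complicated ways), the two bands can be chosen so that the resulting surface is embedded, connected, orientable, and has $K''$ as its inner boundary. This is not a routine local-model argument, precisely because the two crossings need not be adjacent. The paper handles this by a separate lemma: it first shows that one can reorganize the $2g$ disjoint crossing disks so that each opposite-sign pair $D,D'$ fits inside a small ball meeting $K$ in the standard local picture of Figure~3(a), and this reorganization requires the handle-slide argument of Figure~4 to reorder the intersection points of the crossing disks along $K$. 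Once that is done, the genus-one piece is explicit (Figure~3(b)), and the $g$ pieces sit in $g$ disjoint balls, glued to a single knotted band in their complement. Your proposal never addresses this reordering issue; without it, the ``delicate placement of the core arcs'' you ask for is not guaranteed to exist in the form you want.

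A secondary issue is the inductive gluing step. You glue a genus-one cobordism $\Sigma_1$ from $K$ to $K_1$ to a genus-$(g-1)$ cobordism $\Sigma_2$ from $K_1$ to $K'$ along the circle $K_1$ and conclude the result is an embedded genus-$g$ surface. In $S^3$ one must also ensure that the interiors of $\Sigma_1$ and $\Sigma_2$ can be made disjoint; this is not automatic when the two cobordisms are produced independently, and your proof does not address it. The paper avoids the issue entirely by constructing the whole genus-$g$ surface simultaneously (all genus-one pieces inside pairwise disjoint balls, plus a knotted band outside), so disjointness is built in. Your Euler characteristic bookkeeping for the genus of the glued surface is correct, but the geometric embeddedness would still need an argument.
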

\begin{figure}[h]%
(a) \includegraphics{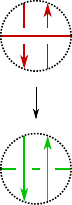}\hfill
(b) \raisebox{0pt}{\includegraphics[width=.38\textwidth]{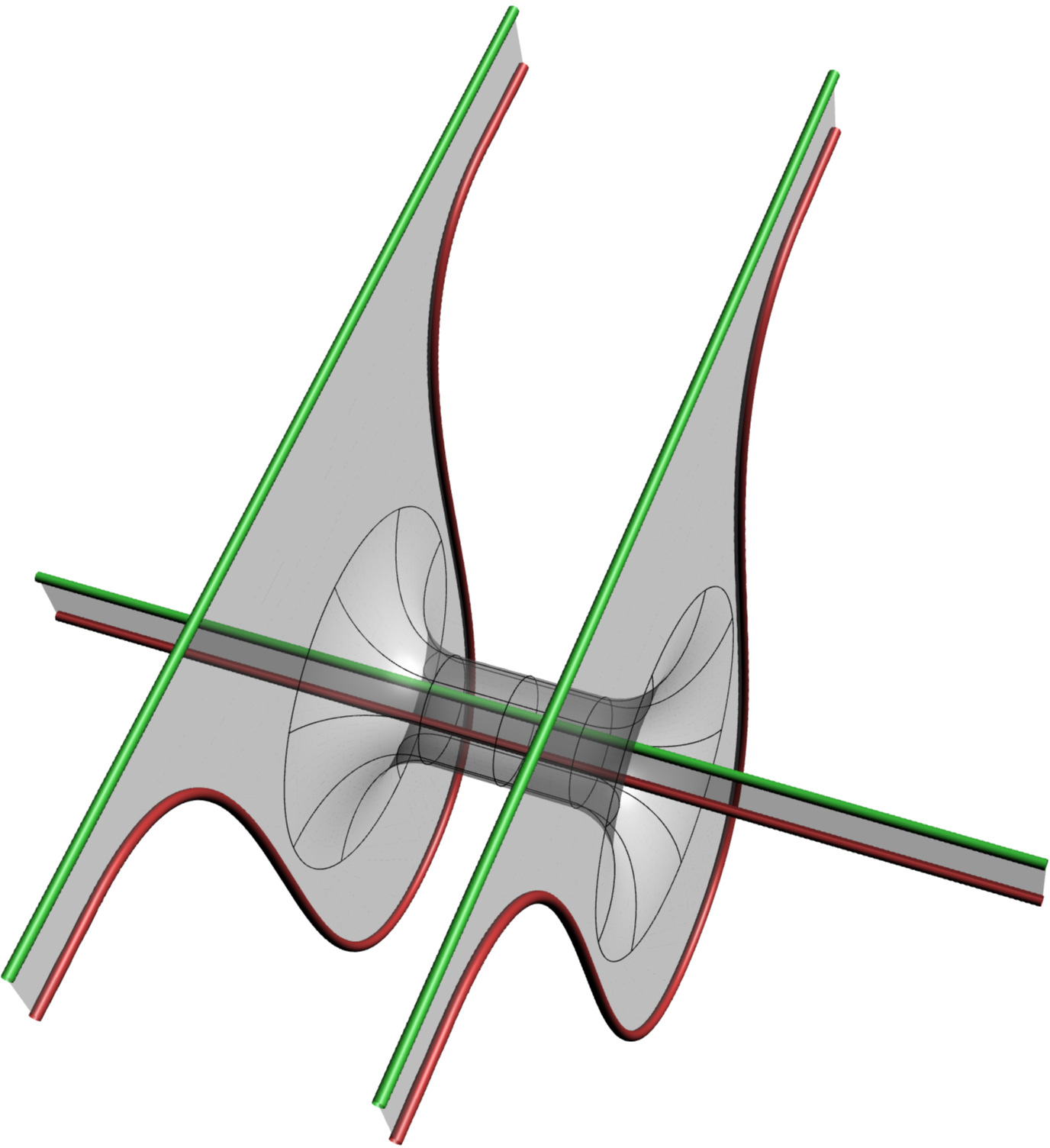}}\hfill
\raisebox{0pt}{\includegraphics[width=.38\textwidth]{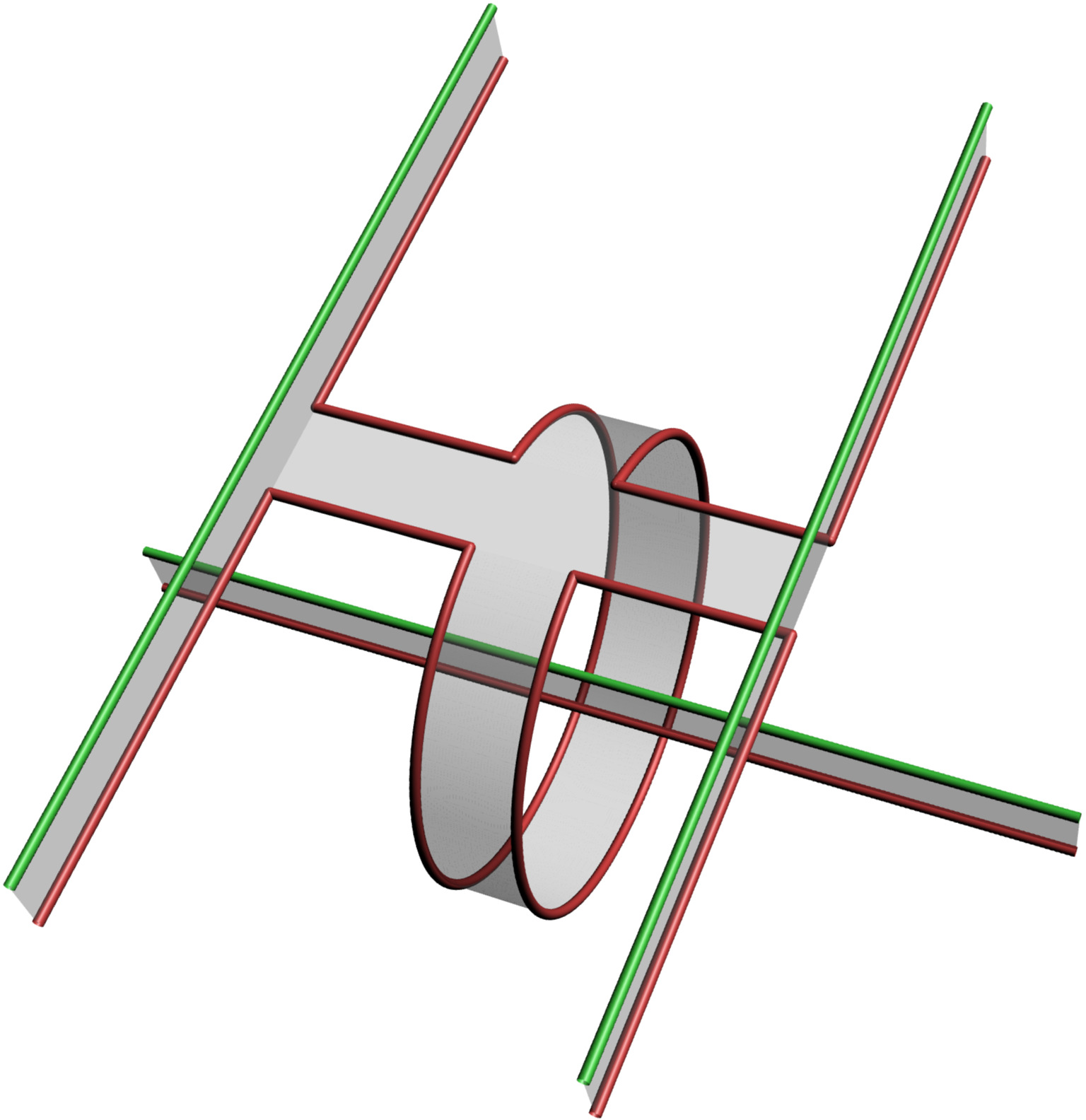}}
\caption{(a) A balanced crossing change, diagrammatically.
(b) Two isotopic drawings of the intersection of ball with a genus 1 Seifert surface realizing a balanced crossing change.}
\label{fig:3d}
\end{figure}
\begin{figure}[h]
\begin{center}
\begingroup%
  \makeatletter%
  \providecommand\color[2][]{%
    \errmessage{(Inkscape) Color is used for the text in Inkscape, but the package 'color.sty' is not loaded}%
    \renewcommand\color[2][]{}%
  }%
  \providecommand\transparent[1]{%
    \errmessage{(Inkscape) Transparency is used (non-zero) for the text in Inkscape, but the package 'transparent.sty' is not loaded}%
    \renewcommand\transparent[1]{}%
  }%
  \providecommand\rotatebox[2]{#2}%
  \newcommand*\fsize{\dimexpr\f@size pt\relax}%
  \newcommand*\lineheight[1]{\fontsize{\fsize}{#1\fsize}\selectfont}%
  \ifx\svgwidth\undefined%
    \setlength{\unitlength}{346.37349286bp}%
    \ifx\svgscale\undefined%
      \relax%
    \else%
      \setlength{\unitlength}{\unitlength * \real{\svgscale}}%
    \fi%
  \else%
    \setlength{\unitlength}{\svgwidth}%
  \fi%
  \global\let\svgwidth\undefined%
  \global\let\svgscale\undefined%
  \makeatother%
  \begin{picture}(1,0.87768131)%
    \lineheight{1}%
    \setlength\tabcolsep{0pt}%
    \put(0,0){\includegraphics[width=\unitlength,page=1]{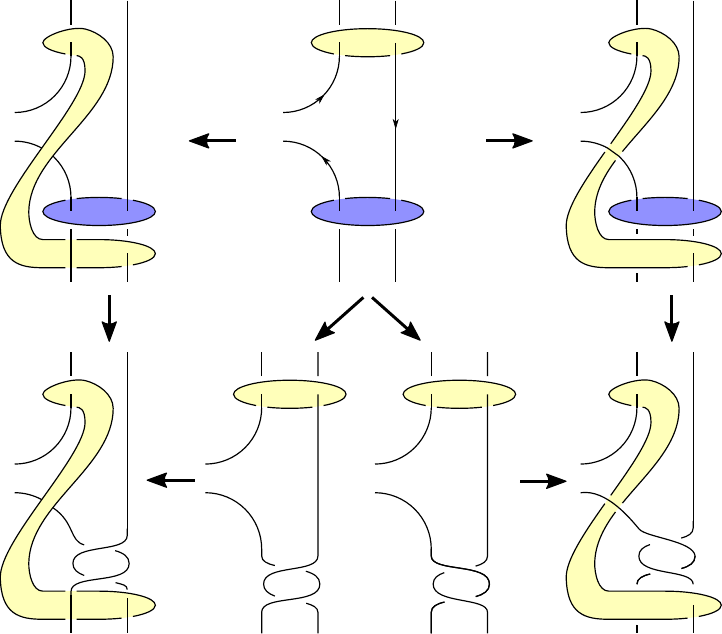}}%
    \put(0.23824942,0.18601715){\color[rgb]{0,0,0}\makebox(0,0)[t]{\lineheight{1.25}\smash{\begin{tabular}[t]{c}{\footnotesize isotopy}\end{tabular}}}}%
    \put(0.08882142,0.44850906){\color[rgb]{0,0,0}\makebox(0,0)[t]{\lineheight{1.25}\smash{\begin{tabular}[t]{c}\footnotesize crossing\\\footnotesize change\end{tabular}}}}%
    \put(0.75132558,0.18601715){\color[rgb]{0,0,0}\makebox(0,0)[t]{\lineheight{1.25}\smash{\begin{tabular}[t]{c}{\footnotesize isotopy}\end{tabular}}}}%
    \put(0.3892565,0.44850906){\color[rgb]{0,0,0}\makebox(0,0)[t]{\lineheight{1.25}\smash{\begin{tabular}[t]{c}\footnotesize crossing\\\footnotesize change\end{tabular}}}}%
    \put(0.6247312,0.44850906){\color[rgb]{0,0,0}\makebox(0,0)[t]{\lineheight{1.25}\smash{\begin{tabular}[t]{c}\footnotesize crossing\\\footnotesize change\end{tabular}}}}%
    \put(0.86832547,0.44850906){\color[rgb]{0,0,0}\makebox(0,0)[t]{\lineheight{1.25}\smash{\begin{tabular}[t]{c}\footnotesize crossing\\\footnotesize change\end{tabular}}}}%
  \end{picture}%
\endgroup%

\end{center}
\vspace{5ex}
\caption{Top: the result of performing the two crossing changes in the middle is the same as the result of performing those to the left or right, where left or right depends on the sign of the Dehn surgery along the boundary of the blue (gray-scale: dark) disk. The sign of the Dehn surgery along the boundary of the yellow (gray-scale: light) disk is not relevant.
\newline
Bottom: the result of performing one of the crossing changes---performing $-1$-framed Dehn surgery (left) and $1$-framed Dehn surgery (right) on the boundary of the blue disk---yields isotopic yellow crossing disks; hence isotopic knots when both crossing changes are performed.}
\label{fig:sliding}
\end{figure}
\begin{proof}
Let us first consider two simple cases.
If $g = 0$, just take $\Sigma$ as a knotted band.
Next, consider the case that $g = 1$ and that the two crossing changes happen inside of a small ball as shown in \Cref{fig:3d}(a).
Then, one may construct $\Sigma$ by gluing a knotted band (as in the case $g=0$) outside of the ball,
and the surface shown in \Cref{fig:3d}(b) inside of the ball.
In fact, a similar construction---gluing $g$ copies of the surface shown in \Cref{fig:3d}(b) inside of $g$ balls, and a knotted band in the complement of the $g$ balls---gives the desired surface in the general case, due to the following lemma.
\end{proof}
\begin{lemma}
Let $K$ and $K'$ be knots as in \Cref{prop:crchangestoSS}.
Then there exist pairwise disjoint balls $B_1, \ldots, B_g$,
such that $K$ and $K'$ agree outside of $\bigcup_i B_i$, and for all~$i$, $B_i \cap K$ and $B_i \cap K'$
look like the top and bottom of \Cref{fig:3d}(a), respectively.
\end{lemma}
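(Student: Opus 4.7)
The plan is to realize each of the $2g$ crossing changes as $\pm 1$--framed Dehn surgery on the boundary of a crossing disk. This yields pairwise disjoint crossing disks $D_1^+,\ldots,D_g^+, D_1^-,\ldots,D_g^-$, each meeting $K$ transversely in two points. The $4g$ intersection points then appear along $K$ in some cyclic order, and the goal will be to rearrange this order so that the points split into $g$ consecutive blocks of four, with the $i$th block consisting of the two points of $D_i^+$ interleaved with the two of $D_i^-$.

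The key enabling tool will be the handle-slide move shown in Figure~\ref{fig:sliding}, which transposes two adjacent intersection points of $K$ with distinct crossing disks while preserving both the isotopy class of $K$ in $S^3$ and the effect of the complete sequence of surgeries. The sign caveat in the caption is harmless in our setting: every disk has framing $\pm 1$, and for any adjacent pair of intersection points one is free to designate which of the two disks plays the role of the ``blue'' disk. Iterating these adjacent transpositions one can realize any permutation of the cyclic disk-label sequence along $K$, and in particular the desired block arrangement.

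Having achieved this arrangement, for each $i$ one chooses a small ball $B_i$ containing $D_i^+\cup D_i^-$ together with the two arcs of $K$ cut out by the four intersection points of the $i$th block, disjoint from the remaining disks and from the rest of $K$. The interleaving of $D_i^+$ and $D_i^-$ inside each block guarantees that the triple $(B_i, B_i\cap K, D_i^\pm\cap B_i)$ is ambient isotopic inside $B_i$ to the local model depicted in the top of Figure~\ref{fig:3d}(a); performing the two crossing changes inside $B_i$ then turns $B_i\cap K$ into the bottom picture. Outside $\bigcup_i B_i$ no crossing change occurs, so $K$ and $K'$ agree there.

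The main obstacle will be justifying the handle-slide move itself, namely checking that the transpositions in Figure~\ref{fig:sliding} preserve both the isotopy type of $K$ and the final surgery outcome; this is a bookkeeping exercise in Kirby calculus, with the two sign cases in the figure covering all possibilities once one is allowed to choose which disk is ``blue''. Given this move, the remainder is purely combinatorial, reducing to the standard fact that adjacent transpositions generate the symmetric group acting on the cyclic sequence of disk labels along $K$.
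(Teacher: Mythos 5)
Your proposal follows the same overall strategy as the paper: use handle slides as in \Cref{fig:sliding} to rearrange the intersection points of the crossing disks with $K$, then take $g$ disjoint balls, one per pair of opposite-sign disks. The target arrangements differ slightly---you ask that the four points of each $\pm$ pair be consecutive and strictly interleaved along $K$, whereas the paper only requires, for each pair, a single sub-arc $I\subseteq K$ running from one disk to the other with $I^\circ$ disjoint from all other disks---but both are attainable by adjacent transpositions, and both suffice for the ball construction.

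There is, however, one genuine gap. You write that realizing the $2g$ crossing changes as surgeries on crossing disks ``yields pairwise disjoint crossing disks,'' but this is not automatic. The crossing changes are performed one after another, so the $i$-th crossing disk is a priori a crossing disk for the intermediate knot obtained after the first $i-1$ changes; when all $2g$ disks are transported back to a single copy of $S^3$ containing the original $K$, they may well intersect one another. One needs a general-position argument to replace them with pairwise disjoint crossing disks, equivalently to arrange that the crossing changes may be performed simultaneously---the paper cites \cite[Prop.~1.5]{MR1628751} for this. Since the handle-slide rearrangement in \Cref{fig:sliding} only applies once the disks are disjoint, this step must come first and should not be elided. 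A smaller, secondary point: you assert but do not verify that your ball (a neighborhood of $D_i^+\cup D_i^-$ together with in-block arcs of $K$) is ambient-isotopic to the model of \Cref{fig:3d}(a); the paper's choice---a regular neighborhood of $D\cup D'\cup I$---has a canonical structure that makes the identification with the local model immediate, so you may find it cleaner to adopt that formulation.
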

\begin{proof}
By the assumption on $K$ and~$K'$,
one may choose $2g$ crossing disks
such that the corresponding surgeries (in the right order) transform $K$ into~$K'$,
and there are $g$ surgeries of each sign.
Since the crossings are not changed simultaneously, the crossing disks
may a priori intersect. However, by a general position argument the disks
may be chosen to be disjoint (see e.g.~\cite[Prop.~1.5]{MR1628751} for details); in other words, crossing changes
may be assumed to happen simultaneously.

Finally, 
we use that one may arbitrarily modify the order in which
the $4g$ intersection points of crossing disks with $K$ occur on $K$. This follows from the fact that modifying two crossing changes (i.e.~modifying a choice of two $\pm1$-framed unknots that arise as the boundary of crossing disks) in a $3$--ball intersecting the knot and crossing disks as depicted in the upper half of \Cref{fig:sliding}, does not alter the outcome of the crossing changes; see bottom half of \Cref{fig:sliding}.

We modify this order such that the $2g$ disks can be arranged in $g$ pairs $D, D'$
with the following properties.
The surgeries corresponding to $D$ and $D'$ are of opposite sign;
and there is a closed interval $I \subseteq K$ with endpoints on $D$ and $D'$, such that $I^{\circ}$ does
not intersect any other crossing disks.
Now, for each such pair take a ball that is a neighborhood of $D\cup D'\cup I$, and make these $g$ balls
small enough so that no two of them intersect. These balls form the desired collection $B_1, \ldots, B_g$.
\end{proof}

\sectionpdfbookmark{More general ambient 3-- and 4--manifolds (proof of \Cref{thm:mains2s2})}{Changing the ambient 3-- and 4--manifold (proof of Theorem~\ref{thm:mains2s2})}
\label{sec:s2s2}
The proof of \Cref{thm:mains2s2} follows the same structure as the proof of \Cref{thm:main}.
We note some preliminary lemmas.
The first one explains why $\Z$--slice surfaces are automatically null-homologous.
\begin{lemma}\label{lem:Zslicesurfacesarenullhomo}
Let $F$ be a properly locally-flatly embedded surface in a four-manifold
\[
V = B \# (\mathbb{C}P^2\#\overline{\mathbb{C}P^2})^{c_1}\# (S^2\times S^2)^{c_2},
\]
where $c_1, c_2\geq 0$ and $B$ is a compact orientable contractible topological
four-manifold with boundary an integral homology sphere $M$.
If $H_1(V\setminus F;\Z)\cong \Z$, then $F$ is null-homologous in  $V$.
\end{lemma}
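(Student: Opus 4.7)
The plan is to combine the long exact sequence of the pair $(V, V\setminus F)$ with Poincar\'e--Lefschetz duality. First I would note that $V$ is simply connected: each summand is simply connected ($B$ is contractible, and $\mathbb{C}P^2$, $\overline{\mathbb{C}P^2}$, $S^2\times S^2$ are classically simply connected), so Seifert--van Kampen gives $\pi_1(V)=1$. In particular $H_1(V;\Z)=0$ and $H_2(V;\Z)$ is free Abelian.

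Since $F$ is locally flat, it admits an oriented normal disk-bundle $\nu F$ \cite[Section~9.3]{FreedmanQuinn_90_TopOf4Manifolds}; combined with excision and the Thom isomorphism, this yields
\[
H_k(V, V\setminus F;\Z) \cong H_{k-2}(F;\Z).
\]
Assuming $F$ connected (the general case can be treated component by component in the same spirit), this gives $H_2(V, V\setminus F;\Z)=\Z$, generated by a meridional normal disk of $F$, and $H_1(V, V\setminus F;\Z)=0$. The relevant portion of the long exact sequence then reads
\[
H_2(V;\Z) \xrightarrow{\psi} \Z \to H_1(V\setminus F;\Z) \to H_1(V;\Z)=0,
\]
where the standard diagram-chase through the Thom isomorphism identifies $\psi$ with the intersection pairing $[S]\mapsto S\cdot F$. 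The hypothesis $H_1(V\setminus F;\Z)\cong\Z$ forces the middle arrow to be a surjection $\Z\to \Z$, hence an isomorphism. Exactness then gives $\psi=0$: the class $[F]$ has zero algebraic intersection with every element of $H_2(V;\Z)$.

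To conclude, I would appeal to the fact that since $M=\partial V$ is an integer homology sphere and $V$ is simply connected, the intersection pairing $H_2(V, \partial V;\Z)\times H_2(V;\Z)\to \Z$ is unimodular. This follows from Poincar\'e--Lefschetz duality $H_2(V,\partial V;\Z)\cong H^2(V;\Z)$ together with the universal coefficient theorem $H^2(V;\Z)\cong\Hom(H_2(V;\Z),\Z)$, using $H_1(V;\Z)=0$. Hence $[F]=0$ in $H_2(V,\partial V;\Z)$, and since the long exact sequence of $(V, M)$ together with $H_2(M;\Z)=H_1(M;\Z)=0$ identifies $H_2(V;\Z)\cong H_2(V,\partial V;\Z)$, we deduce that $F$ is null-homologous in $V$. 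The one point requiring care is the identification of the connecting map $\psi$ with intersection with $F$; this is standard but not entirely mechanical in the topological locally flat category, and I would verify it directly using a meridional disk representing the Thom class.
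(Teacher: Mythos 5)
Your proof is correct and its strategy essentially coincides with the paper's: both arguments reduce to showing that $[F]$ pairs to zero against every class in $H_2(V;\Z)$ and then invoke non-degeneracy of the intersection pairing, which comes from Poincar\'e--Lefschetz duality, $\pi_1(V)=1$, and $\partial V$ being a homology sphere. The difference is in the mechanism. You establish the vanishing via the long exact sequence of the pair $(V,V\setminus F)$ together with the Thom isomorphism $H_*(V,V\setminus F;\Z)\cong H_{*-2}(F;\Z)$, observing that $\partial\colon H_2(V,V\setminus F)\cong\Z\to H_1(V\setminus F)\cong\Z$ must be an isomorphism and hence $\psi=0$. The paper instead proves the equivalent statement (via the same exact sequence) that $k_*\colon H_2(V\setminus F;\Z)\to H_2(V;\Z)$ is surjective, using the Mayer--Vietoris sequence for $\nu F\cup(V\setminus F)$ and a rank count on $H_1(\nu F\setminus F)\to H_1(\nu F)\oplus H_1(V\setminus F)$, which requires identifying $\partial\nu F$ with $F\times S^1$. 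Your route is a touch more uniform (no rank count); the paper's avoids invoking the Thom isomorphism.

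One caveat: your parenthetical that the disconnected case follows ``component by component'' is not right. If $F$ has $n>1$ components, the Thom isomorphism gives $H_2(V,V\setminus F;\Z)\cong\Z^n$, and $H_1(V\setminus F;\Z)\cong\Z$ only forces $\im\psi$ to have rank $n-1$, not to vanish. Indeed the conclusion can fail: two parallel disjoint copies of $S^2\times\{\mathrm{pt}\}$ in a once-punctured $S^2\times S^2$ have complement with $H_1\cong\Z$ but are far from null-homologous. The paper's proof also implicitly assumes $F$ connected with one boundary component (its rank $2g(F)+1$ computation requires this), and in all applications $F$ is connected, so the gap is harmless---but you should drop the aside rather than claim it generalizes.
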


\begin{proof}

For $F$ to be null-homologous in $V$ means that the inclusion $i\colon (F,K)\to (V,M = \partial V)$
induces the zero map on second homology groups, i.e.~$i_*[F] = 0\in H_2(V,M; \Z)$
for $[F] \in H_2(F,K; \Z)$ the fundamental class of $F$.
Since $H_1(\partial V; \Z) \cong H_2(\partial V; \Z) \cong 0$, the inclusion $j\colon (V,\varnothing)\to (V,M)$
induces an isomorphism on second homology groups.
So to establish the lemma it will be sufficient to show that
\[(j_*)^{-1}(i_*([F])) \eqqcolon e = 0 \in H_2(V;\Z).
\]

Since $F$ and $V\setminus F$ are disjoint, the intersection form $H_2(V; \Z) \times H_2(V; \Z) \to \Z$
of $V$ evaluates to $0$ on $(e, e')$ for all $e'\in \im k_* \subset H_2(V; \Z)$ for $k$ the inclusion map $V\setminus F \to V$ and $k_*$ the induced map $H_2(V\setminus F;\Z)\to H_2(V;\Z)$.
Since $M$ is an integral homology sphere, the intersection form is non-degenerate.
So to show $e = 0$ and thus conclude the proof, it only remains to show that $k_*$ is surjective.

For this, let $\nu F\subset V$ be a tubular neighborhood of $F$, and consider the Mayer-Vietoris sequence of $\nu F\cup (V\setminus F)$
of integral homology groups:
\[
0\to H_2(V\setminus F) \xrightarrow{k_*} H_2(V) \to H_1(\nu F\setminus F) \xrightarrow{\alpha} H_1(\nu F) \oplus H_1(V\setminus F) \to 0.
\]
Both domain and target of $\alpha$ are free abelian of rank $2g(F)+1$, and $\alpha$ is surjective.
It follows that $\alpha$ is an isomorphism, and hence so is $k_*$.
\end{proof}

The next three lemmas are preparation for framing considerations for surgeries in $B^4$ in the proof of \Cref{thm:mains2s2} depending on the Arf invariant of $K$.

Let $F$ be a Seifert surface of genus $g$ of a link in some $\Z HS^3$.
We denote the intersection form on $H_1(F;\Z)$ by $\langle\,\cdot\,,\cdot\,\rangle$.
A \emph{symplectic basis of $F$} is a tuple 
$(a_1,b_1,a_2,b_2,\ldots,a_g,b_g)$ of homology classes in $H_1(F;\Z)$
such that $\langle a_i,b_k\rangle=\delta_{ik}$
and $\langle a_i, a_k\rangle = \langle b_i, b_k\rangle = 0$ for all $i, k$.
Such a symplectic basis generates a summand of rank $2g$ of $H_1(\Sigma;\Z)$,
and descends to a basis of $H_1(\Sigma;\Z) / \iota_*H_1(\partial\Sigma;\Z)$, where $\iota$ denotes the inclusion $\partial\Sigma\to\Sigma$.
A \emph{half basis of $F$} is a tuple $(a_1, \ldots, a_g)$ of homology classes in $H_1(F;\Z)$
that may be extended to a symplectic basis $(a_1, b_1, \ldots, a_g, b_g)$ of $F$.

A link $L$ is called \emph{proper} if each of its components $K$ has even linking number with $L\setminus K$
(for example, all knots are proper links).
The Arf invariant of a proper link $L$ may be defined as follows (see~\cite[Chapter~10]{Lickorish_97}).
Let $F$ be a genus $g$ Seifert surface of~$L$, and pick a symplectic basis $(a_1,b_1,a_2,b_2,\ldots,a_g,b_g)$ for $F$.
For $v\in H_1(F;\Z)$, let $q(v)\in\Z/2\Z$ denote the reduction mod 2 of the Seifert form evaluated at $(v,v)$. Then
\[\Arf(L)\coloneqq\sum_{i=1}^gq(a_i)q(b_i).\]
This definition is independent of the choice of $F$ and of the symplectic basis.
\begin{lemma}\label{lem:curves}
Let $F$ be Seifert surface of a proper link $L$ in some $\Z HS^3$.
\begin{enumerate}[label=(\roman*)]
\item There exists a half basis $a_1, \ldots, a_g$ of $F$
such that the framing induced by $F$ on $a_i$ is even for $2 \leq i \leq g$ and odd for $a_1$.
\item There exists a half basis $a_1, \ldots, a_g$ of $F$
such that the framings induced by~$F$ of $a_i$ are even for $1 \leq i \leq g$ if and only if $\Arf(L) = 0$.
\end{enumerate}
\end{lemma}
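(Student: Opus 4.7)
The plan is to reduce both statements to $\mathbb{F}_2$-linear algebra on the symplectic $\mathbb{F}_2$-space $\bar V := (H_1(F;\Z)/\iota_*H_1(\partial F;\Z)) \otimes \mathbb{F}_2$ of dimension $2g$, and then to lift integrally. For a simple closed curve $\alpha \subset F$, the framing induced by $F$ on $\alpha$ equals $V_F([\alpha],[\alpha])$, where $V_F$ is the Seifert form, because $V_F(v,v) = \lk(v, v^+)$ with $v^+$ the positive normal pushoff. Setting $q(v) := V_F(v,v) \bmod 2$ and using the identity $V_F(v,w) - V_F(w,v) = \langle v,w\rangle$ gives the quadratic refinement identity $q(v+w) = q(v) + q(w) + \langle v,w\rangle$ on $H_1(F;\mathbb{F}_2)$. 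Since $L$ is proper, for each boundary component $K_i$ one has $\lk(K_i, K_i^+) = -\sum_{j\neq i} \lk(K_i, K_j)$ (pushing $F$ slightly off itself bounds $L^+$ disjointly from $F$, so $\lk(K_i^+, L) = 0$), which is even; hence $q$ vanishes on each boundary component. Combined with the fact that $\iota_*H_1(\partial F;\Z)$ lies in the radical of $\langle\cdot,\cdot\rangle$ on $H_1(F;\Z)$, the refinement identity forces $q$ to descend to a quadratic refinement $\bar q$ of the symplectic form on $\bar V$, and by definition $\Arf(L) = \Arf(\bar q)$.

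Part (ii) then follows from the standard Arf characterization: on any Lagrangian $L' \subseteq \bar V$, the pairing term in the refinement identity vanishes, so $\bar q|_{L'}$ is linear; hence $\bar q$ vanishes on all of $L'$ iff it vanishes on a basis, and such a Lagrangian exists iff $\Arf(\bar q) = 0$.

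For part (i), I split on the value of $\Arf(\bar q)$. If $\Arf(\bar q) = 0$, start from a symplectic basis $(e_i, f_i)$ of $\bar V$ with $\bar q(e_i) = \bar q(f_i) = 0$, and take the Lagrangian basis $(e_1 + f_1, e_2, \ldots, e_g)$: by the refinement identity its $\bar q$-values are $(1, 0, \ldots, 0)$. If $\Arf(\bar q) = 1$, the Arf normal form provides a symplectic basis with $\bar q(e_1) = \bar q(f_1) = 1$ and $\bar q(e_i) = \bar q(f_i) = 0$ for $i \geq 2$, so $(e_1, e_2, \ldots, e_g)$ is the desired Lagrangian basis.

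To lift to $\Z$, I would invoke the surjectivity of the reduction $\mathrm{Sp}(2g, \Z) \to \mathrm{Sp}(2g, \mathbb{F}_2)$ to promote the $\mathbb{F}_2$-symplectic basis of $\bar V$ to an integer symplectic basis of $V := H_1(F;\Z)/\iota_*H_1(\partial F;\Z)$, and then lift further along the splitting $H_1(F;\Z) \cong V \oplus \iota_*H_1(\partial F;\Z)$ (available because $V$ is free abelian) to an integer symplectic basis of $F$ in the sense of \Cref{sec:s2s2}. The first $g$ entries form the required half basis, and their $F$-framings have the prescribed parities since $q$ depends only on the mod-$2$ reduction. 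The main obstacle, as I see it, is this lifting step: one must handle the classical but non-trivial surjectivity of $\mathrm{Sp}(2g,\Z) \to \mathrm{Sp}(2g,\mathbb{F}_2)$ and verify that no integer obstruction arises beyond the mod-$2$ ones.
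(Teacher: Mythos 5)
Your proposal is correct, but takes a genuinely different and noticeably heavier route than the paper's. The paper stays entirely over $\Z$ and argues by explicit base changes on symplectic bases of $F$: for (i), it first produces a symplectic basis with at least one odd-framed element (replace $a_1$ by $a_1+b_1$ if all are even), arranges the odd-framed elements of the half basis to be $a_1,\ldots,a_k$ with $k\geq1$, and then replaces $a_i$ by $a_i+a_1$ for $2\leq i\leq k$; for (ii), the ``only if'' is immediate from the definition of $\Arf$, and for the ``if'' it takes the half basis from (i), notes $0=\Arf(L)=q(a_1)q(b_1)$ forces $q(b_1)=0$, and replaces $a_1$ by $a_1+b_1$. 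Your proposal instead recasts everything as a statement about a quadratic refinement $\bar q$ on the symplectic $\mathbb{F}_2$-space $\bar V$, and invokes the classification of such refinements (the Arf normal form) together with the surjectivity of the reduction $\mathrm{Sp}(2g,\Z)\to\mathrm{Sp}(2g,\mathbb{F}_2)$ to lift back. This is more conceptual and cleanly isolates the $\mathbb{F}_2$-structure that the paper manipulates implicitly, but it imports two nontrivial classical facts where the paper needs none. Regarding the lifting step you flag as a concern: it does go through. The reduction $\mathrm{Sp}(2g,\Z)\to\mathrm{Sp}(2g,\mathbb{F}_2)$ is indeed surjective, and because the intersection form on $H_1(F;\Z)$ is the pullback of the unimodular form on $V$, \emph{any} lift of a symplectic $\Z$-basis of $V$ to $H_1(F;\Z)$ is automatically a symplectic basis of $F$---so you do not even need to invoke the splitting explicitly. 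That said, the entire detour through $\mathbb{F}_2$ and back can be avoided by performing the elementary moves directly on a half basis over $\Z$, which is precisely what the paper does and is shorter.
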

\begin{proof}
(i)
Let us first show that there exists a symplectic basis
$(a_1, b_1, \ldots, a_g, b_g)$ of $F$ at least one element of which has odd framing.
Indeed, if all basis elements have even framing, then replacing $a_1$ by $a_1 + b_1$ yields such a symplectic basis.
Thus, there exists a half basis $a_1, \ldots, a_g$ of $F$ and $k\in\{1,\ldots,g\}$
such that $a_i$ has odd framing if and only if $i \leq k$.
Then, $(a_1, a_2 + a_1, \ldots, a_k + a_1, a_{k+1}, \ldots, a_g)$ is a half basis in which only the first element has odd framing.

(ii) The `only if' direction follows directly from the definition of the Arf invariant.
For the `if' direction, assume that $\Arf(L) = 0$ and let $a_1, \ldots, a_g$ be a half basis as in (i).
We have $0 = \Arf(L) = q(a_1)q(b_1)$, so $q(b_1) = 0$. Thus $a_1 + b_1, a_2, \ldots, a_g$ is a desired half basis.
\end{proof}

Let us say a Seifert surface $F$ is \emph{$H_1$--null} (or \emph{$H_1$--null mod 2}) if
every simple closed curve on $F$ has zero (or even) linking number with every boundary component of $F$.
Note that the boundary of an $H_1$--null mod 2 surface is a proper link.
Equivalently, $F$ is $H_1$--null (or {$H_1$--null mod 2}) if the inclusion
of $F^{\circ}$ into the complement of $\partial F$ induces the zero map on the first homology group with integer coefficients (or with $\Z/2$--coefficients). Denoting by $\beta^+\subset S^3\setminus F$ a positive normal push-off of a curve $\beta$ in $F$, we have the following technical lemma.
\begin{lemma}\label{lem:nice3dcobo}
Let $F$ be a Seifert surface with boundary components $L_1, \ldots, L_n$ in some $\Z HS^3$.
\begin{enumerate}[label=(\roman*)]
\item There exists an $H_1$--null Seifert surface $F'$ with an  orientation preserving homeomorphism ${f\colon F\to F'}$ such that for all ${j\in\{1,\dots, n\}}$, $f(L_j)$ is isotopic to $L_j$.
\item If $F$ is $H_1$--null mod 2, then there exist $f$ and $F'$ as in (i)
such that the linking number of $\alpha$ and $\alpha^+$ has the same parity as the linking number of $f(\alpha)$ and $f(\alpha)^+$ for all curves $\alpha$ in $F$.
\end{enumerate}
\end{lemma}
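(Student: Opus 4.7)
The plan is to modify the embedding of $F$ into $M$ through a sequence of local tubings until the resulting surface $F'$ is $H_1$--null. Since $M$ is an integer homology sphere, $H_1(M\setminus L;\Z)\cong\Z^n$ is freely generated by meridians of $L_1,\ldots,L_n$, so $F$ is $H_1$--null if and only if the linking number $\lk(\alpha_i,L_j)$ vanishes for every element $\alpha_i$ of some basis of $H_1(F^\circ;\Z)$ and every $j$; the mod 2 case is analogous. Accordingly, the entire problem reduces to killing these finitely many integer linking numbers (or their parities) by modifying $F$ while fixing its boundary and its homeomorphism type.

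The key geometric tool is a \emph{tubing modification}: given a simple closed curve $\alpha\subset F$ with $\lk(\alpha,L_j)=k\neq 0$, use that a push-off $\alpha^+\subset M\setminus L$ bounds some oriented surface $G\subset M$ (which exists because $M$ is a $\Z HS^3$). After a small perturbation, $G$ meets $F$ and $L$ transversely with $G\cdot L_j=k$. Performing oriented cut-and-paste on $F\cup G$ along the 1--manifold $F\cap G$ and then compressing along appropriate curves to restore the original genus yields a new Seifert surface $F_\alpha\subset M$ for $L$ of the same genus as $F$, together with a natural homeomorphism $f_\alpha\colon F\to F_\alpha$ such that $\lk(f_\alpha(\alpha),L_j)$ differs from $\lk(\alpha,L_j)$ by $\pm k$; in particular one can arrange $\lk(f_\alpha(\alpha),L_j)=0$.

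To prove (i), I would iterate the tubing modification. Choose a basis of $H_1(F^\circ;\Z)$ consisting of a cutting system $c_1,\ldots,c_{g+n-1}$ (disjoint simple closed curves cutting $F$ into a disk) together with dual curves $d_1,\ldots,d_g$. First apply tubings to kill all $\lk(c_i,L_j)$; since the $c_i$'s are disjoint, their annular neighborhoods can be modified independently. Then handle the $d_i$'s by tubings supported in small annular regions of neighborhoods of $d_i$ chosen to avoid the intersection points $d_i\cap c_j$, so that the vanishing linkings of the $c_i$'s are preserved. The resulting $F'$ is $H_1$--null.

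For (ii), the mod 2 hypothesis says that all initial linkings $\lk(\alpha,L_j)$ are even, so in every tubing step the auxiliary surface $G$ can be chosen with $G\cdot L_j$ even. One then pairs up the individual tubings so that their net effect on framings of curves on $F$ is even, thereby preserving framings mod 2 under $f$. The main obstacle I anticipate is precisely this bookkeeping: one must verify cleanly that the tubing-and-compressing operation can be set up to simultaneously (a) preserve the genus, (b) realize the prescribed change $\pm k$ in $\lk(\alpha,L_j)$, and (c) control the cross-effects on other basis curves' linkings (critical for the dual curves $d_i$ in (i)) and on framings (critical for (ii)). This requires a careful choice of the compression disks and of the 1--cycles on $F\cap G$ along which to tube, but the underlying topological content is the standard fact that the failure of $H_1$--nullity can be remedied by tubing against auxiliary surfaces whose existence is guaranteed by the homology sphere hypothesis on $M$.
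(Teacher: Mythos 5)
Your opening reduction is sound: in a $\Z HS^3$ the meridians of $L_1,\ldots,L_n$ freely generate $H_1(M\setminus L;\Z)$, so $H_1$--nullity of $F$ is exactly the vanishing of the linking numbers $\lk(\alpha,L_j)$ over a basis $\alpha$ of $H_1(F^\circ;\Z)$. The paper makes the same reduction but then proceeds quite differently: it chooses a handle decomposition of $F$ with a single $0$--handle, $1$--handles $h_1,\ldots,h_{2g}$ with belt spheres on $L_n$, and $1$--handles $h_1',\ldots,h_{n-1}'$ whose cores are parallel to $L_1,\ldots,L_{n-1}$, and then \emph{twists} these handles (inserting full twists into $h_j'$, twisting pairs $h_j',h_k'$ around each other, and twisting $h_i$ around $h_j'$). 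Each twist is a local ambient modification that visibly changes one linking or framing number by $\pm1$ while keeping $\partial F$ and the homeomorphism type of $F$ fixed, so the control needed for part (ii) comes for free. No auxiliary surfaces or compressions appear.

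Your tubing mechanism, by contrast, has a genuine gap. You take a curve $\alpha\subset F$ with $\lk(\alpha,L_j)=k\neq 0$, a push-off $\alpha^+$, and an auxiliary surface $G\subset M$ with $\partial G=\alpha^+$, necessarily meeting $L_j$ algebraically $k$ times. Oriented cut-and-paste of $F\cup G$ along $F\cap G$ produces a surface whose boundary is $\partial F\sqcup\partial G = L\sqcup\alpha^+$, not $L$. Compression preserves the boundary, so ``compressing to restore the genus'' cannot repair this; the output is not a Seifert surface for $L$. Even if the boundary issue were somehow fixed, the assertion that the resulting homeomorphism $f_\alpha$ changes $\lk(f_\alpha(\alpha),L_j)$ by exactly $\pm k$ is asserted without justification, and the promised control over cross-effects on the other basis curves and (for part (ii)) on framings mod $2$ is precisely the hard part you defer. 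The handle-twisting route sidesteps all of this: each twist is a surgery of the embedding supported in a ball, its effect on one linking or framing number is $\pm 1$ by inspection, and its effect on everything else is zero or explicitly tracked. I would recommend abandoning the auxiliary-surface construction in favor of a handle decomposition and local twist moves.
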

In other words, \Cref{lem:nice3dcobo}~(i) says that every link $L$ can be changed to a link $L'$ with isotopic components that has a $H_1$--null Seifert surface $F'$ with the same genus as a given Seifert surface of $L$. \Cref{lem:nice3dcobo}~(ii) says that furthermore, in case $L$ admits an $H_1$--null mod 2 Seifert surface $F$, $F'$ can be chosen with an identification with $F$ that preserves the Seifert form mod $2$.
\begin{proof}
We only sketch the proof, and refer the reader to \cite[Proof of Lemma~18]{FellerLewark_16} for details.
The Seifert surface $F$ has a handle decomposition into one 0--handle, 1--handles $h_1, \ldots, h_{2g(F)}$
whose belt spheres lie in $L_n$, and 1--handles $h_1', \ldots, h_{n-1}'$, such that the belt sphere of $h_j'$
has one point in $L_j$ and one point in $L_n$. The cores of the $1$--handles can be closed off (by two intervals to the core of the $0$--handle) to simple closed curves $\alpha_1,\dots, \alpha_{2g},\alpha'_1,\dots, \alpha'_{n-1}$ that form a basis of $H_1(F;\Z)$. The curve $\alpha'_j$ is parallel to the boundary $L_j$.

For (i), we construct $F'$ in three steps, by modifying $F$ by twisting along $1$--handles (i.e. integer Dehn surgery on the boundary of a disk that intersects $F$ transversely in cocores of $1$-handles). Each step yields a new surface in $S^3$, which comes with a canonical abstract identification homomorphism to the former one, and we use the same notation for the canonically identified $\alpha_i$, $L_j$, and $\alpha'_j$.
\begin{enumerate}[label=\arabic*)]
\item For all $1\leq j<k\leq n-1$, twist the 1--handles $h_{j}'$ and $h_k'$ around each other to make the linking number of $\alpha'_j$ and $\alpha'_k$ zero. 

\item For all pairs $1 \leq i\leq 2g$ and $1\leq j\leq n-1$, twist the 1--handle $h_i$ around $h_j'$ to make the linking number of $\alpha_i$ and $\alpha'_j$ zero. This can be done without 
changing the isotopy classes of the boundary components or the linking numbers arranged to be zero in step~1). 

\item Insert twists into the 1--handles $h_j'$ for all $1\leq j\leq n-1$ to make the framing of $\alpha'_j$ (which is the linking number between $\alpha'_j$ and $L_j$ since $\alpha'_j$ is parallel to $L_j$) zero.

\end{enumerate}
In conclusion, we have built an $F'$ in which all $\alpha_i$ and all $\alpha'_j$ have linking number zero with all $L_k$.
We set $f$ to be the iterative composition of the abstract identification homeomorphisms between a surface and the result of twisting along a properly embedded interval in a surface (the cocores of the 1--handles).

To prove (ii), observe that if $F$ is $H_1$--null mod 2, then all twisting done in the above steps was by introducing an \emph{even} number of twists. (Indeed, the linking numbers of $\alpha'_j$ and $\alpha'_k$, and of $\alpha_i$ and $\alpha'_j$ are even, so the twistings in 1) and 2), respectively, have to be even. Similarly, since every $\alpha_j$ has even framing, which is not affected by 1)~and 2), 3)~necessarily has to be done with an even number of twists.) In other words, $F'$ is obtained from $F$ by even surgeries along curves in the complement of $F$; hence, the canonical identification of $F$ and $F'$ via $f$ preserves the Seifert form mod 2.
\end{proof}
\begin{lemma}\label{lem:arfadditive}
Let $F$ be a Seifert surface of a link $L$ in some $\Z HS^3$. Assume $F$ is $H_1$--null mod 2,
and has two boundary components $L_1, L_2$. Then
$\Arf(L) = \Arf(L_1) + \Arf(L_2)$.
\end{lemma}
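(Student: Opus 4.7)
The plan is to reduce to the case where $F$ is genuinely $H_1$--null, construct a Seifert surface $F_2$ for $L_2$ disjoint from $F$ away from $L_2$, and then compute $\Arf(L_1)=\Arf(L)+\Arf(L_2)$ from the resulting Seifert surface $G\coloneqq F\cup F_2$ of $L_1$; rearranging mod~$2$ then yields the desired equation. For the first step, I would apply \Cref{lem:nice3dcobo}(ii) to replace $F$ by an $H_1$--null Seifert surface for $L$ via a homeomorphism that preserves the knot types of $L_1,L_2$ and the framings of curves on $F$ modulo~$2$. Since $\Arf(L)$ depends only on the framings mod~$2$ of a symplectic basis, and $\Arf(L_j)$ is an invariant of $L_j$, this reduction leaves the claimed equality unchanged. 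From now on I assume $F$ is $H_1$--null; in particular, every simple closed curve on $F$ has zero linking number with each boundary component, the Seifert framings $F$ induces on $L_1,L_2$ both vanish, and $\lk(L_1,L_2)=0$.

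For the second step, write $M$ for the ambient integer homology sphere and set $X\coloneqq M\setminus \nu F^{\circ}$. Pushing $L_2$ slightly off $F$ yields an isotopic copy $L_2'\subset X^{\circ}$ whose linking with every class in $H_1(F;\Z)$ (including $L_1$ and a parallel copy of $L_2$ on $F$) vanishes, by the $H_1$--null hypothesis and the vanishing of the induced framings. Alexander duality in the integer homology sphere $M$ identifies $H_1(X;\Z)$ with $H^1(F;\Z)$ via linking, so $L_2'$ is null-homologous in $X$ and bounds an oriented surface $F_2\subset X$. Properness of $F_2$ in $X$ forces $F_2\cap F=L_2'$ and $F_2\cap L_1=\emptyset$.

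For the concluding computation, orient $F_2$ so that $G\coloneqq F\cup F_2$ is a Seifert surface for $L_1$ of genus $g+g_2$, where $g_2\coloneqq g(F_2)$. Combining a symplectic basis $(a_1,b_1,\ldots,a_g,b_g)$ of $F$ chosen in $F^{\circ}$ with a symplectic basis $(c_1,d_1,\ldots,c_{g_2},d_{g_2})$ of $F_2$ gives a symplectic basis of $G$, since the two families of curves are disjoint in $M$ and so the intersection form on $G$ is block diagonal. The Seifert form $V_G$ restricted to each block reproduces $V_F$ respectively $V_{F_2}$; the mixed values satisfy $V_G(x,y)+V_G(y,x)=2\lk(x,y)\equiv 0\pmod 2$, so the mod-$2$ quadratic refinement $q_G$ is additive across the decomposition. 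Therefore
\[
\Arf(L_1)=\sum_{i=1}^{g} q_F(a_i)\, q_F(b_i)\;+\;\sum_{k=1}^{g_2} q_{F_2}(c_k)\, q_{F_2}(d_k)=\Arf(L)+\Arf(L_2),
\]
which rearranges mod~$2$ to the desired $\Arf(L)=\Arf(L_1)+\Arf(L_2)$. The main obstacle, I expect, is the Alexander duality step: namely, carefully confirming that the $H_1$--null hypothesis together with the vanishing induced framings really suffices to kill $[L_2']$ in $H_1(X;\Z)$, and then arranging the bounding surface in $X$ to meet $F$ only along $L_2$.
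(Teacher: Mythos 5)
Your proof is correct, and the overall structure matches the paper's: reduce to $H_1$--null via \Cref{lem:nice3dcobo}(ii), produce a Seifert surface for $L_2$ meeting $F$ only along $L_2$, glue to obtain a Seifert surface for $L_1$, and read off the Arf additivity from a block-diagonal symplectic basis. The one place where you diverge is in how that disjoint Seifert surface for $L_2$ is produced. The paper simply takes an arbitrary Seifert surface $\Sigma$ of $L_2$ and stabilizes it so that $\Sigma'\cap F'=L_2$, delegating the details to \cite[Proof of Lemma~18]{FellerLewark_16}; you instead build the surface directly inside the exterior $X=M\setminus\nu F$ by invoking Alexander duality to show that the normal push-off $L_2'$ is null-homologous in $X$. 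Both arguments rely on $F$ being $H_1$--null: in the paper's version it underlies the stabilization, and in yours it is precisely what makes $\lk(L_2',\delta)=0$ for all $\delta$ generating $H_1(F;\Z)$. Your flagged worry about the Alexander duality step does in fact go through: $L_2'$ is isotopic to $L_2$ in the complement of any curve in $F^\circ$, so its linking numbers with a generating set $a_1,b_1,\ldots,a_g,b_g,L_1^{||}$ of $H_1(F;\Z)$ all vanish by $H_1$--nullity, and the nondegeneracy of the Alexander linking pairing $H_1(X;\Z)\times H_1(F;\Z)\to\Z$ in an integer homology sphere then kills $[L_2']$. Your route has the advantage of being self-contained, at the cost of a slightly longer homological argument; the paper's route is shorter but outsources the geometric isotopy. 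Two small presentational remarks: the glued surface $G$ should formally include a thin annulus from $L_2$ to $L_2'$ inside $\nu F$; and the sentence about mixed Seifert-form values $V_G(x,y)+V_G(y,x)$ is superfluous, since the Arf formula $\sum q(e_i)q(f_i)$ already sees only diagonal values of $q$ on the split symplectic basis, and the block-diagonality of the intersection form is all that is needed to make that basis symplectic.
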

\begin{proof}
By \cref{lem:nice3dcobo}(ii), there exists an $H_1$--null Seifert surface $F'$
with a homeomorphism $f\colon F\to F'$ such that $f(L_j)$ is isotopic to $L_j$ for $j\in\{1,2\}$,
and $f$ preserves the framing of curves in the surfaces mod 2.
Pick a Seifert surface $\Sigma$ of~$L_2$. There is a stabilization $\Sigma'$ of $\Sigma$ such that
$\Sigma' \cap F' = L_2$ (see \cite[Proof of Lemma~18]{FellerLewark_16} for details).
It is clear from the definition of the Arf invariant that
\begin{align*}
\Arf(\partial (\Sigma' \cup F')) & = \Arf(\partial \Sigma') + \Arf(\partial F') \\
\Rightarrow
\Arf(f(L_1)) & = \Arf(f(L_2)) + \Arf(f(L)),
\end{align*}
and this implies the statement of the lemma since $f$ preserves framing mod 2.
\end{proof}

\begin{proof}[Proof of \Cref{thm:mains2s2}]
(1') $\Rightarrow$ (4'):
The proof of (1) $\Rightarrow$ (4) in \Cref{sec:4D} can be adapted with minimal changes,
using that $\sigma(V) = 0$ and $\pi_1(V) = 1$ and that Borodzik-Friedl's \cref{thm:BF:W}
holds verbatim for knots in integer homology three-spheres.

(4') $\Rightarrow$ (3'):
This part of the proof is purely 3--dimensional and makes no reference to the four-manifold $V$.
So all we need for our proof of (4) $\Rightarrow$ (3) in \cref{subsec:(4)=>(3)} to adapt is that
Borodzik-Friedl's \cref{thm:BoroFriedl} holds for knots in any $\Z HS^3$; see \cite[Rmk.~5.2]{BorodzikFriedl_14_OnTheAlgUnknottingNr}.

(3') $\Rightarrow$ (2'):
The geometric construction of the surface in \Cref{subsec:(3)=>(2)}, which proves (3) $\Rightarrow$ (2) works in any $\Z HS^3$, without changes. In fact, the statement of \Cref{prop:crchangestoSS} and its proof carry over to knots $K'$ and $K$ in any 3--manifold $M$ rather than $S^3$ (even without any assumptions about the homology of $M$).

(2') $\Rightarrow$ (1'):
This is the only step that goes beyond a straight-forward generalization of the corresponding step in \cref{thm:main}.
Let $\Sigma$ be a genus $g=h+c=h+c_1+c_2$ 3D--cobordism between $K$ and a knot $J$ with Alexander polynomial 1. Note that $\Arf(J)=0$.
By \cref{lem:nice3dcobo}, we may and do choose $\Sigma$ to be $H_1$--null.

\begin{figure}[ht]
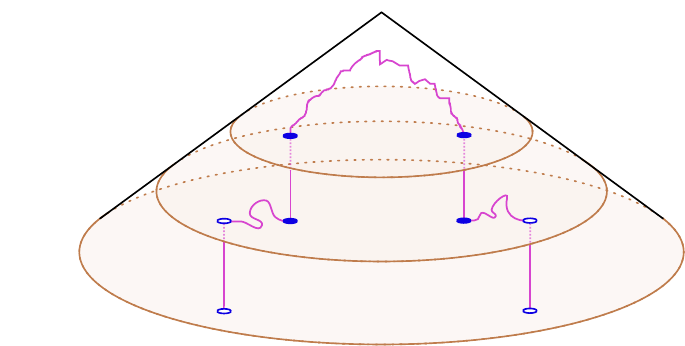
\caption{Schematic drawing of $F'$ (dimensions reduced by one). 1--, 2--, 3-- and 4--manifolds are drawn in blue (small ellipses), purple (lines), brown (cone levels) and black (cone over large ellipse), respectively.}
\label{fig:s2s2schema}
\end{figure}
We start as in (2) $\Rightarrow$ (1). See \cref{fig:s2s2schema} for the following construction.
Construct a $\Z$--slice surface $F'$ by taking the union of $\Sigma\subset M=\partial B$ with a $\Z$--slice disc in $B$ for $J$, using that \Cref{thm:Alex1<=>Zslice} holds in any $\Z HS^3$, and then pushing the interior of this union into $B$. To be more precise, we identify $B$ with a copy of $B$ (we denote it by $B_\frac{1}{2}$) union $M\times[\frac{1}{2},1]$ obtained by canonically gluing $\partial B$ to $M\times\{\frac{1}{2}\}$.
Now we construct $F'$ as follows. We take a $\Z$--slice disk $D$ in
\[B_{\tfrac{1}{2}}\subset B=B_{\frac{1}{2}}\cup M\times[\tfrac{1}{2},1],\]
with boundary $J\times\{\frac{1}{2}\}\subset M\times[\frac{1}{2},1]$ and
the surface $\Sigma'\subset M\times[\frac{1}{2},1]$ given as
\[\Sigma\times\{\tfrac{3}{4}\}\cup J\times[\tfrac{1}{2},\tfrac{3}{4}]\cup K\times[\tfrac{3}{4},1],\] and set $F'\coloneqq D\cup\Sigma'$.

The argument to show $\pi_1(B\setminus F')\cong\Z$ is similar as in the case of $M=S^3$ and $B=B^4$, which is needed in the proof of (2) $\Rightarrow$ (1) and for which a detailed argument is available in~\cite[Proof of Claim~20]{FellerLewark_16}. Here is a brief version. By the `rising water principle' (see \cite[Proof of Proposition~6.2.1]{GompfStipsicz_99}), $(M\times[\tfrac{1}{2},1]\setminus\nu(\Sigma'),M\times\{\tfrac{1}{2}\}\setminus \nu(J\times\{\frac{1}{2}\})$ has a relative handle decomposition with only $2$--handles (here we either think in the category of smooth manifolds with corners or we smooth the corners). As a consequence, the inclusion
\[M\times\{\tfrac{1}{2}\}\setminus (J\times\{\tfrac{1}{2}\})\subset M\times[\tfrac{1}{2},1]\setminus\Sigma'\]
induces a surjection on $\pi_1$, and, thus, $F'=D\cup\Sigma'$ is a $\Z$--slice surface in $B$.

Next, we want to specify $c$ many simple closed framed curves $\gamma_i'$ on $\Sigma'\subset F'$ such that surgery along them corresponds to connected sum with $S^2\times S^2$ or $\mathbb{C}P^2\# \overline{\mathbb{C}P^2}$ and such that $\Sigma'$ may be be compressed $c$ times in the resulting manifold. 

One picks disjoint simple closed curves $\gamma_1, \ldots, \gamma_c$ in $\Sigma^{\circ}$
with the property that their homology classes may be extended to a symplectic basis of $\Sigma$. If $c_1=1$, arrange for exactly one of them to have odd framing induced by $\Sigma$. If instead $c_1=0$, choose all of them to have even framing. This is possible by \Cref{lem:curves}. Note that we need the assumption $\Arf(K)=0(=\Arf(J))$ exactly in case $h=c_1=0$. Indeed, in this case the homology classes of the $\gamma_i$ form a half basis of $\Sigma$, so choosing all of them to have even framing is possible since $\Arf(K\cup J) = \Arf(K) + \Arf(J) = 0$ by \Cref{lem:arfadditive}

We denote by $\gamma_i'$ the curves in $\Sigma'$ that map to $\gamma_i$ under the projection to the first factor
$M\times[\frac{1}{2},1]\to M$. Note that $\gamma_i' \subset M \times \{\tfrac34\}$.
Let $I$ be the interval $[-\epsilon,\epsilon]$, with $\epsilon$ chosen in $ (0,\tfrac{1}{4})$ for the explicit formulas below to work. Let
\[A=A(\gamma_1)\cup\cdots \cup A(\gamma_c)\] denote a closed regular neighborhood of the union of the $\gamma_i$, where $A(\gamma_i)\cong S^1\times I$ denotes a regular neighborhood of $\gamma_i$ in $\Sigma$. We fix a trivialization $\Phi\colon A\times I\hookrightarrow M$ of a regular neighborhood of $A\subset M$ with $\Phi(a,0)=a$ for all $a\in A$ (e.g.~take the restriction of a trivialization of an embedded normal bundle of $\Sigma$ in $M$). We further let \[\Psi\colon A\times I\times I\hookrightarrow M\times[\tfrac{1}{2},1],\quad(s,t,h)\mapsto (\Phi(s,t),\tfrac{3}{4}+h),\] which gives a trivialization of a regular neighborhood of $A'\coloneqq A\times\{\tfrac{3}{4}\}$ in\break ${M\times[\tfrac{1}{2},1]\subset B}$. %

For each $\gamma_i'$, we use $\Psi$ to identify the regular neighborhood $N_i=\Psi(A(\gamma_i)\times I\times I)$ with $A(\gamma_i)\times I\times I\cong S^1\times I\times I\times I$. %
We now perform surgery on $M\times[\tfrac{1}{2},1]$ and thus $B=B_{\tfrac{1}{2}}\cup M\times[\tfrac{1}{2},1]$ by removing the interior of $N_i$ and gluing $D^2\times \partial (I\times I\times I)$ along $S^1\times \partial (I\times I\times I)$ to the boundary of $N_i$ via $\Psi$. %
We modify $\Sigma'$ by removing $\Sigma'\cap N_i^\circ$ and adding $D^2\times \partial I \times \{0\}\times \{0\}\subset D^2\times \partial (I\times I\times I)$. In other, words we have ambiently compressed $\gamma_i'$ in $\Sigma'$.
Performing such a surgery for all the $\gamma_i'\subset B$ together with the corresponding modification on $\Sigma'\subset F'$ yields a $4$--manifold $V$ and an embedded surface $F$ of genus $g-c=h$. 

It remains to check that $\pi_1(V\setminus F) \cong \Z$ and that $V$ is homeomorphic to
\[B \# (\mathbb{C}P^2\#\overline{\mathbb{C}P^2})^{\#c_1}\# (S^2\times S^2)^{\#c_2}.\]
First, we calculate $\pi_1(V\setminus F)$.
Writing
\[V\setminus F=\left(B\setminus (F'\cup \bigcup_{i=1}^b N_i)\right) \cup \left(\bigcup_{i=1}^b R_i\right),\]
where $R_i\cong D^2\times (S^2\setminus \{N,S\})$ is the copy of
$D^2\times \partial (I\times I\times I)\setminus (D^2\times \partial I \times \{0\}\times \{0\}))$ in $V$ corresponding to $\gamma_i$,
the Seifert-van Kampen theorem shows that $\pi_1(V\setminus F)$ is a quotient of $\pi_1(B\setminus (F'\cup \bigcup_{i=1}^b N_i))\cong \pi_1(B\setminus F')\cong\Z$.
In fact, this quotient is given by killing the free conjugation classes of $\Psi(\gamma_i\times\{-\epsilon\}\times\{-\epsilon\})$ in $B\setminus F'$, which is homotopy equivalent to $B\setminus \left(F'\cup \bigcup_{i=1}^b N_i^\circ)\right)$.
However, $\Psi(\gamma_i\times\{-\epsilon\}\times\{-\epsilon\})$ is null-homologous in $B\setminus F'$ (and thus null-homotopic) for all $i$. For this, note that $\Psi(\gamma_i\times\{-\epsilon\}\times\{-\epsilon\})$ is homotopic to
$$
\Psi(\gamma_i\times\{0\}\times\{-\epsilon\})
\subset \Sigma^{\circ} \times \{\tfrac{3}{4}-\epsilon\} \subset (M\setminus J)\times\{\tfrac{3}{4}-\epsilon\}\subset B\setminus \Sigma'\subset B\setminus F',$$
and all the $\gamma_i\subset\Sigma^{\circ}$ are null-homologous in $M\setminus J$ by our choice of $\Sigma$ being $H_1$--null.

Secondly, we describe $V$. Recall that embeddings of a finite union of circles in a simply-connected $4$--manifold are unknotted, i.e.~there exist disjoint embedded 4--balls, one for each component, such that each component lies in a standard position (say as the boundary of some standard 2-disc) in one of the 4--balls.
Therefore, surgery along each component corresponds to changing the manifold by connect summing with the result of performing surgery along a simple closed curve in standard position in $S^4$. Surgery along such a simple closed curve in $S^4$ yields either $S^2\times S^2$ or $\mathbb{C}P^2\# \overline{\mathbb{C}P^2}$, depending on the framing.
Hence, we already know that $V$ is $B \# (\mathbb{C}P^2\#\overline{\mathbb{C}P^2})^{\#c_1'}\# (S^2\times S^2)^{\#c_2'}$ for some non-negative integers $c_1'$ and $c_2'$ with $c_1'+c_2'=g-h$.
To determine  that $c_1'=c_1$ and $c_2'=c_2$, one can check that the surgeries along $\gamma_i'$ we performed to find $V$ yielded connected summing with $S^2\times S^2$ when the framing of $\gamma_i$ induced by $\Sigma$ is even and connected summing with $\mathbb{C}P^2\# \overline{\mathbb{C}P^2}$ when the framing induced by $\Sigma$ is odd. We provide a way to check this using the intersection form in the following, last two paragraphs of the proof.

For each $i$, the second homology classes $a_i$ and $b_i$ in $H_2(V;\Z)$ represented by
\begin{align*}
S_i &\coloneqq\{0\}\times \partial(I\times I\times I) \\
T_i &\coloneqq(D^2\times\{0\}\times\{0\}\times\{\epsilon\})
\cup (\Sigma_i\times\{\tfrac{3}{4}+\epsilon\}),
\end{align*}
respectively. Here, $S_i$ and $(D^2\times\{0\}\times\{0\}\times\{\epsilon\})$ are viewed as subsets of the $D^2\times \partial(I\times I\times I)$ subset of $V$ corresponding to $i$, and $\Sigma_i\subset M$ is a Seifert surface for $\gamma_i$. We have that $(a_1,b_1,\ldots, a_c,b_c)$ is a basis of $H_2(V;\Z)\cong \Z^{2c}$. %

One checks that %
$a_i$ is also represented by a sphere disjoint of $S_i$, $S_i$ and $T_i$ intersect transversely in one point, and $b_i$ can be represented by a surface $T_i'$ that algebraically intersects $T_i$ $n_i$ times, where $n_i$ denotes the framing of $\gamma_i$ induced by~$\Sigma$. Hence, the intersection form on $H_2(V;\Z)$ restricted to $\langle a_i,b_i\rangle\subset H_2(V;\Z)$ is given by
$\begin{pmatrix}
0 & \pm 1\\
\pm 1 & n_i
\end{pmatrix}$. This in turn is equivalent to $\begin{pmatrix}
0 & \pm 1\\
\pm 1 & 0
\end{pmatrix}$ if $n_i$ is even, and to $\begin{pmatrix}
0 & \pm 1\\
\pm 1 & 1
\end{pmatrix}$ and thus $\begin{pmatrix}
-1 & 0\\
0 & 1
\end{pmatrix}$ if $n_i$ is odd. The former is the intersection form of $S^2\times S^2$ while the later is the intersection form of $\mathbb{C}P^2\# \overline{\mathbb{C}P^2}$, which proves that the surgery corresponding to $\gamma_i$ is as claimed.
\end{proof}

\section{Linking pairings of cyclic branched coverings of prime order}\label{sec:Comp}
{\Cref{thm:main} \eqref{item:Blanchfield}}
expresses the $\mathbb{Z}$--slice genus in terms of the existence of certain presentation matrices for the Blanchfield pairing.
In spite of the algebraic nature of this characterization,
no algorithm to compute $\gst$ of a given knot is evident from it.
One faces the same hurdle as when trying to compute the Nakanishi index (the minimal number of generators of
the Alexander module), which bounds $\gst$ from below,
namely the complexity of the ring $\Lambda = \mathbb{Z}[t^{\pm 1}]$
underlying the Blanchfield pairing. In particular, one lacks a classification
of finitely generated modules over $\Lambda$, as is available over rings
such as PIDs or Dedekind domains.

To obtain lower bounds for $\gst$, however, one may consider the Hermitian pairing induced
by $\Bl$ when taking the quotient of $\Lambda$ by a suitable ideal.
In this section, we pursue this idea for the principal ideals generated by the $n$--th cyclotomic
polynomials~$\Phi_n$ for $n$ prime.
There are two reasons for this choice.
Firstly, taking the quotient by $\Phi_n$ yields the sesquilinear linking pairing on the first homology
group of the $n$--fold cyclic branched covering of $S^3$ along the knot, which is of geometric interest.
The details of this relationship are well known (see e.g.~\cite{MR1324523}), and will be explained in \cref{subsec:bisesqui}.

Secondly, the algebraic situation is particularly simple.
The conjugation on $\Lambda / (\Phi_n)$ is just complex conjugation,
the ring $\Lambda / (\Phi_n)$ is Dedekind, and isometry classes
of Hermitian linking pairings can be fully classified.
For $n = 2$, one has $\Phi_2 = t + 1$ and $\Lambda / (\Phi_2) \cong \Z$,
hence the Hermitian pairings are in fact simply the symmetric ones.
Isometry classes of symmetric integral pairings on finite abelian groups $A$ with odd order have been classified by
Wall~\cite{MR0156890}.
We obtain an obstruction for $\gst(K) \leq 1$ from this pairing in \cref{subsec:double}
and use it to complete the calculation
of the $\mathbb{Z}$--slice genus
for knots  in the table of prime knots with crossing number~12 and fewer (see \Cref{subsec:comp}).

One might expect the cases $n \geq 3$ to yield similarly efficient lower bounds.
That appears to be not so. Indeed, in \cref{subsec:higher}, we prove that for an odd prime $n$, the isometry class of the linking pairing
of the $n$--fold branched covering is already determined by the action of the deck transformation group.

This section is inspired by Borodzik and Friedl's pursuit of the analogous lower bounds for the algebraic unknotting number \cite{BorodzikFriedl_15_TheUnknottingnumberAndClassInv1}.
They show that the problem of finding minimal presentation matrices after quotienting by $\Phi_n$ is essentially a finite
problem, and can thus be solved by a computer using brute force.
In contrast, the lower bounds we obtain from the double branched covering in \cref{subsec:double} can for a given knot be checked
by hand. Moreover, our \cref{thm:allisometric} partially answers the question implicitly asked in
\cite{BorodzikFriedl_15_TheUnknottingnumberAndClassInv1}:
why $n$--fold branched coverings for prime odd $n$ do not supply efficient lower bounds.

\subsection{Bilinear and sesquilinear linking pairings of finite branched coverings}
\label{subsec:bisesqui}
\renewcommand{\cyc}{\infty}%
Fix a prime power $n\geq 2$ and let $\Sigma_n(K)$ denote the $n$--fold cyclic branched covering of $S^3$ along $K$.
Then $H_1(\Sigma_n(K);\Z)$ is an abelian group of finite order $d$,
carrying a bilinear symmetric non-degenerate \emph{linking pairing}
\[\lke\colon H_1(\Sigma_n(K);\Z)\times H_1(\Sigma_n(K);\Z)\to\Q/\Z,\]
which can (similarly to the Blanchfield pairing)
be defined as $\lke(x,y) = (\Psi_n(x))(y)$ for $\Psi_n$ the composition of the maps
\begin{multline*}
H_1(\Sigma_n(K);\Z) \to H^2(\Sigma_n(K);\Z) \to H^1(\Sigma_n(K);\Q/\Z)\\
\to \Hom_{\Z}(H_1(\Sigma_n(K);\Z), \Q/\Z),
\end{multline*}
where the first map is the inverse of Poincar\'e duality, the second map is the inverse of the Bockstein map (the connecting homomorphism in the long exact sequence of cohomology induced by the short exact sequence of coefficient $0\to\Z\to\Q\to\Q/\Z\to 0$), and the third map is the Kronecker evaluation map.
Here, we are using that $\Sigma_n(K)$ is a rational homology sphere, which follows from $n$ being a prime power.
For non-prime powers $n$,
$\Sigma_n(K)$ need not be a rational homology sphere,
but $\lke$ may be defined on the torsion part of the first homology.
But since the results we prove in the following sections apply only to prime $n$ anyway,
we do not follow that more general setup.

The deck transformation group of the branched covering $\Sigma_n(K) \to S^3$ is cyclic of order $n$.
Identifying it with the multiplicative group of units $\langle [t]\rangle\subset \Lambda_n\coloneqq \Lambda/(t^n-1)$
endows $H_1(\Sigma_n(K);\Z)$ with the structure of a $\Lambda_n$--module.
Clearly, $\lke$ is \emph{equivariant} with respect to the action of the unit group of $\Lambda_n$, i.e.\ $\lke(tx,ty)=\lke(x,y)$ for all $x,y\in H_1(\Sigma_n(K); \Z)$.

One may now define another linking pairing $\lkh'$ on the $\Lambda_n$-module $H_1(\Sigma_n(K);\Z)$ that is sesquilinear (anti-linear in the first variable), Hermitian (with respect to
the conjugation on $\Lambda_n$ given by $t^k\mapsto t^{-k}$) and also non-degenerate:
\begin{align}\tag{$\dagger$}\label{deflambda}
\lkh'\colon & H_1(\Sigma_n(K);\Z)\times H_1(\Sigma_n(K);\Z)\to %
Q(\Lambda_n) /\Lambda_n,\\\notag
&  (x,y)\mapsto \sum_{k=1}^n [t]^k\lke(t^kx,y).
\end{align}

Now, we will need that $Q(\Lambda_n)$ and $\Q[t^{\pm 1}]/(t^n - 1)$ are isomorphic,
which is implied by the following more general statement.
\begin{lemma}
For all monic polynomials $f\in \Lambda$, the function
$\Q[t^{\pm 1}]/(f) \to Q(\Lambda/(f))$ given by $[g/k] \mapsto [g]/[k]$
for $g \in \Lambda$ and $k\in \Z\setminus\{0\}$ is a ring isomorphism.
\end{lemma}
\begin{proof}
The rings $Q(\Lambda/(f))$ and $\Q[t^{\pm 1}]/(f)$ are the localizations of $\Lambda/(f)$ with respect to the multiplicative sets $S$ and $T$, respectively, where $S$ is the set of non-zero divisors, and $T = \Z\setminus\{0\}$.

We claim that $[g]$ is a zero divisor of $\Lambda / (f)$ if and only if $f$ and $g$ have a non-unital common divisor $h$.
For the `if' direction, note that $g \cdot (f / h)$ is divisible by $f$ in $\Lambda$, and so $[g]\cdot [f/h] = 0 \in \Lambda/(f)$.
Since $f/h$ is not divisible by $f$, $[f/h] \neq 0 \in \Lambda/(f)$, and so $[g]$ is a zero divisor.
For the `only if' direction, recall that $[g]$ being a zero divisor means that there exists a non-zero $[h] \in \Lambda / (f)$ such that $[g][h] = 0$. Equivalently, $f$ divides $g\cdot h$ in $\Lambda$. Since $\Lambda$ is a UFD, and $f$ does not divide $h$ since $[h] \neq 0$, it follows that $f$ and $g$ have a non-unital common divisor.

Since $f$ is monic, the gcd of its coefficients is trivial, and so there is no $k\in T$ having a non-unital common divisor with $f$. It follows that $T\subset S$, and so $Q(\Lambda/(f))$ is in fact
a localization of $\Q[t^{\pm 1}]/(f)$. This yields an injective homomorphism $\Q[t^{\pm 1}]/(f) \to Q(\Lambda(f))$,
which equals the function given in the statement of the lemma.
We claim it is in fact an isomorphism. We just need to check surjectivity, and for that it is sufficient
to prove that every non-zero divisor $[g]\in \Lambda/(f)$ has a multiplicative inverse in $\Q[t^{\pm 1}]/(f)$.
As shown above, $f$ and $g$ have no non-unital common divisor over $\Lambda$.
Gauss's Lemma implies that $f$ and $g$ do not have a non-unital common divisor over $\Q[t^{\pm 1}]$, either.
Since $\Q[t^{\pm 1}]$ is a PID, by B{\'e}zout's lemma
there are $u, v\in \Q[t^{\pm 1}]$ with $uf + vg = 1$. Now $[v] \in \Q[t^{\pm 1}]/(f)$ is the desired multiplicative inverse.
\end{proof}

For the rest of of the text we fix an identification isomorphism $Q(\Lambda_n)\cong\Q[t^{\pm 1}]/(t^n - 1)$ as provided by the above lemma.

Let $\theta\colon Q(\Lambda_n) \cong \Q[t^{\pm 1}]/(t^n - 1) \to \Q$ be the $\Q$--linear map sending
$t^k$ to $1$ if $n|k$ and to $0$ otherwise.
Then $\lke$ may be recovered from $\lkh'$ simply by
\begin{equation}\tag{$\ddagger$}\label{recoverlk}
\lke(x,y)  = \theta(\lkh'(x,y)),
\end{equation}

In fact, we have the following (a variation of \cite[Prop.~1.3]{MR1324523}).
\begin{prop}\label{prop:bisesqui}
On a fixed 
torsion $\Lambda_n$--module,
the bilinear symmetric equivariant pairings
and sesquilinear Hermitian pairings are in one-to-one correspondence
via \cref{deflambda} and \cref{recoverlk}.
This correspondence preserves isometry type and non-degeneracy.
\end{prop}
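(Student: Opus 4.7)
The plan is to exhibit the correspondence explicitly by verifying that \eqref{deflambda} and \eqref{recoverlk} are mutually inverse assignments on the level of pairings, and that both operations preserve non-degeneracy and are canonical in the module, so that they descend to isometry classes. The only genuine ingredient is the identity
\[
f = \sum_{k=1}^{n} [t]^k\,\theta\bigl([t]^{-k} f\bigr) \quad \text{for every } f \in Q(\Lambda_n)=\mathbb{Q}[t]/(t^n-1),
\]
which just says that $\theta$ extracts the coefficient of $1$ in the basis $1,[t],\ldots,[t]^{n-1}$ of $Q(\Lambda_n)$ over $\mathbb{Q}$, and that this basis is self-dual under left multiplication by $[t]^{\pm 1}$. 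Note that $\theta$ descends to a well-defined $\mathbb{Z}$-linear map $Q(\Lambda_n)/\Lambda_n\to\mathbb{Q}/\mathbb{Z}$, so \eqref{recoverlk} indeed makes sense.

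First, starting from a bilinear symmetric equivariant pairing $\lk$, I would check that $\ell'$ defined by \eqref{deflambda} is sesquilinear and Hermitian by direct computation: linearity in the second variable follows from substituting $k\mapsto k-1$ and using equivariance $\lk(t^kx,ty)=\lk(t^{k-1}x,y)$; anti-linearity in the first variable is symmetric. For the Hermitian property, one computes
\[
\overline{\ell'(x,y)} = \sum_k [t]^{-k}\lk(t^kx,y) = \sum_k [t]^{-k}\lk(x,t^{-k}y) = \sum_j [t]^j\lk(t^jy,x) = \ell'(y,x),
\]
using equivariance, symmetry of $\lk$, and the reindexing $j=-k$. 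Conversely, starting from a sesquilinear Hermitian $\ell'$, define $\lk$ by \eqref{recoverlk}; $\mathbb{Z}$-bilinearity is immediate from $\mathbb{Z}$-bilinearity of $\ell'$ and $\mathbb{Z}$-linearity of $\theta$, while symmetry follows because $\theta$ is invariant under the involution (since the coefficient of $1$ in $f$ equals the coefficient of $1$ in $\overline{f}$), and equivariance follows from $\ell'(tx,ty)=\overline{[t]}\,[t]\,\ell'(x,y)=\ell'(x,y)$.

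That the two operations are mutually inverse is a one-line application of the coefficient-extraction identity above: the $k=n$ term of \eqref{deflambda} applied to $\lk=\theta\circ\ell'$ recovers $\lk(x,y)$, while conversely
\[
\sum_k [t]^k\,\theta(\ell'(t^kx,y)) = \sum_k [t]^k\,\theta([t]^{-k}\ell'(x,y)) = \ell'(x,y).
\]
Non-degeneracy transfers in both directions: if $\ell'(x,\cdot)\equiv 0$ then already the $k=n$ coefficient $\lk(x,\cdot)$ vanishes, and conversely if $\lk(x,\cdot)\equiv 0$ then by equivariance $\lk(t^kx,\cdot)\equiv 0$ for every $k$, so all coefficients of $\ell'(x,\cdot)$ vanish. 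Finally, since both formulas are given by universal expressions in the module operations, any $\Lambda_n$-linear isomorphism between modules intertwines $\lk_1\leftrightarrow\lk_2$ if and only if it intertwines $\ell'_1\leftrightarrow\ell'_2$, so the bijection descends to isometry classes. I expect no serious obstacle; the entire argument is bookkeeping of coefficients in $Q(\Lambda_n)$, the only subtle point being to keep straight that $\overline{[t]}=[t]^{-1}$ and that $\theta$ is preserved under conjugation.
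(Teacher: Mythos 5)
Your proposal is correct and fills in precisely the routine verification that the paper leaves to the reader (the statement in the paper is followed immediately by \verb|\qed| with a citation to a variation of \cite[Prop.~1.3]{MR1324523} and no spelled-out argument). You take the natural approach: verify that \eqref{deflambda} produces a sesquilinear Hermitian pairing from a bilinear symmetric equivariant one, that \eqref{recoverlk} goes the other way, that the two assignments are mutually inverse via the coefficient-extraction identity for $\theta$, and that non-degeneracy and $\Lambda_n$-module isometry carry over in both directions. All the individual computations — the reindexings, the use of equivariance and symmetry for the Hermitian property of $\ell'$, the use of $\theta(\overline{f})=\theta(f)$ for symmetry of $\lk$, and the fact that only the $k=n$ term of \eqref{deflambda} contributes to $\theta$ — check out. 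One small clarity point: in your ``mutually inverse'' paragraph, the first clause (about the $k=n$ term) establishes that applying \eqref{deflambda} and then $\theta$ returns $\lk$, while the displayed line establishes that applying $\theta$ and then \eqref{deflambda} returns $\ell'$; the interjected ``$=\theta\circ\ell'$'' makes it read as if both clauses treat the same direction, so you might rephrase to make the two directions visually distinct. That is purely expository; the mathematics is sound.
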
\begin{proof} For a fixed torsion $\Lambda_n$--module, \cref{deflambda} defines a map from sesquilinear Hermitian pairings to bilinear symmetric equivariant pairings that has the map from bilinear symmetric equivariant pairings to sesquilinear Hermitian pairings defined by \cref{recoverlk} as its inverse. Indeed, if $\ell_h$ is a sesquilinear Hermitian pairing and $\ell_e(x,y)\coloneqq \theta(\ell_h(x,y))$,
then we have (using $\sum_{k=1}^n [t^k]\theta([t^{-k}]g)=g$ for $g\in \Q[t^{\pm 1}]/(t^n - 1)\cong Q(\Lambda_n)$)
\[\sum_{k=1}^n[t]^k\ell_e(t^kx,y)=\sum_{k=1}^n[t]^k\theta(\ell_h(t^kx,y))=\sum_{k=1}^n[t]^k\theta([t^{-k}]\ell_h(x,y))=\ell_h(x,y).\]
If instead, $\ell_e$ is a symmetric equivariant pairing and \[\ell_h(x,y)\coloneqq\sum_{k=1}^n[t]^k\ell_e(t^kx,y),\] then we have $\theta(\ell_h(x,y))=\theta\left(\sum_{k=1}^n[t]^k\ell_e(t^kx,y)\right)=\ell_e(t^nx,y)=\ell_e(x,y)$.
\end{proof}

Next, let us change the ground ring from $\Lambda_n$ to its quotient $\Lambda/\rho_n$,
where $\rho_n = (t^n - 1)/(t - 1) = 1 + t + \cdots + t^{n-1}$.
For this, consider the covering map $\pi\colon{S^3\setminus K}^{\cyc}\to S^3\setminus K^n$ and the inclusion $\iota\colon S^3\setminus K^n \to \Sigma_n(K)$, where ${S^3\setminus K}^{n}$ denotes the $n$--fold cyclic covering of $S^3\setminus K$. The deck transformation group endows $H_1({S^3\setminus K}^{n};\Z)$ with a $\Lambda_n$--module structure and as a $\Lambda_n$--module $H_1({S^3\setminus K}^{n};\Z)$ is canonically isomorphic to the twisted homology  ${H_1(S^3\setminus K; \Lambda_n)}$
(twisted with respect to the abelianization $\pi_1(S^3\setminus K)\to \Z$ composed with $\Z\to \Z/n\Z$).
Here,  we identify the group ring $\Z[\Z/n\Z]$ with $\Lambda_n$.

The \emph{transfer homomorphism} $\tau\colon C_*(S^3\setminus K; \Z) \to C_*(S^3\setminus K^n; \Z)$
is defined by sending a singular simplex $\sigma$ of $S^3\setminus K$ to the sum of its $n$ lifts to $S^3\setminus K^n$.
In other words, if $\widetilde{\sigma}$ is an arbitrary lift of $\sigma$, then $\tau(\sigma) = \rho_n\cdot\widetilde{\sigma}$.
Thus it follows that $\im \tau \subset \rho_n C_*(S^3\setminus K^n; \Z)$.
Similarly, if $\widetilde{\sigma}$ is any singular simplex of $S^3\setminus K^n$, and $\sigma$ its image under the covering map, then $\rho_n\cdot \widetilde{\sigma} = \tau(\sigma)$. Hence we have 
$\im \tau = \rho_n {C_*(S^3\setminus K^n; \Z)}$.
The transfer homomorphism is a chain map and thus induces a map $\tau_*\colon 
{H_*(S^3\setminus K; \Z)} \to {H_*(S^3\setminus K^n ; \Z)}$.
The image of $\tau_*$ equals ${\rho_n H_*({S^3\setminus K}^n ; \Z)}$, since this is already true on the chain level.
Because ${H_1(S^3\setminus K; \Z)}$ 
is generated by the class of a meridian $m$ of the boundary torus of $S^3\setminus K$,
it follows that ${\rho_n H_1({S^3\setminus K}^n ; \Z)}$ is generated by $\tau_*([m])$,
which equals the class of a meridian of the boundary torus of $S^3\setminus K^n$.
But since this class is killed by $\iota_*$ we find that $\rho_n$ annihilates~$H_1(\Sigma_n; \Z)$.
So $H_1(\Sigma_n; \Z)$ has the structure of a $\Lambda/\rho_n$--module, 
on which $\lkh'$ induces a sesquilinear Hermitian non-degenerate pairing
\begin{align*}
\lkh\colon & H_1(\Sigma_n(K);\Z)\times H_1(\Sigma_n(K);\Z)\to %
Q(\Lambda/\rho_n) / (\Lambda / \rho_n),\\\notag
&  \lkh(x,y) = p(\lkh'(x,y)) = \sum_{k=1}^n [t]^k\lke(t^kx,y),
\end{align*}
for $p$ the canonical projection
\[
Q(\Lambda_n)/\Lambda_n \to
Q(\Lambda / \rho_n) / (\Lambda / \rho_n).
\]
By \cref{cor:pfold} the isometry classes of $\lkh'$ and $\lkh$
determine one another.

In summary, we have seen three variations of linking pairings on the $n$--fold branched covering,
all of which contain the same information and can thus be used interchangeably.
In the following, we will stick to $\lkh$ and refer to it as
the \emph{sesquilinear linking pairing of $\Sigma_n$}.
Our reason to favor $\lkh$ is its close relationship to the Blanchfield form:
in a certain sense, which will be made more precise in \cref{sec:ApP:BlandLk},
$\lkh$ is induced by $\Bl$. In particular, we have the following statement,
which we will also prove in \cref{sec:ApP:BlandLk}.

\begin{lemma}\label{prop:inducedpresmatrix}
If the Blanchfield pairing of a knot $K$ is presented by a Hermitian matrix $A(t)$ over $\Z[t^{\pm 1}]$,
then, for $n$ a prime power, the sesquilinear linking pairing $\lkh$ of the $n$--fold branched covering $\Sigma_n$ of $S^3$ along $K$
is presented by the matrix obtained from $A(t)$ by applying the quotient ring homomorphism $\Z[t^{\pm 1}] \to  \Z[t^{\pm 1}]/(1 + t + \ldots + t^{n-1})$ to each entry.
\end{lemma}

\subsection{The double branched covering}\label{subsec:double}
For $n = 2$, we have $\rho_2 = \Phi_2 = 1 + t$, and $\Lambda / (\rho_2)$ is isomorphic to $\Z$
via $t\mapsto -1$. Under this identification, the sesquilinear pairing $\lkh$ equals \emph{twice}
the bilinear pairing $\lke$, i.e.\ $\lkh(x,y) = 2\lke(x,y)$ for all $x, y\in H_1(\Sigma_2;\Z)$, since
\begin{align*}
\lkh(x,y)& =\sum_{k=1}^2 t^k\lke(t^kx,y)=t\lke(tx,y)+t^2\lke(t^2x,y)\\
  & =(-1)\lke(-x,y)+(-1)^2\lke(x,y) = 2\lke(x,y).
\end{align*}
Although $\lke$ is the pairing that is usually considered in the literature, we prefer to stick with $\lkh$ for consistency's sake.
We ask the reader to keep this subtlety in mind when working with results from this section (see also \cref{rmk:ko2} below).
Let us start by proving the following.

\begin{prop}\label{prop:doublepresmat}
There is a symmetric integral presentation matrix $C$ of size $2\gst(K)$ for the pairing $\lkh$ on $H_1(\Sigma_2;\Z)$
with $\det C \equiv (-1)^{\gst(K)} \pmod{4}$.
\end{prop}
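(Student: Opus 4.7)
The plan is to apply \Cref{thm:main}\eqref{item:Blanchfield} to obtain a Hermitian matrix $A(t)$ of size $2\gst(K)$ over $\Lambda$ presenting $\Bl(K)$ with $\sigma(A(1)) = 0$, and then take $C \coloneqq A(-1)$. By the discussion at the end of \Cref{subsec:bisesqui}, reducing a presentation matrix of $\Bl(K)$ modulo $\rho_2 = 1+t$ yields a presentation matrix for the sesquilinear pairing $\ell$; under the identification $\Lambda/(1+t)\cong \Z$ sending $t\mapsto -1$, this reduction is exactly $A(-1)$. The Hermitian condition $A(t^{-1})^{\top} = A(t)$ specializes at $t=-1$ to $A(-1)^{\top} = A(-1)$, so $C$ is a symmetric integer matrix of the correct size.

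Next I would compute $\det A(1)$. Since $A(t)$ is a square presentation matrix of the Alexander module, $\det A(t) = u\cdot\Delta_K(t)$ for some unit $u\in\Lambda$; in particular $\det A(1) = \pm\Delta_K(1) = \pm 1$, so $A(1)$ is unimodular. Together with the signature-zero condition and $\rk A(1) = 2\gst(K)$, this forces exactly $\gst(K)$ negative eigenvalues, so $\det A(1) = (-1)^{\gst(K)}$.

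Finally, I would compare $\det A(1)$ and $\det A(-1)$ modulo $4$ using the palindromicity of $\det A(t)$. Hermiticity gives $\det A(t) = \det(A(t^{-1})^{\top}) = \det A(t^{-1})$, so writing $\det A(t) = \sum_i c_i\, t^i$ one has $c_i = c_{-i}$. Then
\[
\det A(1) - \det A(-1) \;=\; \sum_i c_i\bigl(1-(-1)^i\bigr) \;=\; 2\!\sum_{i\text{ odd}}\! c_i \;=\; 4\!\sum_{\substack{i>0\\ i\text{ odd}}}\! c_i \;\equiv\; 0\pmod 4,
\]
which combined with the previous step yields $\det C = \det A(-1) \equiv (-1)^{\gst(K)}\pmod 4$. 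The only non-routine ingredient---and the place where I expect to need to be careful---is this last trick: naive specialization $t\mapsto -1$ only gives a mod-$2$ congruence, and promoting it to mod-$4$ genuinely uses that $\det A(t)$ is symmetric in $t\leftrightarrow t^{-1}$ (so odd-degree coefficients pair up and contribute with a factor of $2$). Everything else is bookkeeping with the identifications from \Cref{subsec:bisesqui} and the standard fact that a unimodular symmetric integral form of rank $2g$ and signature $0$ has determinant $(-1)^g$.
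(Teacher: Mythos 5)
Your proposal is correct and essentially matches the paper's proof: specialize a signature-zero Hermitian presentation matrix $A(t)$ at $t=-1$ to get $C$, determine the exact sign of $\det A(1)$ from unimodularity and signature zero, and then use the palindromicity $\det A(t)=\det A(t^{-1})$ to upgrade to a mod-$4$ congruence. The only cosmetic difference is in that last step: you pair up odd-degree coefficients of $\det A(t)$ directly, while the paper invokes the equivalent factorization $\Delta_K(t) = 1 + (1-t)(1-t^{-1})\Delta'(t)$ and evaluates $(1-t)(1-t^{-1})$ at $t=-1$ to produce the $4$.
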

\begin{proof}
By \Cref{thm:main} \eqref{item:Blanchfield}, there exists a presentation matrix
$A(t)$ of the Blanchfield pairing of size $2\gst(K)$ such that $\sigma(A(1)) = 0$.
When taking the quotient by~$\Phi_2$, i.e.~setting $t=-1$,
$A(t)$ descends to a presentation matrix $C = A(-1)$ of $\lkh$ 
(see \cref{prop:inducedpresmatrix} or \cite[Lemma~3.3]{BorodzikFriedl_15_TheUnknottingnumberAndClassInv1}).
It remains to check the condition on $\det C$.

We have $\det A(t) = \pm\Delta_K(t)$,
where the Alexander polynomial $\Delta_K(t)$ is normalized such that
$\Delta_K(t) = \overline{\Delta_K(t)}$ and $\Delta_K(1) = 1$.
It follows that $\det A(1) = \pm 1$.
So $A(1)$ is a symmetric integral unimodular matrix with signature $0$ (in particular, indefinite).
We now make use of the classification of such forms; see \cref{thm:classificationindefinite}.
To sidestep the case of even matrices, let us consider the form $A(1) \oplus (1)$, which is clearly odd,
and thus congruent over $\Z$ to a diagonal matrix 
with $\gst(K) + 1$ many diagonal entries equal to 1, and $\gst(K)$ many diagonal entries equal to $-1$.
So $\det(A(1)) = \det(A(1) \oplus (1)) = (-1)^{\gst(K)}$,
and thus 
$\det A(t) = (-1)^{\gst(K)}\Delta_K(t)$.

Because $\Delta(1) = 1$, there exists $f(t)\in\Lambda$ such that $\Delta(t) = (t-1)f(t) + 1$.
Since $\Delta(t^{-1}) = \Delta(t)$, we have
$(t^{-1} - 1)f(t^{-1}) + 1 = (t-1)f(t) + 1 \Rightarrow f(t^{-1}) = -tf(t) \Rightarrow f(1) = -f(1) \Rightarrow f(1) = 0$.
So there exists $g(t)\in\Lambda$ such that $f(t) = (t-1)g(t) + 1$, and hence $\Delta(t) = (t-1)^2g(t) + 1$.
Thus we find
\begin{align*}
[\det A(t)] & = [(-1)^{\gst(K)}] \in \Lambda / (t-1)^2\\
\Rightarrow\quad
[\det C] & = [(-1)^{\gst(K)}] \in \Lambda / (\Phi_2, (t-1)^2)\\
\Rightarrow\quad
\det C & \equiv (-1)^{\gst(K)} \pmod{4}.\pushQED{\qed}\qedhere\popQED
\end{align*}%
\renewcommand{\qedsymbol}{}
\end{proof}%
\vspace{-\baselineskip}
\begin{rmk}\label{rmk:ko2}
In the previous proof,
provided that $\gst(K) = g(K)$,
the presentation matrix $A(t)$ of the Blanchfield pairing 
may be obtained from a minimal Seifert matrix $M$ of $K$ using Ko's formula (compare \cref{rmk:ko}).
Note that $M + M^{\top}$ is a presentation matrix for the pairing $\lke$,
whereas $A(-1)$ is a presentation matrix for the pairing $\lkh$.
\end{rmk}

In light of this lower bound for $\gst$ it would be useful---and of somewhat independent interest---to determine for every isometry class
of symmetric pairings $\ell$ on finite abelian groups $A$ of odd order (as classified by Wall~\cite{MR0156890})
the minimal size of a presentation matrix with prescribed determinant modulo 4.
We make a start by giving a necessary and sufficient condition for such a symmetric pairing to admit a $2\times 2$ matrix
in \cref{prop:jacobi} below.
Let us briefly state Wall's classification.
\begin{thm}[\cite{MR0156890}]\label{thm:wall}
For $p$ an odd prime and $k\geq 1$, there are exactly two isometry types of non-degenerate symmetric bilinear pairings $\ell\colon \Z/p^k\Z \times \Z/p^k\Z \to \Q/\Z$. Namely, writing $\ell(1,1) = a / p^k$,
all pairings $\ell$ for which $a$ is a quadratic residue modulo $p$ form an isometry type, which we denote by $A_{p^k}$,
and all pairings $\ell$ for which $a$ is not a square residue modulo $p$ form another isometry type, which we denote by $B_{p^k}$.

The isometry types of non-degenerate symmetric bilinear pairings on finite abelian groups of odd order
constitute a commutative monoid, with the orthogonal sum $\oplus$ as operation, and the trivial group as neutral element.
This monoid may be presented by the generators $A_{p^k}, B_{p^k}$ and the relations $A_{p^k}\oplus A_{p^k} = B_{p^k}\oplus B_{p^k}$, where $p$ ranges over the odd primes and $k$ over the positive integers.
\end{thm}
Wall's classification implies in particular that $(A,\ell)$ decomposes as orthogonal sum of cyclic groups with prime power orders.
By combining summands with coprime orders, one may obtain a decomposition of $(A,\ell)$ as orthogonal sum of pairings on cyclic groups of orders $q_1, \ldots, q_n$ with $q_i | q_{i + 1}$, where the $q_i$ are positive odd integers (but not necessarily prime powers).
It turns out that this decomposition is more practical to phrase \cref{prop:jacobi} than the decomposition given by Wall's classification.

For each $i$, let $g_i$ be a generator of the $i$--th summand, which has order $q_i$. Then $\ell(g_i, g_i) \in \Q/\Z$
can be written as $a_i / q_i$ for some $a_i\in\Z$, which is coprime to $q_i$ since $\ell$ is non-degenerate.
Note that while the $q_i$ are uniquely determined by the isomorphism type of $A$ (this follows from the classification of finitely generated abelian groups), the $a_i$ are not uniquely determined by the isometry type of $\ell$.
Nevertheless, the fraction $a_i/q_i \in \Q/\Z$ determines the isometry type of $\ell$ restricted to the $i$--th summand,
and so the tuple
\[
\left(\frac{a_1}{q_1}, \ldots, \frac{a_n}{q_n}\right)
\]
determines the isometry class of $\ell$.
Note that $n$ is a lower bound for the size of a presentation matrix of $\ell$.
As a warm-up let us solve the case $n = 1$.
\begin{prop}
A non-degenerate symmetric bilinear pairing $\ell\colon A\times A \to \Q/\Z$ on a finite abelian group $A$ of odd order $q > 1$
admits a $1\times 1$ presentation matrix
if and only if
$A$ is cyclically generated by some $g \in A$, such that $\ell(g,g) = a/q$
with $a$ or $-a$ being a quadratic residue modulo $q$.
Moreover, if this condition holds for one choice of $a$,
then it holds for all.
\end{prop}
\begin{proof}
Let us first prove the `only if' direction.
If the $1\times 1$ matrix $(y)$ presents a pairing isometric to $\ell$, then $\Z / y$ must be isomorphic
to $\Z / q$, so $y = \pm q$. Now, for $g = [1] \in \Z/q$, 
the matrix $(q)$ presents the pairing with tuple $(1/q)$, and 
the matrix $(-q)$ presents the pairing with tuple $(-1/q)$. In the first case, $a = 1$
is a quadratic residue, and in the second case, $-a = 1$ is a quadratic residue.

For the `if' direction, assume $a \equiv \pm x^2 \pmod{q}$. Since $a$ is coprime to $q$, so is $x$.
So there exists an integer $y$ coprime to $q$ such that $xy \equiv 1 \pmod{q}$. Sending $g$ to the generator
$yg$ gives an isometry to a pairing with tuple $(\pm 1 / q)$,
since 
\[
\ell(yg,yg) = y^2\ell(g,g) = y^2a/q = \pm y^2x^2/q = \pm 1/q\in \Q/\Z.
\]
This pairing is presented by the $1\times 1$ matrix $(\pm q)$.

To check that the condition that one of $a$ and $-a$ is a quadratic residue does not depend on the choice of $g$
and $a$, observe that $a\in \Z$ is well-defined up to adding multiples of $q$, and multiplying with squares.
Neither of those operations changes whether $\pm a$ is a quadratic residue.
\end{proof}

To analyze the existence of $2\times 2$ presentation matrices,
we will need the \emph{Jacobi symbol} $\legendre{x}{y}\in\{\pm 1,0\}$,
which may be defined as follows for all $x,y \in\Z$ with $y$ odd and positive:
\begin{itemize}
\item If $\gcd(x,y) \neq 1$, then $\legendre{x}{y} = 0$.
\item If $y$ is prime, then $\legendre{x}{y}$ equals the Legendre symbol, i.e.\ it is $1$ if $x$ is a quadratic residue modulo $y$, and $-1$ otherwise.
\item If $y = p_1 \cdot\ldots\cdot p_k$ is a product of prime numbers, then $\legendre{x}{y}$ is the product of the $\legendre{x}{p_i}$.
\end{itemize}
The Jacobi symbol has a number of useful properties.
For instance, it is multiplicative in both $x$ and $y$.
Moreover, if $x,y$ are positive and odd, it satisfies quadratic reciprocity:
\[
\legendre{x}{y} = (-1)^{\frac{x-1}{2}\cdot\frac{y-1}{2}}\cdot\legendre{y}{x}.
\]
Finally, we have $\legendre{-1}{y} = (-1)^{\frac{y - 1}{2}}$. Note that for non-prime $y$, $\legendre{x}{y} = 1$
is a necessary, but not a sufficient condition for $x$ being a quadratic residue modulo~$y$---this is the reason
that Dirichlet's prime number theorem is needed in the proof of the following (which consists, aside from that,
of elementary manipulations).
Recall from \cref{thm:classificationindefinite} that a matrix with integer entries
is called odd if one of its diagonal entries is odd, and even otherwise.
\propjacobi*
\begin{proof}
As discussed above, the existence of such an orthogonal decomposition follows from \cref{thm:wall}. Furthermore, $a_i$ and $q_i$ are coprime since $\ell$ is non-degenerate.
The proof now proceeds by showing that (A) and (B) are both equivalent to the following intermediate statement:
\begin{enumerate}[leftmargin=2em]
\item[(C)]
There exist integers $\alpha, \beta, \gamma, \lambda_1, \lambda_2$ satisfying the following conditions:
\begin{enumerate}[leftmargin=2em]
\item[(C1)] $\alpha$ is odd,
\item[(C2)] $\alpha\gamma - \beta^2 = (-1)^{(q_1q_2 - u)/2} q_2/q_1$,
\item[(C3)] $\lambda_1^2 \alpha \equiv a_1  \pmod{q_1}$,
\item[(C4)] $\lambda_2^2 \alpha \equiv (-1)^{(q_1q_2 - u)/2} a_2 \pmod{q_2}$.\\
\end{enumerate}
\end{enumerate}
More precisely, we will show that if there exists a choice of $a_1, a_2$ such that (C) holds, then (A) follows;
and we will show that if (A) holds, then (C) follows for all possible choice of $a_1, a_2$.
Hence (B) is indeed independent from the choice of $a_1, a_2$, as claimed.
\medskip

\noindent\emph{`(C) $\Rightarrow$ (A)':\quad}
We claim that the matrix
\[
M = q_1 \cdot \begin{pmatrix} \alpha & \beta \\ \beta & \gamma \end{pmatrix}
\]
has the desired properties. Indeed, by (C1) and (C2), this is an odd symmetric $2\times 2$ integer matrix
with determinant $q_1^2(\alpha\gamma - \beta^2) = (-1)^{(q_1q_2 - u)/2} q_1q_2$.
Going through the four possible cases that $q_1q_2 \equiv \pm 1\pmod{4}$ and $u\equiv\pm 1\pmod{4}$,
one sees that $\det M \equiv u\pmod{4}$, as desired.
It remains to check that $M$ presents $\ell$. One computes
\[
M^{-1} =
\frac{(-1)^{\frac{q_1q_2 - u}{2}}}{q_2}\begin{pmatrix} \gamma & -\beta \\ -\beta & \alpha \end{pmatrix}.
\]
Let $v_1 = \lambda_1(\alpha, \beta)^\top$ and $v_2 = \lambda_2(0,1)^{\top}$. Then one computes
\begin{align*}
M^{-1}v_1 & = (\lambda_1,0)^{\top} / q_1 \in \Q^2 \\
M^{-1}v_2 & = (-1)^{(q_1q_2 - u)/2} \lambda_2 (-\beta, \alpha)^{\top} / q_2 \in \Q^2,
\end{align*}
and
\begin{align*}
\ell_M([v_1], [v_2]) & = v_1^{\top} M^{-1} v_2 = 0\in \Q/\Z, \\
\ell_M([v_1], [v_1]) & = v_1^{\top} M^{-1} v_1 = \lambda_1^2\alpha / q_1 \in \Q/\Z, \\
\ell_M([v_2], [v_2]) & = v_2^{\top} M^{-2} v_2 = (-1)^{(q_1q_2 - u)/2}\lambda_2^2\alpha / q_2 \in \Q/\Z.
\end{align*}
Now consider $w = k_1v_1 + k_2v_2$ for $k_1, k_2\in\Z$.
We have $\ell_M([w], [v_1]) = k_1\lambda_1^2\alpha/q_1$
and $\ell_M([w], [v_2]) = k_2(-1)^{(q_1q_2 - u)/2}\lambda_2^2\alpha / q_2$.
Since $a_i$ is coprime with $q_i$, (C3) and (C4) respectively imply that
$\lambda_i^2\alpha$ is also coprime with $q_i$ for $i = 1,2$.
Thus if $[w] = 0 \in \coker M$, then $\ell_M([w], [v_i]) = 0$, and thus $q_i | k_i$ for $i = 1,2$.
It follows that $[v_1], [v_2]$ span a subgroup isomorphic to $\Z/q_1 \oplus \Z/q_2$ of $\coker M$.
But since $|\det M| = q_1q_2$, this subgroup is equal to the whole group $\coker M$.

Above, we have computed that $[v_1]$ and $[v_2]$ are orthogonal with respect to $\ell_M$,
and by (C3) and (C4), $\ell_M([v_i], [v_i]) = a_i/q_i \in \Q/\Z$.
Hence $\ell_M$ is isometric to $\ell$, as desired.
\medskip

\noindent\emph{`(A) $\Rightarrow$ (C)`:\quad}
Since $\coker M \cong \Z/q_1 \oplus \Z/q_2$ with $q_1 | q_2$, row and column operations (which preserve the gcd of entries and $\pm\det$) turn $M$ into its Smith normal form: the diagonal matrix with entries $q_1$ and $q_2$. Hence,
the gcd of the entries of $M$ is $q_1$
and $\det M = \pm q_1q_2$.
The condition $\det M \equiv u \pmod{4}$ implies $\det M = (-1)^{(q_1q_2 - u)/2} q_1q_2$.

Now pick a non-trivial vector $x' \in \Z^2$ with $[x'] = g_2 \in \coker M = A$.
Let $\lambda_2 = \gcd(x')$ and $x = x' / \lambda_2$.
Then, $x$ may be extended to a basis $(y, x)$ of $\Z^2$, i.e.\ $T = (y\mid x) \in \text{GL}_2(\mathbb{Z})$.
Let $\alpha, \beta, \gamma \in \Z$ such that
\[
N \coloneqq T^{-1} M (T^{-1})^{\top} = q_1\cdot \begin{pmatrix}\alpha & \beta \\ \beta & \gamma\end{pmatrix}.
\]
Here, we used that every entry of $M$, and thus also of $T^{-1} M (T^{-1})^{\top}$, is divisible by $q_1$.
We claim that the integers $\alpha,\beta,\gamma,\lambda_1$ satisfy (C2) and (C4).
Indeed, (C2) follows from $q_1^2(\alpha\gamma-\beta^2)\det N = \det M = (-1)^{(q_1q_2 - u)/2} q_1q_2$.
Next, one computes
\[
N^{-1} =
\frac{(-1)^{\frac{q_1q_2 - u}{2}}}{q_2}\begin{pmatrix} \gamma & -\beta \\ -\beta & \alpha \end{pmatrix}
\]
and
\begin{align*}
\frac{a_2}{q_2} & = \ell(g_2, g_2) = x'^{\top} M^{-1} x' = \lambda_2^2 x^{\top} (T^{-1})^{\top} N^{-1} T^{-1} x
= \lambda_2^2\begin{pmatrix} 0 \\ 1 \end{pmatrix}^{\top} N^{-1} \begin{pmatrix} 0 \\ 1 \end{pmatrix}\\
 & = \frac{(-1)^{\frac{q_1q_2 - u}{2}}\lambda_2^2\alpha}{q_2} \in \Q/\Z
\quad\Rightarrow\quad \lambda_2^2\alpha  \equiv (-1)^{\frac{q_1q_2 - u}{2}} a_2\pmod{q_2},
\end{align*}
so (C4) is satisfied.
Now, one checks that
\[
v = \begin{pmatrix} \alpha \\ \beta \end{pmatrix} \quad\Rightarrow\quad
N^{-1}v  = \begin{pmatrix} 1 / q_1 \\ 0 \end{pmatrix},
\]
which implies that $[v]$ is of order $q_1$ in $\coker N$ and orthogonal to $[(0,1)^{\top}]$ with respect to $N^{-1}$.
So, $[Tv]$ is of order $q_1$ in $\coker M$ and orthogonal to $[x]$ with respect to $M^{-1}$.
Therefore, $[Tv]$ and $g_1$ generate the same subgroup, and thus $\lambda_1 [Tv] = g_1$
for some $\lambda_1 \in \Z$, which implies (C3) by a calculation similar to the one for (C4) above.

So if $\alpha$ is odd, (C1) is satisfied and we are done.
If $\alpha$ is even, then $\gamma$ must be odd because $N$ is an odd matrix.
Replacing $\alpha$ and $\beta$ by $\alpha + 2q_2\beta + q_2^2\gamma$ and $\beta + q_2\gamma$,
respectively, preserves (C2)--(C4), while also satisfying (C1).
\medskip

\noindent\emph{`(C) $\Rightarrow$ (B)`:\quad}
Since $q_1|q_2$, (C4) implies that
$\lambda_2^2 \alpha \equiv (-1)^{(q_1q_2 - u)/2} a_2$ also holds mod $q_1$.
Multiplying this with the congruence mod $q_1$ given in (C3) yields
\[
(\lambda_1\lambda_2\alpha)^2 \equiv (-1)^{(q_1q_2 - u)/2} a_1a_2 \pmod{q_1};
\]
hence, the right-hand side is a quadratic residue mod $q_1$. This proves condition~(B1).

To prove (B2), we assume $u = -1$ and $q_1q_2 \equiv 1 \pmod{4}$ and aim to prove $\legendre{a_2}{q_2/q_1} = 1$.
By (C2), we have $\alpha \gamma - \beta^2 = -q_2/q_1$, so $q_2/q_1$ is a square modulo $\alpha$.
Since $a_2$ and $q_2$ are coprime, (C4) implies that $\alpha$ and $q_2$ are coprime,
and thus so are $|\alpha|$ and $q_2/q_1$.
It follows that $\legendre{q_2/q_1}{|\alpha|} = 1$. This concludes the proof since
\begin{align*}
1 = \legendre{q_2/q_1}{|\alpha|} & = \legendre{|\alpha|}{q_2/q_1} \qquad\text{(by quadratic reciprocity and} \\[-3ex]
 & \phantom{= \legendre{|\alpha|}{q_2/q_1}} \qquad \text{ \ $q_2/q_1 \equiv 1\pmod{4}$)} \\
 &  = \legendre{\pm a_2}{q_2/q_1} \qquad\text{(by (C4))} \\
 &  = \legendre{a_2}{q_2/q_1} \qquad\text{(because $q_2/q_1 \equiv 1\pmod{4}$)}.
\end{align*}

\noindent\emph{`(B) $\Rightarrow$ (C)`:\quad}
Dirichlet's prime number theorem states that for any positive integers $x, y$,
there exists a positive prime number equivalent to $x$ modulo~$y$.
By the Chinese remainder theorem, it follows that for positive integers $x_1, x_2, y_1, y_2$
with $y_1$ and $y_2$ coprime,
there exists a positive prime number equivalent to $x_i$ modulo~$y_i$ for~$i = 1,2$.
So in our situation, using that $y_1 = q_2$ and $y_2 = 4$ are coprime,
there exists a positive prime number $p$ satisfying
$p \equiv \sigma_1\cdot (-1)^{(q_1q_2 - u)/2}\cdot a_2 \pmod{q_2}$
and $p \equiv \sigma_2 \pmod{4}$
for any given $\sigma_1, \sigma_2 \in \{-1,1\}$.
Set $\alpha = \sigma_1\cdot p$. By definition, it satisfies (C1) and (C4) with $\lambda_2 = 1$.
By (B1), there exists $\lambda_1\in\Z$ such that
\[
\lambda_1^2 \equiv (-1)^{(q_1q_2 - u)/2} a_1a_2 \pmod{q_1}.
\]
Since $(-1)^{(q_1q_2 - u)/2} a_2$ and $q_1$ are coprime, it follows that
\[
\lambda_1^2 (-1)^{(q_1q_2 - u)/2} a_2 \equiv a_1 \pmod{q_1}.
\]
We have $\alpha = \sigma_1 p \equiv (-1)^{(q_1q_2 - u)/2} a_2 \pmod{q_2}$.
Since $q_1 | q_2$, this congruence also holds mod $q_1$. So (C3) follows.

Finally, note that the existence of some $\beta,\gamma\in\Z$
solving (C2) is equivalent to $- (-1)^{(q_1q_2 - u)/2}\cdot q_2/q_1$
being a quadratic residue modulo~$p$.
Because $p$ is prime, this is equivalent to
\[
\legendre{(-1)^{1 + \frac{q_1q_2 - u}{2}} q_2/q_1}{p} = 1.
\]
One computes
\begin{align*}
\legendre{(-1)^{1 + \frac{q_1q_2 - u}{2}} q_2/q_1}{p} & = \legendre{(-1)^{1 + \frac{q_1q_2 - u}{2}}}{p} \cdot \legendre{q_2/q_1}{p} \\
\intertext{(by multiplicativity of the Jacobi symbol)}
& = (-1)^{\frac{p-1}{2}\cdot \frac{q_1q_2-u+2}{2}} \cdot (-1)^{\frac{p - 1}{2}\cdot\frac{q_2/q_1 - 1}{2}} \cdot \legendre{p}{q_2/q_1} \\
\intertext{(by the formula for $\legendre{-1}{y}$ and by quadratic reciprocity)}
& = (-1)^{\frac{\sigma_2-1}{2}\cdot \frac{q_1q_2-u+2}{2}} \cdot (-1)^{\frac{\sigma_2 - 1}{2}\cdot\frac{q_1q_2 - 1}{2}} \cdot \legendre{p}{q_2/q_1} \\
\intertext{(since $p \equiv \sigma_2$ and $q_2/q_1 \equiv q_1q_2 \pmod{4}$)}
& = (-1)^{\frac{\sigma_2-1}{2}\cdot \frac{u + 1}{2}} \cdot \legendre{\sigma_1(-1)^{(q_1q_2 - u)/2}a_2}{q_2/q_1} \\
\intertext{(since $q_1q_2$ is odd, and $p \equiv \sigma_1 (-1)^{(q_1q_2 - u)/2} a_2 \pmod{q_2}$, and thus also mod $q_2/q_1$)}
& = (-1)^{\frac{\sigma_2-1}{2}\cdot \frac{u + 1}{2}}\cdot (-1)^{\frac{\sigma_1 -1 + q_1q_2 - u}{2}\cdot \frac{q_1q_2 - 1}{2}} \cdot \legendre{a_2}{q_2/q_1}
\intertext{(by multiplicativity of the Jacobi symbol and the formula for $\legendre{-1}{y}$).}
\end{align*}\vspace{-2\baselineskip}

Now, if $u = 1$, switching $\sigma_2$ changes the sign of the first factor,
and so $\sigma_2$ may be chosen to make the whole product $1$.
Similarly, if $q_1q_2 \equiv 3 \pmod{4}$,
then $\sigma_1$ may be chosen to make the whole product $1$.
Else we have $u = -1$ and $q_1q_2 \equiv 1 \pmod{4}$, so the first two
factors are $1$; moreover, the third factor is $1$ by (B2).
\end{proof}
Now let us spell out the obstructions for $u_a$ and $\gst$
obtained by combining the two previous propositions.
In concrete cases, the obstructions can be checked by hand.
\begin{corollary}\label{cor:doubleobstruct}
If the symmetric pairing $\ell$ as in \cref{prop:jacobi} is the linking pairing $\lkh$ of the double branched covering
of a knot $K$, then we have the following obstructions.
\begin{enumerate}[label=(\roman*)]
\item If $q_1 \equiv 1 \pmod{4}$ and $a_1a_2$ is not a quadratic residue modulo $q_1$, then $u_a(K) \geq 3$ and $\gst(K) \geq 2$.
\item If $q_1 \equiv q_2 \equiv 3 \pmod{4}$, $a_1a_2$ is not a quadratic residue modulo $q_1$ and $\legendre{a_2}{q_2/q_1} = -1$, then $u_a(K) \geq 3$ and $\gst(K) \geq 2$.
\item If $q_1 \equiv 3$ and $q_2 \equiv 1\pmod{4}$ and $a_1a_2$ is not a quadratic residue modulo~$q_1$, then $\gst(K) \geq 2$.
\item If $q_1 \equiv q_2 \equiv 3\pmod{4}$ and $-a_1a_2$ is not a quadratic residue modulo $q_1$, then $\gst(K) \geq 2$.
\item If $q_1q_2 \equiv 1\pmod{4}$ and $\legendre{a_2}{q_2/q_1} = -1$, then $\gst(K) \geq 2$.\qed
\end{enumerate}
\end{corollary}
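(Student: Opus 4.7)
The plan is to combine \Cref{prop:doublepresmat} (and \Cref{thm:BoroFriedl} for the statements about $u_a$) with \Cref{prop:jacobi}, proceeding by contraposition. Supposing $\gst(K) \leq 1$, or $u_a(K) \leq 2$ for the parts (i) and (ii), the goal is to derive a contradiction from each of the listed hypotheses by elementary modular arithmetic.

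First, I observe that under any of the hypotheses (i)--(v) one must have $q_1 \geq 3$: if $q_1 = 1$, every square-residue condition and every Jacobi symbol $\legendre{\cdot}{1}$ becomes trivial, so the hypothesis cannot be satisfied; in particular $\ell$ is non-trivial. If $\gst(K) = 1$, \Cref{prop:doublepresmat} produces a $2\times 2$ symmetric integer presentation of $\ell$ with determinant $\equiv -1 \pmod 4$, so condition (A) of \Cref{prop:jacobi} holds with $u = -1$ and hence (B) must hold as well. For $u_a(K) \leq 2$, \Cref{thm:BoroFriedl} yields a $2\times 2$ Hermitian presentation $A(t)$ of the Blanchfield pairing; specializing to $t = -1$ gives a $2 \times 2$ presentation of $\ell$ with $\det \equiv \pm 1 \pmod 4$, forcing condition (B) of \Cref{prop:jacobi} to hold for some $u \in \{\pm 1\}$.

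The remainder is a short case-by-case verification that each hypothesis contradicts the corresponding conclusion of (B). In (iii), take $u = -1$: since $q_1 q_2 \equiv 3 \pmod 4$, $(q_1 q_2 + 1)/2$ is even and (B1) reduces to ``$a_1 a_2$ is a square modulo $q_1$,'' contradicting the hypothesis. Cases (i) and (iv) reduce analogously, using that $-1$ is a square modulo $q_1$ exactly when $q_1 \equiv 1 \pmod 4$. For (ii), $u = 1$ is ruled out by (B1) (since $q_1 q_2 \equiv 1 \pmod 4$ makes $(q_1 q_2 - 1)/2$ even), and $u = -1$ by (B2), because $q_1 q_2 \not\equiv 3 \pmod 4$ and the Jacobi symbol equals $-1$ by hypothesis. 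Case (v) is immediate from the failure of (B2) with $u = -1$.

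The main technical subtlety is that condition (A) of \Cref{prop:jacobi} demands an \emph{odd} $2 \times 2$ presentation, whereas \Cref{prop:doublepresmat} only guarantees a symmetric one. I expect to address this by ruling out, in each case, the possibility that the relevant pairing $\ell$ admits only even $2 \times 2$ presentations: such pairings have restricted Wall invariants, and their incompatibility with the prescribed square-class conditions on $a_1 a_2$ can be verified directly, so the presentation supplied by \Cref{prop:doublepresmat} may without loss of generality be taken to be odd. Once this is handled, the modular-arithmetic case analysis above completes the proof.
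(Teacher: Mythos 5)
Your overall strategy is the intended one: combine \Cref{prop:doublepresmat} (and, for the $u_a$ claims, \Cref{thm:BoroFriedl}) with \Cref{prop:jacobi} via contrapositive, then sort through the four choices of sign. The modular-arithmetic case analysis you sketch is correct.

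However, the last paragraph of your proposal identifies the real technical issue but does not resolve it. You propose to argue that pairings admitting only even $2\times 2$ presentations ``have restricted Wall invariants'' incompatible with the hypotheses (i)--(v), so that the presentation from \Cref{prop:doublepresmat} can ``without loss of generality be taken to be odd.'' This is left as a hope, and the logic is also off: even if you could rule out that $\ell$ has \emph{only} even presentations, you would still need an odd presentation with the \emph{same} determinant modulo~$4$, and switching from an even to an odd $2\times 2$ matrix presenting the same $\ell$ can change $\det\bmod 4$ (indeed, evenness of an integral symmetric matrix is a $\mathrm{GL}_2(\Z)$--congruence invariant, so the two presentations are never congruent and the determinant is not controlled). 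The paper's toolkit offers a cleaner fix that sidesteps this: the matrix $C$ in \Cref{prop:doublepresmat} is $A(-1)$ for a Hermitian $\Lambda$--presentation $A(t)$ with $A(1)$ unimodular of signature zero. For each diagonal entry $A_{ii}(t)=A_{ii}(t^{-1})$ one has $A_{ii}(1)\equiv A_{ii}(-1)\pmod 2$, so $A(1)$ and $A(-1)$ are simultaneously odd or even. In the signature-zero case $A(1)$ is indefinite, so \Cref{lem:nodiag} lets you replace $A(t)$ by $B(t)$ (still presenting the Blanchfield pairing, with $\det B(t)=\det A(t)$) such that $B(1)$ is diagonal with $\pm 1$ entries --- in particular odd --- and then $B(-1)$ is odd with $\det B(-1)\equiv -1\pmod 4$. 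For the algebraic-unknotting bounds one also encounters $\sigma(A(1))=\pm 2$, but then $A(1)$ is definite unimodular of size $2$, hence congruent to $\pm I$ and already odd, so \Cref{lem:nodiag} is not needed there.

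One smaller slip: your opening claim that (i)--(v) force $q_1\geq 3$ fails for (v), where $q_1=1$ gives $\legendre{a_2}{q_2/q_1}=\legendre{a_2}{q_2}$, which can perfectly well equal $-1$ (e.g.\ $q_2=5$, $a_2=2$). What you actually need is only that the hypotheses rule out the trivial pairing (so $\gst(K)\neq 0$), which does hold, but for a different reason than you state.
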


\subsection{Calculations of the $\Z$--slice genus for small knots}\label{subsec:comp}

\begin{table}[ht]
\begin{tabular}{|p{.36\textwidth}|l|p{.51\textwidth}|}\hline
\emph{Obstruction to $\gst = 1$}                     & \multicolumn{2}{p{.52\textwidth}|}{\emph{Knots to which the obstruction applies}} \\\hline
Levine-Tristram-Signatures            &    1 &
12n749 \\\hline
Hasse-Taylor \cite{mccoylewark}  &    6 &
{12a787, 12a1142, 12n\{269, 505, 598, 602, 756\}}                                     \\\hline
$H_1(\Sigma_2(K); \Z)$ needs more than two generators &    7 &
12a554, 12a750, 12n\{553, 554, 555, 556,
 642\}                                    \\\hline
$u_a \geq 3$ \cite{BorodzikFriedl_15_TheUnknottingnumberAndClassInv1},
or equivalently \Cref{cor:doubleobstruct}(i) &    6 &
{10$_{103}$, 11n148, 12a327, 12a921,
  12a1194, 12n147 }                \\\hline
\Cref{cor:doubleobstruct}(iii)         &  26 &
$9_{48}$,
$10_{74}$,
11a\{155, 173, 352\}, 11n\{71, 75, 167\},
12a\{164, 166, 177, 244, 298, 413, 493, 503, 810, 895\},
12n\{334, 379, 460, 495, 549, 583, 869\}   \\\hline
\Cref{cor:doubleobstruct}(iv)         &  12 &
$9_{37}$, 11a135,
12a\{265, 396, 769, 873, 905\},
12n\{388, 480, 737, 813, 846\}
 \\\hline
\end{tabular}
\bigskip

\caption{$\mathbb{Z}$--slice genus calculations for small knots.}
\label{table1}
\end{table}
In \cite{mccoylewark}, $\gst$ was determined for all but 52 of the prime knots with crossing number 12 or less.
Let us summarize those calculations, and see how much further we can go using \Cref{cor:doubleobstruct}
and \cref{thm:main}.

\begin{figure}[p]
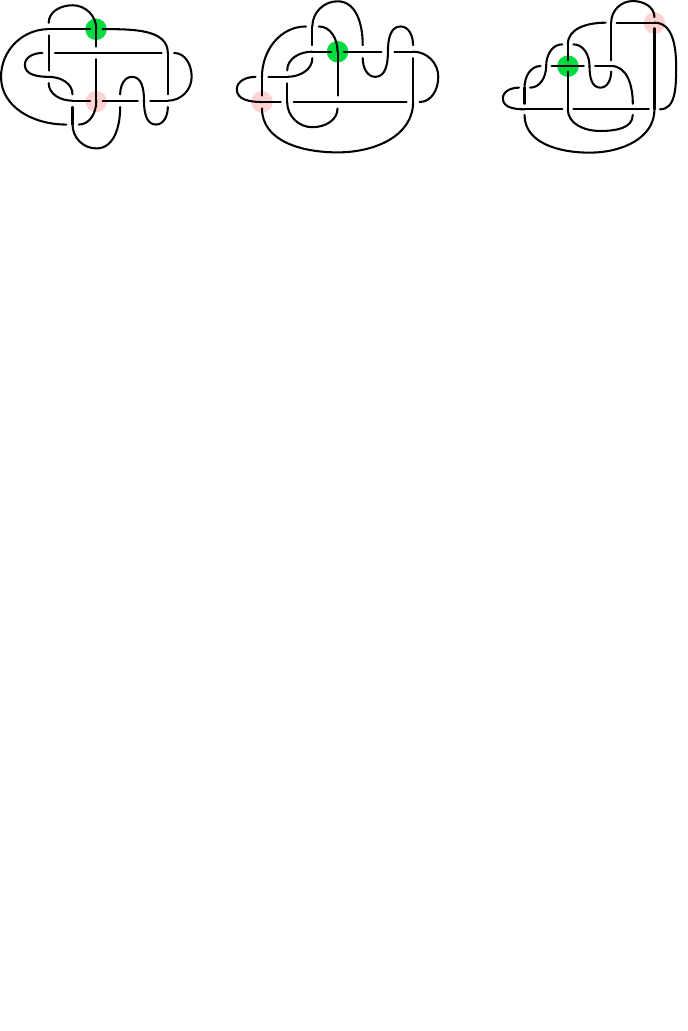
\caption{Using crossing changes, it can be shown that these 15 knots have $\gst = 1$ (see \cref{subsec:comp} for details). Dark green and light red disks mark positive and negative crossings to be changed, respectively. Diagrams drawn with PLink~\cite{plink}.}
\label{fig:unknottings}
\end{figure}
Four of those 2,977 knots have Alexander polynomial $1$ and thus $\gst = 0$.
For all others $\gst \geq 1$ holds. Upper bounds for $\gst$ are given by half of the span of the Alexander polynomial, by the algebraic unknotting number, and may be obtained
bounds from a randomized Seifert matrix algorithm \cite{BaaderFellerLewarkLiechti_15,mccoylewark}.
For 1,999 knots, one of these upper bounds is~$1$ and $\gst = 1$ follows.
For another 901 knots, one of these upper bound equals $|\sigma|/2$ and $\gst = |\sigma|/2$ follows.
For all of the remaining 73 knots, the minimum of these upper bounds is $2$, and it just remains to decide whether $\gst$ equals 1 or~2.
\Cref{table1} lists those knots for which $\gst = 2$ can be proven, and the used obstruction.
The $\Z$--slice genus was previously unknown for the 37 knots listed in the last two rows,
for which \Cref{cor:doubleobstruct} proves to be an effective tool.
There remain 15 knots.
For all of those knots we prove $\gst = 1$, using the characterization of $\gst$ as balanced algebraic unknotting number, as follows (the relevant crossing changes are shown in \cref{fig:unknottings}):
\begin{itemize}
\item
The following 13 knots may be transformed into the unknot by two crossing changes,
one of each sign: 12a1013, 12a1047, 12a1168, 12a1203, 12a1211, 12a1221, 12a1222, 12a1225, 12a1229, 12a1230, 12a1248, 12a1260, 12a1283.
\item
The knot $K = $ 12a1226 may be transformed into $J = 10_{79}$ by changing a negative crossing.
Since $J$ is amphicheiral and has algebraic unknotting number 1~\cite{knotorious},
$J$ may be transformed into a knot with Alexander polynomial 1 by changing a positive crossing.
Overall, it follows that $K$ has balanced algebraic unknotting number 1.
\item
The knot $K = $ 12a1288 may be transformed into $J = 10_{64}$ by changing a positive crossing, or by changing a negative crossing. The knot $J$ has algebraic unknotting number 1~\cite{knotorious}.
Overall, it follows that $K$ has balanced algebraic unknotting number 1.
\end{itemize}
This concludes the calculation of the algebraic genus for all prime knots with crossing number 12 or less. The values are also listed in an online table~\cite{galg-table}.
\subsection{Branched coverings of odd prime order}
\label{subsec:higher}
This section is devoted to the proof of the following.
\thmalliso*
Let us start with an example, which already contains one crucial ingredient of the proof.
\begin{example}
Let $K$ be the $(6,7)$--torus knot. %
Torus knots have Nakanishi index~$1$, in particular $N_p(K)$ is a cyclic module for all primes $p$.
For example, $N_2(K) \cong \Z[t]/(7,t+1)$, which is simply $\Z/7$ on which $t$ acts as $-1$.
The sesquilinear linking pairing $\lkh\colon N_2(K)\times N_2(K) \to \Q/\Z$ is completely characterized by 
its value $\lkh(x,x)$ on a generator $x$ of $N_2(K)$. We have $\lkh(x,x) = a/7$ with $a\in \Z$.
Now, for $K'$ the mirror image of $K$, one has $N_2(K') \cong N_2(K)$ and
$\lkh(x',x') = -a/7$ for a suitably chosen generator $x'$ of $N_2(K')$.
Suppose there exists an isometry between the linking pairings on $N_2(K)$ and $N_2(K')$. It would send $x$ to $\lambda x'$,
which would imply
\[a/7 = \lambda^2\cdot (-a/7) \in \Q/\Z \quad\Leftrightarrow\quad \lambda^2 \equiv -1 \pmod{7}.
\]
But $-1$ is not a square modulo $7$.
So, $K$ and $K'$ are not distinguished by their $N_2$--modules, which are isomorphic; but they are distinguished by their linking
pairings on those $N_2$--modules, which are not isometric.
\cref{thm:allisometric} tells us that $p = 2$ is the only prime for which this may happen.
Indeed, let us consider the same pair of knots $K, K'$ for $p = 3$.
One finds $N_3(K)\cong N_3(K')$ to be of order $7$ as modules over $\Z[t]/(t^2 + t + 1)$, i.e.\ 
$N_3(K) \cong \Z[t]/(7,t^2+t+1)$. Note that the underlying abelian group of this module is isomorphic to $\Z/7 \times \Z/7$.
Again, the linking pairing on $N_3$ is determined by the value $\lkh(x,x)\in (\Q(t)/(t^2+t+1)) / (\Z[t] / (t^2+t+1))$
for $x$ a generator of $N_3(K)$. Because $\lkh$ is Hermitian, $\overline{\lkh(x,x)} = \lkh(x,x)$, where $\overline{\,\cdot\,}$
is the conjugation that linearly extends $t\mapsto t^{-1}$. Thus $\lkh(x,x) = b/7$ for some $b\in\Z$.
Once more, $\lkh(x', x') = -b/7$ for some generator $x'$ of $N_3(K')$.
But, in contrast to the case $p = 2$, for $p = 3$ the linking pairings on the modules $N_p(K)$ and $N_p(K')$ are isometric.
An isometry is given by sending $x$ to $(3+6t)x'$. Indeed, one checks that
\begin{multline*}
\lkh((3+6t)x',(3+6t)x') = (3+6t)(3+6t^{-1})\lkh(x',x') \\
= (-1 + 18t^{-1}(t^2+t+1) + 28)\cdot(-b/7) = b/7 \\
 = \lkh(x,x)\in (\Q(t)/(t^2+t+1)) / (\Z[t] / (t^2+t+1)).
\end{multline*}
The key is that modulo $(7,1+t+t^2)$, one can write $-1 \in \Z[t]$ as $\lambda\overline{\lambda}$ for some $\lambda\in\Z[t]$.
In fact, every $y\in\Z[t]$ with $y=\overline{y}$ can be written in that way.

\end{example}

The theorem is an instance of a more general statement about Hermitian pairings on Dedekind rings,
which we formulate as the following proposition.

\begin{prop}\label{prop:allisometric}
Let $R$ be a Dedekind ring with an involution $\overline{\,\cdot\,}$,
such that the quotient of $R$ by any maximal ideal is a finite field.
Let $R^-$ be the subring of elements fixed by the involution and assume that $R = R^-[\xi]$ for some $\xi\in R\setminus R^-$.
Let $A$ be a finitely presented $R$--torsion module with $A = \overline{A}$
whose order is coprime with~$\xi-\xi^{-1}$.
Then any two non-degenerate $R$--Hermitian pairings $A\times A \to Q(R) / R$ are isometric.
\end{prop}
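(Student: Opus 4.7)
The plan is to adapt Wall's classification of Hermitian forms on finite abelian groups of odd order (\cite{MR0156890}) to the present Dedekind setting, breaking the proof into a primary decomposition and separate analyses of the self-conjugate and non-self-conjugate primes.

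First, I would use the structure theorem for finitely generated torsion modules over the Dedekind ring $R$ to write $A = \bigoplus_{\mathfrak{p}} A_{\mathfrak{p}}$, indexed by the maximal ideals of $R$. Since $A = \overline{A}$, the involution permutes the $A_{\mathfrak{p}}$ via its action on primes. For any Hermitian pairing $\ell$ and any $x \in A_{\mathfrak{p}}$, $y \in A_{\mathfrak{q}}$, the element $\ell(x,y) \in Q(R)/R$ is annihilated by both $\overline{\mathrm{Ann}(x)} \subseteq \overline{\mathfrak{p}^N}$ and $\mathrm{Ann}(y) \subseteq \mathfrak{q}^N$; when $\overline{\mathfrak{p}} \neq \mathfrak{q}$, these ideals are coprime in a Dedekind domain, so $\ell(x,y) = 0$. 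Hence any Hermitian pairing respects the orbit decomposition $A = \bigoplus_{[\mathfrak{p}]} A_{[\mathfrak{p}]}$, where $A_{[\mathfrak{p}]} = A_{\mathfrak{p}}$ for self-conjugate $\mathfrak{p}$ and $A_{[\mathfrak{p}]} = A_{\mathfrak{p}} \oplus A_{\overline{\mathfrak{p}}}$ otherwise. It suffices to treat each summand.

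For a non-self-conjugate orbit, the same coprimeness argument shows that the diagonal blocks $\ell|_{A_{\mathfrak{p}} \times A_{\mathfrak{p}}}$ and $\ell|_{A_{\overline{\mathfrak{p}}} \times A_{\overline{\mathfrak{p}}}}$ vanish. Non-degeneracy then forces the remaining off-diagonal block to realize an $R$-linear isomorphism $A_{\overline{\mathfrak{p}}} \xrightarrow{\cong} \overline{\Hom_R(A_{\mathfrak{p}}, Q(R)/R)}$, which determines the pairing up to the choice of this isomorphism. Any two such pairings therefore differ by an $R$-module automorphism of $A_{\mathfrak{p}}$, which acts as an isometry between them. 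For a self-conjugate prime $\mathfrak{p}$, I would next prove an orthogonal cyclic decomposition of $A_{\mathfrak{p}}$ by induction on its length: picking $x \in A_{\mathfrak{p}}$ of maximal $\mathfrak{p}$-order such that $\ell(x,x)$ has the same maximal order then splits off $Rx$ as an orthogonal summand. Producing such an $x$ is where the coprimality of the order of $A$ with $\xi - \xi^{-1}$ first intervenes: starting from $y, z$ with $\ell(y,z)$ of maximal order (provided by non-degeneracy), one combines $y$ and $z$ by an averaging construction using $\xi - \xi^{-1}$ (playing the role of "$2$" in Wall's setup), whose invertibility on $A$ is exactly the hypothesis.

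This reduces everything to the case $A = R/\mathfrak{p}^n$ with $\overline{\mathfrak{p}} = \mathfrak{p}$. A non-degenerate Hermitian pairing on $R/\mathfrak{p}^n$ is determined by $\ell(1,1) \in Q(R)/R$, a Hermitian-fixed element of "denominator" generating $\mathfrak{p}^n$; two such pairings $\ell, \ell'$ are isometric if and only if $\ell(1,1) = \lambda\overline{\lambda}\cdot \ell'(1,1)$ for some unit $\lambda \in (R/\mathfrak{p}^n)^*$. Everything therefore comes down to surjectivity of the norm map
\[
N \colon (R/\mathfrak{p}^n)^* \longrightarrow \bigl\{u \in (R/\mathfrak{p}^n)^* : \overline{u} = u\bigr\}, \qquad \lambda \longmapsto \lambda\overline{\lambda}.
\]
I expect this to be the main obstacle. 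The strategy is to reduce by Hensel-type lifting to the residue field $R/\mathfrak{p}$: the hypothesis that $\xi - \xi^{-1}$ is a unit on $A$ guarantees that $R/\mathfrak{p}$ is either a separable quadratic field extension of the fixed residue field $R^-/(\mathfrak{p} \cap R^-)$ or a split étale algebra, and in either case the norm map is surjective by classical Hilbert 90 for finite fields. Surjectivity then lifts to $R/\mathfrak{p}^n$ by the standard $(1+\mathfrak{p}^k)$-filtration argument, again using that $\xi - \xi^{-1}$ is a unit to show that $N$ is surjective on each graded piece. Finally, \cref{thm:allisometric} is obtained by specializing the proposition to $R = \Lambda/(\Phi_p)$ with $\xi = t$ and noting that $\xi - \xi^{-1} = t^{-1}(t-1)(t+1)$: the factor $(t+1)$ is a unit in $\Z[\zeta_p]$ for odd $p$, while $(t-1)$ is coprime to the order of $N_p(K)$ because $\Delta_K(1) = 1$.
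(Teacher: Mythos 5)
Your proposal follows essentially the same strategy as the paper's proof: primary decomposition of $A$ over the Dedekind ring, orthogonal diagonalization into cyclic pieces, and surjectivity of the norm map on the cyclic case. The role you assign to the $\xi-\xi^{-1}$ hypothesis is also the correct one — in the paper's language, coprimality of the order with $\xi-\xi^{-1}$ (which generates the different ideal $\mathfrak{D}_{R:R^-}$ since $R=R^-[\xi]$) rules out ramified primes, so that each residue ring $R/\mathfrak{P}$ is either a quadratic field extension of $R^-/\mathfrak{p}$ or a split \'etale algebra $F\times F$, whence trace and norm are surjective (this is the paper's Lemma \ref{lem:NT}). Two tactical differences are worth noting. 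First, you treat non-self-conjugate primes separately via the hyperbolic-form observation (the diagonal blocks vanish, so the pairing is determined by a duality isomorphism and all such pairings are isometric); the paper instead groups a split pair $\mathfrak{Q},\overline{\mathfrak{Q}}$ into $\mathfrak{P}=\mathfrak{Q}\overline{\mathfrak{Q}}$, i.e.\ works with ideals generated by primes of $R^-$, and runs the \emph{same} diagonalization lemma (Lemma \ref{lem:diag}, using trace surjectivity via Lemma \ref{lem:homchange}) uniformly across the inert and split cases. Your separate treatment is cleaner, the paper's more uniform. Second, you explicitly invoke a Hensel-type lifting to pass from norm surjectivity on the residue field to norm surjectivity modulo $\mathfrak{p}^k$; the paper's final step only produces $\lambda$ with $\lambda\overline{\lambda}\mu\equiv 1$ in $R^-/\mathfrak{p}$ and leaves the lift to $\mathfrak{p}^k$ (and some localization bookkeeping) implicit, so your version is actually the more complete account of that step. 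One small imprecision on your side: the "averaging construction" used in the diagonalization step does not literally use $\xi-\xi^{-1}$ as a replacement for $2$; rather, it uses surjectivity of the trace $x\mapsto x+\overline x$ on the residue level, which is what the non-ramification (i.e.\ the $\xi-\xi^{-1}$ hypothesis) buys. Your specialization to $R=\Lambda/(\Phi_p)$ at the end matches the paper's deduction of Theorem \ref{thm:allisometric} exactly.
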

For the reader's convenience, let us briefly state some essential properties of Dedekind rings
that will come into play. 
Dedekind rings $R$ may be defined as integral domains satisfying any one of the following, equivalent conditions:
\begin{itemize}
\item $R$ is integrally closed, Noetherian, and has Krull dimension at most one.
\item $R$ is integrally closed, Noetherian, and every non-zero prime ideal of $R$ is maximal.
\item Every ideal $I\subset R$ with $I\neq (0), (1)$ equals a product of prime ideals.
\end{itemize}
In particular, every PID is a Dedekind ring.
The reverse is not true. For example, $R = \Lambda / \Phi_{23}$ is Dedekind, but not a PID.
More generally, the ring of algebraic integers of any number field is Dedekind, but such rings need not be PIDs.
This is one of the reasons that, in this context,
Dedekind rings are the more natural class of rings to work with than PIDs.
Statements that hold for PIDs often remain true for Dedekind rings when one replaces `prime number' by `prime ideal'.
In particular, over a Dedekind ring $R$, one can classify finitely generated modules:
all such modules are the sum of a free module of finite rank, and finitely many cyclic modules, each of which is the quotient of $R$ by the positive power of a non-zero prime ideal.

For these statements, as well as the further algebraic number theory needed in this section,
we refer the reader to any introductory text, such as Neukirch's~\cite{neukirch}.

Let us now deduce \cref{thm:allisometric} from \cref{prop:allisometric}.

\begin{proof}[Proof of \cref{thm:allisometric}]
First note that as discussed in \cref{subsec:bisesqui}, the 
isometry classes of the linking pairings $\lkh$ and $\lkh'$ determine each other; so it suffices to prove
that any two non-degenerate Hermitian pairings on
the $R$--module
$H_1(\Sigma_p(K); \Z)$ are isometric
(where $R \coloneqq \Lambda/\Phi_p$).
Now $R$ is the ring of algebraic integers
of the $p$--th cyclotomic field (cf.~\cite[Ch.~I, (10.2)]{neukirch}),
and as such it is a Dedekind ring whose quotient by any maximal ideal is a finite field
(cf.~\cite[Ch.~I, Theorem~(3.1) and its proof]{neukirch}).
The ring $R$ inherits the involution $t\mapsto t^{-1}$
from $\Lambda$. Denote the fixed point ring by~$R^-$.
Since $R^-[t]$ is the smallest subring of $R$ containing $R^-$ and $t$, and every element in $R$ is a polynomial in $t$ with integer coefficients, and $\Z \subset R^-$,
we have $R^-[t] = R$. With $t$ and $H_1(\Sigma_p(K); \Z)$ taking the roles of $\xi$ and $A$ in 
\cref{prop:allisometric}, respectively, it just remains to show that the order of $H_1(\Sigma_p(K); \Z)$ over $R$
is coprime with $t - t^{-1}$, i.e.~that $1$ is an $R$-linear combination of those two elements.

Note that the order of $H_1(\Sigma_p(K); \Z)$ over $R$ equals $\Delta_K \in R$,
since a presentation matrix of the Blanchfield pairing of $K$,
which has determinant $\Delta_K$, descends to a presentation matrix of $H_1(\Sigma_p(K); \Z)$ (see \cref{prop:inducedpresmatrix}).
Firstly, $\Delta_K$ and $(t - 1)$ are coprime in $\Lambda$, since $\Delta_K(1) = 1$,
and so $\Delta_K = (t-1)\cdot f + 1$ for some $f \in \Lambda$,
and thus $\Delta_K - (t-1)\cdot f = 1$. It follows that $\Delta_K$ and $(t - 1)$ are also coprime in $R$.
Secondly, we have $\Phi_p(-1) = 1$ (here, we need $p\neq 2$), and thus $\Phi_p = (t + 1) \cdot g + 1$
for some $g\in \Lambda$, and hence $t+1$ is a unit in $R$.
Therefore, $\Delta_K$ and $(t-1)\cdot (t+1)\cdot t^{-1} = t - t^{-1}$ are coprime in $R$.
This concludes the proof.
\end{proof}
The remainder of this section contains the proof of \cref{prop:allisometric}.
In a nutshell, that proof goes as follows.
We will first follow Wall's classification of symmetric pairings of
finite abelian groups of odd order~\cite{MR0156890}, showing that pairings may
be diagonalized, i.e.\ decomposed as orthogonal sum of pairings on cyclic modules
(for this, we will of course need that $R$ is Dedekind).
On a cyclic module, there is only one isometry class of pairings, because every element is a norm modulo a fixed prime ideal
(for this, we will need that $R$ is not fixed by conjugation).

\begin{lemma}\label{lemma:dedekind}
With the hypotheses of \cref{prop:allisometric}, $R^-$ is a Dedekind ring.
\end{lemma}
\begin{proof}
The element $\xi$ is a root of the polynomial
$p(x) = (x-\xi)(x-\overline{\xi}) \in R[x]$. Since the coefficients of $p(x) = x^2 + (-\xi-\overline{\xi})x + \xi\overline{\xi}$ are invariant under the involution, we actually have $p(x) \in R^-[x]$.
It follows that there is a ring isomorphism $R \cong R^-[x] / (p(x))$.
Because $p(x)$ is monic, this yields an $R^-$-module isomorphism $R \cong R^- \times R^-$.
Now the Eakin-Nagata theorem implies that $R^-$ is Noetherian.
Moreover, the ring extension $R:R^-$ is integral, whence the Krull-dimension of $R^-$ is the same as that of $R$.
Finally, if $u\in Q(R^-)$ is integral over $R^-$, then $u$ is also integral over~$R$,
so $u\in R$. Since $R^- = Q(R^-) \cap R$, this implies $u\in R^-$, and thus $R^-$ is integrally closed.
\end{proof}

So we find ourselves in the usual situation of an extension of Dedekind rings,
where $R^-$ is a Dedekind ring, $K:Q(R^-)$ is a finite field extension (where $K = Q(R)$),
and $R$ is the integral closure of $R^-$ in $K$. The relationship of prime ideals
of $R^-$ and $R$ in this situation is well-understood (cf.~\cite[Ch.~I, Sec.~8]{neukirch}):
let $\mathfrak{p} \subset R^-$ be a prime ideal, and
$\mathfrak{P} \subset R$ the ideal generated by~$\mathfrak{p}$.
Then there are three scenarios:
$\mathfrak{p}$ may be \emph{inert}, i.e.\ $\mathfrak{P}$ is prime;
or $\mathfrak{p}$ may \emph{split}, i.e.\ $\mathfrak{P}$ is the product of two distinct prime ideals of $R$ that are interchanged by the involution;
or $\mathfrak{p}$ may \emph{ramify}, i.e.\ $\mathfrak{P}$ is the square of a prime ideal of $R$.

\begin{lemma}\label{lem:NT}
Let $\mathfrak{p}, \mathfrak{P}$ be non-zero ideals as above with non-ramifying~$\mathfrak{p}$.
Write $E = R / \mathfrak{P}$ and $F = R^- / \mathfrak{p}$.
Define the \emph{trace} $T\colon E \to F$ and the
\emph{norm} $N\colon E^{\times} \to F^{\times}$ as
$T(x) = x + \overline{x}$ and $N(x) = x\cdot\overline{x}$, respectively.
Then $T$ and $N$ are surjective.
\end{lemma}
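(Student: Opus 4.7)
My plan is to argue case-by-case on the splitting behaviour of $\mathfrak{p}$. Because $\mathfrak{p}$ is non-ramifying, exactly one of the following holds: either $\mathfrak{p}$ is inert, so that $\mathfrak{P}$ is itself prime in $R$, or $\mathfrak{p}$ splits as $\mathfrak{P} = \mathfrak{Q}\,\overline{\mathfrak{Q}}$ for two distinct primes of $R$ exchanged by the involution. The split case will be essentially formal, while the inert case reduces to a classical fact about quadratic extensions of fields.

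In the split case, the Chinese Remainder Theorem yields a ring isomorphism $E \cong (R/\mathfrak{Q})\times(R/\overline{\mathfrak{Q}})$ in which each factor is canonically isomorphic to $F$ and the involution acts as the swap $(a,b)\mapsto(b,a)$. The fixed subring is the diagonal copy of $F$, and under this identification $T(a,b) = a+b$ and $N(a,b) = ab$. Both maps are surjective, witnessed by $T(x,0)=x$ and $N(x,1)=x$ for arbitrary $x\in F$, respectively $x\in F^{\times}$.

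In the inert case, $\mathfrak{P}$ is maximal, so $E$ is a field. Since $R = R^{-}[\xi]$, the image $\bar\xi$ of $\xi$ generates $E$ as an $F$-algebra, and $\bar\xi\notin F$, because $\mathfrak{P}\cap R^{-}=\mathfrak{p}$ forces $E$ to strictly contain $F$ (else $R = R^{-} + \mathfrak{P}$ would give $\mathfrak{P} = \mathfrak{p}R \supsetneq \mathfrak{p}$, contradicting inertness). Hence $[E:F]=2$, the involution restricts to a nontrivial $F$-automorphism $\sigma$ of $E$, and $E/F$ is a Galois---in particular separable---quadratic extension with $\operatorname{Gal}(E/F)=\langle\sigma\rangle$. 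Separability makes the trace form non-degenerate, so the $F$-linear map $T$ is nonzero and, taking values in $F$, therefore surjective. For the norm I will invoke finiteness of the residue field $F$, which in the intended application holds because the Dedekind rings in question are orders in number fields. Given this, $E^{\times}$ is cyclic of order $|F|^{2}-1$ and $N$ becomes the power map $x\mapsto x^{|F|+1}$, whose image in $F^{\times}$ has order $(|F|^{2}-1)/(|F|+1)=|F|-1=|F^{\times}|$, so $N$ is onto.

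The main subtlety I anticipate is the finiteness hypothesis on $F$ used in the inert norm argument: it is essential, since for a general separable quadratic extension of infinite fields the norm need not be surjective (for instance $\mathbb{C}/\mathbb{R}$). In the topological setting of \Cref{prop:allisometric} this is harmless, as the proposition is ultimately applied to a finite module $H_1(\Sigma_p(K);\mathbb{Z})$ and hence only to primes of $R^{-}$ whose residue fields are finite; but this reduction should be made explicit when deploying the lemma. The trace surjectivity and the entire split case require no finiteness assumption.
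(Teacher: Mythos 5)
Your proof is correct and follows the paper's strategy closely: a case split on inert versus split, the Chinese Remainder Theorem with explicit preimages in the split case, and a cardinality count for the kernel of the power map $x\mapsto x^{|F|+1}$ in the inert norm case. The one real divergence is the inert trace argument: you observe that $E/F$ is a separable (indeed Galois) quadratic extension, so the trace form is non-degenerate, hence $T\neq 0$ and therefore onto; the paper instead argues directly that $T\equiv 0$ would force $\operatorname{char}F=2$ via $T(1)=2$, after which $x+\overline{x}=0$ would make the involution trivial on $E$, a contradiction. Both work and neither needs finiteness for the trace. Your flag of finiteness of $F$ as a needed input for the inert norm is a good catch: for an abstract Dedekind pair $(R,R^-)$ the lemma as stated can fail---e.g.\ $R=\mathbb{C}[t]$, $R^-=\mathbb{R}[t]$, $\mathfrak{p}=(t)$ gives the non-surjective norm $\mathbb{C}^\times\to\mathbb{R}^\times$---and the paper implicitly imports finiteness through its citation of Neukirch. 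Two small points you should tighten. First, your parenthetical justification that $[E:F]=2$ in the inert case does not parse: $\mathfrak{P}=\mathfrak{p}R$ is the definition of $\mathfrak{P}$, not a consequence of $E=F$; the clean reason is that the inertia degree $f$ satisfies $ef=[Q(R):Q(R^-)]=2$ with $e=1$ since $\mathfrak{p}R$ is prime. Second, the nontriviality of the induced involution on $E$, which you assert and the paper leans on with only ``$R\neq R^-$'', genuinely requires non-ramification: $\overline{\xi}\equiv\xi\pmod{\mathfrak{P}}$ would mean $\mathfrak{P}$ divides the different $(\xi-\overline{\xi})$, i.e.\ $\mathfrak{p}$ ramifies.
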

\begin{proof}
Let us first show the surjectivity of $T$.
Since $R^-$ is Dedekind, $\mathfrak{p}$ is maximal, and so $F = R^- / \mathfrak{p}$ is a field.
The ring $E$ is a two-dimensional $F$--vector space.
Assume that $T$ is not surjective; then it is the zero map.
Since $T(1) = 1 + 1 = 0 \in F$, $F$ must have characteristic $2$.
So for any $x\in E$, $T(x) = x + \overline{x} = 0$ implies that $\overline{x} = x$.
However $R\neq R^-$, so this is not the case.

To show surjectivity of $N$, let us consider the two possible cases that $\mathfrak{p}$ is inert, or it splits.
If $\mathfrak{p}$ is inert, then $\mathfrak{P}$ is a maximal ideal of $R$,
and thus by the assumptions of \cref{prop:allisometric}, $E = R / \mathfrak{P}$ is a finite field.
Since $F$ is a subfield of $E$, $F$ is also finite.
The field extension $E : F$ is of degree two, since $E = F(\xi)$, and $\xi$ is the root of a degree two polynomial with coefficients in $F$ (see the proof of \cref{lemma:dedekind}).
Hence $|E| = |F|^2$, and the Galois group of the field extension is isomorphic to $\Z/2$.
In fact, this Galois group is well-understood: its unique non-trivial element is the map $x \mapsto x^{|F|}$,
which is a power of the Frobenius automorphism \cite[Ch.~V~Thm~5.5]{zbMATH01703931}.
Since the involution $\overline{\,\cdot\,}$ is an automorphism of $F$ that fixes $E$, but not all of $F$, the involution must equal the unique non-trivial element of that Galois group, i.e.~$\overline{x} = x^{|F|}$.
So $x\in \ker N$ holds if and only if $x^{|F|+1} = 1$.
Thus $\ker N(x)$ consists of the roots of the polynomial $x^{|F|+1} - 1$ in $E$.
Since over a field, the number of roots of a polynomial is bounded from above by its degree,
it follows that $\ker N$ contains at most $|F| + 1$ elements.
On the other hand, $|E^{\times}| = |E| - 1 = |F|^2 - 1$ and $|F^{\times}| = |F| - 1$, so the kernel contains at least $|F| + 1$ elements.
It follows that $|\ker N| = |F| + 1$ and $|\im N| = |F| - 1$, so $N$ is surjective.
If $\mathfrak{p}$ splits, then the ring $E$ is isomorphic to $F\times F$,
with conjugation interchanging the two components, and $F\subset E$ identified with $\{(x,x)\mid x\in F\}$.
Clearly, $(x,x) = N((x,1))$, so $N$ is surjective.
\end{proof}
Now, using the freshly established non-triviality of the trace,
let us prove a base change lemma for homogeneous modules.
\begin{lemma}\label{lem:homchange}
Let $\mathfrak{p}, \mathfrak{P}$ be non-zero ideals as above with non-ramifying $\mathfrak{p}$.
Let $m, k \geq 1$, let $A = \bigoplus_{i=1}^m R / \mathfrak{P}^k$
and $\ell\colon A\times A \to Q(R) / R$ a non-degenerate Hermitian pairing.
Write $g_i$ for $[1]$ in the $i$--th summand.
Then there is an automorphism $\phi\colon A\to A$ such that
\[\mathfrak{P}^{k-1}\ell(\phi(g_m), \phi(g_m)) \neq (0).\]
\end{lemma}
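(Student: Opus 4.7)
The plan is to first exhibit an element $v\in A$ with $\mathfrak{P}^{k-1}\ell(v,v)\neq(0)$, and then to convert this into the desired automorphism. The conversion is automatic: if $v = (v_1,\dots,v_m) \in (R/\mathfrak{P}^k)^m$ satisfies $\mathfrak{P}^{k-1}\ell(v,v)\neq(0)$, then $v$ must have annihilator exactly $\mathfrak{P}^k$, because $\mathfrak{P}^j v = 0$ for some $j\leq k-1$ would give $\mathfrak{P}^j\ell(v,v) = \ell(v, \mathfrak{P}^j v) = (0)$ and hence $\mathfrak{P}^{k-1}\ell(v,v) = (0)$ since $\mathfrak{P}^{k-1}\subseteq\mathfrak{P}^j$. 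Because $R/\mathfrak{P}^k$ is local with maximal ideal $\mathfrak{P}/\mathfrak{P}^k$, this means some coordinate $v_i$ is a unit; hence $v$ extends to a free basis of $A$, and an automorphism $\phi$ sending $g_m$ to $v$ exists.

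To produce $v$, non-degeneracy applied to $g_m$ furnishes a $y\in A$ with $\mathfrak{P}^{k-1}\ell(g_m,y)\neq(0)$. If either $\mathfrak{P}^{k-1}\ell(g_m,g_m)\neq(0)$ or $\mathfrak{P}^{k-1}\ell(y,y)\neq(0)$, take $v=g_m$ or $v=y$. Otherwise, try $v = ag_m + y$ for some $a\in R$. Expanding with Hermitian symmetry,
\[
\ell(v,v) \;=\; a\bar a\,\ell(g_m,g_m) \;+\; T\bigl(\bar a\,\ell(g_m,y)\bigr) \;+\; \ell(y,y), \qquad T(z) := z+\bar z,
\]
so after multiplying by $\mathfrak{P}^{k-1}$ the outer terms vanish by assumption, and it suffices to choose $a$ with $\mathfrak{P}^{k-1}T(\bar a\,\ell(g_m,y))\neq(0)$. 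Localizing at $\mathfrak{P}$ and choosing a uniformizer $p\in R^-$ (possible because, as noted in the next paragraph, $\mathfrak{P}$ is forced to be inert), write $\mathfrak{P}^{k-1}\ell(g_m,y) = u/p + R$ for a unit $u\in R$; then $\bar p = p$, and the condition becomes $T(a\bar u) \not\equiv 0 \pmod{\mathfrak{p}}$, that is, the image of $T(a\bar u)$ in $F=R^-/\mathfrak{p}$ is non-zero. Since $\bar u$ is a unit modulo $\mathfrak{P}$ and the trace $T\colon E \to F$ is surjective by \cref{lem:NT}, a suitable $a$ exists.

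The main obstacles are the vacuity of the split case and the existence of a ``real'' uniformizer. If $\mathfrak{P}\neq\overline{\mathfrak{P}}$, then every $\ell(x,y)$ lies in the $\mathfrak{P}$-primary part of $Q(R)/R$, so Hermitian symmetry forces $\ell(x,x)=\overline{\ell(x,x)}\in (Q(R)/R)_{\overline{\mathfrak{P}}}$ to be zero for all $x$; polarization then yields $\ell\equiv 0$, contradicting non-degeneracy. Hence $\mathfrak{P}$ is inert, so that a uniformizer of $\mathfrak{p}$ inside $R^-$ serves as a uniformizer of $R_{\mathfrak{P}}$, justifying the identity $\bar p = p$ used above. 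Once these bookkeeping points are settled, the argument reduces directly to the trace surjectivity provided by \cref{lem:NT}.
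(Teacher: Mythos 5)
There is a genuine gap: your reduction to the inert case is based on a misreading of the setup. In the paragraph preceding \cref{lem:NT}, $\mathfrak{P}$ is defined as the ideal of $R$ generated by the prime $\mathfrak{p}\subset R^-$, and is therefore \emph{always} conjugation-stable, $\mathfrak{P}=\overline{\mathfrak{P}}$, since it is generated by conjugation-fixed elements. The split case is not ``$\mathfrak{P}\neq\overline{\mathfrak{P}}$''; it is the case where $\mathfrak{P}=\mathfrak{Q}\overline{\mathfrak{Q}}$ is a product of two distinct conjugate primes of $R$, which is perfectly compatible with $\mathfrak{P}=\overline{\mathfrak{P}}$ (and indeed occurs, e.g.\ for $7$ in $\Z[\zeta_3]$). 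So your ``vacuity'' argument discharges a hypothesis that never holds, and the deduction ``hence $\mathfrak{P}$ is inert'' is unjustified. Note that \cref{lem:NT} treats the split case explicitly precisely because it is not vacuous.

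This misstep propagates to two further places. First, in the split case $R/\mathfrak{P}^k\cong R/\mathfrak{Q}^k\times R/\overline{\mathfrak{Q}}^k$ is \emph{not} local, so ``annihilator exactly $\mathfrak{P}^k$ implies some $v_i$ is a unit'' fails: $\operatorname{Ann}(v)$ may be $\mathfrak{Q}^a\overline{\mathfrak{Q}}^b$ with $a\neq b$, and one needs a separate argument (using that $\ell(v,v)=\overline{\ell(v,v)}$ forces $\operatorname{Ann}(\ell(v,v))$ to be a power of $\mathfrak{P}$) to conclude that both $a=b=k$ and hence that $v$ is unimodular. Second, in the uniformizer step, $\ell(g_m,y)$ is not conjugation-fixed, so its annihilator need not be a power of $\mathfrak{P}$, and the identity $\mathfrak{P}^{k-1}\ell(g_m,y)=u/p+R$ with $u$ a unit is not automatic. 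The paper avoids both complications by perturbing along a basis element $g_j$ rather than a general $y$ --- then $\phi(g_m)=g_m+\lambda g_j$, $\phi(g_i)=g_i$ for $i<m$ is manifestly an automorphism, with no need to show $v$ extends to a basis --- and by choosing $\lambda$ directly so that the cross term $\lambda\ell(g_m,g_j)+\overline{\lambda\ell(g_m,g_j)}$ survives mod $\mathfrak{P}$, which \cref{lem:NT} guarantees in the split case as well as the inert one. Your overall strategy (perturb $g_m$, use trace surjectivity) is the same as the paper's, but as written your proof only covers the inert case and incorrectly asserts that this is the only one.
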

\begin{proof}
If $\mathfrak{P}^{k-1}\ell(g_i, g_i) \neq (0)$ for any $i$, let $\phi$ simply exchange the $i$--th and $m$--th summand.
So consider the case that $\mathfrak{P}^{k-1}\ell(g_i, g_i) = (0)$ for all $i$.
Pick ${x \in \mathfrak{P}^{k-1} \setminus \mathfrak{P}^k}$.
Since $xg_m$ is a non-zero element of the $m$-th summand,
and $\ell$ is non-degenerate, we have $\ell(g_j, xg_m) \neq 0$ for some~$j$.
Thus $x \ell(g_j, g_m) \neq 0$, and hence $\mathfrak{P}^{k-1}\ell(g_j, g_m) \neq (0)$.
By \Cref{lem:NT}, there exists a
$\lambda \in R$ such that $\lambda + \overline{\lambda} \not\in \mathfrak{p} = R^- \cap \mathfrak{P}$.
Set $\phi(g_m) = g_m + \lambda g_j$ and $\phi(g_i) = g_i$ for $i < m$.
One computes $\mathfrak{P}^{k-1}\ell(\phi(g_m), \phi(g_m))$ to be equal to
\[
\mathfrak{P}^{k-1}( \ell(g_m, g_m)  + \lambda\overline{\lambda} \ell(g_j,g_j) + (\lambda + \overline{\lambda}) \ell(g_j, g_m))  \\
 = \mathfrak{P}^{k-1}\ell(g_j, g_m) \neq (0).
\pushQED{\qed}\qedhere\popQED
\]%
\renewcommand{\qedsymbol}{}
\end{proof}%
\vspace{-\baselineskip}
We are now ready to prove diagonalizability for general modules.
\begin{lemma}\label{lem:diag}
Let $A$ be an $R$--torsion module as in the \cref{prop:allisometric}.
Let $\ell$ be a non-degenerate Hermitian pairing on $A$.
Then there is a decomposition
\[
A\ \cong\ \bigoplus_{i=1}^m R / \mathfrak{P}_i^{k_i},
\]
where $k_i \geq 1$ and the $\mathfrak{P}_i \subset R$ are ideals
generated by maximal ideals $\mathfrak{p}_i\subset R^-$.
Moreover, if $x$ and $y$ are respective elements of the $i$--th and $j$--th summand with $i \neq j$,
then $\ell(x, y) = 0$.
\end{lemma}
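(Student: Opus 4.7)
The plan is to mimic Wall's strategy for classifying symmetric pairings on finite Abelian groups: first decompose $A$ orthogonally into primary components over the fixed subring $R^-$, then inductively split off cyclic orthogonal summands of maximal exponent. Since $R^-$ is Dedekind and $A$ is a finitely presented torsion $R^-$-module, it decomposes canonically as $A = \bigoplus_{\mathfrak{p}} A_{\mathfrak{p}}$, summed over primes $\mathfrak{p}$ of $R^-$ in the support of $A$. Each $A_{\mathfrak{p}}$ is both $R$-stable and conjugation-stable (the involution fixes $R^-$ and hence fixes each $\mathfrak{p}$). I would first check mutual orthogonality: if $x \in A_{\mathfrak{p}}$ and $y \in A_{\mathfrak{p}'}$ with $\mathfrak{p}\neq\mathfrak{p}'$, then $\ell(x,y) \in Q(R)/R$ is annihilated by some power of $\mathfrak{p}$ (because $\mathfrak{p}^n x = 0$ and the involution fixes $\mathfrak{p}$) and by some power of $\mathfrak{p}'$; these two ideals of $R$ are coprime, so $\ell(x,y) = 0$.

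Next I would analyse each $A_{\mathfrak{p}}$ as an $R$-module. The coprimality hypothesis on the order of $A$ excludes ramified primes from its support, so $\mathfrak{p}$ is either inert (with $\mathfrak{P} \coloneqq \mathfrak{p}R$ prime) or split (with $\mathfrak{P} = \mathfrak{Q}\overline{\mathfrak{Q}}$ for distinct primes $\mathfrak{Q}, \overline{\mathfrak{Q}}$ of $R$ interchanged by the involution). The structure theorem over the Dedekind ring $R$ decomposes $A_{\mathfrak{p}}$ as $\bigoplus_i R/\mathfrak{Q}_i^{k_i}$; in the split case, conjugation-stability forces the multiplicity of $R/\mathfrak{Q}^k$ and that of $R/\overline{\mathfrak{Q}}^k$ to agree for every $k$, and pairing matching summands via $R/\mathfrak{Q}^k \oplus R/\overline{\mathfrak{Q}}^k \cong R/\mathfrak{P}^k$ (Chinese Remainder) yields $A_{\mathfrak{p}}\cong\bigoplus_i R/\mathfrak{P}^{k_i}$, as the statement requires.

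It remains to upgrade this module decomposition to an orthogonal one, which I would do by induction on $|A_{\mathfrak{p}}|$, splitting off one cyclic summand of maximal exponent $k = \max_i k_i$ at a time. The crux is the claim that there exists $x \in A_{\mathfrak{p}}$ with $\Ann(x) = \mathfrak{P}^k$ and $\mathfrak{P}^{k-1}\ell(x,x) \neq 0$ in $Q(R)/R$. Granting it, the adjoint map $y\mapsto \ell(y,\cdot)|_{Rx}$ from $A_{\mathfrak{p}}$ to $\overline{\Hom_R(Rx, Q(R)/R)}$ is surjective (because $\ell(x,x)$ already generates the cyclic target $R/\mathfrak{P}^k$ as an $R$-module), its kernel $(Rx)^{\perp}$ is a complementary summand, so $A_{\mathfrak{p}} = Rx \oplus (Rx)^{\perp}$ orthogonally, and induction on the smaller $(Rx)^{\perp}$ completes the argument.

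The main obstacle is establishing the existence claim. \Cref{lem:homchange} delivers it under the additional assumption that $A_{\mathfrak{p}}$ is homogeneous, via surjectivity of the trace and norm (\Cref{lem:NT}), for which non-ramification is essential. To reduce the non-homogeneous case, I would work with the ``top layer'' $\mathfrak{P}^{k-1}A_{\mathfrak{p}} \subseteq A_{\mathfrak{p}}[\mathfrak{P}]$, a free $R/\mathfrak{P}$-module of rank equal to the number of maximal summands of $A_{\mathfrak{p}}$. Applying an analogue of \Cref{lem:homchange} there produces an $R/\mathfrak{P}$-generator with sufficiently non-degenerate self-pairing, and lifting it back along the surjection from a maximal summand of $A_{\mathfrak{p}}$ onto this top layer provides the required $x$ of exact annihilator $\mathfrak{P}^k$. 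Tracking the pairing through this lifting step is where the main technical work lies.
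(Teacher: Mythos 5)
Your overall plan matches the paper's: primary decomposition, verification that ramified primes do not occur, pairing of split primes via CRT to get summands of the form $R/\mathfrak{P}_i^{k_i}$, orthogonality of distinct primary pieces by coprimality, and then a Gram--Schmidt-type induction that peels off a cyclic summand of maximal exponent using an element $x$ with $\mathfrak{P}^{k-1}\ell(x,x)\neq 0$. You also correctly identify the crux: producing such an $x$ when $A_{\mathfrak{p}}$ is not homogeneous, so that \Cref{lem:homchange} does not apply directly.

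However, the reduction you sketch for this step has a genuine gap, and it is not merely a matter of "tracking the pairing through the lifting." The pairing $\ell$ restricted to the top layer $\mathfrak{P}^{k-1}A_{\mathfrak{p}}$ is identically zero whenever $k\geq 2$: writing $x=pu$, $y=qv$ with $p,q\in\mathfrak{P}^{k-1}$ and $u,v$ annihilated by $\mathfrak{P}^k$, one gets $\ell(x,y)=\overline{p}\,q\,\ell(u,v)\in\mathfrak{P}^{2k-2}\cdot(\mathfrak{P}^{-k}/R)=0$. So there is no pairing on the top layer to which an analogue of \Cref{lem:homchange} could apply, and the plan as stated does not produce the desired generator. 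What is actually needed is not information on the socle but rather the statement that $\ell$ restricts to a \emph{non-degenerate} pairing on the maximal-exponent part $M=\bigoplus_{k_i=k}R/\mathfrak{P}^k$ (a homogeneous module), after which \Cref{lem:homchange} applies directly. This non-degeneracy of $\ell|_M$ does not follow formally from non-degeneracy of $\ell$ on $A_{\mathfrak{p}}$; the paper proves it by a separate contradiction argument, showing that a radical element of $\ell|_M$, scaled into the socle, would in fact be in the radical of $\ell$ on all of $A_{\mathfrak{p}}$ because its pairing with the lower-exponent summands is automatically killed by $\mathfrak{P}^{k-1}\subset\mathfrak{P}^{k_i}$. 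That argument is the missing ingredient in your proposal.
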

\begin{proof}
Since $R$ is a Dedekind domain, $A$ is isomorphic to a unique sum of
terms of the form $R/\mathfrak{Q}^r$ with $\mathfrak{Q} \subset R$ maximal and $r\geq 1$.
Let $\mathfrak{p} \subset R^-$ be the maximal ideal $\mathfrak{Q} \cap R^-$,
and let $\mathfrak{P} \subset R$ be the ideal generated by $\mathfrak{p}$.
Depending on whether $\mathfrak{p}$ is inert, split, or ramified,
$\mathfrak{P}$ is equal to $\mathfrak{Q}$, $\mathfrak{Q}\overline{\mathfrak{Q}}$, or $\mathfrak{Q}^2$.

Let us check that $\mathfrak{p}$ does not ramify.
Ramification is controlled by the different ideal $\mathfrak{D}_{R:R^-} \subset R$ (cf.~\cite[Ch.~III, Sec.~2]{neukirch}):
$\mathfrak{p}$ ramifies if and only if $\mathfrak{Q} | \mathfrak{D}_{R:R^-}$.
Since $R = R^-[\xi]$, the different is the principal ideal generated by $f'(\xi)$,
where $f(x) = x^2 - (\xi + \xi^{-1})x + \xi\overline{\xi}\in R^-[x]$ is the minimal polynomial of $\xi$.
Thus the different is $(\xi - \xi^{-1})$. Note $\mathfrak{Q}$ divides the order of $A$,
which is by assumption coprime with $(\xi - \xi^{-1})$.
Thus $\mathfrak{Q}$ does not divide $\mathfrak{D}_{R:R^-}$, and so $\mathfrak{p}$ does not ramify.

Since $A = \overline{A}$, for every term $R/\mathfrak{Q}^r$ either it holds that $\overline{\mathfrak{Q}} = \mathfrak{Q}$,
or the term $R / \overline{\mathfrak{Q}}^r$ also appears.
So for each $\mathfrak{Q}$ with  $\overline{\mathfrak{Q}} = \mathfrak{Q}$,
take $\mathfrak{P}_i = \mathfrak{Q}$; for each pair $\mathfrak{Q}, \overline{\mathfrak{Q}}$ with $\overline{\mathfrak{Q}} \neq \mathfrak{Q}$, take $\mathfrak{P}_i = \mathfrak{Q}\overline{\mathfrak{Q}}$.
In both cases, let $k_i = r$ and $\mathfrak{p}_i = \mathfrak{Q} \cap R^-$, so that $\mathfrak{p}_i$
is a prime ideal generating $\mathfrak{P}_i$ over $R$ (here, we use that $\mathfrak{p}_i$ does not ramify).
This gives a decomposition of $A$ as desired, which it remains to diagonalize.

If $x, y$ are given as above, and $\mathfrak{P}_i \neq \mathfrak{P}_j$, then
$\ell(x,y) \in Q(R) / R$ is annihilated by
$S = \mathfrak{P}_i^{k_i} + \mathfrak{P}_j^{k_j}$.
Let us show that $S = R$.
In the case $k_i = k_j = 1$, we have $\mathfrak{p}_i + \mathfrak{p}_j \subset S$.
Because $\mathfrak{p}_i$ and $\mathfrak{p}_j$ are distinct maximal ideals,
we have $\mathfrak{p}_i + \mathfrak{p}_j = R^-$. Since $1 \in R^-$, we have $1\in S$ and thus $S = R$.
In the case $k_i, k_j \geq 1$, one deduces $S = R$
by inductively applying that $I_1I_2+J=R$ if $I_1+J=I_2+J=R$ for ideals $I_1$, $I_2$ and $J$ (a general property of commutative rings). Since $\ell(x,y)$ is annihilated by $R$, it follows that 
$\ell(x,y) = 0$.
So, it suffices to solve the case that all $\mathfrak{P_i}$ are equal.
Let us write $\mathfrak{P} = \mathfrak{P}_i$, order the $k_i$ ascendingly, let $k$ be their maximum,
and let $r \in \{1,\ldots,m\}$ such that
\[
k_1 \leq k_2 \leq \ldots \leq k_{r-1} < k_r = k_{r+1} = \ldots = k_m = k.
\]

We note that $\ell$ restricts to a non-degenerate Hermitian pairing on the submodule $M\coloneqq\bigoplus_{i=r}^mR/\mathfrak{P}^k$.
To establish this, assume towards a contradiction that we have $x\in M\setminus \{0\}$ with $\ell(x,y)=0$ for all $y\in M$.
Let $n$ be the maximal integer such that $x=(x_r,\ldots,x_m)$ is an element of $\bigoplus_{i=r}^m\mathfrak{P}^n/\mathfrak{P}^k$.
By the maximality of $n$, $\mathfrak{P}^{k-1-n} x$ is a nontrivial submodule of $M$.
Let $x'$ be any non-trivial element of it. We can write $x'=px$ for some $p\in\mathfrak{P}^{k-1-n}$.
We note that $\ell(x',y)=\ell(x,py)=0$ for all $y$ in~$M$.
Furthermore, since $px_i\in \mathfrak{P}^{k-1}\subset\mathfrak{P}^{k_i}$ for all $i\leq r-1$, we find
\begin{align*}
\ell(x',y) &= \sum_{i=r}^m \ell((\ldots,0,px_i,0,\ldots),y) && = \sum_{i=r}^m \ell((\ldots,0,1,0,\ldots),px_iy) \\
 &= \sum_{i=r}^m \ell((\ldots,0,1,0,\ldots),0) && =\sum_{i=r}^m 0
\end{align*}
for all $y$ in $\bigoplus_{i=1}^{r-1}R/\mathfrak{P}^{k_i}$.
Thus, $x'$ is in the radical of $\ell$ contradicting the assumption of $\ell$ being non-degenerate.

Let $g_i$ be a generator of the $i$--th summand. By \Cref{lem:homchange}, there is a base change on summands number $r$ to $m$ after which
$\mathfrak{P}^{k_r - 1}\ell(g_m, g_m) \neq (0)$.
This implies that $\lambda_i = \ell(g_m, g_i) / \ell(g_m, g_m)$ lies in $R$ for all $i\in\{1,\ldots,m-1\}$.
Now one may replace $g_i$ with
$g_i - \lambda_i g_m$ (as in the Gram-Schmidt algorithm). After this base change, one has
$\ell(g_i, g_m) = 0$  and may proceed by induction over $m$.
\end{proof}
We now conclude the proof of the proposition.
\begin{proof}[Proof of \Cref{prop:allisometric}]
By \Cref{lem:diag}, it suffices to prove that any two pairings $\ell_1, \ell_2$ on a module of the form
$A = R / \mathfrak{P}^k$ are isometric. A pairing on a cyclic module such as $A$ is completely determined
by its value $\ell_i(1,1)$, which is annihilated by~$\mathfrak{P}^k$, but not by~$\mathfrak{P}^{k-1}$ .
Moreover, $\overline{\ell_i(1,1)} = \ell_i(1,1)$, and thus
we can represent $\ell_i(1,1)$ as $x_i / y_i$ with $x_i\in R^- \setminus \mathfrak{p}$ and $y_i \in \mathfrak{p}^k$.
The quotient of $x_1/y_1$ by $x_2/y_2$ is $\mu = \frac{x_1y_2}{x_2y_1}$, which lies in $R^- \setminus \mathfrak{p}$.
By \Cref{lem:NT}, there exists a $\lambda \in R$ such that
$[\lambda\cdot \overline{\lambda} \cdot \mu] = [1] \in R^- / \mathfrak{p}$.
Let $\phi$ be the ring endomorphism of $A$ given by multiplication with~$\lambda$.
This endomorphism is an automorphism since $\lambda$ is a unit in $A$. Indeed, the latter follows since $(\lambda)+\mathfrak{P}=R$ implies $(\lambda)+\mathfrak{P}^k=R$.
 Then $\ell_1(\phi(1), \phi(1)) = \ell_2(1,1)$, so $\phi$ is an isometry
between $\ell_1$ and $\ell_2$ as desired.
\end{proof}

\begin{appendix}
\section{Base change for Hermitian pairings}\label{app}

One purpose of this appendix is to establish~\cref{prop:BC},
which allows base changes for Hermitian pairings of $R$--modules
over a different (in the applications usually bigger) ring $R'$.
As mentioned
in \Cref{subsec:(4)=>(3)},
this method is already in implicit use by experts for the Blanchfield pairing;
in \Cref{subsec:a2}, we make the method more explicit and generalize it to arbitrary rings $R$.
Along the way, in \Cref{subsec:a1}, we discuss how one may replace
$Q(R)/R$ as target of Hermitian pairings by $R/I$ for a suitably chosen ideal $I$.
Once again, this method has been used before;
we provide a general setup and discuss how the ideal~$I$ can be chosen,
particularly if $R$ is not a PID.
Finally, using (among other things) \Cref{subsec:a2}, we discuss how the linking form on the $n$--fold branched covering can be understood as a specialization of the Blanchfield paring, and, in particular, prove \cref{prop:inducedpresmatrix}; see \Cref{sec:ApP:BlandLk}.

Let us start with a simple, motivating example to illustrate the results of both
\Cref{subsec:a1}
and
\Cref{subsec:a2}.
Consider the pairing $\ell_A$ presented by a matrix $A$:
\[
\ell_A\colon \Z^2/A\Z^2 \times \Z^2/A\Z^2 \to \Q/\Z, \
\ell_A(x,y) = x^{\top} A^{-1} y,
\ \text{where }
A = \begin{pmatrix} 12 & 3 \\ 3 & 24 \end{pmatrix}.
\]

Firstly, note that $\im\ell_A = (\frac{1}{93}\Z)/\Z \subset \Q/\Z$, where $93$ generates the annihilator of
the cokernel of $A$.
Since the abelian groups $(\frac{1}{93}\Z)/\Z$ and $\Z/93\Z$ are isomorphic, one may instead of $\ell_A$ equivalently
consider the map $\Z^2/A\Z^2 \times \Z^2/A\Z^2 \to \Z/93$ given by $\ell_A(x,y) = 93\cdot x^{\top} A^{-1} y$.

Secondly, a matrix $B$ congruent to $A$ over $\Z$
clearly yields an isometric pairing~$\ell_B$. But now, consider the matrix
\[
B = \begin{pmatrix} 3 & 3 \\ 3 & 96\end{pmatrix},
\]
which is congruent to $A$ over $\Z[\frac{1}{2}]$.
One observes that $\ell_B$ is isometric to $\ell_A$, even though $A$ and $B$ are not congruent over $\Z$
(because $A$ is even and $B$ is odd).

Let us now formulate general principles based on these two observations.
\subsection{Change of perspective: Hermitian pairings as maps to $R/I$}\label{subsec:a1}
Let $R$ be a unital commutative rings with involution.
Usually, one considers Hermitian pairings of torsion modules $M$ with target $Q(R)/R$
(as we did in \cref{subsec:pairings}):
\[
\ell\colon M \times M \to Q(R)/R.
\]
However, it will turn out to be more convenient
to consider Hermitian pairings on $M$ with target $R/I$, where $I$ is an ideal contained in $\Ann(M)$:
\[
\ell'\colon M\times M\to R/I.
\]
Note that this makes sense even when $M$ is not torsion.

If $I$ is a principal ideal generated by a non-zero-divisor $s$,
then there is the following 1-to-1-correspondence
between Hermitian pairings $\ell$ of $M$ with target $Q(R)/R$
and Hermitian pairings $\ell'$ of $M$ with target $R / I$.
Denote by $\iota_s$ the injection given by `dividing by $s$':
\[
\iota_s\colon
R/I=R/(s) \hookrightarrow Q(R)/R, \quad r+(s)\mapsto \frac{r}{s} + R.
\]
Let a pairing $\ell'$ as above correspond to $\iota_s\circ\ell'$.
This mapping from pairings with target $R/I$ to pairings with target $Q(R) / R$
is clearly injective; it is also surjective, because $I\subset \Ann(M)$.

If $M$ is torsion, but $I$ is not principal, the Hermitian pairings with targets $Q(R)/R$ and $R/I$
may not be in such a natural 1-to-1 correspondence; but since we do not encounter such $M$ in this text,
we refrain from pursuing this further.

Note that the 1-to-1-correspondence  depends on the choice of $s$.
However, when applying this change of perspective to the Blanchfield pairing of a knot~$K$,
where $M = H_1(S^3\setminus K;\Lambda)$ is the Alexander module,
there are two natural choices of $I$ as a principal ideal with a canonical generator.
One may choose $I$ to be the order ideal, canonically generated by the Alexander polynomial
$\Delta_K(t)$ with $\Delta_K(t) = \Delta_K(t^{-1})$ and $\Delta_K(1) = 1$.
Thus the Blanchfield pairing of a knot $K$ may be considered in a canonical fashion as a Hermitian pairing
\[
\Bl'(K)\colon H_1(S^3\setminus K; \Lambda) \times H_1(S^3\setminus K; \Lambda) \to \Lambda / (\Delta(t)).
\]

One may also choose $I$ to be the annihilator of $H_1(S^3\setminus K; \Lambda)$. It follows from \cref{lem:ufd} below
that $I$ is principal. Since $\Delta_K \in \Ann(H_1(S^3\setminus K; \Lambda))$, any generator of $\Ann(H_1(S^3\setminus K; \Lambda))$ divides $\Delta_K$.
Let us choose as canonical generator of the annihilator of the Alexander module the
unique generator $a(t)$ with $a(t) = a(t^{-1})$ and $a(1) = 1$.
So one may equally well consider the Blanchfield pairing of a knot $K$ in a canonical fashion as a Hermitian pairing
\[
\Bl''(K)\colon H_1(S^3\setminus K; \Lambda) \times H_1(S^3\setminus K; \Lambda) \to \Lambda / (a(t)).
\]
Note that composing $\Bl''$ with multiplication by $\frac{\Delta(t)}{a(t)}$ gives $\Bl'$.

\begin{lemma}\label{lem:ufd}
Let $R$ be a unital commutative unique factorization domain with involution,
and $A$ an $n\times n$ matrix over $R$ with non-zero-divisor determinant.
Then the annihilator ideal of the cokernel of $A$ is principal.
\end{lemma}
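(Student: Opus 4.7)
The plan is to identify $\Ann(M)$ explicitly, where $M = \coker A = R^n/AR^n$, and then extract a single generator using the UFD hypothesis. Set $d = \det A$ and let $a_{ij}$ denote the entries of $\Adj(A)$, so that $A\cdot\Adj(A) = d\cdot I_n$. First I would observe that $r\in R$ annihilates $M$ if and only if $rI_n$ factors through $A$ from the right, i.e.\ $rI_n = AB$ for some $B\in\Mat_{n\times n}(R)$. Since $d$ is a non-zero-divisor, multiplying such an equation by $\Adj(A)$ forces $B = r\cdot\Adj(A)/d$, and the condition becomes $d \mid r\cdot a_{ij}$ for all $i,j$.

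The UFD hypothesis enters next: any finite family of elements of $R$ has a greatest common divisor, unique up to units, and $\gcd$ commutes with multiplication by a nonzero element, so $\gcd(ra_{11},\dots,ra_{nn}) = r\cdot g$ (up to units), where $g \coloneqq \gcd_{i,j}(a_{ij})$. Hence ``$d\mid r a_{ij}$ for every $i,j$'' collapses to the single condition $d \mid rg$. Writing $d = \gcd(d,g)\cdot d'$ and $g = \gcd(d,g)\cdot g'$ with $\gcd(d',g') = 1$, this last divisibility reduces (using unique factorization once more) to $d' \mid r$.

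Therefore $\Ann(M) = (d')$ is principal, with canonical generator
\[
d' \;=\; \det(A)\,/\,\gcd\!\bigl(\det(A),\; \gcd_{i,j}\Adj(A)_{ij}\bigr).
\]
I do not foresee a serious obstacle; the argument is essentially a chain of elementary GCD manipulations, and the only delicate point is ensuring that each such manipulation is indeed valid in a UFD (coprime factor extraction, distributivity of $\gcd$ over multiplication). These are precisely the steps that fail in more general rings, which explains why the lemma is stated only for UFDs rather than, say, the Dedekind domains considered elsewhere in the paper.
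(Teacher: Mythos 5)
Your proof is correct and follows essentially the same route as the paper's: both reduce membership in $\Ann(\coker A)$ to divisibility conditions coming from the adjugate of $A$ and then invoke the UFD hypothesis to collapse those conditions via gcd/lcm. The paper phrases this more abstractly (``an intersection of principal ideals, all of whose generators divide $\det A$, is principal''), whereas you go further and extract the explicit generator $\det(A)/\gcd(\det A, \gcd_{i,j}\Adj(A)_{ij})$ — a mild but pleasant refinement, and one can check it agrees with the paper's worked example ($93$ for the $2\times 2$ matrix in the appendix).
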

\begin{proof}
Write $M$ for the cokernel of $A$. We have $r\in \Ann(M)$ if and only if
for all $v\in R^n$ it holds that $rv \in A R^n$, or equivalently $rA^{-1}v \in R^n$
(where $A^{-1}$ is a matrix over~$Q(R)$). Now, $A^{-1}v$ is a vector in $Q(R)^n$ with entries
of the form $p / q$. So $\Ann(M)$ consists of the intersection of all the principal ideals $(q)$ for the
$q\in R$ that appear in this way for some~$v$. However, over a unique factorization domain, an intersection of
principal ideals is again principal.
\end{proof}

Let us generalize from the Blanchfield pairing to
Hermitian pairings $\ell_A$ on an $R$--torsion module $M$ presented by a square matrix $A$ with non-zero-divisor determinant.
Taking $I$ as the order ideal $\Ord(M)$, which is principal
and generated by $\det(A)$, gives the following formula for $\ell_A$ in terms of $A$:
\[
\ell_A\colon R^n/AR^n\times R^n/AR^n \to R/(\det(A)),\quad
(x,y) \mapsto \overline{x}^{\top}\Adj(A)y+(\det(A)),
\]
where $\Adj(A)$ denotes the \emph{adjoint} of $A$. Recall that $A\Adj(A)$ equals $\det A$ times the identity matrix.
This explains how this formula and the `old' formula (cf.\ \cref{subsec:pairings})
\[\ell_A\colon R^n/AR^n\times R^n/AR^n\to Q(R)/R,\quad (x,y)\mapsto \overline{x}^{\top}A^{-1}y + R\]
translate into each other via the above 1-to-1 correspondence
coming from the generator $\det A$ of $\Ord(M)$.

\subsection{Changing bases over a different ring}\label{subsec:a2}
The advantage of the perspective on Hermitian pairings given in the previous subsection
is that a change of the base ring of the module naturally carries over to the target of Hermitian pairings,
even when the order is no longer a non-zero-divisor.

Let $\phi\colon R\to R'$ be a homomorphism of unital commutative rings $R$ and $R'$ with involution
and let $\ell\colon M\times M\to R/I$ be a Hermitian pairing on an $R$--module~$M$.
Then there is an induced Hermitian pairing $\ell^{\phi}$ on the $R'$--module $M' = M \otimes_R R'$ given by
\begin{align*}
\ell^{\phi} \colon M' \times M' &\to R' / \phi(I)R',\\
(x\otimes r, y\otimes s) &\mapsto \overline{r}\cdot s\cdot\phi(\ell(x,y)) + \phi(I)R'.
\end{align*}
For well-definedness, observe that $\phi(I)R' \subset \Ann_{R'}(M')$.

\begin{prop}\label{prop:BC}
Let $\phi\colon R\to R'$ be a homomorphism of unital commutative rings with involution $R$ and $R'$. %
For $i \in \{1,2\}$, let $\ell_i\colon  M_i \times M_i \to R / I_i$ be Hermitian pairings of $R$--modules $M_i$,
where $I_i$ is an ideal contained in $\Ann(M_i)$.
Assume that $\phi$ induces an isomorphism of $R$--modules between $R/I_i$ and $R'/\phi(I_i)R'$ for $i = 1,2$.
Then $\ell_1^{\phi}$ and $\ell_2^{\phi}$ are isometric over $R'$
if and only if $\ell_1$ and $\ell_2$ are isometric over~$R$.
\end{prop}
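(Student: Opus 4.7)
My plan is to split the proof into the two implications. The easy direction---$\ell_1\cong_R\ell_2$ implies $\ell_1^\phi\cong_{R'}\ell_2^\phi$---I would handle directly: an $R$-linear isometry $\psi\colon M_1\to M_2$ base-changes to the $R'$-linear isomorphism $\Psi\coloneqq\psi\otimes_R\id_{R'}$, and that $\Psi$ intertwines $\ell_1^\phi$ and $\ell_2^\phi$ is immediate from the defining formula for $\ell_i^\phi$. The content of the proposition lies in the converse.

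For the converse, the crucial step is to show that the natural $R$-linear map
\[
\iota_i\colon M_i\longrightarrow M_i\otimes_R R',\qquad m\mapsto m\otimes 1,
\]
is an isomorphism of $R$-modules (though not, in general, of $R'$-modules). I would establish this by using that $I_i$ annihilates $M_i$ to factor the base change as
\[
M_i\otimes_R R' \;\cong\; M_i\otimes_{R/I_i}\bigl(R/I_i\otimes_R R'\bigr) \;\cong\; M_i\otimes_{R/I_i}\bigl(R'/\phi(I_i)R'\bigr),
\]
and then invoking the hypothesis that $R/I_i\to R'/\phi(I_i)R'$ is an isomorphism of $R$-modules---equivalently, of $R/I_i$-modules, as $I_i$ acts trivially on both sides---to reduce the last term to $M_i\otimes_{R/I_i}R/I_i = M_i$. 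The composite isomorphism is inverse to $\iota_i$.

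With this in hand, the converse becomes short. Given an $R'$-linear isometry $\Psi\colon M_1\otimes_R R'\to M_2\otimes_R R'$ (taking the natural case $I_1=I_2\eqqcolon I$ so that the two pairings share a target), set $\psi\coloneqq\iota_2^{-1}\circ\Psi\circ\iota_1$. This is an $R$-linear isomorphism by construction, and it satisfies $\Psi(x\otimes 1)=\psi(x)\otimes 1$, so one computes
\[
\phi\bigl(\ell_2(\psi(x),\psi(y))\bigr) = \ell_2^\phi\bigl(\Psi(x\otimes 1),\Psi(y\otimes 1)\bigr) = \ell_1^\phi(x\otimes 1, y\otimes 1) = \phi\bigl(\ell_1(x,y)\bigr)
\]
in $R'/\phi(I)R'$. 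Injectivity of the induced map $R/I\hookrightarrow R'/\phi(I)R'$---part of the hypothesis---then gives $\ell_2(\psi(x),\psi(y))=\ell_1(x,y)$ in $R/I$. The main point to keep straight throughout is the $R$-versus-$R'$-linearity: $\iota_i$ is only $R$-linear, but that is precisely the level at which the recovery of an $R$-linear isometry from an $R'$-linear one is possible.
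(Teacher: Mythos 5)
Your proof is correct and follows essentially the same route as the paper's: both establish that $\iota_i\colon M_i\to M_i\otimes_R R'$ is an $R$-module isomorphism via a chain of isomorphisms that funnels through $R/I_i\cong R'/\phi(I_i)R'$, and then transport an $R'$-isometry back to an $R$-isometry by conjugating with $\iota_1$ and $\iota_2^{-1}$. The only cosmetic differences are that you phrase the chain over $\otimes_{R/I_i}$ where the paper phrases it over $\otimes_R$, and that you spell out the final verification (using injectivity of $R/I\to R'/\phi(I)R'$) which the paper leaves as ``straightforward.''
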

\begin{proof}
It is straightforward that an isometry between $\ell_1$ and $\ell_2$ induces an isometry
between $\ell_1^{\phi}$ and $\ell_2^{\phi}$.
For the other direction, note that the map $M_i \to M_i \otimes_R R'$ given by $x \mapsto x\otimes 1$
is an $R$--module isomorphism, since it can be written as the composition of the following $R$--isomorphisms:
\[
M_i\cong M_i\otimes_R R/I_i \cong M_i\otimes_R R'/\phi(I_i)R'\cong M_i\otimes_R R'.
\]
An isometry between $\ell_1^{\phi}$ and $\ell_2^{\phi}$ is in particular an $R'$--isomorphism $M_1 \otimes_R R' \to M_2 \otimes_R R'$.
Composing with the isomorphisms $M_1 \to M_1 \otimes_R R'$ and $M_2 \otimes_R R' \to M_2$ gives an
$R$--isomorphism $M_1 \to M_2$. It is straight-forward that this isomorphism behaves well with respect
to $\ell_1, \ell_2$, and is thus an isometry.
\end{proof}

Let us consider two corollaries needed in the paper.
The first allows one to think of the sesquilinear linking pairing of $p$--fold branched coverings of the knot as module over the ring $\Lambda/(g)$ rather than $\Lambda/(t^p-1)$ where $p$ is a prime and $g = 1 + \ldots + t^{p-1}$ is the $p$--th cyclotomic polynomial
(see \cref{subsec:bisesqui}). For this purpose, take $f = \Delta_K$:

\begin{corollary}\label{cor:pfold}
Let $g\in\Lambda$ and $\phi\colon \Lambda / ((t-1)g) \to \Lambda/ (g)$ be the canonical projection.
For $i\in\{1,2\}$, let $\ell_i\colon M_i\times M_i \to\Lambda/(f, (t-1)g)$
be Hermitian pairings of $\Lambda/((t-1)g)$--modules for an $f\in\Lambda$ with $f(1) = 1$ and $f\in\Ann(M_i)$.
Then $\ell_1^{\phi}$ and $\ell_2^{\phi}$ are isometric over $\Lambda/(g)$
if and only if $\ell_1$ and $\ell_2$ are isometric over~$\Lambda/((t-1)g)$.
\end{corollary}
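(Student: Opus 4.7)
The proof plan is to reduce the statement directly to \cref{prop:BC}. Set $R = \Lambda/((t-1)g)$, $R' = \Lambda/(g)$, and let $I_i$ be the ideal of $R$ generated by the image of $f$; so the target of $\ell_i$ is $R/I_i = \Lambda/(f,(t-1)g)$, and since $f\in\Ann(M_i)$, we have $I_i\subseteq\Ann_R(M_i)$ as required. It then only remains to check the hypothesis of \cref{prop:BC}, namely that the canonical map induced by $\phi$ between $R/I_i$ and $R'/\phi(I_i)R' = \Lambda/(f,g)$ is an $R$-module isomorphism. Once that is established, \cref{prop:BC} immediately yields the claimed equivalence of isometry over $R$ and over $R'$.

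The key algebraic step will be to show that the ideals $(f,(t-1)g)$ and $(f,g)$ coincide in $\Lambda$. The assumption $f(1)=1$ means that $f$ and $t-1$ are coprime in $\Lambda$: evaluating at $t=1$ gives $1\in(f) + (t-1)$ (after reducing modulo $t-1$, $f$ becomes a unit in $\Lambda/(t-1)\cong\Z$), so one may write $af + b(t-1) = 1$ for some $a,b\in\Lambda$. Multiplying by $g$ gives $g = afg + b(t-1)g$, which shows $g\in (f,(t-1)g)$; the reverse inclusion $(t-1)g\in(f,g)$ is trivial. Hence $(f,(t-1)g) = (f,g)$, so $R/I_i$ and $R'/\phi(I_i)R'$ are literally equal as quotients of $\Lambda$, and the identification is obviously induced by $\phi$.

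I expect no serious obstacle; the only thing that needs care is matching up the bookkeeping of ideals and modules with the formalism of \cref{prop:BC}, and verifying the small coprimeness argument above. Once $(f,(t-1)g) = (f,g)$ is in hand, the result is a direct invocation of \cref{prop:BC}.
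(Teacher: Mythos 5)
Your proposal is correct and follows essentially the same route as the paper: both reduce to \cref{prop:BC} with $R = \Lambda/((t-1)g)$, $R' = \Lambda/(g)$, and both verify the hypothesis by showing $(f,(t-1)g) = (f,g)$ in $\Lambda$ using $f(1)=1$. (The paper takes the slight shortcut $f - 1 = (t-1)r$ and hence $g = -(t-1)gr + fg$, which is your coprimeness argument specialized to $a=1$.)
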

\begin{proof}
The statement is a direct application of \Cref{prop:BC}
with $R = \Lambda/((t-1)g)$ and $R' = \Lambda/(g)$;
one merely needs to check that the map induced by $\phi$ between
$R / fR$ and $R' / fR'$ is an isomorphism---this is the canonical projection
\[
\Lambda/((t-1)g,f) \to \Lambda/(g, f).
\]
Note that $f(1) = 1$ implies $f - 1 = (t-1)r$ for some $r\in\Lambda$, so $g = -(t-1)gr + fg$.
This implies the equality of the ideals $((t-1)g,f) = (g,f)$.
\end{proof}

The following is a version of \Cref{prop:BC} for Hermitian pairings presented by square matrices, which we use in our proof of $(4)\Rightarrow(3)$ of \cref{thm:main}; see \cref{lem:lambda0}.

\begin{corollary}\label{cor:BC}
Let $\phi\colon R\to R'$ be a homomorphism of unital commutative rings $R$ and $R'$ with involution.
Let $A$ and $B$ be Hermitian $n\times n$ $R$--matrices and denote by $A_\phi$ and $B_\phi$ the $R'$--matrices obtained from applying $\phi$ entry-wise to $A$ and $B$, respectively.
Assume that $\phi$ induces isomorphisms $R/(\det(A))\cong R'/(\phi(\det(A)))$ and $R/(\det(B))\cong R'/(\phi(\det(B)))$.
Then:\smallskip

(i) $\ell_A$ is isometric to $\ell_B$ if and only if $\ell_{A_\phi}$ is isometric to $\ell_{B_\phi}$.

(ii) $\ell_A$ is isometric to $\ell_B$ if
there exists $T\in GL_n(R')$ with $B_\phi=\overline{T}^{\top} A_\phi T$.
\end{corollary}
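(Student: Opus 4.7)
The plan is to deduce both (i) and (ii) from \cref{prop:BC} applied to $M_1 = R^n/AR^n$ and $M_2 = R^n/BR^n$ with ideals $I_1 = (\det A)$ and $I_2 = (\det B)$, using the $R/I$--valued presentation of $\ell_A$ and $\ell_B$ from \cref{subsec:a1}, namely $\ell_A(x,y) = \overline{x}^{\top}\Adj(A) y + (\det A)$ and similarly for $B$. First I would verify the hypotheses of \cref{prop:BC}: the inclusion $(\det A) \subset \Ann(R^n/AR^n)$ is immediate from $A\cdot\Adj(A) = \det(A)\cdot I_n$, and the required $R$--module isomorphism $R/(\det A) \cong R'/\phi(\det A)R' = R'/(\det A_\phi)$ (and its analogue for $B$) is precisely our standing assumption, noting that $\phi(\det A) = \det A_\phi$.

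The crucial compatibility to record is that under the canonical $R'$--module isomorphism
\[
\alpha\colon (R^n/AR^n)\otimes_R R' \longrightarrow (R')^n/A_\phi(R')^n, \qquad e_i\otimes 1 \longmapsto e_i,
\]
arising from right-exactness of $-\otimes_R R'$ applied to $R^n \xrightarrow{A} R^n \to R^n/AR^n \to 0$, the base-changed pairing $\ell_A^{\phi}$ corresponds to $\ell_{A_\phi}$. This follows from a one-line computation on basis vectors, using that $\phi$ commutes with $\Adj$ (since adjugate entries are polynomial in the matrix entries): $\ell_A^\phi(e_i\otimes 1, e_j\otimes 1) = \phi(\Adj(A)_{ij}) = \Adj(A_\phi)_{ij} = \ell_{A_\phi}(e_i,e_j)$. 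With this identification in hand for both $A$ and $B$, \cref{prop:BC} immediately yields (i).

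For (ii), I would verify directly that if $T\in GL_n(R')$ satisfies $B_\phi = \overline{T}^{\top} A_\phi T$, then multiplication by $\overline{T}^{\top}$ descends to an $R'$--module isomorphism $\psi\colon (R')^n/A_\phi(R')^n \to (R')^n/B_\phi(R')^n$: well-definedness follows from $\overline{T}^{\top} A_\phi(R')^n = B_\phi T^{-1}(R')^n = B_\phi(R')^n$, and invertibility from $\overline{T}^{\top}\in GL_n(R')$. The isometry property reduces to the identity $T B_\phi^{-1}\overline{T}^{\top} = A_\phi^{-1}$, which is just the inverse of the given relation $B_\phi = \overline{T}^{\top} A_\phi T$. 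Thus $\ell_{A_\phi}$ and $\ell_{B_\phi}$ are isometric over $R'$, and (i) then yields that $\ell_A$ and $\ell_B$ are isometric over $R$. I do not anticipate any real obstacle in carrying out this plan; the only technical point worth double-checking is naturality of $\alpha$ and its compatibility with the pairings, both of which reduce to the fact that $\Adj$ and $\det$ are polynomial in the entries and therefore commute with any ring homomorphism.
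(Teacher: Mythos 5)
Your proposal is correct and follows essentially the same route as the paper: both identify $(\ell_A)^\phi$ with $\ell_{A_\phi}$ via the canonical isomorphism from $(R^n/AR^n)\otimes_R R'$ to $\coker A_\phi$ (using right-exactness of the tensor product), feed this into \cref{prop:BC} with $I_i$ equal to the order ideals $(\det A)$ and $(\det B)$, and reduce (ii) to (i) by checking directly that $T$ realizes an isometry between $\ell_{A_\phi}$ and $\ell_{B_\phi}$. You supply some details the paper leaves implicit (that $\phi$ commutes with $\Adj$, the explicit computation on basis vectors, the well-definedness of the map induced by $\overline{T}^{\top}$), but the overall structure and the key lemma invoked are the same.
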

\begin{proof}
The existence of $T$ implies that $\ell_{A_\phi}$ is isometric to $\ell_{B_\phi}$, so (ii) follows from~(i).
Let us show (i). Denote by $M$ be the cokernel of $A$.
The Hermitian pairing~$(\ell_A)^\phi$ is isometric to
$\ell_{A_\phi}$ via the canonical isomorphism from $M\otimes R'$ to the cokernel of $A_\phi$
(using right-exactness of the tensor product with $R'$).
Similarly, $(\ell_B)^\phi$ is isometric to~$\ell_{B_\phi}$.
Since $\Ord(M)=(\det(A))$, and similarly for the cokernel of $B$,
the statement follows from \Cref{prop:BC}.
\end{proof}
\subsection{The linking form on a branched covering as specialization of the Blanchfield pairing}\label{sec:ApP:BlandLk}
In this subsection, we will examine how the Blanchfield form of a knot induces the sesquilinear linking pairing $\lkh$ of the $n$--fold branched covering, and give a proof of \cref{prop:inducedpresmatrix}.

First, we explain how the Blanchfield pairing $\Bl$ induces a pairing $\Bl_n$ on $T_{\Delta}H_1(X_K;\Lambda_n)$.
Here, we have fixed a knot $K$ and a surjection $\pi_1(S^3\setminus K)\to \Z/n\Z$ for $n$ a prime power, and we have identified the group ring $\Z[\Z/n\Z]$ with $\Lambda_n = \Lambda / (t^n - 1)$.
For the sake of brevity we write $\Delta \in \Lambda$ for the Alexander polynomial of $K$, and $X_K = S^3\setminus K$.
For $M$ a module over $\Lambda$ or $\Lambda_n$, $T_{\Delta}M \subset M$ denotes the submodule of elements annihilated by $\Delta$.
Note that $H_1(X_K;\Lambda_n)$ contains a $\Lambda_n / (t-1)$ summand generated by a meridian of the boundary torus of the $n$--fold covering $X_K^n$ of $X_K$. We work with $T_{\Delta}H_1(X_K;\Lambda_n)$ in order to exclude this summand.

As explained in~\Cref{subsec:a1}, one may see the Blanchfield pairing as a Hermitian pairing
\[
\Bl'(K)\colon H_1(X_K; \Lambda) \times H_1(X_K; \Lambda) \to \Lambda / (\Delta).
\]
Indeed, we find $\Bl'$ from the definition of $\Bl$ in \cref{subsec:defBl} by identifying a submodule of $\Q(t)/\Lambda$ with $\Lambda / (\Delta)$. This amounts to $\Bl'(K)(x,y)=(\Psi'(x))(y)$,
where $\Psi'$ is the composition of the following maps
\begin{multline*}
H_1(X_K;\Lambda)\to H_1(X_K,\partial X_K;\Lambda)\xrightarrow{\cong}
H^2(X_K;\Lambda)\\\xrightarrow{\cong}
H^1(X_K;\Lambda / (\Delta))
\xrightarrow{\mathrm{ev}}
\overline{\Hom_\Lambda(H_1(X_K;\Lambda),\Lambda / (\Delta))},
\end{multline*}
with all the maps as described in \Cref{subsec:defBl}, except the third, which is taken to be the inverse of the connecting homomorphism of the long exact sequence associated with the short exact sequence
\[
\begin{tikzcd}[column sep=scriptsize]
0 \ar[r] & \Lambda \ar[r,"\cdot\Delta"] & \Lambda \ar[r] & \Lambda / (\Delta) \ar[r] & 0.
\end{tikzcd}
\]
Similarly, one may define $\Psi_n^{'\mathrm{twist}}$ as the composition of the maps
\begin{multline*}
T_{\Delta}H_1(X_K;\Lambda_n)\to H_1(X_K,\partial X_K;\Lambda_n)
\xrightarrow{\cong}
H^2(X_K;\Lambda_n)\\\xrightarrow{\cong}
H^1(X_K;\Lambda_n/(\Delta))
\xrightarrow{\mathrm{ev}}
\overline{\Hom_{\Lambda_n}(T_{\Delta}H_1(X_K;\Lambda_n),\Lambda_n/(\Delta))}.
\end{multline*}
where, again, all the maps are as described in \Cref{subsec:defBl}, except the third, which is taken to be a corresponding connecting homomorphism of the long exact sequence associated with the short exact sequence
$\begin{tikzcd}[column sep=scriptsize]
0 \ar[r] & \Lambda_n \ar[r,"\cdot\Delta"] & \Lambda_n \ar[r] & \Lambda_n / (\Delta) \ar[r] & 0.
\end{tikzcd}$
This gives rise to a pairing
\[
\Bl'_n(K)\colon T_{\Delta}H_1(X_K; \Lambda_n) \times T_{\Delta}H_1(X_K;\Lambda_n) \to  \Lambda_n/(\Delta)\] defined by $\Bl'_n(K)(x,y)\coloneqq(\Psi_n^{'\mathrm{twist}}(x))(y)$.
In other words, using $\phi\colon\Lambda\to\Lambda_n, r\mapsto [r]$ as ring homomorphism for the setup in~\Cref{subsec:a2}, we have $\Bl'_n(K)=\Bl'(K)^\phi$.
Now we have the following.
\begin{lemma}\label{lemma:blnprime}
There is a commutative diagram
\[
\begin{tikzcd}
H_1(X_K;\Lambda) \ar[r,"\Psi'"] \ar[d,"\alpha"] & \overline{\Hom_\Lambda(H_1(X_K;\Lambda),\Lambda / (\Delta))} \ar[d,"\beta"] \\
T_{\Delta}H_1(X_K;\Lambda_n) \ar[r,"\Psi_n^{'\mathrm{twist}}"] & \overline{\Hom_{\Lambda_n}(T_{\Delta}H_1(X_K;\Lambda_n),\Lambda_n/(\Delta))},
\end{tikzcd}
\]
where the map $\alpha$, induced by the projection $\phi\colon \Lambda \to \Lambda_n$, is surjective,
and the map $\beta$ is given by sending $f\colon H_1(X_K; \Lambda) \to \Lambda/(\Delta)$
to 
$T_{\Delta}H_1(X_K;\Lambda_n) \to \Lambda_n/(\Delta)$, $x \mapsto \phi(f(\alpha^{-1}(x)))$.
In particular, for all $x,y\in H_1(X_K;\Lambda)$ it holds that
\[
\Bl'_n(\alpha(x), \alpha(y)) = \overline{\phi}(\Bl'(x,y)) \in \Lambda_n/(\Delta),
\]
where $\overline{\phi}\colon \Lambda/(\Delta) \to \Lambda_n/(\Delta)$ is induced by the quotient map $\phi\colon\Lambda\to\Lambda_n$.
\end{lemma}
For a detailed proof of this lemma, we refer the reader to Proposition~A.1 and Lemma~A.4 in \cite{homrib},
where a pairing on $H_1(X_K; R)$ is constructed for certain $(t-1)$-torsion free $\Lambda$--algebras $R$.
After sprinkling them with $T_{\Delta}$, the proofs in \cite{homrib} may also be applied to $R = \Lambda_n$ (which has $(t-1)$--torsion).

Again by~\Cref{subsec:a1}, $\Bl'_n(K)$ can be equivalently seen (essentially by dividing by $\Delta\in \Lambda_n$) as a pairing
\[
\Bl_n(K)\colon T_{\Delta}H_1(X_K; \Lambda_n) \times T_{\Delta}H_1(X_K;\Lambda_n) \to  Q(\Lambda_n)/\Lambda_n,\]
where $\Bl_n(K)(x,y)=(\Psi_n^\mathrm{twist}(x))(y)$ for $\Psi_n^\mathrm{twist}$ a corresponding composition of maps
\begin{multline*}
T_{\Delta}H_1(X_K;\Lambda_n)\to H_1(X_K,\partial X_K;\Lambda_n)
\xrightarrow{\cong}
H^2(X_K;\Lambda_n)\\\xrightarrow{\cong}
H^1(X_K;Q(\Lambda_n)/\Lambda_n)
\xrightarrow{\mathrm{ev}}
\overline{\Hom_{\Lambda_n}(T_{\Delta}H_1(X_K;\Lambda_n),Q(\Lambda_n)/\Lambda_n)}.
\end{multline*}
As a consequence of \cref{lemma:blnprime}, $\Bl_n$ is induced by $\Bl$ in the following sense:
\begin{corollary}\label{cor:BlnandBl}
For all $x,y\in H_1(X_K;\Lambda)$ we have $\Bl(x,y) \in \Delta^{-1}\Lambda/\Lambda$ and
\[
\Bl_n(\alpha(x), \alpha(y)) = \overline{\phi}(\Bl(x,y)),
\]
where $\overline{\phi}\colon \underset{\subset Q(\Lambda)/\Lambda}{\Delta^{-1}\Lambda/\Lambda} \to \underset{\subset Q(\Lambda_n)/\Lambda_n}{\Delta^{-1}\Lambda_n/\Lambda_n}$
is given by $[r / \Delta] \mapsto [\phi(r) / \Delta]$ for $r\in \Lambda$.\qed
\end{corollary}

Now that we have constructed the pairing $\Bl_n$ and have shown how it is induced by $\Bl$,
let us next make the connection between $\Bl_n$ and $\lkh$.
Note that we have defined $H_1(X_K; \Lambda_n)$ as $H_1(X_K^n; \Z)$ with its $\Lambda_n=\Z[\Z/n\Z]$--module structure, hence we can consider the map $i_*\colon H_1(X_K; \Lambda_n)\to
H_1(\Sigma_n(K);\Z)$, induced by the inclusion $i\colon  X_K^n\to \Sigma_n(K)$. The map $i_*$ induces a $\Lambda_n$--module isomorphism from $T_{\Delta}H_1(X_K; \Lambda_n)=TH_1(X_K^n; \Z)$ to $H_1(\Sigma_n(K);\Z)$.
\begin{lemma}\label{lemma:Blnlkh'}
The $\Lambda_n$--module isomorphism $i_*\colon 
T_{\Delta}H_1(X_K; \Lambda_n)\to H_1(\Sigma_n(K);\Z)$ is an isometry between
\[
(T_{\Delta}H_1(X_K; \Lambda_n), \Bl_n)\qquad\text{ and }\qquad (H_1(\Sigma_n(K);\Z),\lkh').
\]
\end{lemma}
We make use of the following description of twisted Poincar\'{e} duality.
\begin{lemma}\label{lemma:twistPD}
For a compact connected oriented $d$--manifold $X$ and a surjective group homomorphism $\varphi\colon \pi_1(X)\to G$ to a finite group $G$, twisted Poincar\'{e} duality
$\PD_{\varphi}\colon H^{k}(X;\Z[G])\to H_{d-k}(X,\partial X;\Z[G])$ satisfies
$\PD_{X^G}([f])=\PD_\varphi([\eta(f)])$. Here
\[
\PD_{X^G}\colon H^{k}(X^G;\Z)\to H_{d-k}(X^G,\partial X^G;\Z)
\]
denotes untwisted Poincar\'{e} duality on the $\ker(\varphi)$--covering $X^G$ of $X$ and
\[ \eta\colon C^k(X^G;\Z)\to C^k(X;\Z[G]) \]
is the map sending
$(f\colon C_k(X^G;\Z)\to \Z)\in C^k(X^G; \Z)$ to
\[
  \eta(f)\colon \overline{C_k(X;\Z[G])}\to \Z[G],\quad c\mapsto \sum_{g\in G}f(gc)\cdot g.
\]
\end{lemma}
\begin{proof}[Sketch of proof]
Denote by
\[
[X]\in H_{d}(X,\partial X;\Z)\quad\text{ and }\quad [X^G]\in H_{d}(X^G, \partial X^G;\Z)
\]
the fundamental class of $X$ and $X^G$, respectively.
Let $[X]=[S]\in H_{d}(X;\Z)$, where $S$ is a $\Z$--linear combination of singular simplexes of~$X$.
Let $S'$ be a lift of $S$ to the covering $X^G$.
The key observation is that $[X^G] = \Bigl[\sum_{g\in G}g S'\Bigr]$.

Recall that the untwisted Poincar\'{e} duality of $X^G$ is defined as
\[
\PD_{X^G}([f]) = [f] \frown [X^G],
\]
where $\frown$ denotes the cap product.
Likewise, twisted Poincar\'{e} duality may be defined via the twisted cap product (see e.g.~\cite{FNOP}).
In particular, $\PD_\varphi\colon H^{k}(X;\Z[G])\to H_{d-k}(X,\partial X;\Z[G])$
is given by sending $[h]$ to the twisted cap product of $[h]$ with $[X]$, which may be seen to equal $[h \frown S']$.
We then calculate
\begin{multline*}\PD_{X^G}([f])=[f]\frown [X^G]=\Bigl[\sum_{g\in G} f\frown gS'\Bigr]\\
=\Bigl[\sum_{g\in G}f((gS')|_{[v_0,\ldots,v_{k}]})\cdot(gS')|_{[v_{k},\ldots,v_{d}]}\Bigr]
=[\eta(f)\frown S']=\PD_\varphi([\eta(f)]),
\end{multline*}
as desired.
\end{proof}
\begin{proof}[Proof of \cref{lemma:Blnlkh'}]
Consider the following diagram, where the first and last column compose to $\Psi_n$ (as used to define $\lke$) and $\Psi_n^{\mathrm{twist}}$ (as used to define $\Bl_n$), respectively.
\[\begin{tikzcd}[column sep=small]
H_1(\Sigma_n(K); \Z)
\arrow[swap]{dd} {\PD^{-1}}&TH_1(X_K^n; \Z) \arrow{l}{i_*}\arrow{r}{\id} \arrow[swap]{d} & T_{\Delta}H_1(X_K; \Lambda_n) \arrow{d} \\
 & H_1(X_K^n, \partial X_K^n; \Z)\arrow{r}{\id}\arrow[swap]{d}{\PD^{-1}} & H_1(X_K,\partial X_K^n; \Lambda_n)\arrow[swap]{d}{\PD^{-1}_\varphi}\\
H^2(\Sigma_n(K); \Z)\arrow[swap]{d}\arrow{r}{i^*} &H^2(X_K^n; \Z) \arrow[swap]{d}\arrow{r}{\eta} & H^2(X_K;\Lambda_n)\arrow[swap]{d} \\
H^1(\Sigma_n(K);\Q/\Z)\arrow[swap]{d}\arrow{r}{i^*} &H^1(X_K^n;\Q/\Z)\arrow[swap]{d}\arrow{r}{\eta'}& H^1(X_K;Q(\Lambda_n)/\Lambda_n))\arrow[swap]{d}\\
\parbox{6.3em}{\hspace*{-3em}$\Hom_{\Z}(H_1(\Sigma_n(K);\Z),\\\hspace*{4em}\Q/\Z)$}\arrow{r}{(i_*)^*} & \parbox{8.7em}{$\Hom_{\Z}(TH_1(X_K^n;\Z),\\\hspace*{6.4em}\Q/\Z)$}\arrow{r}{\eta''}& \parbox{7.3em}{\hspace{2.5em}$\overline{\Hom_{\Lambda_n}(T_{\Delta}H_1(X_K;\Lambda_n),}$\\\hspace*{5.5em}$\overline{Q(\Lambda_n)/\Lambda_n)}$}
  \end{tikzcd}\]
Here, $\eta$ is as defined in \cref{lemma:twistPD} and $\eta'$, $\eta''$ are given by
  \begin{multline*} \eta'([f]) =[C_1\to Q(\Lambda_n)/\Lambda_n\colon c_1\mapsto \sum_{i=1}^nt^if(t^{i}c_1)] \\ \text{ for }[f\colon C_1(X^n_K;\Z)\to \Q/\Z]\in H^1(X_K^n;\Q/\Z),\end{multline*}
  \[\eta''(f)=[c_1]\mapsto \sum_{i=1}^nt^if(t^{-i}[c_1])\text{ for }f\in \Hom_{\Z}(TH_1(X_K^n;\Z),\Q/\Z).\]
   Note the different sign of the exponent of $t$ in the formulas for $\eta'$ to $\eta''$, which is due to the appearance of the ring involution in the formula that defines the evaluation map used in the definition of the Blanchfield paring.
   
  This diagram commutes. The second square on the right-hand side reads $\eta\circ\PD^{-1}=\PD^{-1}_\varphi\circ\id$ which follows from \cref{lemma:twistPD}. The bottom square on the right-hand side can be checked to commute by unpacking the definition of the evaluation map as discussed in~\cite{FriedlPowell_15}. All the other squares commute due to naturality under continuous maps and change of coefficients.
  
We now check $\lkh'(i_*(x),i_*(y))=\Bl_n(x,y)$ for all $x,y\in T_{\Delta}H_1(X_K; \Lambda_n)$.
We have
\begin{align*}\lkh'(i_*(x),i_*(y))&=\sum_{k=1}^n t^k\lke(t^ki_*(x),i_*(y))=\sum_{k=1}^n t^k\lke(i_*(x),t^{-k}i_*(y))\\
\intertext{by  the definition $\lkh'$ and equivariance of $\lke$,}
&=\sum_{k=1}^n t^k\Psi_n(i_*(x))(t^{-k}i_*(y))=\sum_{k=1}^n t^k\Psi_n(i_*(x))(i_*(t^{-k}y))\\
\intertext{by the definition of $\lke$ and linearity of $i_*$,}
&=\sum_{k=1}^n t^k\left((i_*)^*\left(\Psi_n(i_*(x))\right)(t^{-k}y)\right)\\
\intertext{by the definition of $(i_*)^*$ (i.e.~the $i_*$-induced map on $\Hom$),}
&=\left(\eta''\left((i_*)^*\left(\Psi_n(i_*(x))\right)\right)\right)(y)\\
\intertext{by the definition of $\eta''$,}
&=\Psi_n^\mathrm{twist}(x)(y),
\end{align*}
{by the commutativity of the outermost rectangle of the diagram.}
\end{proof}
\begin{proof}[Proof of \cref{prop:inducedpresmatrix}]
Let $A(t) \in \Lambda^{m\times m}$ be a presentation matrix for $\Bl$,
and let $B(t) \in \Lambda_n^m$ and $C(t) \in (\Lambda/\rho_n)^{m\times m}$ be the matrices obtained from $A(t)$
by applying, respectively, the quotient maps $\Lambda\to\Lambda_n$ and $\Lambda\to\Lambda/(\rho_n)$ to each matrix entry.
We wish to show that $C(t)$ presents $\lkh$.

First, let us check that 
$\coker C(t)$ and $H_1(\Sigma_n; \Z)$ are isomorphic $\Lambda/(\rho_n)$--modules.
We claim that the following diagram of $\Lambda$--modules commutes and each row is a short exact sequence.
\[
\begin{tikzcd}
0 \ar[r] & \Lambda^m \ar[r,"A(t)"]\ar[d] & \Lambda^m \ar[r,"q"] \ar[d]& H_1(X_K; \Lambda)\ar[r]\ar[d,"\alpha"] & 0 \\
0 \ar[r] & \Lambda_n^m \ar[r,"B(t)"]\ar[d,"\id"] & \Lambda_n^m \ar[r] \ar[d,"\id"]& T_{\Delta}H_1(X_K; \Lambda_n)\ar[r]\ar[d,"i_*"] & 0 \\
0 \ar[r] & \Lambda_n^m \ar[r,"B(t)"]\ar[d] & \Lambda_n^m \ar[r] \ar[d]& H_1(\Sigma_n; \Z)\ar[r]\ar[d,"\id"] & 0 \\
0 \ar[r] & (\Lambda/(\rho_n))^m \ar[r,"C(t)"] & (\Lambda/(\rho_n))^m \ar[r,"q_n"] & H_1(\Sigma_n; \Z) \ar[r] & 0
\end{tikzcd}
\]
The first row is exact by the assumption that $A(t)$ presents $\Bl$. The second row and the vertical maps from first to second row arise by tensoring the first row with $\Lambda_n$ over $\Lambda$. Tensoring preserves exactness because $\det A(t) = \Delta$ is a non-zero divisor over $\Lambda_n$.
The exactness of the third row and the commutativity between second and third row is a consequence of \cref{lemma:Blnlkh'}.
Finally, the fourth row and the vertical maps from third to fourth row arise by tensoring the third row with $\Lambda_n/(\rho_n)$ over~$\Lambda_n$. Tensoring preserves exactness because $\Delta$ is a non-zero divisor over $\Lambda_n/(\rho_n)$, since $n$ is a prime power. We have $H_1(\Sigma_n;\Z) \otimes_{\Lambda_n} \Lambda_n/(\rho_n) \cong H_1(\Sigma_n;\Z)$
since $\rho_n$ acts trivially on $H_1(\Sigma_n; \Z)$, as discussed in the paragraphs preceding \cref{prop:inducedpresmatrix}.
So, it follows that $\coker C(t)$ and $H_1(\Sigma_n; \Z)$ are isomorphic $\Lambda/(\rho_n)$--modules, as desired.

It just remains to show that for $x, y\in (\Lambda/(\rho_n))^m$ we have
\[
\lkh(q_n(x), q_n(y)) = \overline{x}^{\top}C(t)^{-1} y \quad\in Q(\Lambda/(\rho_n)) / \Lambda/(\rho_n).
\]
Observe that
\begin{align*}
\lkh(q_n(x), q_n(y)) & = p(\lkh'(q_n(x), q_n(y))) \\
\intertext{by definition of $\lkh$, for $p\colon Q(\Lambda_n)/\Lambda_n \to Q(\Lambda / \rho_n) / (\Lambda / \rho_n)$ the canonical projection,}
                 & = p(\Bl_n(i_*^{-1}(q_n(x)), i_*^{-1}(q_n(y)))) \\
\intertext{by \cref{lemma:Blnlkh'},}
                 & = p(\overline{\phi}(\Bl(\tilde x, \tilde y))).
\intertext{by \cref{cor:BlnandBl}, with $\tilde x, \tilde y \in \Lambda^m$ such that $\alpha(q(\tilde x)) = i_*^{-1}(q_n(x))$ and similarly for~$\tilde y$,}
                 & = p(\overline{\phi}(\overline{\tilde x}^{\top}A(t)^{-1} \tilde y))
\intertext{since $A(t)$ presents $\Bl$,}
                 & = \overline{x}^{\top}C(t)^{-1} y
\end{align*}
because of the commutativity of the above diagram. This concludes the proof.
\end{proof}

\end{appendix}
\bibliographystyle{myamsalpha}
\bibliography{peterbib}
\end{document}